\theoremstyle{plain}
 \newtheorem{thm}{Theorem}[section]
\newtheorem{lem}[thm]{Lemma} \newtheorem{prop}[thm]{Proposition}
\newtheorem{cor}[thm]{Corollary}
\theoremstyle{definition}
\newtheorem{exmp}{Example} \theoremstyle{remark}
\newtheorem*{rem*}{Remark} \newtheorem{rem}{Remark}
 \newcommand{\R}{\mathbb{R}}
 \newcommand{\Rd}{{\R^{d}}}
\renewcommand{\leq}{\leqslant} \renewcommand{\le}{\leq}
\renewcommand{\geq}{\geqslant} \renewcommand{\ge}{\geq}
\def\Go{G_{\{0\}^c}}
 \def\({\left(} \def\){\right)} \def\[{\left[}
  \def\]{\right]} \def\<{\langle} \def\>{\rangle} 
\def\E{\mathbb{E}}
\def\p{\mathbb{P}}
\definecolor{mr}{rgb}{0.1,0.2,0.7}
\definecolor{tg}{rgb}{0.7,0.1,0.2}
\newcommand{\WUSC}[2]{\textrm{\rm WUSC}(#1,#2)}
\newcommand{\WLSC}[2]{\textrm{\rm WLSC}(#1,#2)}
\newcommand{\Ca}{C_{9}}
\newcommand{\Cb}{C_{8}}
\newcommand{\Cc}{C_{7}}
\newcommand{\Cd}{C_{6}}
\newcommand{\Ce}{C_{5}}
\title{Hitting times of points and intervals for symmetric L\'{e}vy processes \thanks{\textbf{2010 MSC}: Primary 60G51; Secondary 60J50, 60J75. \textbf{Keywords}: symmetric L\'{e}vy process, L\'{e}vy-Khintchine
exponent, hitting time, Harnack inequality, Dirichlet kernel}}
\author{Tomasz Grzywny\thanks{Wroc\l{}aw University of Technology, ul. Wyb. Wyspia\'{n}skiego 27, 50-370 Wroc\l{}aw, Poland,
tomasz.grzywny@pwr.edu.pl, michal.ryznar@pwr.edu.pl} and Micha\l{} Ryznar\textsuperscript{\textdagger}\\
  Wroc\l{}aw University of Technology, Poland}
\begin{document}
\maketitle
\begin{abstract}
For  one-dimensional symmetric L\'evy processes, which hit every  point with positive probability,   we give
sharp bounds for the tail function $\p^x(T_B>t)$, where $T_B$ is the first hitting time of $B$ which is either a single point or an  interval.
The estimates are obtained under some weak type scaling  assumptions on the characteristic exponent of the process. We apply these results to prove optimal estimates of the transition density of the process killed after hitting $B$. \end{abstract}

\section{Introduction}

\setcounter{equation}{0}

The purpose of this paper is to investigate the distribution of the first hitting time of a point or an  interval by a symmetric L\'evy process such that $\{0\}$ is regular for itself. Such processes hit points with positive probability. Our main results, under certain regularity assumptions,  provide sharp estimates of the tail function $\p^x(T_0>t), t>0$, where  $T_0$ is the first hitting time of the point $0$ by the process starting from $x$. We further derive similar estimates for the first hitting time of an interval of a given width, under some weak scaling assumption on the characteristic exponent $\psi$ of the process. We also find the asymptotic behaviour of the tail function either for the first hitting time of a point or a compact set  under the assumption that the characteristic exponent is regularly varying  at zero with index $\delta\ge 1$. The estimates or asymptotics obtained in the paper are expressed in terms of the generalized inverse  $\psi^{-1}$ of the characteristic exponent and  the compensated potential kernel $$K(x)=\int^\infty_0(p_s(0)-p_s(x))ds,\ x\in \R.$$
Here $p_s(x), s>0, x\in \R,$ is the transition density of the process, which must exist for processes we study. If $\psi$ is comparable with a non-decreasing function  we are able to provide sharp estimates of $K$ in terms of the characteristic exponent, so in these cases the estimates become quite explicit and given in terms of the characteristic exponent and its generalized inverse.  For example we show that if $\psi$ has the weak lower scaling property with index $\alpha>1$ (see Preliminaries for the definition) then
$$ \p^x(T_0>t)\approx  \frac{1}{t\psi^{-1}(1/t)|x|\psi(1/x)}\wedge 1, \quad x\in \R,\, t>0.$$
Moreover, we find a similar estimate in the case when $ \p^x(T_0>t)$ is replaced by the tail function of the first hitting time of an interval (see Theorem \ref{ball}). While  in principle, for starting points $x$ far away from the interval, such estimates should follow from the estimates of $ \p^x(T_0>t)$, but for points close to the boundary of the interval the behaviour of the tail function is not clear.  In order to overcome this difficulty we proved and then applied  the global Harnack inequality under the weak scaling assumption for $\psi$ (see Theorem \ref{Harnack}). The  Harnack inequality is one of the central topics in the potential theory and the present paper contributes to these studies. Usually the Harnack inequality for L\'evy or generally Markov processes is proved under the assumptions which enforce the transience of the process and absolute continuity of its L\'evy measure \cite{MR1918242,MR2031452,MR2524930,MR3225805,MR3271268}. In our case the process is not only recurrent but point recurrent.

Finally, under the assumption the process is  unimodal and $\psi$ has the  lower and upper weak  scaling property we apply the estimates   of the hitting times and derive sharp estimates of $p^{D}$, the transition density (heat kernel)  of the process killed after hitting an interval.  We show that for $D=(-\infty, -R) \cup (R, \infty),\ R>0$ we have
$$p_t^{D}( x, y)\approx   \p^x(\tau_D>t)\p^y(\tau_D>t) p_t(x-y),\quad \ t>0,\,\,\, x,\,y \in D, $$
with the comparability constant independent of $R$ and where $\tau_D$ denotes the first exit time from $D$.
The problem of estimating the heat kernel for symmetric L\'evy  processes has brought a lot of attention recently; see e.g. \cite{MR2677618,MR2722789,MR2923420,MR3131293,MR3249349}.
Most of the results are derived under the assumption that the process is transient. The recurrent processes, except isotropic stable \cite{MR2722789}, were not explored with regard to heat kernel estimates for exterior sets, and to the best of our knowledge our result is the first one with such generality. One of the drawbacks is that we deal with one-dimensional processes which are point recurrent. It would be desirable to provide such optimal estimates for one or two-dimensional recurrent symmetric L\'evy  processes, which do not hit points. Unfortunately our approach, based on the nice  behaviour of the compensated kernel $K$, will not work in this case.

The distribution of the hitting time of points or compact sets for one-dimensional $\alpha$-stable processes was a subject of studies in several papers \cite{MR0217877,MR1149016,MR2599211,MR2800088,MR2466193,MR3183574,MR2988398,2014arXiv1403.3714J}.
Let $T_B$ be the first hitting time of a set $B$.  Port in  \cite {MR0217877} found
 the asymptotics  of $\p^x(T_B>t), t\to \infty$ for a compact  set $B$ if  $1< \alpha<2$, and for not necessarily symmetric stable processes. The density $f_x(t)$ of $T_{x}$ for the symmetric $\alpha$-stable process, $1<\alpha<2$,   was found in \cite {MR2599211}.
  For spectrally positive (no negative   jumps) $\alpha$-stable process, $1<\alpha<2$,  Peskir \cite{MR2466193}
and Simon \cite{MR2800088}
found the density $f_x(t), x>0,$  in a form of a series from which one can derive the asymptotics   of $f_x(t)$ as $t\to 0^+$  or $t\to\infty$. In a recent paper
\cite{MR3183574}
 this type of result was extended to    $\alpha$-stable processes, $1<\alpha<2$, having both negative and positive jumps. In this paper the authors derived the Mellin transform of the distribution of  $T_{x}$ and then successfully inverted it to obtain the series representation of the density of $T_{x}$.

Relatively little is known about the distribution of  hitting times of single points
for general L\'evy processes. To our best knowledge  such explicit results as mentioned above do not exist. Only recently Kwa\'snicki \cite{MR2988398}
studied the distribution of $T_{x}$ for symmetric L\'evy processes under certain regularity assumptions on the characteristic exponent of the process. The main result provides an integral  representation of the distribution function of $T_{x}$ in terms of generalized eigenfunctions for the killed semigroup upon hitting $\{0\}$. This representation was then successfully applied in \cite{2014arXiv1403.3714J} to obtain various asymptotics and estimates  of the tail function of $T_{x}$ and its derivatives under further additional regularity assumptions on  characteristic exponent and  the L\'evy measure. Namely it is assumed that the process has  completely monotone L\'evy density.
Comparing our results with those obtained in \cite{2014arXiv1403.3714J} we remark that our assumptions are much less restrictive, however our approach  does not allow to treat the estimates of the density or the higher derivatives of the distribution functions. In a forthcoming paper we provide sharp  estimates of the density under the weak upper and lower scaling property for the characteristic exponent for   unimodal L\'evy  processes. Moreover we also treat the hitting distribution  of intervals and  provide sharp estimates and asymptotics of the tail function, which was not investigated in  \cite{2014arXiv1403.3714J}. We also mention that our methods are more elementary and are based on the estimates of the Laplace transforms of the hitting distributions and various estimates of exit probabilities.

{The paper is composed as follows. In Section 2 we recall some basic material regarding one-dimensional symmetric L\'{e}vy processes and present some auxiliary results which we use in the sequel.  In Section 3 we obtain estimates and asymptotics of the tail function $\p^x(T_0>t)$. Section 4 is devoted to the  uniform Harnack inequality and boundary behaviour of harmonic functions. These tools we use in Section 5 to prove estimates of the function $\p^x(T_{[-r,r]}>t)$.  Section 6 focuses on symmetric unimodal processes with weak global scaling. We use the methods and results of the previous sections to obtain estimates of the Dirichlet heat kernel of a complement of an interval.
}
\section{Preliminaries}

    Throughout the paper  by $c, c_1\,\dots$ we  denote
       nonnegative constants which may depend on other constant parameters only.
        The value of $c$ or $c_1\,\ldots$ may change from line to line in a chain
         of estimates. If we use enumerated  $C_1,C_2\, \dots$ then they are fixed constants and usually used in the sequel parts of the paper.
                Any subsets and real functions considered in the paper are assumed to be Borel measurable.
      The notion $p(u)\approx q(u),\ u \in A$ means that the ratio
      $p(u)/ q(u),\ u \in A$ is bounded from  below and above
      by positive (comparability) constants which may depend on other constant parameters only but does not depend on the set $A$.

We present in this section some basic material regarding one-dimensional   symmetric
L\'{e}vy processes which hit points with non-zero probability. For more  detailed
information, see \cite{MR1406564,MR0368175}. For questions regarding the
Markov and the strong Markov properties, semigroup properties,
Schr\"{o}dinger operators and basic potential theory, the reader
is referred to \cite{MR1329992} and \cite{MR0264757}.

In this paper we assume that a L\'evy process $X=(X_t, \,t\ge 0)$ \cite{MR1739520}, is symmetric. By $\nu$  we denote its L\'evy measure and by $\psi$  its    L\'evy-Khintchine exponent (symbol). Notice that $\nu$ and $\psi$ are symmetric as well.  Recall that any L\'evy measure is a measure such that
\begin{equation*}\label{wml}
\int_\R \(|x|^2\wedge 1\)\nu(dx)<\infty.
\end{equation*}
If the L\'evy measure $\nu$ is absolutely continuous with respect to the Lebesgue measure, then with a slight abuse of notation, we denote its density by $\nu$ as well.
Since the process is symmetric there is $\sigma\in \R$ such that 
$$
 \psi(\xi)=\int_\R \left(1- \cos \xi x\right) \nu(dx)+ \sigma^2 \xi^2 ,\qquad\xi\in\R,$$
and
$$
\E\,e^{i\xi X_t}=e^{-t\psi(\xi)},\qquad\xi\in\R.
$$ 

For $x\in \R$, by $\p^x$ and $\E^x$ we denote the distribution and the resulting expectation  of the process $x+X$. Obviously $\p^0=\p$ and $\E^0=\E$.
The process $X$ is called unimodal if for any $t>0$ the distribution $p_t(dx)$ of $X_t$ is  unimodal that is it is  absolutely continuous on $\R\setminus \{0\}$ and its density $p_t(x)$ is symmetric on $\R$ and {non-increasing} on $(0,\infty)$. Unimodal  L\'evy processes are characterized in \cite{MR705619} by unimodal L\'evy measures
$\nu(dx) = \nu(x) dx = \nu(|x|) dx$.

 The {\it first exit time} of an (open)
   set  $D\subset {\Rd}$
   by the process $X_t$ is defined by the formula
   $$
   \tau_{D}=\inf\{t> 0;\, X_t\notin D\}\,.
   $$

   If $F\subset \R$ is a closed set we define the first hitting time $T_F $ of $F$ as the first exit time from $F^c$. In the case when $F= \{a\}, a\in \R$ we denote  $T_F=T_a $. In this paper we consider symmetric L\'evy processes which have the property that  $0$ is regular for the set  $\{0\}$ that is
   $$\p^0(T_0=0)=1,$$
   which is equivalent to (\cite[Theoreme 7 and Theoreme 8]{MR0368175})
   \begin{equation}\label{0reg}\int^\infty_0\frac{1}{1+\psi(x)}dx<\infty.  \end{equation}
Note that the above condition implies that $\psi$ is unbounded, so excludes compound Poisson processes and in consequence $\psi(x)>0$ for $x\neq 0$. Moreover \eqref{0reg} guarantees that the distribution of $X_t, t>0$, is absolutely continuous and its density $p_t(\cdot)\in {C^\infty(\R)}$.


 In general potential theory  a very important role is played by $\lambda$-potential  kernels, $\lambda>0$  which are
defined as

$$u^\lambda(x)=\int_0^\infty e^{-\lambda t}p_t(x)dt, \ x\in \R.$$
If the defining integral above is finite   for $\lambda=0$ we call $u^0(x)=u(x)$ the potential kernel and then     the underlying process is transient.

Under the above assumptions it follows from \cite[Corollary II.18 and Theorem II.19]{MR1406564} that   $h^\lambda(x)=\E^0e^{-\lambda T_x}$ is continuous and $$u^\lambda(x)=\int^\infty_0 e^{-\lambda s}p_s(x)ds=u^\lambda(0)h^\lambda(x).$$
Denote
 $$K^\lambda(x)=u^\lambda(0)-u^\lambda(x)$$ and $$K(x)=\lim_{\lambda\to 0^+}K^\lambda(x)=\int^\infty_0(p_s(0)-p_s(x))ds.$$ 
By symmetry and \cite[Theorem II.19]{MR1406564},
$$K^\lambda(x)=\frac{1}{\pi}\int^\infty_0(1-\cos xs)\frac{1}{\lambda+\psi(s)}ds.$$
The monotone convergence theorem implies
$$K(x)=\frac{1}{\pi}\int^\infty_0(1-\cos xs)\frac{1}{\psi(s)}ds=\frac{1}{x\pi}\int^\infty_0(1-\cos s)\frac{1}{\psi(s/x)}ds.$$

For  a number of   results below  we  make the assumption that $K $ is non-decreasing. We do not know any general criterion which guarantees monotonicity, but it is clear that  sufficient conditions are:   $\psi(x)/x$ is  non-decreasing on $(0,\infty)$ or the process $X$ is unimodal.   Another interesting  problem is the question if monotonicity of $K$ implies some monotonicity properties of $\psi$.

Define $\kappa\in[0,\infty)$ by $$\kappa=\frac{\pi}{\int_{0}^\infty\frac{ds}{\psi(s)}}.$$

\begin{lem}[\cite{MR2603019}, Theorem 3.1]\label{php1} We have $\p^0(T_x=\infty)=\kappa K(x)$.  If  $\int^\infty_0 \frac{1}{\psi(s)}ds=\infty$ then for any $x\in \R$,  $\p^0(T_x<\infty)=1$.
\end{lem}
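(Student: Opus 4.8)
The plan is to let $\lambda\downarrow 0$ in the identities already recorded in the excerpt. Since $e^{-\lambda T_x}\uparrow\ind_{\{T_x<\infty\}}$ as $\lambda\downarrow 0$, monotone convergence gives $\lim_{\lambda\to0^+}h^\lambda(x)=\lim_{\lambda\to0^+}\E^0e^{-\lambda T_x}=\p^0(T_x<\infty)$, and hence, using $u^\lambda(x)=u^\lambda(0)h^\lambda(x)$,
$$
\p^0(T_x=\infty)=\lim_{\lambda\to0^+}\bigl(1-h^\lambda(x)\bigr)=\lim_{\lambda\to0^+}\frac{u^\lambda(0)-u^\lambda(x)}{u^\lambda(0)}=\lim_{\lambda\to0^+}\frac{K^\lambda(x)}{u^\lambda(0)}.
$$
So everything reduces to understanding the two limits $\lim_{\lambda\to0^+}K^\lambda(x)$ and $\lim_{\lambda\to0^+}u^\lambda(0)$.

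Next I would make $u^\lambda(0)$ explicit. Either from the inversion formula $p_s(0)=\frac{1}{\pi}\int_0^\infty e^{-s\psi(\xi)}\,d\xi$ together with Tonelli (the resulting double integral is finite by \eqref{0reg}), or directly from \cite[Theorem~II.19]{MR1406564}, one obtains $u^\lambda(0)=\frac{1}{\pi}\int_0^\infty\frac{d\xi}{\lambda+\psi(\xi)}$. Letting $\lambda\downarrow 0$ and using monotone convergence, $u^\lambda(0)\uparrow\frac{1}{\pi}\int_0^\infty\frac{d\xi}{\psi(\xi)}=\kappa^{-1}\in(0,\infty]$. Combined with the monotone convergence $K^\lambda(x)\uparrow K(x)$ noted above, this settles the case $\kappa>0$: then $u^\lambda(0)\to\kappa^{-1}\in(0,\infty)$, and moreover $K^\lambda(x)=u^\lambda(0)\bigl(1-h^\lambda(x)\bigr)\le u^\lambda(0)\le\kappa^{-1}$ forces $K(x)\le\kappa^{-1}<\infty$, so that $\p^0(T_x=\infty)=\lim_{\lambda\to0^+}K^\lambda(x)/u^\lambda(0)=\kappa K(x)$.

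The remaining case is $\kappa=0$, i.e.\ $\int_0^\infty\psi(\xi)^{-1}\,d\xi=\infty$, so that $u^\lambda(0)\to\infty$; here I must show $K^\lambda(x)/u^\lambda(0)\to0$, which yields $\p^0(T_x<\infty)=1$ and is consistent with the value $\kappa K(x)=0$. Since $0\le K^\lambda(x)\le K(x)$ for every $\lambda>0$, it suffices to check $K(x)<\infty$. For this I would split $K(x)=\frac{1}{\pi}\int_0^\infty\frac{1-\cos x\xi}{\psi(\xi)}\,d\xi$: on a neighbourhood of $\xi=0$ the L\'evy--Khintchine representation yields $\liminf_{\xi\to0}\psi(\xi)/\xi^2>0$ (which holds as $\psi\not\equiv 0$), while $1-\cos x\xi\le\frac12 x^2\xi^2$, so the integrand stays bounded there; on $[1,\infty)$ one uses $1-\cos x\xi\le2$ together with the control of $1/\psi$ at infinity provided by \eqref{0reg}. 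I expect this finiteness of $K(x)$ in the recurrent regime — that is, extracting enough integrability of $1/\psi$ near infinity from \eqref{0reg} and pinning down the behaviour of $\psi$ near the origin — to be the only genuinely technical point; everything else is a routine passage to the limit in the formulas already displayed in the excerpt.
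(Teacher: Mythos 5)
The paper does not prove Lemma~\ref{php1} at all: it is imported verbatim from Yano (\cite{MR2603019}, Theorem~3.1), whose argument goes through It\^o excursion theory away from the regular point $0$, not through direct inversion of Laplace transforms as you do. So there is no ``paper's proof'' to match; what follows assesses your proposal on its own terms.

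The overall architecture is sound, and the transient case $\kappa>0$ is complete: $u^\lambda(0)\uparrow\kappa^{-1}<\infty$, $K^\lambda(x)\le u^\lambda(0)\le\kappa^{-1}$ forces $K(x)\le\kappa^{-1}<\infty$, and the ratio $K^\lambda(x)/u^\lambda(0)\to\kappa K(x)$. The behaviour of $\psi$ near the origin is also handled correctly: for a non-degenerate symmetric exponent one always has $\liminf_{\xi\to 0}\psi(\xi)/\xi^2>0$ (either $\sigma^2>0$, or $\int_{|x|\le\delta}x^2\,\nu(dx)>0$ and $1-\cos u\ge \tfrac{2}{\pi^2}u^2$ for $|u|\le\pi$), so $\int_0^1\xi^2/\psi(\xi)\,d\xi<\infty$ as you assert.

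The genuine gap is the step you flag yourself as ``the only genuinely technical point'': the integrability of $1/\psi$ at infinity. You write that \eqref{0reg} provides ``enough control of $1/\psi$ at infinity,'' but this does not follow from \eqref{0reg} alone. Writing $\frac{1}{\psi}=\frac{1}{1+\psi}+\frac{1}{\psi(1+\psi)}$, the condition $\int_1^\infty\frac{d\xi}{1+\psi(\xi)}<\infty$ controls the first summand but tells you nothing about $\int_{\{\xi\ge 1:\ \psi(\xi)<1\}}\frac{d\xi}{\psi(\xi)}$: a general symmetric L\'evy exponent $\psi$ is \emph{not} comparable to its maximal function $\psi^*$, need not be monotone, and can dip toward $0$ at arbitrarily large frequencies (this is precisely why the paper introduces $\tilde K$ built from $\psi^*$ and carries the separate hypothesis $\psi\ge a\psi^*$ in Lemmas~\ref{KWLSC}--\ref{potEst}). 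Hence your deduction $K^\lambda(x)\le K(x)<\infty$ in the case $\kappa=0$ is unproved, and with it the conclusion $K^\lambda(x)/u^\lambda(0)\to 0$. The finiteness of $K(x)$ in the recurrent regime is a real theorem (Bretagnolle/Yano) that needs the local-time or excursion machinery, not just the Fourier split; alternatively one proves $\p^0(T_x<\infty)=1$ in the recurrent case directly from recurrence plus the fact that $x$ is regular for $\{x\}$ (chaining the hitting times of small neighbourhoods of $x$), which bypasses $K$ entirely. As written, your argument for $\kappa=0$ is incomplete.
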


If    $\int^\infty_0 \frac{1}{\psi(s)}ds<\infty$ the process $X$ is transient and it is clear from Lemma \ref{php1} that  its potential kernel satisfies
$$u(x)=\int^\infty_0 p_s(x)ds=\frac{1}{\kappa} \p^x(T_0<\infty).$$


\begin{prop}
$K$ is subadditive on $\R$.
\end{prop}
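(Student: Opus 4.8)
The plan is to pass through the $\lambda$-potential kernels with $\lambda>0$ and then let $\lambda\to0^+$; in this way the transient and the recurrent cases are handled uniformly. Recall from the Preliminaries that $K^\lambda(x)=u^\lambda(0)-u^\lambda(x)=u^\lambda(0)\bigl(1-h^\lambda(x)\bigr)$, where $h^\lambda(x)=\E^0 e^{-\lambda T_x}$ and $0<u^\lambda(0)=\tfrac1\pi\int_0^\infty\frac{ds}{\lambda+\psi(s)}<\infty$ thanks to \eqref{0reg}. Since $T_x\in[0,\infty]$ and $\lambda>0$, we have $h^\lambda(x)\in[0,1]$. Hence it suffices to prove the supermultiplicativity estimate
$$h^\lambda(x+y)\ge h^\lambda(x)\,h^\lambda(y),\qquad x,y\in\R,\ \lambda>0.$$
Indeed, once this holds, using $0\le h^\lambda\le 1$ we get
$$1-h^\lambda(x+y)\le 1-h^\lambda(x)h^\lambda(y)=\bigl(1-h^\lambda(x)\bigr)+h^\lambda(x)\bigl(1-h^\lambda(y)\bigr)\le\bigl(1-h^\lambda(x)\bigr)+\bigl(1-h^\lambda(y)\bigr),$$
and multiplying by $u^\lambda(0)$ yields $K^\lambda(x+y)\le K^\lambda(x)+K^\lambda(y)$. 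Letting $\lambda\downarrow 0$ and using the monotone convergence $K^\lambda\uparrow K$ recorded in the Preliminaries then gives $K(x+y)\le K(x)+K(y)$.

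To establish the displayed supermultiplicativity I would apply the strong Markov property at the stopping time $T_x$. On $\{T_x<\infty\}$ one has $X_{T_x}=x$ (by translation invariance $\{x\}$ is regular for itself, which is the content of \eqref{0reg}), and after time $T_x$ the process still needs an additional $T_{x+y}\circ\theta_{T_x}$ units of time before it reaches $x+y$; hence $T_{x+y}\le T_x+T_{x+y}\circ\theta_{T_x}$ on $\{T_x<\infty\}$, and this inequality holds trivially on $\{T_x=\infty\}$ as well. Consequently $e^{-\lambda T_{x+y}}\ge e^{-\lambda T_x}\bigl(e^{-\lambda T_{x+y}}\circ\theta_{T_x}\bigr)$ everywhere. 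Taking $\E^0$, applying the strong Markov property (the factor $e^{-\lambda T_x}$ is $\mathcal F_{T_x}$-measurable and vanishes on $\{T_x=\infty\}$), and using $X_{T_x}=x$ together with $\E^x e^{-\lambda T_{x+y}}=\E^0 e^{-\lambda T_y}=h^\lambda(y)$, we obtain
$$h^\lambda(x+y)=\E^0 e^{-\lambda T_{x+y}}\ge\E^0\bigl[e^{-\lambda T_x}\,\E^{X_{T_x}}e^{-\lambda T_{x+y}}\bigr]=h^\lambda(y)\,\E^0 e^{-\lambda T_x}=h^\lambda(x)\,h^\lambda(y),$$
as required.

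I expect the only genuinely delicate point to be the measure-theoretic bookkeeping in the strong Markov step, specifically the identity $X_{T_x}=x$ on $\{T_x<\infty\}$; this is where assumption \eqref{0reg} is used, and it is standard for symmetric L\'evy processes for which $\{0\}$ is regular for itself. A purely analytic alternative would start from $K^\lambda(x)=\tfrac1\pi\int_0^\infty(1-\cos xs)\frac{ds}{\lambda+\psi(s)}$ and verify directly that $\int_0^\infty\bigl[\cos xs+\cos ys-\cos(x+y)s-1\bigr]\frac{ds}{\lambda+\psi(s)}\le0$; however the integrand is not of one sign (e.g.\ for $x=y$ small it is positive near $s=0$), so proving this seems to require essentially recovering the probabilistic identity above, and I would therefore favour the hitting-time argument.
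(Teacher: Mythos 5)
Your proof is correct and takes essentially the same route as the paper: both rest on the hitting-time inequality $T_{x+y}\le T_x+T_{x+y}\circ\theta_{T_x}$, the resulting supermultiplicativity $h^\lambda(x+y)\ge h^\lambda(x)h^\lambda(y)$ via the strong Markov property, the identity $K^\lambda(x)=u^\lambda(0)\bigl(1-h^\lambda(x)\bigr)$, and monotone convergence as $\lambda\downarrow 0$. The only cosmetic difference is the final algebra: you bound $1-h^\lambda(x+y)$ directly by $(1-h^\lambda(x))+(1-h^\lambda(y))$, whereas the paper writes $K(x)+K(y)-K(x+y)=\lim_\lambda u^\lambda(0)\bigl[1-h^\lambda(x)\bigr]\bigl[1-h^\lambda(y)\bigr]\ge 0$; these are equivalent.
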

\begin{proof}
Observe that $T_{x+y}\leq T_x+T_{x+y}\circ T_x$, where $\circ$ denotes the usual shift operation.  By the strong Markov property, for $\lambda>0$,
$$h^\lambda(x+y)\geq h^{\lambda}(x)h^\lambda(y), \quad x,\,y\in \R.$$
Hence
\begin{eqnarray*}
K(x)+K(y)-K(x+y)&=&\lim_{\lambda\to 0}\[(u^\lambda(0)-u^{\lambda}(x))+(u^\lambda(0)-u^\lambda(y))-(u^\lambda(0)-u^\lambda(x+y))\]\\
&=&\lim_{\lambda\to 0}u^\lambda(0)\[1-h^\lambda(x)-h^\lambda(y)+h^\lambda(x+y)\]\\
&\geq&\lim_{\lambda\to 0}u^\lambda(0)\[1-h^\lambda(x)-h^\lambda(y)+h^\lambda(x)h^\lambda(y)\]\\
&=&\lim_{\lambda\to 0}u^\lambda(0)[1-h^\lambda(x)][1-h^\lambda(y)]\geq 0.
\end{eqnarray*}
\end{proof}


The fundamental object of the potential theory is the {\it killed process} $X_t^D$
  when exiting the set $D$. It is defined in terms of sample paths up to time $\tau_D$.
  More precisely,
  $$
 \E^x f(X_t^D) =  \E^x[t<\tau_D; f(X_t)]\,,\quad t>0\,.
  $$
  The density function of the transition probability of the process $X_t^D$ is denoted
  by $p_t^{D}$. We have
  \begin{equation*}
  p_t^{D}(x,y) = p_t(y-x) -
   \E^x[t> \tau_D; p_{ t-\tau_D}(y-X_{\tau_D})]    \,, \quad x,\, y \in {\Rd}\,.\label{density100}
  \end{equation*}
  Obviously, we obtain
   $$
     p_t^{D}(x,y) \le p_t(y-x) \,, \quad x,\, y \in {\Rd}\,.
   $$
  $(p_t^{D})_{t>0}$ is a strongly contractive semigroup (under composition) and shares most
  of properties of the semigroup $ p_t$. In particular, it is strongly Feller and
  symmetric: $ p_t^{D}(x,y) =  p_t^{D}(y,x)$.
 The $\lambda$-potential measure  of the process $X_t^D$ started from $x$ is a Borel measure defined as
  $$
   G^\lambda_D(x,A)= \int_0^{\infty}e^{-\lambda t} \p^x(X_t^D\in A) \,dt\,,
   $$
 for any Borel subset $A$ of $\R$.
 For the L\'evy processes explored in the paper their  potential measures are absolutely continuous and the corresponding density is
  $\lambda$-potential kernel of the process $X_t^D$ and is called
  {\it  $\lambda$-Green function} of the set $D$. It is denoted by $G^\lambda_D$ and  we have
  $$
   G^\lambda_D(x,y)= \int_0^{\infty}e^{-\lambda t} p_t^{D}(x,y)\,dt\,.
  $$
If $\lambda=0$ the corresponding {\it  $0$-Green function} we simply call the {\it  Green function} of $D$ and denote  $G_D(x,y)$.

  Another important  object in the potential theory of $X_t$ is the
  {\it harmonic measure}  of the
  set $D$. It is defined by the formula:
  $$ 
  P_D(x,A)=
  \E^x[\tau_D<\infty; {\bf{1}}_A(X_{\tau_D})], \quad A\subset \R.
  $$
  The density kernel (with respect to the Lebesgue measure) of  the measure $P_D(x,A)$ (if it exists) is called the
  {\it Poisson kernel} of the set $D$.
  The relationship between the Green function of $D$ and the harmonic measure is provided by the Ikeda-Watanabe formula \cite{MR0142153},

  $$ 
  P_D(x,A)=  \int_D \nu(A-y)G_D(x,dy), \quad A\subset (\bar{D})^c.
  $$

Now we define harmonic and regular harmonic functions with respect to the process $X$. Let $u$ be
a Borel measurable function on $\R$. We say that $u$ is {\em
harmonic} function in an open set $D\subset \R$ if
$$u(x)=\E^xu(X_{\tau_B}), \quad x\in B,$$
for every bounded open set $B$ with the closure
$\overline{B}\subset D$. We say that $u$ is {\em regular harmonic} in $D$
if
$$u(x)=\E^x[\tau_D<\infty; u(X_{\tau_D})], \quad x\in D.$$
We note that for any open $D$ the Green function $G_D(x,y)$(if exists) is harmonic in $D\setminus \{y\}$  as a function of $x$. This follows from the strong Markov property and is frequently used in the paper.

 The following formula for the Green function of the complement of a point  can be found  in \cite[Lemma 4.1]{MR2603019},  \cite[Theorem 6.1]{MR1813843} for recurrent processes and \cite[Lemma 4]{MR2256481} for stable processes.
\begin{prop}$G_{\{0\}^c}(\cdot,\cdot)$ is jointly continuous on $\R\times \R$.
Moreover
\begin{equation}\label{G0}G_{\{0\}^c}(x,y)=K(x)+K(y)-K(y-x)-K(x)K(y)\kappa.\end{equation}
\end{prop}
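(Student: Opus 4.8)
The plan is to pass through the $\lambda$-resolvent, fix $\lambda>0$, derive the analogue of \eqref{G0} for $G^\lambda_{\{0\}^c}$ by the strong Markov property, and then let $\lambda\downarrow 0$. Specializing the identity for $p^{D}_t$ recalled above to $D=\{0\}^c$, for which $\tau_{\{0\}^c}=T_0$ and $X_{T_0}=0$ on $\{T_0<\infty\}$, we get
$$p^{\{0\}^c}_t(x,y)=p_t(y-x)-\E^x[t>T_0;\,p_{t-T_0}(y)].$$
Multiplying by $e^{-\lambda t}$ and integrating in $t$ over $(0,\infty)$ (all terms being finite for $\lambda>0$, bounded by $u^\lambda(y-x)<\infty$), Tonelli's theorem gives
$$G^\lambda_{\{0\}^c}(x,y)=u^\lambda(y-x)-\E^x[e^{-\lambda T_0}]\,u^\lambda(y).$$
By spatial homogeneity $\E^x[e^{-\lambda T_0}]=\E^0[e^{-\lambda T_{-x}}]=h^\lambda(-x)$, and since $p_t$, hence $u^\lambda$ and $h^\lambda$, are symmetric, this equals $h^\lambda(x)$. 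Thus $G^\lambda_{\{0\}^c}(x,y)=u^\lambda(y-x)-h^\lambda(x)u^\lambda(y)$.

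Next, substitute $u^\lambda(z)=u^\lambda(0)-K^\lambda(z)$ and $h^\lambda(z)=1-K^\lambda(z)/u^\lambda(0)$ and expand; a one-line computation yields
$$G^\lambda_{\{0\}^c}(x,y)=K^\lambda(x)+K^\lambda(y)-K^\lambda(y-x)-\frac{K^\lambda(x)K^\lambda(y)}{u^\lambda(0)}.$$
Now let $\lambda\downarrow 0$. On the left, $e^{-\lambda t}p^{\{0\}^c}_t(x,y)\uparrow p^{\{0\}^c}_t(x,y)$, so by monotone convergence $G^\lambda_{\{0\}^c}(x,y)\to G_{\{0\}^c}(x,y)$. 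On the right, $K^\lambda(z)\uparrow K(z)$ (already noted in the text), while by Fourier inversion $u^\lambda(0)=\tfrac1\pi\int_0^\infty(\lambda+\psi(s))^{-1}ds\uparrow\tfrac1\pi\int_0^\infty\psi(s)^{-1}ds$, whence $1/u^\lambda(0)\to\kappa$ (with the convention $1/\infty=0$, which matches $\kappa=0$ in the recurrent case $\int_0^\infty\psi(s)^{-1}ds=\infty$). Since each of the four terms on the right converges to a finite limit, we obtain \eqref{G0}; the right-hand side is finite because $K$ is, its defining integral converging by $\int_0^1 s^2\psi(s)^{-1}\,ds<\infty$ near $0$ and $\int_1^\infty(1+\psi(s))^{-1}\,ds<\infty$ from \eqref{0reg}.

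It remains to prove joint continuity, and for this it suffices to observe that $K$ is continuous on $\R$: in the representation $K(x)=\tfrac1\pi\int_0^\infty(1-\cos xs)\psi(s)^{-1}\,ds$ the integrand is, for $|x|\le M$, dominated by $\tfrac12 M^2 s^2\psi(s)^{-1}$ on $(0,1]$ and by $2\psi(s)^{-1}$ on $[1,\infty)$, both integrable by the bounds just used, so dominated convergence applies. Then the right-hand side of \eqref{G0} is a continuous function of $(x,y)$, and since it equals $G_{\{0\}^c}(x,y)$ pointwise, $G_{\{0\}^c}$ is jointly continuous (alternatively, each $G^\lambda_{\{0\}^c}(x,y)=u^\lambda(y-x)-h^\lambda(x)u^\lambda(y)$ is already jointly continuous, $h^\lambda$ being continuous as recalled above). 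The only genuinely delicate point is the passage $\lambda\downarrow 0$ — in particular the bookkeeping that separates the transient case from the recurrent one, where $u^\lambda(0)\to\infty$ and $\kappa=0$ — together with checking that $K$ is finite and continuous; the rest is routine.
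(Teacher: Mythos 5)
Your argument is correct and follows essentially the same route as the paper: pass to the $\lambda$-resolvent, use the strong Markov property (via the hitting-time decomposition of $p_t^{\{0\}^c}$) together with $X_{T_0}=0$ to get $G^\lambda_{\{0\}^c}(x,y)=u^\lambda(y-x)-h^\lambda(x)u^\lambda(y)$, expand in terms of $K^\lambda$, and let $\lambda\downarrow 0$ by monotone convergence, with continuity of $K$ (hence of $G_{\{0\}^c}$) from dominated convergence on the cosine integral. The only difference is expository detail — you spell out the $\lambda\downarrow 0$ bookkeeping and the recurrent/transient dichotomy, which the paper compresses into ``by the monotone convergence theorem.''
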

\begin{proof}
For $\lambda>0$ we define $K^\lambda(x)=u^\lambda(0)-u^\lambda(x)$. We have
\begin{eqnarray*}\Go^\lambda(x,y)&=&u^\lambda(y-x)-\E^xe^{-\lambda T_{0}}u^\lambda(y-T_0)=u^\lambda(y-x)-h^\lambda(x)u^\lambda(y)\\
&=&-K^\lambda(y-x)+K^\lambda(y) +K^\lambda(x)-\frac{K^\lambda(x)K^\lambda(y)}{u^\lambda(0)}.
\end{eqnarray*}
Hence by the monotone convergence theorem
$$\Go(x,y)=K(x)+K(y)-K(y-x)-K(x)K(y)\kappa.$$
By the dominated convergence theorem we get continuity of $K$ and  $\Go$  as well.

\end{proof}

The following observation plays a crucial role in the sequel.

\begin{prop}\label{Greenb} For any $x,y \in \R$
we have $$\Go(x,y)\leq 2\[K(x)\wedge K(y)\].$$ If additionally  $K(\cdot)$ is non-decreasing then for  $xy\ge 0$ we have
$$\Go(x,y)\ge K(|x|\wedge|y|)-K(x)K(y)\kappa= K(|x|\wedge|y|)\p^{|x|\vee|y|}(T_0<\infty). $$

\end{prop}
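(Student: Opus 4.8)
The plan is to read everything off the explicit formula \eqref{G0},
$$\Go(x,y)=K(x)+K(y)-K(y-x)-\kappa K(x)K(y),$$
using only two facts about $K$ that are already available: $K\ge 0$ (immediate from the integral representation $K(x)=\tfrac1\pi\int_0^\infty(1-\cos xs)\psi(s)^{-1}\,ds$, since $1-\cos\ge0$ and $\psi>0$ off the origin) and the subadditivity of $K$ established above; for the lower bound we add the standing hypothesis that $K$ is non-decreasing (in the sense $K(a)\le K(b)$ whenever $|a|\le|b|$) and Lemma \ref{php1}.

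For the upper bound, first discard the nonpositive term $-\kappa K(x)K(y)$, so it is enough to bound $K(x)+K(y)-K(y-x)$. Since $K$ is even, $K(y-x)=K(x-y)$, and subadditivity in the form $K(y)=K\bigl((y-x)+x\bigr)\le K(y-x)+K(x)$ gives $K(y)-K(y-x)\le K(x)$, hence $K(x)+K(y)-K(y-x)\le 2K(x)$. Exchanging the roles of $x$ and $y$ (equivalently using $K(x)\le K(x-y)+K(y)$) gives the same quantity $\le 2K(y)$. Taking the minimum of the two bounds yields $\Go(x,y)\le 2\bigl(K(x)\wedge K(y)\bigr)$.

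For the lower bound, note that both sides of the asserted inequality are unchanged under $(x,y)\mapsto(-x,-y)$ and under $(x,y)\mapsto(y,x)$: the right-hand side because it involves only $|x|,|y|$ and the symmetric product $K(x)K(y)$, and $\Go$ because $K$ is even and $\Go(x,y)=\Go(y,x)$. Since $xy\ge0$, we may therefore assume $0\le x\le y$, so that $|x|\wedge|y|=x$, $|x|\vee|y|=y$, and $0\le y-x\le y$. Monotonicity of $K$ then gives $K(y-x)\le K(y)$, whence
$$\Go(x,y)=K(x)+\bigl(K(y)-K(y-x)\bigr)-\kappa K(x)K(y)\ \ge\ K(x)-\kappa K(x)K(y)=K(|x|\wedge|y|)-\kappa K(x)K(y).$$
Finally, by Lemma \ref{php1} together with translation invariance and symmetry of $X$, $\p^{y}(T_0<\infty)=\p^0(T_{-y}<\infty)=\p^0(T_{y}<\infty)=1-\kappa K(y)$, so $K(x)-\kappa K(x)K(y)=K(x)\,\p^{y}(T_0<\infty)=K(|x|\wedge|y|)\,\p^{|x|\vee|y|}(T_0<\infty)$, which is the stated identity. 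The argument is essentially bookkeeping once \eqref{G0} and subadditivity are in hand; the only places needing care are the reduction to $0\le x\le y$ in the second part (so that $K(y-x)\le K(y)$ can be applied) and the correct use of Lemma \ref{php1} to identify $\kappa K(y)$ with $\p^{y}(T_0=\infty)$.
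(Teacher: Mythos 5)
Your argument is correct and follows the same route as the paper's own proof: read off formula \eqref{G0}, use subadditivity of $K$ to get the upper bound, and monotonicity plus $K(y-x)\le K(y)$ for the lower bound, with Lemma \ref{php1} supplying the probabilistic reformulation $\kappa K(y)=\p^y(T_0=\infty)$. You spell out the symmetry reduction to $0\le x\le y$ and the drop of the nonpositive term $-\kappa K(x)K(y)$ more explicitly than the paper does, but these are the same ideas.
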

\begin{proof}
By subadditivity of $K$ we have
$$K(y)\leq K(x)+K(y-x).$$\
Hence $$\Go(x,y)\leq 2K(x).$$
 If  $K(\cdot)$ is non-decreasing then for  $y\geq x>0$ we have  $K(y)-K(y-x)\geq 0$. Hence
$$K(x)-K(x)K(y)\kappa\le \Go(x,y).$$

\end{proof}

\begin{lem}\label{GreenRegHarm}
For any $0<|x|<R<|y|$,
$$\Go(x,y)= \E^x\Go(X_{\tau_{(-R,R)}\wedge T_0},y).$$
\end{lem}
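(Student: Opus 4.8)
The plan is to read the right‑hand side as the value at $x$ of the function on $D:=(-R,R)\setminus\{0\}$ that is regular harmonic in $D$ and agrees with $\Go(\cdot,y)$ off $D$, and to produce this identity from the local harmonicity of the Green function away from its pole together with an exhaustion of $D$ by compactly contained open sets.

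First I would observe that the first exit time of the open set $D=(-R,R)\setminus\{0\}$ is exactly $\tau_{(-R,R)}\wedge T_0$ — leaving $(-R,R)$ or hitting $0$ is precisely leaving $D$ — and that $\tau_D\le\tau_{(-R,R)}<\infty$ a.s., since a nondegenerate L\'evy process (here $\psi$ is unbounded by \eqref{0reg}) exits every bounded interval a.s. Next, by the remark recorded above that $G_E(\cdot,y)$ is harmonic in $E\setminus\{y\}$ for any open $E$, applied with $E=\{0\}^c$, the function $x\mapsto\Go(x,y)$ is harmonic in $\R\setminus\{0,y\}$. For large $n$ put $D_n:=\{z\in\R:1/n<|z|<R\}$; then $D_n$ is bounded and open, its closure $\{z:1/n\le|z|\le R\}$ is a compact subset of $\R\setminus\{0,y\}$ (it omits $0$ because $1/n>0$ and omits $y$ because $|y|>R$), and $x\in D_n$ once $n>1/|x|$. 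Hence the definition of harmonicity gives, for all large $n$,
$$\Go(x,y)=\E^x\Go(X_{\tau_{D_n}},y).$$

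It then remains to let $n\to\infty$. Since $D_n\uparrow D$, the exit times satisfy $\tau_{D_n}\uparrow\tau_D$: they are increasing and each is $\le\tau_D$, and if $\sigma:=\lim_n\tau_{D_n}$ then for every $s<\sigma$ one has $s<\tau_{D_n}$ for some $n$, hence $X_s\in D_n\subset D$; thus $X_s\in D$ for all $s<\sigma$, so $\tau_D\ge\sigma$ and therefore $\sigma=\tau_D$. By the quasi‑left‑continuity of the L\'evy process $X$ (it has no jumps at predictable times) this yields $X_{\tau_{D_n}}\to X_{\tau_D}$ a.s. on $\{\tau_D<\infty\}$, which is a.s. Since $\Go(\cdot,\cdot)$ is jointly continuous (as shown above) and $0\le\Go(\cdot,y)\le 2K(y)<\infty$ by Proposition \ref{Greenb}, bounded convergence gives $\E^x\Go(X_{\tau_{D_n}},y)\to\E^x\Go(X_{\tau_D},y)$, and combining this with the displayed identity yields $\Go(x,y)=\E^x\Go(X_{\tau_D},y)=\E^x\Go(X_{\tau_{(-R,R)}\wedge T_0},y)$.

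The genuinely delicate point is the convergence $X_{\tau_{D_n}}\to X_{\tau_D}$: one has to exclude the possibility that $X$ leaves $D$ by a jump ``at the very last instant'' in a way not captured by the $D_n$‑exits, and this is precisely what quasi‑left‑continuity supplies. Everything else is bookkeeping, and the uniform bound $\Go(\cdot,y)\le 2K(y)$ together with the joint continuity of $\Go$ established earlier are exactly what make the passage to the limit painless.
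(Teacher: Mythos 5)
Your proof is correct and takes essentially the same route as the paper: both exhaust $(-R,R)\setminus\{0\}$ by the bounded open sets $(-R,-r)\cup(r,R)$ on which the harmonicity of $\Go(\cdot,y)$ applies directly, and then pass $r\to 0$ using the uniform bound $\Go(\cdot,y)\le 2K(y)$ from Proposition \ref{Greenb}, the joint continuity of $\Go$, quasi-left continuity of $X$, and dominated convergence. You spell out more carefully the monotone convergence of exit times and the need for $\tau_D<\infty$ a.s., but these are exactly the details the paper's terser proof implicitly invokes.
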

\begin{proof}
Let $0<r<|x|<R<R^\prime<|y|$. Then by harmonicity of $\Go(\cdot,y)$ on $(-R^\prime,0)\cup (0,R^\prime)$ we have
$$\Go(x,y)=\E^x\Go(X_{\tau_{D_{r, R}}},y),$$
where $D_{r, R}= (-R,-r)\cup (r,R)$. Proposition \ref{Greenb}, the dominated convergence theorem, continuity of $\Go$ and quasi-left continuity of $X$ yield the conclusion when we pass $r\to 0$.
\end{proof}

\begin{prop}\label{exittime1}
For $|x|\in (0,R)$ we have
\begin{equation*}
\E^x[\tau_{(-R,R)}\wedge T_0]\le 4R K(x) .
\end{equation*}
\end{prop}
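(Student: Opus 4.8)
The plan is to realise the stopping time $\tau_{(-R,R)}\wedge T_0$ as the first exit time $\tau_D$ from the \emph{open} set $D=(-R,R)\setminus\{0\}$, and then to use the standard identity expressing the mean exit time as the integral of the Green function, $\E^x[\tau_D]=\int_D G_D(x,y)\,dy$. This identity I would justify directly from the definition $G_D(x,y)=\int_0^\infty p_t^D(x,y)\,dt$ together with Tonelli's theorem: since $D$ is open, $X_t\in D$ for every $t<\tau_D$, so $\int_D p_t^D(x,y)\,dy=\p^x(t<\tau_D)$, and hence $\int_D G_D(x,y)\,dy=\int_0^\infty\p^x(\tau_D>t)\,dt=\E^x[\tau_D]$. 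The existence of $G_D$ as an honest density is already granted in the Preliminaries for the class of processes considered here, so there is nothing to prove there.

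The second step is a pointwise majorisation of $G_D$. Because $D\subseteq\{0\}^c$, the process killed on exiting $D$ is killed no later than the one killed on hitting $0$, so domain monotonicity of killed transition densities gives $p_t^D(x,y)\le p_t^{\{0\}^c}(x,y)$ for all $t$, $x$, $y$, whence $G_D(x,y)\le \Go(x,y)$. By Proposition~\ref{Greenb} we have $\Go(x,y)\le 2\,[K(x)\wedge K(y)]\le 2K(x)$. Integrating this bound over $y$ in an interval of length $2R$ yields
$$\E^x[\tau_{(-R,R)}\wedge T_0]=\int_D G_D(x,y)\,dy\le\int_{-R}^{R}2K(x)\,dy=4R\,K(x),$$
which is exactly the claim; note that $K(x)<\infty$ for $x\neq0$, so this also confirms finiteness of the mean exit time.

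I do not expect a genuine obstacle: the argument is two lines once the bookkeeping is set up, and the only points requiring care are the routine verification of the mean–exit–time formula and the domain–monotonicity inequality, both of which are standard. If one wished to sidestep the $0$-Green function one could instead work with the $\lambda$-potentials: $\E^x\frac{1-e^{-\lambda\tau_D}}{\lambda}=\int_D G^\lambda_D(x,y)\,dy\le\int_D G^\lambda_{\{0\}^c}(x,y)\,dy\le\int_{-R}^{R}2K^\lambda(x)\,dy=4R\,K^\lambda(x)$, using that $K^\lambda$ is subadditive (as in the proof of Proposition~\ref{Greenb}, since $(1-h^\lambda(x))(1-h^\lambda(y))\ge0$), and then let $\lambda\to0^+$ by monotone convergence; but the direct route above is cleaner.
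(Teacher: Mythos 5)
Your argument is correct and is essentially identical to the paper's proof: both identify $\E^x[\tau_{(-R,R)}\wedge T_0]$ with $\int_D G_D(x,y)\,dy$ for $D=(-R,0)\cup(0,R)$, bound $G_D\le\Go$ by domain monotonicity, and then apply Proposition~\ref{Greenb} to get $\Go(x,y)\le 2K(x)$ before integrating over $(-R,R)$. The extra details you supply (Tonelli for the mean-exit-time formula, the $\lambda$-potential alternative) are sound but not needed beyond what the paper already takes as standard.
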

\begin{proof}{By Proposition \ref{Greenb},}
\begin{eqnarray*} \E^x[\tau_{(-R,R)}\wedge T_0]&=&\int^R_{-R} G_{(-R,0)\cup(0,R)}(x,y)dy\leq
\int^R_{-R} \Go(x,y)dy\\ &\leq & 
4RK(x).
\end{eqnarray*}

\end{proof}

\begin{prop} \label{MRestimate1}Let $K$ be non-decreasing on $[0,\infty)$. For $R>0$ and $0<|x|<R$.
$$\frac16\frac{K(x)}{K(R)}\leq {\p^x(\tau_{(-R, R)}<T_0)}.$$
If $\kappa=0$, then additionally
$$ {\p^x(\tau_{(-R, R)}<T_0)}\leq  4\frac{K(x)}{K(R)}.$$

\end{prop}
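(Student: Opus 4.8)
The plan is to feed the regular‑harmonicity identity of Lemma~\ref{GreenRegHarm} into the two‑sided bounds for $\Go$ from Proposition~\ref{Greenb}. By the symmetry of $X$ and of the interval, together with $K(-x)=K(x)$, I may assume $0<x<R$; abbreviate $\tau=\tau_{(-R,R)}$. The preliminary step is to turn Lemma~\ref{GreenRegHarm} into a usable form: since $K(0)=0$ one has $\Go(0,y)=0$, and on $\{\tau\ge T_0\}$ we have $\tau\wedge T_0=T_0$, which is a.s.\ finite by Proposition~\ref{exittime1}, so $X_{\tau\wedge T_0}=X_{T_0}=0$ there; hence
$$\Go(x,y)=\E^x\bigl[\tau<T_0;\ \Go(X_\tau,y)\bigr],\qquad |y|>R.$$

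For the lower bound I would bound the integrand from above. On $\{\tau<T_0\}$ one has $|X_\tau|\ge R$, and $\Go(X_\tau,y)\le 2K(y)$ by Proposition~\ref{Greenb}, so the identity gives $\Go(x,y)\le 2K(y)\,\p^x(\tau<T_0)$; letting $y\downarrow R$ and using the joint continuity of $\Go$ and the continuity of $K$ turns this into $\Go(x,R)\le 2K(R)\,\p^x(\tau<T_0)$. Since the second part of Proposition~\ref{Greenb} gives $\Go(x,R)\ge K(x)\bigl(1-\kappa K(R)\bigr)$, this already produces $\p^x(\tau<T_0)\ge \tfrac{1-\kappa K(R)}{2}\cdot\tfrac{K(x)}{K(R)}$. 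That bound is weak only when $\kappa K(R)$ is close to $1$, in which case I would use the complementary trivial inequality $\p^x(\tau<T_0)\ge\p^x(T_0=\infty)=\kappa K(x)$ — valid because $\tau\wedge T_0<\infty$ a.s.\ forces $\{T_0=\infty\}\subseteq\{\tau<T_0\}$, and by Lemma~\ref{php1}. Writing $t=\kappa K(R)\in[0,1)$ and taking the larger of the two bounds, the claim follows from $\max\bigl(\tfrac{1-t}{2},t\bigr)\ge\tfrac13\ge\tfrac16$.

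For the upper bound in the case $\kappa=0$ I would instead bound the integrand from below; here one must split $\{\tau<T_0\}$ by the sign of $X_\tau$, because the lower bound of Proposition~\ref{Greenb} requires arguments of the same sign. If $y>R$, then on $\{X_\tau\ge R\}$ Proposition~\ref{Greenb} (with $\kappa=0$) gives $\Go(X_\tau,y)\ge K(X_\tau\wedge y)\ge K(R)$, so discarding the nonnegative contribution of $\{X_\tau\le -R\}$ yields $\Go(x,y)\ge K(R)\,\p^x(\tau<T_0,\ X_\tau\ge R)$; the mirror argument with $y'<-R$ gives $\Go(x,y')\ge K(R)\,\p^x(\tau<T_0,\ X_\tau\le -R)$. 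Adding these and sending $y\downarrow R$, $y'\uparrow -R$ gives $\Go(x,R)+\Go(x,-R)\ge K(R)\,\p^x(\tau<T_0)$, and since $\Go(x,\pm R)\le 2K(x)$ by Proposition~\ref{Greenb} we get $\p^x(\tau<T_0)\le 4K(x)/K(R)$.

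The one genuine subtlety is that a single use of the harmonicity identity does not suffice for the lower bound: it degenerates as $\kappa K(R)\to 1$, and one has to observe that in that regime the event $\{T_0=\infty\}$ is itself large and supplies the missing probability. Beyond that the proof is routine, provided one respects the sign restriction in Proposition~\ref{Greenb} (dealt with by using both exterior points $y>R$ and $y'<-R$) and justifies the limits $y\to\pm R$ by the stated joint continuity of $\Go$.
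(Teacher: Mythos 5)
Your proposal is correct and follows essentially the same route as the paper: apply Lemma~\ref{GreenRegHarm} to express $\Go(x,y)$ as an expectation over the exit position, sandwich it via Proposition~\ref{Greenb}, and repair the degeneration of the lower bound as $\kappa K(R)\to 1$ by the complementary estimate $\p^x(\tau_{(-R,R)}<T_0)\ge\p^x(T_0=\infty)=\kappa K(x)$ from Lemma~\ref{php1}. The only technical divergence is cosmetic: you pass to the limit $y\to\pm R$ using the stated joint continuity of $\Go$, whereas the paper simply takes the exterior reference point $y=\pm 2R$ (which is outright legal in Lemma~\ref{GreenRegHarm}) and pays for it with one extra use of subadditivity, $K(2R)\le 2K(R)$. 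Your variant actually yields the marginally sharper constant $1/3$ in place of $1/6$, but the argument's structure, the key lemmas invoked, and the sign-splitting device in the $\kappa=0$ upper bound are the same.
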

\begin{proof} Let $0<x< R$. 
By Proposition \ref{Greenb}, Lemma \ref{GreenRegHarm} and subadditivity of $K$,
$$ K(x){\p^{2R}(T_0<\infty)} \leq \Go(x,2R)= \E^x\Go(X_{\tau_{(-R,R)}\wedge T_0},2R)\leq 4 K(R) \p^x(|X_{\tau_{(-R,R)}\wedge T_0}|\ge R).$$
On the other hand, by Lemma \ref{php1}  and subadditivity of $K$,  $${\p^x(\tau_{(-R,R)}<T_0)}\ge \p^{x}(T_0=\infty)= \kappa K(x)= \frac {K(x)}{K(2R)}\p^{2R}(T_0=\infty)\ge \frac12\frac {K(x)}{K(R)}\p^{2R}(T_0=\infty), $$ which combined with the first bound above provide the first estimate.
Moreover, if $\kappa=0$ then
$$ 2K(x) \geq  \Go(x,2R)= \E^x\Go(X_{\tau_{(-R,R)}\wedge T_0},2R)\geq  K(R) \p^x(X_{\tau_{(-R,R)}\wedge T_0}\ge R)
$$
and
$$ 2K(x) \geq  \Go(x,-2R)= \E^x\Go(X_{\tau_{(-R,R)}\wedge T_0},-2R)\geq  K(R) \p^x(X_{\tau_{(-R,R)}\wedge T_0}\le- R)
.
$$
Hence
$$ 4K(x) \geq 
K(R) \p^x(|X_{\tau_{(-R,R)}\wedge T_0}|\ge R).$$
\end{proof}

We also consider the renewal function $V$
   of the (properly normalized) ascending ladder-height process of $X_t$.
The ladder-height process is a subordinator with the Laplace exponent
\begin{equation*}\label{kappa}
 \kappa(\xi)=
\exp\left\{\frac{1}{\pi} \int_0^\infty \frac{ \log {\psi}(\xi\zeta)}{1 + \zeta^2} \, d\zeta\right\}, \quad \xi\ge 0,
\end{equation*}
and $V(x), \,x\ge 0,$ is its potential measure of the interval  $[0,x]$. For $x<0$ we set $V(x)=0$. 
Silverstein studied $V$ and its derivative $V'$ as $g$ and $\psi$ in \cite[(1.8) and Theorem~2]{MR573292}.
The  Laplace transform of $V$ is
\begin{equation}\label{eq:tLV}
\mathcal{L}V(\xi)=\int_0^\infty V(x)e^{-\xi x}dx=\frac{1}{\xi\kappa(\xi)}, \quad \xi>0.
\end{equation}
For instance,
$V(x)= x^{\alpha/2}$ for $x\ge 0$, if $\psi(\xi)= |\xi|^\alpha$ \cite[Example~3.7]{MR2453779}.
The definition of $V$ is rather implicit and properties of $V$ are delicate. In particular
 the decay properties of $V'$ are not yet fully understood.
For a detailed discussion of $V$ we refer the reader to
\cite{BGR3} and \cite{MR573292}.
We have $V(x)=0$ for $x\le 0$ and $V(\infty):=\lim_{r\to \infty}V(r)=\infty$. Also, $V$ is
subadditive:
\begin{equation}\label{subad}
 V(x+y)\le V(x)+V(y), \quad x,\,y \in \R.
\end{equation}
It is known that $V$ is absolutely continuous and harmonic on $(0,\infty)$ for $X_t$. Also
$V^\prime$
is a
positive harmonic function for
$X_t$ on $(0,\infty)$, hence $V$ is actually (strictly) increasing.
For the so-called complete subordinate Brownian motions \cite{MR2978140} $V'$ is monotone, in fact completely monotone, cf. \cite[Proposition 4.5]{MR3098066}. This property was crucial for the development in \cite{MR2923420,MR3131293}, but in general it fails in the present setting cf. \cite[Remark~9]{BGR3}.
One of the important features of the  function $V^\prime$ is the fact that   the Green function of   $(0,\infty)$ can be written as
\begin{equation}\label{Gformula_F}G_{(0,\infty)}(x,y)=\int^{x}_0V'(u)V'( y-x + u)du,\quad 0<x< y.\end{equation}
This follows from \cite[Theorem
VI.20]{MR1406564}.

Let $\psi^*(x)= \sup_ {|u|\le x}  \psi(u), x\ge 0$ be the maximal function of $\psi$.
By \cite[Theorem~2.7]{1998-WHoh-habilitation},
\begin{equation}\label{Psi*ScalingGeneral}
\psi(s
{u})\leq\psi^*(s
u)\le 2
(s^2+1)\psi^*(
u
),
\qquad
s,u
\ge 0.
\end{equation}

Below, in Lemmas \ref {ch1V}-\ref{exit} we collect useful facts which are true for general symmetric L\'evy  processes, which are not compound Poisson.
\begin{lem}[\cite{BGR3}, Proposition 2.4]\label{ch1V}
There is an absolute  constant $C_1\ge 1$ such that
\begin{equation*}
 C^{-1}_1\frac1 {\sqrt{\psi^*(1/r)}} \le V(r)\le  C_1\frac1 {\sqrt{\psi^*(1/r)}}
,\quad r>0.
\end{equation*}
\end{lem}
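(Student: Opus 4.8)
The plan is to combine the comparison $V(r)\approx 1/\kappa(1/r)$ with the comparison $\kappa(\xi)^{2}\approx\psi^{*}(\xi)$, where $\kappa$ is the Laplace exponent of the ascending ladder-height process. For the first comparison: $V$ is non-decreasing, vanishes on $(-\infty,0]$, is continuous at $0$, and is subadditive by \eqref{subad}, while $\mathcal{L}V(\xi)=1/(\xi\kappa(\xi))$ by \eqref{eq:tLV}. Substituting $x=u/\xi$,
\[
\frac{1}{\kappa(\xi)}=\xi\,\mathcal{L}V(\xi)=\int_{0}^{\infty}V(u/\xi)\,e^{-u}\,du .
\]
Discarding $u<1$ and using monotonicity bounds the right side below by $e^{-1}V(1/\xi)$; subadditivity gives $V(u/\xi)\le(u+1)V(1/\xi)$ and hence the upper bound $V(1/\xi)\int_{0}^{\infty}(u+1)e^{-u}\,du=2V(1/\xi)$. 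Thus $e^{-1}V(1/\xi)\le1/\kappa(\xi)\le2V(1/\xi)$, and it remains to prove $\kappa(\xi)^{2}\approx\psi^{*}(\xi)$ with absolute constants.

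\emph{The easy half, $\kappa(\xi)^{2}\le 8\,\psi^{*}(\xi)$ (which yields the lower bound for $V$).} Since $\log\kappa(\xi)^{2}=\frac{2}{\pi}\int_{0}^{\infty}\frac{\log\psi(\xi\zeta)}{1+\zeta^{2}}\,d\zeta$ is an average of $\log\psi(\xi\zeta)$ against the probability density $\frac{2}{\pi}(1+\zeta^{2})^{-1}$ on $(0,\infty)$, I would insert $\psi(\xi\zeta)\le\psi^{*}(\xi\zeta)\le2(\zeta^{2}+1)\psi^{*}(\xi)$ from \eqref{Psi*ScalingGeneral}, take logarithms and integrate, using $\frac{1}{\pi}\int_{0}^{\infty}(1+\zeta^{2})^{-1}d\zeta=\tfrac12$ and $\frac{1}{\pi}\int_{0}^{\infty}\frac{\log(1+\zeta^{2})}{1+\zeta^{2}}\,d\zeta=\log2$ (substitute $\zeta=\tan\theta$ and use $\int_{0}^{\pi/2}\log\cos\theta\,d\theta=-\tfrac{\pi}{2}\log2$). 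This gives $\log\kappa(\xi)^{2}\le\log\psi^{*}(\xi)+3\log2$, and combined with the first paragraph, $V(r)\ge c\,\psi^{*}(1/r)^{-1/2}$ with $c$ absolute.

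\emph{The hard half, $\kappa(\xi)^{2}\ge c\,\psi^{*}(\xi)$ (which yields the upper bound for $V$) — the main obstacle.} I would split $\log\psi(\xi\zeta)=\log\psi^{*}(\xi\zeta)-\log\bigl(\psi^{*}(\xi\zeta)/\psi(\xi\zeta)\bigr)$ in the formula for $\kappa$. Since $\psi^{*}$ is non-decreasing and, rearranging \eqref{Psi*ScalingGeneral}, $\psi^{*}(\xi\zeta)\ge\frac{\zeta^{2}}{4}\psi^{*}(\xi)$ for $\zeta\le1$, the first term contributes at least $\log\psi^{*}(\xi)-C$ with $C$ absolute (the only integrals that occur are $\int_{0}^{1}\frac{\log\zeta}{1+\zeta^{2}}d\zeta$ and $\int_{0}^{1}\frac{d\zeta}{1+\zeta^{2}}$). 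Everything then reduces to the uniform estimate
\[
\sup_{\xi>0}\ \int_{0}^{\infty}\frac{1}{1+\zeta^{2}}\,\log\frac{\psi^{*}(\xi\zeta)}{\psi(\xi\zeta)}\,d\zeta<\infty ,
\]
i.e.\ to showing that the dips of $\psi$ below $\psi^{*}$ — which can be deep, $\psi$ being in general non-monotone — carry uniformly bounded total logarithmic weight. This is the one genuinely non-trivial step and the only place where structure of L\'evy exponents beyond \eqref{Psi*ScalingGeneral} is needed; I would derive it from the Pruitt-type comparison $\psi^{*}(u)\approx u^{2}\!\int_{|x|\le1/u}x^{2}\,\nu(dx)+\nu(\{|x|>1/u\})$, whose right-hand side is a regular (doubling) function of $u$, together with the pointwise lower bound for $\psi(u)$ coming from the mass of $\nu$ carried by jumps of size comparable to $1/u$. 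Granting this, $\log\kappa(\xi)^{2}\ge\log\psi^{*}(\xi)-C'$, hence $V(r)\le C\,\psi^{*}(1/r)^{-1/2}$. The bookkeeping in the first two parts is routine; all the difficulty is concentrated in the oscillation estimate just quoted.
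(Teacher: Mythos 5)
Your first two paragraphs are correct and clean: $\,\xi\mathcal{L}V(\xi)=\int_0^\infty V(u/\xi)e^{-u}\,du$, monotonicity plus subadditivity of $V$ give $e^{-1}V(1/\xi)\le 1/\kappa(\xi)\le 2V(1/\xi)$ with absolute constants; inserting $\psi(\xi\zeta)\le 2(1+\zeta^2)\psi^*(\xi)$ into the exponential formula for $\kappa$ and evaluating $\frac1\pi\int_0^\infty\frac{\log(1+\zeta^2)}{1+\zeta^2}d\zeta=\log 2$ yields $\kappa(\xi)^2\le 8\psi^*(\xi)$. Your reduction of the remaining direction to
\[
\sup_{\xi>0}\int_0^\infty\frac{1}{1+\zeta^2}\log\frac{\psi^*(\xi\zeta)}{\psi(\xi\zeta)}\,d\zeta<\infty
\]
is also accurate (the computation $\log\psi^*(\xi\zeta)\ge\log\psi^*(\xi)+2\log\zeta-\log 4$ for $\zeta\le1$ and $\ge\log\psi^*(\xi)$ for $\zeta\ge1$ gives the first piece up to an absolute constant involving Catalan's number). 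So the skeleton is sound, and it does go through the ladder exponent $\kappa$ rather than through an exit-time comparison; that is a legitimate alternative route.

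The gap is in the final step, and the sketch you offer will not close it. The only unconditional pointwise lower bound available is $\psi(u)\ge\tfrac{2}{\pi^2}u^2\int_{|x|\le 1/u}x^2\nu(dx)$ (plus the Gaussian part); there is no "pointwise lower bound for $\psi(u)$ coming from the mass of $\nu$ carried by jumps of size comparable to $1/u$," because a symmetric atom of $\nu$ at a point $r_0\asymp 1/u$ contributes $\asymp\nu(\{r_0\})(1-\cos(ur_0))$, which vanishes when $ur_0\in 2\pi\Z$ regardless of how large $\nu(\{r_0\})$ is. If you replace $\psi(u)$ by the crude lower bound and use Pruitt--Hoh for $\psi^*$, you are left trying to bound the Cauchy-weighted integral of $\log\bigl(1+\nu(|x|>1/u)\big/(u^2\int_{|x|\le 1/u}x^2\nu(dx))\bigr)$, and this quantity is \emph{not} uniformly bounded over symmetric Lévy exponents: take, say, $\nu$ with density $x^{-2}$ on $(0,1/2]$ together with a symmetric atom of mass $A$ at $1$; the ratio is $\gtrsim A/u$ on $2\le u\lesssim A$, which makes the weighted log-integral grow like $\log A$. (For this $\nu$ the true quantity $\int\frac{\log(\psi^*/\psi)}{1+\zeta^2}$ actually stays bounded, because away from the resonances $u\in 2\pi\Z$ the atom does contribute to $\psi(u)$ through $1-\cos u$, and near each resonance the dip is narrow enough to integrate to $O(1)$ --- but that is exactly the oscillatory information your sketch discards.) So the crucial inequality $\kappa(\xi)^2\ge c\,\psi^*(\xi)$ with an absolute $c$ is not established, and the proposed route to it (Pruitt comparison $+$ pointwise estimate on $\psi$) cannot establish it; a genuinely different mechanism is needed here, e.g.\ the exit-time comparisons $\E^0\tau_{(-r,r)}\approx V(r)^2$ (via the Green function of the half-line) and $\E^0\tau_{(-r,r)}\approx 1/\psi^*(1/r)$ (Pruitt), or a careful analysis of the harmonic extension $\int_0^\infty\frac{\xi\log\psi(u)}{\xi^2+u^2}\,du$ that retains the contribution of the large jumps to $\psi$.
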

\begin{lem}[\cite{BGR3}, (2.23) and (2.24)]\label{upper_den} There is an absolute  constant $C_2$  such that

\begin{equation}\label{B0}\p^0(|X_t|\ge r)\le C_2\frac{t}{ V^2(r)},\quad r>0.\end{equation}
and
\begin{equation}\label{B}\nu[r, \infty)\le C_2\frac{1}{ V^2(r)},\quad r>0.\end{equation}
Moreover for any $D\subset B_r$ and $|x|<r/2$,
\begin{equation}\label{B1}\p^x(|X_{\tau_D}|\ge r)\le C_2\frac{\E^x\tau_D}{ V^2(r)},\quad r>0.\end{equation}
\end{lem}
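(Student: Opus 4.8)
The plan is to reduce all three estimates to a single elementary inequality,
\begin{equation}\label{Pcomp}
\frac{\sigma^2}{r^2}+\int_\R\Bigl(\frac{z^2}{r^2}\wedge 1\Bigr)\nu(dz)\ \le\ c\,\psi^*(1/r)\ \le\ \frac{c\,C_1^2}{V^2(r)},\qquad r>0,
\end{equation}
where the last inequality is Lemma~\ref{ch1V}. For the middle one I would argue in two halves. Putting $\xi=1/r$ in the L\'evy--Khintchine formula and using $1-\cos u\ge u^2/3$ for $|u|\le 1$ gives $r^{-2}\int_{|z|<r}z^2\,\nu(dz)+\sigma^2 r^{-2}\le 3\psi(1/r)\le 3\psi^*(1/r)$. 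Averaging $\psi$ over $(0,2/r)$ and using $1-\tfrac{\sin v}{v}\ge\tfrac12$ for $|v|\ge 2$ gives $\nu\{|z|\ge r\}\le\tfrac r2\int_0^{2/r}\psi(\xi)\,d\xi\le 2\psi^*(2/r)$, which is at most $20\,\psi^*(1/r)$ by \eqref{Psi*ScalingGeneral}. Adding the two halves gives \eqref{Pcomp}; and since $\nu[r,\infty)=\tfrac12\nu\{|z|\ge r\}$, this already proves \eqref{B}.

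For \eqref{B0} I would use the standard truncation inequality: for a symmetric probability measure $\mu$ on $\R$ with Fourier transform $\widehat\mu$ and any $a>0$,
$$\mu\{x:|x|\ge 2/a\}\ \le\ \frac1a\int_{-a}^{a}\bigl(1-\widehat\mu(\xi)\bigr)\,d\xi,$$
which follows by Fubini from $\int_{-a}^{a}(1-\cos\xi x)\,d\xi=2a\bigl(1-\tfrac{\sin ax}{ax}\bigr)$ and $1-\tfrac{\sin v}{v}\ge\tfrac12\ind_{\{|v|\ge 2\}}$. Taking $\mu$ to be the law of $X_t$ (so $\widehat\mu(\xi)=e^{-t\psi(\xi)}$) and $a=2/r$, then bounding $1-e^{-t\psi(\xi)}\le t\psi(\xi)\le t\,\psi^*(2/r)$ for $|\xi|\le 2/r$ and using \eqref{Psi*ScalingGeneral} and Lemma~\ref{ch1V}, I would get $\p^0(|X_t|\ge r)\le 2t\,\psi^*(2/r)\le 20\,t\,\psi^*(1/r)\le 20C_1^2\,t/V^2(r)$.

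For \eqref{B1} I would run a test function / Dynkin argument. Fix $r>0$ and choose $f=f_r\in C^2(\R)$ with $0\le f\le 1$, $f\equiv 0$ on $\overline{B_{r/2}}$, $f\equiv 1$ on $\{|y|\ge r\}$ and $\|f''\|_\infty\le c_0/r^2$ for an absolute $c_0$ (e.g.\ $f(y)=g(|y|/r)$ for a fixed smooth profile $g$); then $f$ lies in the domain of the generator $\mathcal L$ of $X$, which on $C^2_b(\R)$ acts by $\mathcal L f(y)=\tfrac{\sigma^2}{2}f''(y)+\tfrac12\int_\R\bigl(f(y+z)+f(y-z)-2f(y)\bigr)\nu(dz)$. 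Using $|f(y+z)+f(y-z)-2f(y)|\le\|f''\|_\infty|z|^2\wedge 2\le c_0'\,(z^2/r^2\wedge 1)$ together with \eqref{Pcomp}, one obtains $\|\mathcal L f\|_\infty\le c_1\bigl(\sigma^2/r^2+\int_\R(z^2/r^2\wedge 1)\nu(dz)\bigr)\le c_2/V^2(r)$ with absolute constants. Now let $D\subset B_r$ and $|x|<r/2$; we may assume $\E^x\tau_D<\infty$ (otherwise \eqref{B1} is trivial), so $\tau_D<\infty$ a.s.\ under $\p^x$, and Dynkin's formula for $\tau_D\wedge t$ followed by $t\to\infty$ (dominated convergence, using $\|\mathcal L f\|_\infty<\infty$ and $\E^x\tau_D<\infty$) yields
$$\p^x(|X_{\tau_D}|\ge r)\ \le\ \E^x f(X_{\tau_D})\ =\ f(x)+\E^x\!\int_0^{\tau_D}\!\mathcal L f(X_s)\,ds\ \le\ \frac{c_2}{V^2(r)}\,\E^x\tau_D,$$
since $f(x)=0$ and $f\ge\ind_{\{|y|\ge r\}}$.

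The genuinely routine pieces are the two trigonometric estimates behind \eqref{Pcomp}, the truncation inequality, and the explicit construction of $f_r$. I expect the one point requiring a little care to be the uniform bound $\|\mathcal L f_r\|_\infty\lesssim V(r)^{-2}$: for $y$ close to $\partial B_{r/2}$ the small-jump contribution $\int f_r(y+z)\,\nu(dz)$ is a priori large, and it is precisely the second-difference estimate $|f_r(y+z)+f_r(y-z)-2f_r(y)|\le\|f_r''\|_\infty|z|^2\wedge 2$ combined with \eqref{Pcomp} (equivalently $\psi^*(1/r)\approx V(r)^{-2}$) that tames it.
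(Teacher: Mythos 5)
Your proof is correct. Note that the paper does not prove this lemma itself but cites it from \cite{BGR3}, so there is no internal proof to compare against; your argument is nonetheless essentially the standard route and, as far as I can tell, the one underlying the cited reference: reduce everything to the Pruitt-type concentration function $W(1/r)=\sigma^2/r^2+\int(z^2/r^2\wedge1)\,\nu(dz)$, compare it two-sidedly with $\psi^*(1/r)$ (which the present paper itself later invokes as \cite[Lemma 4]{MR3225805} in the proof of \eqref{psiInvScal}, and which then transfers to $V(r)^{-2}$ via Lemma~\ref{ch1V}), use the classical truncation inequality for \eqref{B0}, and run Dynkin's formula with a barrier function for \eqref{B1}. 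All three steps check out, including the delicate one you flag: the second-difference bound $|f(y+z)+f(y-z)-2f(y)|\le\|f''\|_\infty|z|^2\wedge 2$ is exactly what makes $\|\mathcal L f\|_\infty\lesssim W(1/r)$, and the localization argument ($t\to\infty$ after stopping at $\tau_D\wedge t$, justified since $\E^x\tau_D<\infty$ may be assumed) is sound. Two harmless slips worth noting: with the paper's normalization $\psi(\xi)=\int(1-\cos\xi x)\,\nu(dx)+\sigma^2\xi^2$, the diffusion part of the generator is $\sigma^2 f''$, not $\sigma^2 f''/2$; and in the averaging step one gets $\nu\{|z|\ge r\}\le r\int_0^{2/r}\psi$, not with a factor $r/2$. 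Neither affects the conclusion, only the explicit constant.
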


\begin{lem}[\cite{MR3098066}, Theorem 3.1] \label{halfspace} There is an absolute constant $C_3$ such that for $x>0, t>0$,

$$C_3 \(\frac {V(x)}{\sqrt{t}}\wedge1\) \le \p^x(\tau_{(0,\infty)}>t)\le  2 \(\frac {V(x)}{\sqrt{t}}\wedge1\).$$

\end{lem}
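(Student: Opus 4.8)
The plan is to compute the Laplace transform in $t$ exactly, via the $\lambda$-Green function of the half-line, and then de-Poissonise. Fix $\lambda>0$ and let $\mathbf{e}_\lambda$ be an exponential variable with parameter $\lambda$, independent of $X$. Integrating the identity $\p^x(\tau_{(0,\infty)}>t)=\int_\R p^{(0,\infty)}_t(x,y)\,dy$ against $e^{-\lambda t}\,dt$ gives
\begin{equation*}
\tfrac1\lambda\,\p^x\bigl(\tau_{(0,\infty)}>\mathbf{e}_\lambda\bigr)=\int_0^\infty e^{-\lambda t}\,\p^x(\tau_{(0,\infty)}>t)\,dt=\int_\R G^\lambda_{(0,\infty)}(x,y)\,dy .
\end{equation*}

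Next I would use the $\lambda$-analogue of the ladder representation \eqref{Gformula_F}: $G^\lambda_{(0,\infty)}(x,y)=\int_0^{x\wedge y}V_\lambda'(u)\,V_\lambda'(|x-y|+u)\,du$, where $V_\lambda$ is the renewal function of the ascending ladder-height process of $X$ watched up to time $\mathbf{e}_\lambda$; by symmetry the ascending and descending ladder processes coincide, so a single $V_\lambda$ occurs, and for $\lambda=0$ this is \eqref{Gformula_F} with $V_0=V$. This killed ladder subordinator has Laplace exponent whose value at $0$ is, by the Wiener--Hopf factorisation for symmetric processes, equal to $\sqrt\lambda$; hence, as in \eqref{eq:tLV}, $V_\lambda(\infty)=\lambda^{-1/2}$ (consistently $V(\infty)=\infty$). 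Plugging the representation into the previous display and integrating in $y$ — splitting $0<y<x$ and $y>x$, interchanging the order of integration, and using $\int_0^x V_\lambda'\,du=V_\lambda(x)$ and $\int_0^x 2V_\lambda V_\lambda'\,du=V_\lambda(x)^2$ — everything telescopes to $\int_\R G^\lambda_{(0,\infty)}(x,y)\,dy=V_\lambda(\infty)V_\lambda(x)=V_\lambda(x)/\sqrt\lambda$, so that $\p^x(\tau_{(0,\infty)}>\mathbf{e}_\lambda)=\sqrt\lambda\,V_\lambda(x)$ for all $x,\lambda>0$. (As a check, for $\psi(\xi)=\xi^2$ one has $V_\lambda(x)=(1-e^{-\sqrt\lambda x})/\sqrt\lambda$, and the right-hand side equals $1-e^{-\sqrt\lambda x}=\E^x e^{-\lambda T_0}$, as it must.)

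It remains to pass from this identity, at $\lambda=c/t$, to the estimate for a fixed time $t$. This needs the standard comparison $V_\lambda(x)\approx V(x)\wedge\lambda^{-1/2}=V(x)\wedge V_\lambda(\infty)$ (the upper bound being clear since $V_\lambda\le V$ and $V_\lambda\le V_\lambda(\infty)$, the lower bound because the $\mathbf{e}_\lambda$-killing does not affect the ladder potential below the space scale $V^{-1}(\lambda^{-1/2})$), together with a de-Poissonisation. For the upper bound the latter is immediate: since $t\mapsto\p^x(\tau_{(0,\infty)}>t)$ is non-increasing, $\p^x(\tau_{(0,\infty)}>\mathbf{e}_{c/t})\ge(1-e^{-c})\p^x(\tau_{(0,\infty)}>t)$, and combining with $V_{c/t}\le V$ gives $\p^x(\tau_{(0,\infty)}>t)\le\frac{\sqrt c}{1-e^{-c}}\,V(x)/\sqrt t$; optimising the prefactor over $c>0$ (the minimiser solves $e^c=1+2c$, with value below $2$) and using also the trivial bound $\le1$ yields $\p^x(\tau_{(0,\infty)}>t)\le 2\bigl(V(x)/\sqrt t\wedge1\bigr)$. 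For the lower bound I would first dispose of the ``easy regime'' $V(x)\ge M\sqrt t$: then, by symmetry, $\p^x(\tau_{(0,\infty)}>t)=1-\p\bigl(\sup_{s\le t}X_s\ge x\bigr)\ge\tfrac12$ by the maximal-inequality form of the tail estimate \eqref{B0}, provided $M$ is a large enough absolute constant. In the remaining range one combines the Laplace identity with the doubling property $\p^x(\tau_{(0,\infty)}>2t)\ge c\,\p^x(\tau_{(0,\infty)}>t)$; this doubling follows from the Markov property at time $t$ together with the easy regime, once one shows that conditionally on $\{\tau_{(0,\infty)}>t\}$ the value $X_t$ lies above the scale $V^{-1}(c_1\sqrt t)$ with probability bounded below, which one reads off from the invariance identity $\E^x[\tau_{(0,\infty)}>t;\,V(X_t)]=V(x)$ (harmonicity of $V$ on $(0,\infty)$ and $V\equiv0$ on $(-\infty,0]$) and the already-proved upper bound. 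With doubling and the comparison for $V_\lambda$ in hand, the identity $\p^x(\tau_{(0,\infty)}>\mathbf{e}_{c/t})=\sqrt{c/t}\,V_{c/t}(x)$ de-Poissonises to $\p^x(\tau_{(0,\infty)}>t)\ge C_3\bigl(V(x)/\sqrt t\wedge1\bigr)$.

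I expect the lower bound to be the main obstacle. The Green-function computation is clean and pins down the Laplace transform exactly, but the identity only controls $\int_0^\infty e^{-\lambda t}\p^x(\tau_{(0,\infty)}>t)\,dt$, and — absent any scaling hypothesis on $\psi$ — inverting it requires the doubling estimate (equivalently, that a process conditioned to stay positive sits at the natural space scale), whose proof in the ``boundary layer'' $V(x)\ll\sqrt t$ is the delicate point; carrying the explicit constants $C_3$ and $2$ through these arguments is the remaining bookkeeping.
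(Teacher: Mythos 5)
The paper does not prove this lemma; it cites it from \cite{MR3098066}, Theorem 3.1, and your reconstruction runs on the same engine as that source. The exact Laplace identity $\p^x(\tau_{(0,\infty)}>\mathbf{e}_\lambda)=\sqrt\lambda\,V_\lambda(x)$, obtained via the $\lambda$-Green function of the half-line, the ladder representation of $G^\lambda_{(0,\infty)}$, and the symmetric Wiener--Hopf factorisation, is indeed the right starting point; your telescoping computation is correct, and the de-Poissonised upper bound $\p^x(\tau_{(0,\infty)}>t)\le 2(V(x)/\sqrt t\wedge 1)$ goes through exactly as you write. The easy regime $V(x)\ge M\sqrt t$ for the lower bound (L\'evy's reflection plus \eqref{B0}) is also correct.

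The gap, which I would not call bookkeeping, is the doubling step in the remaining range $V(x)<M\sqrt t$. You want to conclude that, conditionally on $\{\tau_{(0,\infty)}>t\}$, the variable $V(X_t)$ exceeds $c_1\sqrt t$ with probability bounded below. What the invariance identity $\E^x[\tau_{(0,\infty)}>t;\,V(X_t)]=V(x)$ together with the upper bound actually gives is only $\E^x[V(X_t)\mid \tau_{(0,\infty)}>t]\ge \sqrt t/2$, a lower bound on the conditional \emph{mean}; this does not produce a lower bound on the conditional probability of a large value without a higher-moment estimate, and the natural one is unavailable: for $\psi(\xi)=|\xi|^\alpha$, $\alpha\in(1,2)$, one has $V^2(y)=y^\alpha$ while the conditioned process retains the stable tail of index $\alpha$, so $\E^x[V^2(X_t);\tau_{(0,\infty)}>t]=\infty$ and the Paley--Zygmund device fails. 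Separately, the lower half of the comparison $V_\lambda(x)\gtrsim V(x)\wedge\lambda^{-1/2}$ is asserted with only a heuristic; since the killing of the $\lambda$-ladder subordinator depends jointly on the ladder time and the ladder height, this requires an actual argument, not an appeal to intuition. Note also that the two ingredients are redundant as you present them: once the $V_\lambda$ comparison is proved, the lower bound de-Poissonises directly by splitting $\int_0^\infty e^{-\lambda s}\p^x(\tau_{(0,\infty)}>s)\,ds$ at $s=t$, using monotonicity of the tail function together with your already-established upper bound to control the $\int_0^t$ part, so the doubling step is not needed at all. Establishing that $V_\lambda$ comparison is the piece that is genuinely missing.
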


\begin{lem} [\cite{MR3007664}, Proposition 3.7]\label{exit} There is an absolute constant $C_4$ such that for any $x\in (0, R),$

 $$C_4 \frac{V(x)}{V(R)}\le  \p^x(X_{\tau_{(0,R)}}\ge R) \le  \frac{V(x)}{V(R)}.$$
 In fact we may take $C_4= \frac{C_3^2}4$.
\end{lem}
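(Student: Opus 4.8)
The plan is to prove the two bounds separately.

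\emph{Upper bound.} Recall that $V$ is harmonic on $(0,\infty)$ for $X$ and that $V\equiv 0$ on $(-\infty,0]$. For $0<\epsilon<x<R$ the set $(\epsilon,R)$ has closure contained in $(0,\infty)$, so $V(x)=\E^x V(X_{\tau_{(\epsilon,R)}})$; since $V\ge 0$ and $V\ge V(R)$ on $[R,\infty)$, this already gives $\p^x(X_{\tau_{(\epsilon,R)}}\ge R)\le V(x)/V(R)$. Letting $\epsilon\downarrow 0$ one has $\tau_{(\epsilon,R)}\uparrow\tau_{(0,R)}$ and $X_{\tau_{(\epsilon,R)}}\to X_{\tau_{(0,R)}}$ a.s.\ by quasi-left-continuity, so Fatou's lemma together with continuity of $V$ yields $\E^x V(X_{\tau_{(0,R)}})\le V(x)$, and since $V\ge V(R)\ind_{[R,\infty)}$ this is exactly the stated upper bound.

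\emph{Lower bound.} The starting point is $\tau_{(0,R)}=\tau_{(0,\infty)}\wedge\tau_{(-\infty,R)}$. On the event $\{\tau_{(0,\infty)}>t\}\cap\{\tau_{(0,R)}\le t\}$ one has $\tau_{(0,R)}<\tau_{(0,\infty)}$, hence $\tau_{(0,R)}=\tau_{(-\infty,R)}$ and so $X_{\tau_{(0,R)}}\ge R$; combined with $\{\tau_{(0,R)}>t\}\subset\{\tau_{(0,\infty)}>t\}$, this gives, for every $t>0$,
$$\p^x\bigl(X_{\tau_{(0,R)}}\ge R\bigr)\ \ge\ \p^x\bigl(\tau_{(0,\infty)}>t\bigr)-\p^x\bigl(\tau_{(0,R)}>t\bigr).$$
The first term I would bound below by Lemma~\ref{halfspace}, namely by $C_3\bigl(V(x)/\sqrt t\wedge 1\bigr)$. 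For the second term the key ingredient is the occupation-time estimate $\E^x\tau_{(0,R)}\le V(x)V(R)$: indeed $\E^x\tau_{(0,R)}=\int_{(0,R)}G_{(0,R)}(x,y)\,dy\le\int_{(0,R)}G_{(0,\infty)}(x,y)\,dy$, and splitting this $y$-integral at $y=x$, using the symmetry $G_{(0,\infty)}(x,y)=G_{(0,\infty)}(y,x)$ and the formula \eqref{Gformula_F} together with a Fubini computation (and $\frac{d}{du}\bigl(\tfrac12 V(u)^2\bigr)=V(u)V'(u)$), one checks the last integral equals $\int_0^x V'(u)V(R-x+u)\,du\le V(R)\int_0^x V'(u)\,du=V(R)V(x)$, since $V$ is non-decreasing and $R-x+u\le R$ on $[0,x]$. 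Markov's inequality then gives $\p^x(\tau_{(0,R)}>t)\le V(x)V(R)/t$. Choosing $t=4V(R)^2/C_3^2$ (for which $V(x)/\sqrt t=C_3V(x)/(2V(R))<1$, as $V(x)<V(R)$ and $C_3\le 1$) one obtains
$$\p^x\bigl(X_{\tau_{(0,R)}}\ge R\bigr)\ \ge\ \frac{C_3^2}{2}\,\frac{V(x)}{V(R)}-\frac{C_3^2}{4}\,\frac{V(x)}{V(R)}\ =\ \frac{C_3^2}{4}\,\frac{V(x)}{V(R)},$$
which is the assertion with $C_4=C_3^2/4$.

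\emph{Where the difficulty lies.} The delicate point is obtaining the correct $t^{-1}$ decay of the strip-survival probability $\p^x(\tau_{(0,R)}>t)$. The naive one-sided bound $\p^x(\tau_{(0,R)}>t)\le\p^x(\tau_{(0,\infty)}>t)\le 2V(x)/\sqrt t$ decays only like $t^{-1/2}$, so (since $C_3<2$) it can never be dominated by the first term; one really has to use that $(0,R)$ is bounded, which is done here through the occupation-time bound and hence through the explicit Green-function formula \eqref{Gformula_F}. Driving the constant in $\E^x\tau_{(0,R)}\le V(x)V(R)$ down to exactly $V(x)V(R)$ is precisely what makes the final constant come out to be $C_3^2/4$; a cruder estimate of $\E^x\tau_{(0,R)}$, or of $\sup_{z\in(0,R)}\p^z(\tau_{(0,R)}>s)$ via a ball around $z$ and Lemma~\ref{halfspace}, still gives the result, but with a smaller value of $C_4$.
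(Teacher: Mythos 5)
This lemma is stated in the paper as a citation to \cite{MR3007664} (Proposition~3.7), so there is no in-paper proof to compare against; but your argument is correct and recovers the stated value $C_4=C_3^2/4$, which is a good check. Two minor remarks. For the upper bound, working through Fatou with the continuous function $V$ rather than directly with the indicator of $[R,\infty)$ is the right move: the indicator is not continuous at $R$, so passing to the limit with it would require ruling out $X_{\tau_{(0,R)}}=R$; your route sidesteps this. For the occupation-time bound you rederived $\int_0^R G_{(0,\infty)}(x,y)\,dy=\int_0^x V'(u)V(R-x+u)\,du\le V(x)V(R)$ from \eqref{Gformula_F}; the paper already records the right-hand inequality as $M(x,R)\le V(x)V(R)$ in display \eqref{Green}, so this could have been cited, but the computation is correct. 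Your closing observation is also apt: the naive half-line bound $\p^x(\tau_{(0,R)}>t)\le 2V(x)/\sqrt t$ decays too slowly and can never be dominated by the lower bound $C_3 V(x)/\sqrt t$, so the boundedness of $(0,R)$ must enter through an occupation-time (or equivalent) estimate, and the constant in $\E^x\tau_{(0,R)}\le cV(x)V(R)$ scales the resulting $C_4$ by $1/c$, so $c=1$ gives the sharp $C_3^2/4$.
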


\begin{lem} [\cite {MR3165234}, Lemma 1] \label{cos} Let $f:(0, \infty)\mapsto [0,\infty)$ be non-increasing. Then   for $x>0$,

 $$\frac 2{\pi^2} \int^\infty_0[1\wedge(xr)^2]f(r) {dr} \le \int^\infty_0(1-\cos(xr))f(r) {dr}.$$

\end{lem}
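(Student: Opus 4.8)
The plan is to eliminate the parameter $x$ by scaling, then exploit the monotonicity of $f$ via a layer-cake (distribution-function) representation to reduce the assertion to an elementary scalar inequality, and finally to check that scalar inequality on the two ranges $(0,\pi]$ and $(\pi,\infty)$; the constant $\tfrac2{\pi^2}$ will come out of Jordan's inequality.

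First I would substitute $s=xr$ in both integrals: since $dr=ds/x$ the prefactor $1/x$ cancels and the claim becomes
\[
\frac{2}{\pi^2}\int_0^\infty(1\wedge s^2)\,g(s)\,ds\le\int_0^\infty(1-\cos s)\,g(s)\,ds,
\]
where $g(s)=f(s/x)$ is still non-increasing and nonnegative, so it suffices to treat $x=1$. If $g(\infty):=\lim_{s\to\infty}g(s)>0$ the right-hand side is infinite and there is nothing to prove; hence assume $g(\infty)=0$. After replacing $g$ by its right-continuous version (which changes neither integral, a monotone function having at most countably many discontinuities), write $g(s)=\mu((s,\infty))$, where $\mu$ is the $\sigma$-finite Lebesgue--Stieltjes measure of the non-decreasing function $-g$ on $(0,\infty)$. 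Then for every nonnegative Borel $\varphi$, Tonelli's theorem gives $\int_0^\infty\varphi(s)g(s)\,ds=\int_{(0,\infty)}\big(\int_0^a\varphi(s)\,ds\big)\,\mu(da)$. Applying this with $\varphi(s)=1-\cos s$ and with $\varphi(s)=1\wedge s^2$, the difference of the two sides equals $\int_{(0,\infty)}\big[(a-\sin a)-\tfrac2{\pi^2}\int_0^a(1\wedge s^2)\,ds\big]\,\mu(da)$, so the whole statement reduces to the pointwise estimate
\[
\frac{2}{\pi^2}\int_0^a(1\wedge s^2)\,ds\le\int_0^a(1-\cos s)\,ds=a-\sin a,\qquad a>0.
\]

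To prove this I would split at $a=\pi$. For $0<a\le\pi$, Jordan's inequality $\sin u\ge 2u/\pi$ on $[0,\pi/2]$ gives $1-\cos s=2\sin^2(s/2)\ge\tfrac2{\pi^2}s^2$ for $s\in[0,\pi]$; integrating and using $1\wedge s^2\le s^2$ yields $\tfrac2{\pi^2}\int_0^a(1\wedge s^2)\,ds\le\tfrac2{\pi^2}\int_0^a s^2\,ds\le\int_0^a(1-\cos s)\,ds$. For $a>\pi$ one has $\int_0^a(1\wedge s^2)\,ds=a-\tfrac23$, and since $\tfrac2{\pi^2}<\tfrac14$ (because $\pi^2>8$) and $a\ge\pi>\tfrac43$, one gets $\tfrac2{\pi^2}\big(a-\tfrac23\big)\le\tfrac2{\pi^2}a\le\tfrac a4\le a-1\le a-\sin a$. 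This completes the verification, and the only genuinely non-routine point is the layer-cake passage from the monotone weight $f$ to the scalar inequality; everything after that is elementary calculus, with $\tfrac2{\pi^2}$ being exactly the constant forced by the extremal point $s=\pi$ in $1-\cos s\ge\tfrac2{\pi^2}s^2$.
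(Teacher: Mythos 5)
Your proof is correct. Note that the paper itself does not prove this lemma; it cites it from reference \cite{MR3165234}, so there is no in-paper argument to compare against. Your route is self-contained and sound: scaling out $x$, reducing to a right-continuous non-increasing $g$ with $g(\infty)=0$ (the case $g(\infty)>0$ makes both sides infinite), applying Tonelli to the layer-cake representation $g(s)=\mu((s,\infty))$ to reduce everything to the scalar inequality $\tfrac{2}{\pi^2}\int_0^a(1\wedge s^2)\,ds\le a-\sin a$, and then verifying that inequality by Jordan's bound $1-\cos s\ge\tfrac{2}{\pi^2}s^2$ on $[0,\pi]$ together with the crude estimate $\tfrac{2}{\pi^2}\bigl(a-\tfrac23\bigr)\le a-1$ for $a>\pi$. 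Every step checks. One small caveat on the closing remark: $\tfrac{2}{\pi^2}$ is indeed the sharp constant in the pointwise bound $1-\cos s\ge c\,s^2$ on $[0,\pi]$, but it is not the sharp constant in the lemma itself; taking $f=\mathbf{1}_{(0,a)}$ and optimizing over $a$ shows the best constant is $\inf_{a>0}\frac{a-\sin a}{\int_0^a(1\wedge s^2)\,ds}\approx 0.48$, so $\tfrac{2}{\pi^2}\approx 0.20$ is comfortably within the valid range rather than forced by extremality.
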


For a continuous non-decreasing function $\phi : [0,\infty) \to [0,\infty)$, {such that $\phi(0)=0$ and $\lim_{s\to \infty}\phi(s)=\infty$ and
define
the generalized inverse $\phi^{-1}: {[0,\infty) \to [0, \infty)}$,
\begin{equation*}\label{2s}
\phi^{-1}({u}) = \inf \{{s}
\ge 0 : \phi (s)\ge u\},\quad 0\le u<\infty.
\end{equation*}
 The function $\phi^{-1}$ is non-decreasing and c\`agl\`ad  (left continuous with right-hand side  limits).
Notice that $\phi(\phi^{-1}(u))=u$ for $u\in [0,\infty)$ and $\phi^{-1}(\phi(s))\leq s$ for $s\in [0,\infty)$.
Also, if $\varphi : [0,\infty) \to [0,\infty)$, $\varphi(0)=0$, $c>0$ and $c\phi\le \varphi$, then $\phi^{-1}(u)\ge \varphi^{-1}(cu)$, $u\ge 0$.
Below we often consider the (unbounded) characteristic exponent $\psi$ of a symmetric L\'evy process with infinite L\'evy measure and its  maximal function $\psi^*$,
and denote $$\psi^{-1}=(\psi^*)^{-1}.$$ This short notation is motivated by
the following equality:
\begin{equation*}\label{2sp}
\inf \{{s}
\ge 0 : \psi (s)\ge u\}=\inf \{{s}
\ge 0 : \psi^* (s)\ge u\},\qquad 0\le u<\infty.
\end{equation*}

It is rather natural to assume (relative) power-type behaviour
 for the characteristic exponent $\psi$ of $X$.
To this end we consider
 $\psi$ as a function on $(0,\infty)$.
We say that
$\psi$ satisfies {the} global  weak lower scaling condition
 (WLSC) if there are numbers
$\alpha>0
$ (called  the index of the lower scaling)
and  $\gamma\in(0,1]$,  such that
\begin{equation*}\label{eq:LSC2}
 \psi(\lambda\theta)\ge
\gamma \lambda^{\,\alpha} \psi(\theta)\quad \mbox{for}\quad \lambda\ge 1, \quad
\theta>0.
\end{equation*}
In short we write $\psi\in\WLSC{\alpha}{\gamma}$ or $\psi\in {\rm WLSC}$.
The global weak upper scaling condition
 (WUSC) means that
there are numbers $\beta <2$ (called  the index of the upper scaling)
and $\rho{\in [1,\infty)}$ such that
\begin{equation*}\label{eq:USC2}
 \psi(\lambda\theta)\le
\rho\lambda^{\,\beta} \psi(\theta)\quad \mbox{for}\quad \lambda\ge 1, \quad\theta>0.
\end{equation*}
In short, $\psi\in\WUSC{\beta}{\rho}$ or $\psi\in{\rm WUSC}$.
Similarly,

 We call $\alpha$, $\gamma$, $\beta$, $\rho$ the scaling characteristics of $\psi$ or simply the scalings. In most of our  results we assume only the lower scaling condition.

Here are further remarks from \cite{MR3165234}:
We have $\psi\in\WLSC{\alpha}{\gamma}$ if and only if $\psi(\theta)/\theta^\alpha$
is  comparable to a non-decreasing function on $(0,\infty)$, and $\psi\in$WUSC($\gamma$,$\rho$) if and only if $\psi(\theta)/\theta^\beta$
is  comparable to a non-increasing function on $(0,\infty)$, see \cite[Lemmas~8, 9 and 11]{MR3165234}.

We are thus led to the behavior of {$\psi^{-1}$}.
{\begin{lem}If $\psi\in\WLSC{\alpha}{\gamma}$, then
\begin{equation}\label{psiInvScal}\psi^{-1}\in\WLSC{1/2}{(\gamma/48^2)^{1/\alpha}}\cap\WUSC{1/\alpha}{(48^3/\gamma^2)^{1/\alpha}}.\end{equation}  
\end{lem}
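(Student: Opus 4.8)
The plan is to pass from $\psi$ to its maximal function $\psi^*$ and then apply two elementary ``inversion'' facts for the weak scaling conditions. First I would record the basic properties of $\psi^*$: it is continuous (because $\psi$ is), non-decreasing, $\psi^*(0)=\psi(0)=0$, and $\psi^*(\theta)\to\infty$ as $\theta\to\infty$ (because $\psi$ is unbounded under \eqref{0reg}), so $\psi^{-1}=(\psi^*)^{-1}$ really is the generalized inverse and $\psi^*(\psi^{-1}(u))=u$. Next, $\psi^*$ inherits the lower scaling: for each $\theta>0$ pick, by continuity of $\psi$ and using $\psi>0$ off $0$, a point $u_\theta\in[0,\theta]$ with $\psi(u_\theta)=\psi^*(\theta)$; then for $\lambda\ge1$, since $\lambda u_\theta\le\lambda\theta$,
\[
 \psi^*(\lambda\theta)\ge\psi(\lambda u_\theta)\ge\gamma\lambda^{\alpha}\psi(u_\theta)=\gamma\lambda^{\alpha}\psi^*(\theta),
\]
so $\psi^*\in\WLSC{\alpha}{\gamma}$. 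On the other hand \eqref{Psi*ScalingGeneral} gives the universal bound $\psi^*(s\theta)\le 2(s^2+1)\psi^*(\theta)\le 4s^2\psi^*(\theta)$ for $s\ge1$, i.e.\ $\psi^*\in\WUSC{2}{4}$; comparing the two scalings, $\gamma\lambda^{\alpha}\le 4\lambda^2$ for all $\lambda\ge1$, which forces $\alpha\le2$, a fact I will use below.

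Then I would establish two general statements for a continuous non-decreasing $f\colon[0,\infty)\to[0,\infty)$ with $f(0)=0$ and $f(\theta)\to\infty$: \emph{(a)} $f\in\WLSC{\alpha}{\gamma}$ implies $f^{-1}\in\WUSC{1/\alpha}{\gamma^{-1/\alpha}}$, and \emph{(b)} $f\in\WUSC{\beta}{\rho}$ implies $f^{-1}\in\WLSC{1/\beta}{\rho^{-1/\beta}}$. For \emph{(a)}: given $\mu\ge1$ and $v>0$, put $s=f^{-1}(v)$, so $f(s)=v$; with $\lambda=(\mu/\gamma)^{1/\alpha}\ge1$ one gets $f(\lambda s)\ge\gamma\lambda^{\alpha}f(s)=\mu v$, hence $\lambda s\in\{t:f(t)\ge\mu v\}$ and therefore $f^{-1}(\mu v)\le\lambda s=\gamma^{-1/\alpha}\mu^{1/\alpha}f^{-1}(v)$. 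For \emph{(b)}: with the same $s$, if $\mu\le\rho$ then $f^{-1}(\mu v)\ge f^{-1}(v)=s\ge\rho^{-1/\beta}\mu^{1/\beta}s$ by monotonicity, since $\rho^{-1/\beta}\mu^{1/\beta}\le1$; if $\mu>\rho$, then for every $\lambda\in[1,(\mu/\rho)^{1/\beta})$ we have $f(\lambda s)\le\rho\lambda^{\beta}f(s)=\rho\lambda^{\beta}v<\mu v$, so by monotonicity of $f$ no point $t\le\lambda s$ lies in $\{t:f(t)\ge\mu v\}$, i.e.\ $f^{-1}(\mu v)\ge\lambda s$, and letting $\lambda\uparrow(\mu/\rho)^{1/\beta}$ yields $f^{-1}(\mu v)\ge\rho^{-1/\beta}\mu^{1/\beta}f^{-1}(v)$.

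Finally I would combine these: applying \emph{(a)} to $f=\psi^*\in\WLSC{\alpha}{\gamma}$ gives $\psi^{-1}\in\WUSC{1/\alpha}{\gamma^{-1/\alpha}}$, and applying \emph{(b)} to $f=\psi^*\in\WUSC{2}{4}$ gives $\psi^{-1}\in\WLSC{1/2}{1/2}$. It then remains to weaken the constants to those in \eqref{psiInvScal}: since $0<\gamma\le1\le48^3$ we have $\gamma^{-1/\alpha}\le(48^3/\gamma^2)^{1/\alpha}$ and $(48^3/\gamma^2)^{1/\alpha}\ge1$, so the WUSC part follows (enlarging the constant weakens WUSC); and since $0<\gamma\le1$ and $\alpha\le2$ we have $0<(\gamma/48^2)^{1/\alpha}\le(\gamma/48^2)^{1/2}=\sqrt{\gamma}/48\le1/2$, so the WLSC part follows (shrinking the constant weakens WLSC).

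The only slightly delicate point is part \emph{(b)}: for a lower bound on a generalized inverse the identity $f^{-1}(f(t))\le t$ points the wrong way, so one genuinely needs the \emph{strict} inequality $f(\lambda s)<\mu v$ together with monotonicity of $f$ and a passage to the limit in $\lambda$; everything else (the inherited scaling and continuity of $\psi^*$, the bound $\alpha\le2$, and the constant bookkeeping) is routine. If one prefers not to prove the inversion facts by hand, an alternative is to invoke the characterizations of WLSC and WUSC through comparability with monotone functions from \cite{MR3165234}, at the price of somewhat larger but still admissible constants.
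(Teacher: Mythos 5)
Your proof is correct, and in fact more self-contained than the paper's. The paper's argument passes from $\psi^*$ to the strictly increasing function $W(x)=\int_{\R}(1\wedge(xs)^2)\nu(ds)+\sigma^2x^2$, invokes the two-sided bound $\tfrac12\psi^*\le W\le 24\psi^*$ from \cite[Lemma 4]{MR3225805} (whence the factor $48=2\cdot 24$ in the stated constants), and then defers the actual inversion step to the proof of \cite[Lemma 18]{MR3165234}. You instead work directly with $\psi^*$, which is enough because your two inversion facts (a) and (b) only need a continuous, non-decreasing $f$ with $f(0)=0$ and $f(\infty)=\infty$, not strict monotonicity; you establish $\psi^*\in\WLSC{\alpha}{\gamma}$ by a clean compactness/continuity argument, read $\psi^*\in\WUSC{2}{4}$ off \eqref{Psi*ScalingGeneral}, and prove (a) and (b) from scratch. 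This gives the sharper raw constants $\gamma^{-1/\alpha}$ and $1/2$, and your final bookkeeping (using $\gamma\le1$ and the forced inequality $\alpha\le2$) correctly shows these can be weakened to the constants in \eqref{psiInvScal}. The only delicate spot, which you flagged and handled correctly, is the lower bound on a generalized inverse in part (b), where $f^{-1}(f(t))\le t$ is useless and one must argue via the strict inequality $f(\lambda s)<\mu v$ plus monotonicity and a limit in $\lambda$. In short: same destination, but you bypass both the auxiliary function $W$ and the external reference, at the cost of a bit more in-line work and with slightly better intermediate constants.
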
}
\begin{proof}
{The proof is similar to the proof of \cite[Lemma 18]{MR3165234}, where  unimodal L\'{e}vy processes were considered. Let $W(x)=\int_{\R}(1\wedge(x s)^2)\nu(ds)+ \sigma^2 x^2$. Since $\psi$ is unbounded the function $W$ is increasing on $(0,\infty)$.  Moreover, by
\cite[Lemma 4]{MR3225805}
$$\frac{1}{2}\psi^*(x)\leq W(x)\leq 24 \psi^*(x),\quad x\geq0.$$
Since  $\psi\in\WLSC{\alpha}{\gamma}$ then  $\psi^*\in\WLSC{\alpha}{\gamma}$ and $W\in \WLSC{\alpha}{\frac{\gamma}{48}}$. Now, we can repeat the arguments of \cite[Lemma 18]{MR3165234} to arrive at \eqref{psiInvScal}.
}
\end{proof}
\begin{lem}\label{KWLSC} If $\psi\ge a \psi^*$, then for $x>0$,
$$\frac{2}{\pi^3}\int^\infty_{1/x}\frac{ds}{\psi^*(s)} \le K(x)\le \frac{10}{\pi a} \int^\infty_{1/x}\frac{ds}{\psi^*(s)}.$$

Moreover,  $\psi\in\WLSC{\alpha}{\gamma}$, $\alpha>1$ if and only if  $$K(x)\approx \frac{1}{|x|\psi(1/x)}\approx \frac{V^2(|x|)}{|x|},\ x\neq0 \quad \text{and}\quad \psi^*\approx \psi.$$
 The comparability constants depend only on the scalings. In this case $K\in\WLSC{\alpha-1}{\gamma_1}$, where $\gamma_1>0$ depends on the scalings of $\psi$.
\end{lem}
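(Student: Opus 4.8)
The plan is to establish the two explicit inequalities first, then the equivalence, and finally the scaling of $K$.

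\emph{The two-sided bound on $K$.} I start from the formula $K(x)=\frac1\pi\int_0^\infty(1-\cos xs)\psi(s)^{-1}\,ds$, $x>0$. Since $\psi\le\psi^*$, the lower bound is immediate from Lemma \ref{cos} applied to the non-increasing function $f=1/\psi^*$, together with $\int_0^\infty[1\wedge(xs)^2]\psi^*(s)^{-1}\,ds\ge\int_{1/x}^\infty\psi^*(s)^{-1}\,ds$; this gives exactly $K(x)\ge\frac{2}{\pi^3}\int_{1/x}^\infty\psi^*(s)^{-1}\,ds$. For the upper bound I substitute $u=xs$ and use $\psi\ge a\psi^*$ to get $K(x)\le\frac1{\pi a x}\int_0^\infty(1-\cos u)\psi^*(u/x)^{-1}\,du$; splitting at $u=1$ with $1-\cos u\le u^2/2$ on $(0,1)$ and $1-\cos u\le2$ on $(1,\infty)$, and bounding $\psi^*(u/x)$ from below via \eqref{Psi*ScalingGeneral} (which yields $\psi^*(u/x)\ge\frac{u^2}{4}\psi^*(1/x)$ for $u\le1$ and, in the tail, $\int_{1/x}^\infty\psi^*(s)^{-1}ds\ge\frac1{4x\psi^*(1/x)}$), the two pieces add up to at most $\frac{10}{\pi a}\int_{1/x}^\infty\psi^*(s)^{-1}\,ds$.

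\emph{The ``only if'' direction and the scaling of $K$.} If $\psi\in\WLSC{\alpha}{\gamma}$ with $\alpha>1$, then for $0<u\le x$ one has $\psi(x)\ge\gamma(x/u)^\alpha\psi(u)\ge\gamma\psi(u)$, hence $\psi\le\psi^*\le\gamma^{-1}\psi$, i.e. $\psi^*\approx\psi$, and in particular $\psi\ge\gamma\psi^*$, so the first part applies with $a=\gamma$ and $K(x)\approx\int_{1/x}^\infty\psi(s)^{-1}\,ds$. Writing $s=\lambda/x$ and using $\psi(\lambda/x)\ge\gamma\lambda^\alpha\psi(1/x)$ with $\int_1^\infty\lambda^{-\alpha}\,d\lambda<\infty$ — this is exactly where $\alpha>1$ enters — gives $\int_{1/x}^\infty\psi(s)^{-1}\,ds\le\frac{1}{\gamma(\alpha-1)x\psi(1/x)}$, while restricting the integral to $(1/x,2/x)$ and invoking \eqref{Psi*ScalingGeneral} gives the reverse bound; hence $K(x)\approx\frac1{x\psi(1/x)}$. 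Combining with Lemma \ref{ch1V}, $V^2(x)\approx\psi^*(1/x)^{-1}\approx\psi(1/x)^{-1}$, so $K(x)\approx V^2(x)/x$. For the scaling: for $\lambda\ge1$, $x>0$, applying $\psi(\lambda\theta)\ge\gamma\lambda^\alpha\psi(\theta)$ with $\theta=1/(\lambda x)$ gives $\psi(1/x)\ge\gamma\lambda^\alpha\psi(1/(\lambda x))$, whence $K(\lambda x)\approx\frac1{\lambda x\psi(1/(\lambda x))}\ge\frac{\gamma\lambda^{\alpha-1}}{x\psi(1/x)}\gtrsim\gamma\lambda^{\alpha-1}K(x)$, i.e. $K\in\WLSC{\alpha-1}{\gamma_1}$.

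\emph{The ``if'' direction.} Assume the comparabilities. From $\psi^*\approx\psi$ we get $\psi\ge a\psi^*$ for some $a>0$, so the first part and the hypothesis give, with $r=1/x$ and $I(r):=\int_r^\infty\psi^*(s)^{-1}\,ds$, the comparison $I(r)\approx r/\psi^*(r)$. The upper estimate $I(r)\le C r/\psi^*(r)=-C r\,I'(r)$ (valid a.e., since $I'=-1/\psi^*$) yields $(\log I)'(r)\le-1/(Cr)$, hence $I(R)\le I(r)(r/R)^{1/C}$ for $R\ge r$; inserting this into $\psi^*(r)\approx r/I(r)$ produces $\psi^*(R)\gtrsim(R/r)^{1+1/C}\psi^*(r)$, i.e. $\psi^*\in\WLSC{1+1/C}{\gamma'}$, and since $\psi\approx\psi^*$ also $\psi\in\WLSC{\alpha}{\gamma}$ with $\alpha=1+1/C>1$. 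The delicate point is precisely this last step: turning the plain integral comparability $\int_r^\infty\psi^*(s)^{-1}\,ds\approx r/\psi^*(r)$ into a lower scaling of $\psi^*$ with index \emph{strictly} above $1$ (a Matuszewska-index type argument). Everything else is bookkeeping of constants, where one checks that the estimates in the first part, the change of variables, and the uses of \eqref{Psi*ScalingGeneral} produce constants depending only on $a$, respectively on $\alpha$ and $\gamma$.
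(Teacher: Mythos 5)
Your proof is correct, and the bulk of it (the two-sided bound on $K$ and the ``only if'' direction, including the scaling of $K$) follows essentially the same path as the paper: lower bound via Lemma~\ref{cos} applied to $1/\psi^*$, upper bound by splitting $\int_0^\infty$ at the scale $1/x$ and using \eqref{Psi*ScalingGeneral}, then $\psi\in\WLSC{\alpha}{\gamma}$ with $\alpha>1$ gives $\psi\ge\gamma\psi^*$ and the elementary estimate $\int_{1/x}^\infty \psi^*(s)^{-1}\,ds \le \frac{1}{\gamma(\alpha-1)x\psi^*(1/x)}$, with the reverse bound from \eqref{Psi*ScalingGeneral} on $(1/x,2/x)$. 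The only minor difference there is cosmetic (a change of variables $u=xs$ before splitting, versus the paper's splitting in $s$); the constants work out to the same $2/\pi^3$ and $10/(\pi a)$ after a short check.

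The genuine divergence is the ``if'' direction. The paper dispatches it by invoking \cite[Theorem~26]{MR3165234}, translating the comparability $K(x)\approx 1/(x\psi^*(1/x))$ into two-sided weak scaling of $K$ and then inferring lower scaling of index $>1$ for $x K(x)$, hence for $\psi^*$. You instead give a direct, self-contained Gronwall/Matuszewska-index argument on the tail integral $I(r)=\int_r^\infty\psi^*(s)^{-1}\,ds$: the hypothesis $I(r)\le Cr/\psi^*(r)=-CrI'(r)$ integrates to $I(R)\le I(r)(r/R)^{1/C}$, and feeding this back through $\psi^*(r)\approx r/I(r)$ produces $\psi^*(R)\gtrsim (R/r)^{1+1/C}\psi^*(r)$, i.e.\ $\psi^*\in\WLSC{1+1/C}{\gamma'}$ with an index \emph{strictly} above $1$ as required; $\psi\approx\psi^*$ then transfers the scaling to $\psi$. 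This is a nice gain: it keeps the lemma self-contained (no appeal to the external structural theorem), it exposes cleanly where $\alpha>1$ comes from (the strict positivity of $1/C$), and it makes the dependence of the resulting scaling index and constant on the comparability constant $C$ explicit. The only caveat worth being pedantic about in a write-up is the a.e.\ differentiability of $I$ (immediate, since $1/\psi^*$ is non-increasing and locally integrable with $\int^\infty<\infty$), which you do note.
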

\begin{proof} Let  $\tilde{K}(x)=\frac1\pi\int_0^\infty(1-\cos xs)\frac{1}{\psi^*(s)}ds.$
Observe that $\tilde{K}$  is the L\'{e}vy-Khintchine exponent of some isotropic unimodal L\'{e}vy process with the L\'evy density $\frac1{2\pi\psi^*(|s|)}$. By Lemma \ref{cos} and the inequality $1-\cos s\le s^2/2$ we have
 $$\frac 2{\pi^2} \int^\infty_0(1\wedge(xs)^2)\frac{ds}{\psi^*(s)} \le \pi\tilde{K}(x)\le \frac {x^2}2 \int^{1/x}_0s^2\frac{ds}{\psi^*(s)}+  2\int^\infty_{1/x}\frac{ds}{\psi^*(s)}.$$
 Moreover, by \eqref{Psi*ScalingGeneral},  for $0<xs\leq 1$, 
$$(xs/2)^2\psi^*(2/x)\le 4  \psi^*( s),$$
which implies that
$$\frac {x^2}2 \int^{1/x}_0s^2\frac{ds}{\psi^*(s)}\le \frac8{x\psi^*(2/x)}\le
 8\,\int^{2/x}_{1/x}\frac{ds}{\psi^*(s)} \le 8\,\int^{\infty}_{1/x}\frac{ds}{\psi^*(s)}.$$
Hence,

\begin{equation}\label{Ktil1}\frac 2{\pi^3} \int^\infty_{1/x}\frac{ds}{\psi^*(s)} \le \tilde{K}(x) \le \frac{10}\pi \int^\infty_{1/x}\frac{ds}{\psi^*(s)}.\end{equation}
Since  $ \tilde{K}(x) \le   K(x)\le \frac 1a\,\tilde{K}(x)$ we get the first conclusion.


Suppose that $\psi\in\WLSC{\alpha}{\gamma}$, $\alpha>1$. Then $\psi\ge \gamma \psi^*$ and $\psi^*\in\WLSC{\alpha}{\gamma}$ as well.
  Hence, {for $x>0$},
\begin{equation}\label{Ktil2}\frac 1{16}\frac1{x\psi^*(1/x)}\le \frac1{x\psi^*(2/x)} \le \int_{1/x}^\infty\frac{dr}{\psi^*(r)}\le \frac1{\gamma(\alpha-1)x\psi^*(1/x)},\end{equation}
which shows that $ K(x)\approx  \frac1{x\psi(1/x)}$ with the comparability constant dependent on the scaling characteristics. Also, it is evident that $K$ satisfies  the weak lower scaling condition  with index $\alpha-1$.
Furthermore, by Lemma \ref{ch1V} we get $K(x)\approx \frac{V^2(|x|)}{|x|}$.

Next we assume that $\frac{1}{\psi^*(|x|)}\approx \frac{K(1/x)}{|x|}$.  By \cite[Theorem 26]{MR3165234} it is equivalent to the fact that
 $K(x)$ satisfies the global weak lower and upper scaling conditions with indices $0<\delta\leq \beta< 2$, respectively.   This  implies that
 $xK(x) $ satisfies the global weak lower scaling condition  with index $\alpha=\delta+1$.
Equivalently, $\psi^*$ satisfies  the weak lower scaling condition  with index $\alpha=\delta+1>1$. The proof is completed.

\end{proof}

The following technical lemma is the main tool in estimating the tail function of $T_0$ via its Laplace transform. Recall that $\tilde{K}(x)=\frac1\pi\int_0^\infty(1-\cos xs)\frac{1}{\psi^*(s)}ds.$

\begin{lem} \label{potEst}For any $\lambda>0$,
%
\begin{eqnarray}\label{ulam12}
u^\lambda(0)&\ge&\frac{1}{4}\tilde{K}\(\frac{1}{\psi^{-1}(\lambda)}\){\geq\frac{1}{32{\pi^3}}\frac{\psi^{-1}(\lambda)}{\lambda}}.
\end{eqnarray}
If $a\psi^*(x)\leq \psi(x), x\ge 0 $, then
\begin{eqnarray*}
\frac{a}{4}K\(\frac{1}{\psi^{-1}(\lambda)}\)\le u^\lambda(0)\le \frac{3\pi^2}{2a} K\(\frac{1}{\psi^{-1}(\lambda)}\).
\end{eqnarray*}
For $x\psi^{-1}(\lambda)\le 1$, $ x\ge 0 $,
$$K^\lambda(x)\ge \frac{a}{10\pi^2} K(x). $$
\end{lem}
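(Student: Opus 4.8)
The plan is to base everything on the Fourier representations
$$u^\lambda(0)=\frac1\pi\int_0^\infty\frac{ds}{\lambda+\psi(s)},\qquad K^\lambda(x)=\frac1\pi\int_0^\infty(1-\cos xs)\,\frac{ds}{\lambda+\psi(s)},$$
and the analogous formulas for $K$ and $\tilde K$ (with $\lambda+\psi$ replaced by $\psi$, resp. by $\psi^*$), and to write $v:=\psi^{-1}(\lambda)=(\psi^*)^{-1}(\lambda)$, so that $\psi^*(v)=\lambda$. The only tools needed are $\psi\le\psi^*$, the scaling \eqref{Psi*ScalingGeneral}, the estimate \eqref{Ktil1}, Lemma \ref{cos}, the inequality $1-\cos\theta\le\theta^2/2$, and the classical value $\int_0^\infty\frac{\cos t}{1+t^2}\,dt=\frac\pi{2e}$. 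For the first inequality in \eqref{ulam12} I would first prove the pointwise bound
$$\frac{1-\cos(s/v)}{\psi^*(s)}\le\frac{4}{\lambda+\psi^*(s)},\qquad s>0,$$
by splitting at $s=v$. If $s\le v$, then $1-\cos(s/v)\le\frac12(s/v)^2$ and \eqref{Psi*ScalingGeneral}, in the form $\lambda=\psi^*(v)\le 2((v/s)^2+1)\psi^*(s)\le 4(v/s)^2\psi^*(s)$, give the left-hand side $\le 2/\lambda$, while $\lambda+\psi^*(s)\le 2\lambda$. If $s>v$, then $1-\cos(s/v)\le 2$ and $\psi^*(s)\ge\psi^*(v)=\lambda$, so $\lambda+\psi^*(s)\le 2\psi^*(s)$. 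Integrating this bound and then using $\psi\le\psi^*$ yields $\tilde K(1/v)\le\frac4\pi\int_0^\infty\frac{ds}{\lambda+\psi^*(s)}\le\frac4\pi\int_0^\infty\frac{ds}{\lambda+\psi(s)}=4u^\lambda(0)$.

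The key quantitative step, reused below, is the change of variables $s=vt$ in $\tilde K(1/v)$ combined with \eqref{Psi*ScalingGeneral} in the form $\psi^*(vt)\le 2(t^2+1)\lambda$:
$$\tilde K(1/v)=\frac v\pi\int_0^\infty\frac{1-\cos t}{\psi^*(vt)}\,dt\ \ge\ \frac v{2\pi\lambda}\int_0^\infty\frac{1-\cos t}{1+t^2}\,dt\ =\ \frac{(1-e^{-1})\,v}{4\lambda}.$$
Hence $\frac14\tilde K(1/v)\ge\frac{1-e^{-1}}{16}\cdot\frac v\lambda\ge\frac1{32\pi^3}\cdot\frac{\psi^{-1}(\lambda)}{\lambda}$, which is the second inequality in \eqref{ulam12}. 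Now assume $a\psi^*\le\psi$; then $a\le 1$, and $\frac a\psi\le\frac1{\psi^*}$ gives $\tilde K\ge aK$, so $u^\lambda(0)\ge\frac14\tilde K(1/v)\ge\frac a4K(1/v)$, the lower half of the two-sided bound. For the upper half, $\lambda+\psi\ge a(\lambda+\psi^*)$ gives $u^\lambda(0)\le\frac1{a\pi}\int_0^\infty\frac{ds}{\lambda+\psi^*(s)}$; splitting at $v$, the part over $(0,v)$ is at most $v/\lambda$, while on $(v,\infty)$ one has $\psi^*\ge\lambda$, so that part is at most $\int_v^\infty\frac{ds}{\psi^*(s)}\le\frac{\pi^3}2\tilde K(1/v)$ by \eqref{Ktil1}. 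Since also $v/\lambda\le\frac4{1-e^{-1}}\tilde K(1/v)<\pi^3\tilde K(1/v)$, we obtain $\int_0^\infty\frac{ds}{\lambda+\psi^*(s)}\le\frac{3\pi^3}2\tilde K(1/v)$, hence $u^\lambda(0)\le\frac{3\pi^2}{2a}\tilde K(1/v)\le\frac{3\pi^2}{2a}K(1/v)$, the last step because $\psi\le\psi^*$ gives $\tilde K\le K$.

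For the final assertion take $x$ with $x\psi^{-1}(\lambda)\le 1$ (the case $x=0$ being trivial), so $1/x\ge v$. I would chain the following: $K^\lambda(x)\ge\frac1\pi\int_0^\infty(1-\cos xs)\frac{ds}{\lambda+\psi^*(s)}$ since $\psi\le\psi^*$; next, Lemma \ref{cos} applied to the non-increasing function $f(s)=1/(\lambda+\psi^*(s))$ bounds this below by $\frac2{\pi^3}\int_{1/x}^\infty\frac{ds}{\lambda+\psi^*(s)}$; on $[1/x,\infty)$ one has $\psi^*(s)\ge\psi^*(v)=\lambda$, so $\lambda+\psi^*(s)\le 2\psi^*(s)$ and the bound becomes $\frac1{\pi^3}\int_{1/x}^\infty\frac{ds}{\psi^*(s)}\ge\frac1{10\pi^2}\tilde K(x)$ by \eqref{Ktil1}; finally $\tilde K\ge aK$ gives $K^\lambda(x)\ge\frac a{10\pi^2}K(x)$. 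The only point requiring care is matching the constant $\frac{3\pi^2}{2a}$ in the upper bound: a cruder estimate of $v/\lambda$ obtained from \eqref{Ktil1} alone is not good enough, and one genuinely needs the explicit evaluation $\int_0^\infty\frac{1-\cos t}{1+t^2}\,dt=\frac\pi2(1-e^{-1})$; everything else is routine splitting of integrals and bookkeeping.
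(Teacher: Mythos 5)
Your proposal is correct and uses the same core devices as the paper's proof: the Fourier representations of $u^\lambda(0)$, $K$, $\tilde K$ and $K^\lambda$, the substitution $s=\psi^{-1}(\lambda)\,t$, the splitting of the integral at $\psi^{-1}(\lambda)$ (equivalently at $t=1$), the two-sided scaling \eqref{Psi*ScalingGeneral}, Lemma~\ref{cos}, the comparison $\tilde K\le K\le a^{-1}\tilde K$, and \eqref{Ktil1}. The first inequality of \eqref{ulam12} and the final $K^\lambda$ bound are essentially identical to the paper's (you merely phrase the first one as a pointwise inequality before integrating, and you unpack Lemma~\ref{KWLSC} into \eqref{Ktil1} plus $\tilde K\ge aK$ rather than citing it).

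Where you genuinely diverge is in how the constants are produced in the two middle assertions. For the second inequality of \eqref{ulam12}, the paper simply combines \eqref{Ktil1} with \eqref{Ktil2} to get $\tilde K(x)\ge \frac{1}{8\pi^3}\frac{1}{x\psi^*(1/x)}$, whereas you introduce the new closed-form ingredient $\int_0^\infty(1-\cos t)/(1+t^2)\,dt=\frac{\pi}{2}(1-e^{-1})$; both give a valid (indeed your constant is a bit better) bound. For the upper bound on $u^\lambda(0)$, the paper replaces $\int_0^1 ds/\lambda$ by $3\int_0^1 s^2\,ds/\lambda$ and then applies Lemma~\ref{cos} to $\int_0^\infty(s^2\wedge1)\,ds/\psi^*(\psi^{-1}(\lambda)s)$ in one shot, producing $\frac{3\pi^2}{2a}$ directly. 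You instead bound the two pieces $\int_0^v$ and $\int_v^\infty$ separately and recombine via \eqref{Ktil1} and the explicit integral. You are right that, with your bookkeeping, the rough bound $v/\lambda\le 8\pi^3\tilde K(1/v)$ coming from \eqref{Ktil1}--\eqref{Ktil2} alone would not reach the stated constant, which is why you need the sharper $v/\lambda\le\frac{4}{1-e^{-1}}\tilde K(1/v)$; the paper sidesteps this issue with its $3s^2$ trick. Net effect: your route is a bit more computational but buys slightly sharper intermediate constants; the paper's is shorter because it reuses the already-proved \eqref{Ktil1}--\eqref{Ktil2} and lets Lemma~\ref{cos} absorb the whole $(0,\infty)$ integral at once.
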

\begin{proof}
 By \eqref{Psi*ScalingGeneral}, $\lambda=\psi^*\(\psi^{-1}(\lambda)ss^{-1}\)\le (2s^{-2}+2)\psi^*\(\psi^{-1}(\lambda)s\)$, hence
\begin{eqnarray*}
u^\lambda(0)&=&\frac{1}{\pi}\int^\infty_0\frac{dr}{\lambda+\psi(r)}=\frac{\psi^{-1}(\lambda)}{\pi}\int^\infty_0\frac{ds}{\lambda+\psi\(\psi^{-1}(\lambda)s\)}\\&\geq&\frac{\psi^{-1}(\lambda)}{\pi}\int^\infty_0\frac{ds}{\lambda+\psi^*\(\psi^{-1}(\lambda)s\)}\geq \frac{\psi^{-1}(\lambda)}{\pi}\[\int^1_0\frac{ds}{2\lambda}+\int^\infty_1\frac{ds}{2\psi^*\(\psi^{-1}(\lambda)s\)}\]\\
&\geq&\frac{\psi^{-1}(\lambda)}{2\pi}\[\int^1_0\frac{s^2ds}{4\psi^*(\psi^{-1}(\lambda)s)}+\int^\infty_1(1-\cos s) \frac{ds}{2\psi^*\(\psi^{-1}(\lambda)s\)}\]\\
&\geq&\frac{\psi^{-1}(\lambda)}{4\pi}\int^\infty_0(1-\cos s) \frac{ds}{{\psi^*}\(\psi^{-1}(\lambda)s\)}=\frac{1}{4}{\tilde{K}}\(\frac{1}{\psi^{-1}(\lambda)}\).
 \end{eqnarray*}
By \eqref{Ktil1} and \eqref{Ktil2} we have $\tilde{K}(x)\geq \frac{1}{8\pi^3}\frac1{x\psi^*(1/x)}$, which  implies the second inequality in \eqref{ulam12}.

Now assume that  $a\psi^*(x)\leq \psi(x), \ x\ge 0 $. {Then we have
$$u^\lambda(0)\geq \frac{1}{4}\tilde{K}\(\frac{1}{\psi^{-1}(\lambda)}\)\geq \frac{a}{4}K\(\frac{1}{\psi^{-1}(\lambda)}\).$$
}
To obtain the upper bound we apply Lemma \ref{cos} with $f(r)= \frac{1}{\psi^*\(\psi^{-1}(\lambda)r\)}$ to get

\begin{eqnarray*}
u^\lambda(0)&\leq& \frac{\psi^{-1}(\lambda)}{\pi}\[\int^1_0\frac{ds}{\lambda}+\int^\infty_1\frac{ds}{\psi\(\psi^{-1}(\lambda)s\)}\]\\
&=&\frac{\psi^{-1}(\lambda)}{\pi}\[3\int^1_0\frac{s^2ds}{\psi(\psi^{-1}(\lambda))}+\int^\infty_1 \frac{ds}{\psi\(\psi^{-1}(\lambda)s\)}\]\\
&\leq&\frac{3\psi^{-1}(\lambda)}{a\pi}\int^\infty_0(s^2\wedge1) \frac{ds}{\psi^*\(\psi^{-1}(\lambda)s\)}\\
&\leq& \frac{3\pi^2}{2a}\frac{\psi^{-1}(\lambda)}{\pi}\int^\infty_0 (1-\cos s) \frac{ds}{\psi^*\(\psi^{-1}(\lambda)s\)}\\
&\leq& \frac{3\pi^2}{2a} K\(\frac{1}{\psi^{-1}(\lambda)}\).
 \end{eqnarray*}

For $0<x\psi^{-1}(\lambda)\leq 1$, applying Lemma \ref{cos} with $f(s)= \frac{1}{\lambda+\psi^*\(
s\)}$, we obtain
\begin{eqnarray*}
\pi K^\lambda(x)&=&\int^\infty_0(1-\cos(xs))\frac{ds}{\lambda+\psi(s)}\geq\int^\infty_0(1-\cos(x s))\frac{ds}{\lambda+\psi^*\( s \)}\\
&\geq&\frac2{\pi^2}\int^\infty_0(1\wedge(x s)^2)\frac{ds}{\lambda+\psi^*\( s \)}\geq\frac1{\pi^2}\int^\infty_{1/x}\frac{ds}{\psi^*\( s \)}\\
&\geq&\frac{a}{10\pi} K(x),
\end{eqnarray*}
where the last step follows from Lemma \ref{KWLSC}.
\end{proof}
For two functions $g, f$ we write  $g(x)\cong  f(x), x\to x_0,$ if $\lim_{x\to x_0} g(x)/  f(x)=1.$

\begin{lem} \label{potential}Suppose that  $\psi(r)$ is regularly varying at $0$ with index $1<\delta\le 2$. Then

$$  u^\lambda(0)  \cong   \frac {\psi^{-1}(\lambda)}{\lambda} \frac{1}{\delta\sin\frac{\pi}{\delta}},\quad  \lambda\to 0^+$$
and $u^\lambda(0)$ is regularly varying at $0$ with index $1/\delta-1$.

 If $\psi(r)$ is regularly varying  at $0$ with index $1$, then

$$u^\lambda(0)
\cong \frac{1}{\pi}\int^\infty_{\psi^{-1}(\lambda)}\frac{ds}{\psi(s)}, \quad \lambda\to 0^+$$
 and $u^\lambda(0)$ is slowly varying.
\end{lem}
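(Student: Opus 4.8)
The plan is to work from the identity $u^\lambda(0)=\frac1\pi\int_0^\infty\frac{dr}{\lambda+\psi(r)}$ and rescale by $r=\psi^{-1}(\lambda)s$. Since $\psi$ is regularly varying at $0$ with index $\delta\in[1,2]$, Potter's bounds give $\psi\cong\psi^*$ at $0$; hence $\psi^*$ is regularly varying at $0$ with index $\delta$, $\psi^{-1}=(\psi^*)^{-1}$ is regularly varying at $0$ with index $1/\delta$, and from $\psi^*(\psi^{-1}(\lambda))=\lambda$ one gets $\psi(\psi^{-1}(\lambda))\cong\lambda$ as $\lambda\to0^+$. The substitution turns $u^\lambda(0)$ into
$$u^\lambda(0)=\frac{\psi^{-1}(\lambda)}{\pi\lambda}\int_0^\infty\frac{ds}{1+\psi(\psi^{-1}(\lambda)s)/\lambda},$$
and the uniform convergence theorem for regularly varying functions yields $\psi(\psi^{-1}(\lambda)s)/\lambda\to s^\delta$ as $\lambda\to0^+$, locally uniformly in $s\in(0,\infty)$.

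Assume first $\delta>1$, so that $\tfrac1{1+s^\delta}$ is integrable on $(0,\infty)$ with $\int_0^\infty\frac{ds}{1+s^\delta}=\frac{\pi}{\delta\sin(\pi/\delta)}$. I would then justify, by dominated convergence, that $\frac{\lambda}{\psi^{-1}(\lambda)}\pi u^\lambda(0)\to\int_0^\infty\frac{ds}{1+s^\delta}$. For the domination: the integrand is $\le1$ on $[0,1]$; on the range where $\psi^{-1}(\lambda)s$ stays below a fixed small level, Potter's bounds give a uniform bound $c\,s^{-(\delta-\varepsilon)}$ with $\delta-\varepsilon>1$; and the contribution of $s\ge r_0/\psi^{-1}(\lambda)$, equal to $\frac{\lambda}{\psi^{-1}(\lambda)}\int_{r_0}^\infty\frac{dr}{\lambda+\psi(r)}$, tends to $0$ because $\lambda/\psi^{-1}(\lambda)$ is regularly varying of positive index $1-1/\delta$ while $\int_{r_0}^\infty\frac{dr}{\lambda+\psi(r)}$ is kept under control by \eqref{0reg} (after a further splitting at a large multiple of $\psi^{-1}(\lambda)$). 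This gives $u^\lambda(0)\cong\frac{\psi^{-1}(\lambda)}{\lambda}\cdot\frac1{\delta\sin(\pi/\delta)}$; since $\psi^{-1}(\lambda)/\lambda$ is regularly varying of index $1/\delta-1$ and asymptotic equivalence preserves regular variation, $u^\lambda(0)$ is regularly varying of index $1/\delta-1$.

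For $\delta=1$ the limiting integral diverges, so I would instead split $\pi u^\lambda(0)=\int_0^{\psi^{-1}(\lambda)}\frac{dr}{\lambda+\psi(r)}+\int_{\psi^{-1}(\lambda)}^\infty\frac{dr}{\lambda+\psi(r)}$. The first term is $\le\psi^{-1}(\lambda)/\lambda$; the second equals $\int_{\psi^{-1}(\lambda)}^\infty\frac{dr}{\psi(r)}-\int_{\psi^{-1}(\lambda)}^\infty\frac{\lambda\,dr}{\psi(r)(\lambda+\psi(r))}$. Writing $\psi(r)=r\ell(r)$ with $\ell$ slowly varying, Karamata's theorem gives $\int_{\psi^{-1}(\lambda)}^\infty\frac{dr}{\psi(r)}\gtrsim\frac{\log(1/\psi^{-1}(\lambda))}{\ell(\psi^{-1}(\lambda))}$, which grows much faster than $\psi^{-1}(\lambda)/\lambda\asymp1/\ell(\psi^{-1}(\lambda))$; and after splitting the correction integral at a large fixed multiple of $\psi^{-1}(\lambda)$ and using \eqref{0reg} for the remaining tail, the correction is an arbitrarily small fraction of $\int_{\psi^{-1}(\lambda)}^\infty\frac{dr}{\psi(r)}$. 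Hence $u^\lambda(0)\cong\frac1\pi\int_{\psi^{-1}(\lambda)}^\infty\frac{ds}{\psi(s)}$. Slow variation then follows because passing from $\lambda$ to $c\lambda$ moves the lower endpoint to $\psi^{-1}(c\lambda)\cong c\,\psi^{-1}(\lambda)$, changing the integral only by $O(1/\ell(\psi^{-1}(\lambda)))$, which is of smaller order than the integral itself.

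The main obstacle in both parts is the justification of the interchange of limit and integral, and more precisely the control of the range where $r$ is bounded away from $0$: there $\psi$ need not be close to its regular-variation profile $r^\delta\ell(r)$ (nor to $\psi^*$), so one must rely on the integrability of $1/(1+\psi)$ furnished by \eqref{0reg}, together with the monotonicity of $\psi^*$ and the inequality $\psi\le\psi^*$, to see that this range contributes only a lower-order term relative to the dominant scale $r\asymp\psi^{-1}(\lambda)$. For $\delta>1$ this is comparatively painless because $\psi^{-1}(\lambda)\to0$ quickly; for $\delta=1$ it is the delicate point, and it is the Karamata estimate that makes the head and the correction term negligible.
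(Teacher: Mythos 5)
Your proof follows the same strategy as the paper's: the same rescaling $r=\psi^{-1}(\lambda)s$, dominated convergence via Potter's bounds plus a tail cut-off coming from \eqref{0reg} when $\delta>1$, and the same splitting at $r=\psi^{-1}(\lambda)$ together with the subtraction
$\int_{\psi^{-1}(\lambda)}^\infty\frac{dr}{\lambda+\psi(r)}=\int_{\psi^{-1}(\lambda)}^\infty\frac{dr}{\psi(r)}-\int_{\psi^{-1}(\lambda)}^\infty\frac{\lambda\,dr}{\psi(r)(\lambda+\psi(r))}$
when $\delta=1$. One step in your $\delta=1$ argument is not quite right, though: the estimate $\int_{\psi^{-1}(\lambda)}^\infty\frac{dr}{\psi(r)}\gtrsim\frac{\log(1/\psi^{-1}(\lambda))}{\ell(\psi^{-1}(\lambda))}$ is \emph{not} a general consequence of Karamata for slowly varying $\ell$ (take $\ell(t)=\exp(-\log^{0.9}(1/t))$: then $L(x)\cong c\,\log^{0.1}(1/x)/\ell(x)\ll\log(1/x)/\ell(x)$). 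What the argument actually needs, and what is true, is only $L(u)\,\ell(u)\to\infty$, i.e. $L(u)\gg u/\psi(u)$; the paper gets this directly from $L(u)-L(au)\ge(a-1)u/\psi^*(au)$ together with slow variation of $L$ (so the left side tends to $0$), which avoids any spurious $\log$ factor. With that one correction your argument coincides with the paper's.
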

\begin{proof}  Assume that $\psi(s)$ is regularly varying with index $1<\delta\le 2$.
\begin{eqnarray*}
u^\lambda(0)&=&\frac{1}{\pi}\int^\infty_0\frac{ds}{\lambda+\psi(s)}=\frac{1}{\pi}\int^1_0 \frac{ds}{\lambda+\psi(s)} +\frac{1}{\pi}\int^\infty_1 \frac{ds}{\lambda+\psi(s)}\\&=&\frac{\psi^{-1}(\lambda)}{\lambda\pi}\int^{\frac1{\psi^{-1}(\lambda)}}_0\frac{dw}{1+\frac{\psi\(\psi^{-1}(\lambda)w\)}{\psi^*\(\psi^{-1}(\lambda)\)}}+\frac{1}{\pi}\int^\infty_1 \frac{ds}{\lambda+\psi(s)}.
\end{eqnarray*}

The second integral converges to $\frac{1}{\pi}\int^\infty_1 \frac{dr}{\psi(r)}<\infty$ and since $\frac{\lambda}{\psi^{-1}(\lambda)}\to 0$ its has no contribution  to the limit. Note that $\psi^*(u)\cong \psi(u), u \to 0$ (see \cite[Theorem 1.5.3]{MR1015093}). Since $\psi(x)>0, x\ne 0$, by Potter's lemma \cite[Theorem 1.5.6]{MR1015093}, and  continuity  of $\psi$ and $\psi^*$, for $1<\delta^*<\delta$, we can find a constant $c=c( \delta^*, \psi)>0$  such that  for $\lambda<1,  \psi^{-1}(\lambda)s<1, s>1$,  $$\frac{\psi\(\psi^{-1}(\lambda)s\)}{\psi^*\(\psi^{-1}(\lambda)\)}\ge c s^{\delta^*}.$$  By  the dominated convergence theorem
$$\lim_{\lambda\to 0 }\frac{\lambda}{\psi^{-1}(\lambda)}u^\lambda(0)=\frac{1}{\pi}\int^\infty_0\frac{ds}{1+s^\delta}=\frac{\Gamma(1/\delta)\Gamma(1-1/\delta)}{\pi\delta}=\frac{1}{\delta\sin\frac{\pi}{\delta}}.$$

Next, let  $\psi(r)$ be regularly varying at $0$ with index $1$.  Let $L(u)= \int^\infty_{u}\frac{dr}{\psi(r)}$. This function is slowly varying at $0$ \cite[Proposition 1.5.9a]{MR1015093}.
Note that $\int^{\psi^{-1}(\lambda)}_0\frac{dr}{\lambda+\psi(r)}\le \frac{\psi^{-1}(\lambda)}\lambda= \frac{\psi^{-1}(\lambda)}{\psi^*(\psi^{-1}(\lambda))}$. Due to regular variation of $\psi^2$ with index $2$ we have
$$\int^\infty_{\psi^{-1}(\lambda)}\frac{dr}{\psi(r)}-\int^\infty_{\psi^{-1}(\lambda)}\frac{dr}{\lambda+\psi(r)}\le \lambda  \int^\infty_{\psi^{-1}(\lambda)}\frac{dr}{\psi^2(r)}
\cong \frac {\psi^{-1}(\lambda)}{\lambda},\quad \lambda \to 0^+.  $$
Hence it is enough to prove that $$\frac{ u/\psi^*(u)}{L(u)}\to 0,\quad u\to 0^+.$$ Let $a>1$. Then
$$\frac{L(u)-L(au)}{{L}(u)}\ge \frac {(a-1)u} {L(u)\psi^*(au)}.$$
Since $L$ varies slowly  the left hand side converges to $0$ so the proof is completed.
\end{proof}

\section{Hitting times of points}

In this section we examine the tail function $ \p^x(T_0>t)$ under various assumptions on $\psi$ or $K$. Under the monotonicity of $K$ we find the lower and upper bounds of the tail function. On the other hand comparability of $\psi$ and $\psi^*$ is another source  of the estimates via  approximate inversion of the Laplace transform. We also derive asymptotics of $ \p^x(T_0>t)$ if $t\to \infty$ by applying Tauberian theorems.

\begin{prop}\label{0hit}
We have for any $t>0$ and $x\in \R$
$$ \p^x(T_0>t)\leq {\[7\frac{K(x)}{\tilde{K}\(\frac{1}{\psi^{-1}(1/t)}\)}\]\wedge 1\leq  \[51\pi^3\frac{K(x)}{t\psi^{-1}(1/t)}\]\wedge 1}.$$
If additionally $\psi(x)\geq a\psi^*(|x|)$ for $x\in \R$, then
 $$ \p^x(T_0>t)\leq  \[\frac{7}{a}\frac{K(x)}{K\(\frac{1}{\psi^{-1}(1/t)}\)}\]\wedge 1.$$
\end{prop}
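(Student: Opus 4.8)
The plan is to estimate the tail function $\p^x(T_0>t)$ by controlling the Laplace transform $\int_0^\infty e^{-\lambda t}\p^x(T_0>t)\,dt$ and then inverting it through a standard Markov-type argument. First I would write, for $\lambda>0$,
\begin{equation*}
\int_0^\infty e^{-\lambda t}\p^x(T_0\le t)\,dt=\frac{1}{\lambda}\E^x e^{-\lambda T_0}=\frac{1}{\lambda}h^\lambda(x),
\end{equation*}
so that $\int_0^\infty e^{-\lambda t}\p^x(T_0>t)\,dt=\frac{1}{\lambda}(1-h^\lambda(x))=\frac{1}{\lambda}\,\frac{K^\lambda(x)}{u^\lambda(0)}$, using $u^\lambda(x)=u^\lambda(0)h^\lambda(x)$ and $K^\lambda(x)=u^\lambda(0)-u^\lambda(x)$ from the Preliminaries. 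Since $K^\lambda(x)\le K(x)$ (by monotone convergence, as $K^\lambda\uparrow K$), this gives the bound $\int_0^\infty e^{-\lambda t}\p^x(T_0>t)\,dt\le \frac{K(x)}{\lambda u^\lambda(0)}$.

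Next I would convert the Laplace-transform bound into a pointwise tail bound. Because $t\mapsto\p^x(T_0>t)$ is non-increasing, for any $t>0$ and any $\lambda>0$ we have the elementary inequality
\begin{equation*}
\p^x(T_0>t)\le \frac{e}{t}\int_0^t \p^x(T_0>s)\,ds\le \frac{e}{t}\cdot e^{\lambda t}\int_0^\infty e^{-\lambda s}\p^x(T_0>s)\,ds,
\end{equation*}
and choosing $\lambda=1/t$ yields $\p^x(T_0>t)\le \frac{e^2}{t}\int_0^\infty e^{-s/t}\p^x(T_0>s)\,ds\le e^2\,\frac{K(x)}{t\,u^{1/t}(0)}$. (If one prefers to avoid the monotonicity trick, the same conclusion follows from Markov's inequality applied to $e^{-\lambda T_0}$ together with $1-e^{-u}\ge (1-e^{-1})u$ for $u\in[0,1]$; either route gives an absolute constant, and the constant $7$ in the statement absorbs whatever numerical factor arises.) Now I invoke Lemma~\ref{potEst}: its first display gives $u^{1/t}(0)\ge \tfrac14\tilde K\!\big(1/\psi^{-1}(1/t)\big)$, whence $\p^x(T_0>t)\le c\,K(x)/\big(t\,\tilde K(1/\psi^{-1}(1/t))\big)$, and the second inequality in \eqref{ulam12}, namely $\tilde K(x)\ge \tfrac{1}{8\pi^3}\frac{1}{x\psi^*(1/x)}$ with $x=1/\psi^{-1}(1/t)$ so that $x\psi^*(1/x)=\psi^*(\psi^{-1}(1/t))/\psi^{-1}(1/t)=1/(t\psi^{-1}(1/t))$, converts this into the bound $51\pi^3 K(x)/(t\psi^{-1}(1/t))$. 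Intersecting with the trivial bound $\p^x(T_0>t)\le 1$ gives the first line. For the last assertion, under $\psi\ge a\psi^*$ Lemma~\ref{potEst} gives $u^{1/t}(0)\ge \tfrac a4 K\!\big(1/\psi^{-1}(1/t)\big)$, so the same chain yields $\p^x(T_0>t)\le \tfrac{7}{a}K(x)/K(1/\psi^{-1}(1/t))$, again intersected with $1$.

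The only genuinely delicate point is tracking the numerical constants so that they match the statement ($7$, $51\pi^3$, $7/a$); this is routine but requires being careful that the Laplace-inversion step contributes only an absolute factor and that the two inequalities of Lemma~\ref{potEst} are applied in the right order. Everything else — the identity for $\int e^{-\lambda t}\p^x(T_0>t)\,dt$, the bound $K^\lambda\le K$, and the elementary tail-from-Laplace-transform estimate — is standard. I expect no obstacle beyond bookkeeping; in particular no scaling hypothesis on $\psi$ is needed for the first line, and only the one-sided comparability $\psi\ge a\psi^*$ for the last.
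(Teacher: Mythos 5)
Your approach is essentially the paper's: the same Laplace transform identity $\mathcal{L}(\p^x(T_0>\cdot))(\lambda)=\frac{1}{\lambda}\frac{K^\lambda(x)}{u^\lambda(0)}$, the same bound $K^\lambda\le K$, the same appeal to Lemma~\ref{potEst} for $u^\lambda(0)$, and a Chebyshev/Markov inversion to pass from the transform to the tail, which is exactly what the cited Lemma~5 of \cite{MR3165234} encodes. The only slip is in your monotonicity route: the first inequality already holds as $\p^x(T_0>t)\le\frac{1}{t}\int_0^t\p^x(T_0>s)\,ds$ with no factor $e$, so that chain gives $e$ rather than $e^2$; better still, the Markov alternative you sketch (using $1-e^{-u}\ge(1-e^{-1})u$ at $\lambda=1/t$, i.e.\ $\p^x(T_0>t)\le\frac{e}{e-1}\E^x[1-e^{-T_0/t}]$) yields the sharp factor $e/(e-1)$ and, combined with the $4$ and $8\pi^3$ from Lemma~\ref{potEst}, reproduces the paper's constants $7$ and $51\pi^3$ exactly.
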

\begin{proof}
Observe that for the Laplace transform $\p^x(T_0>\cdot)$ we have
$$\mathcal{L}(\p^x(T_0>\cdot))(\lambda)=\frac{1}{\lambda}\[1-\E^xe^{-\lambda T_0} \]=\frac{1}{\lambda}\frac{u^\lambda(0)-u^\lambda(x)}{u^\lambda(0)}\le \frac{1}{\lambda}\frac{K(x)}{u^\lambda(0)}
.$$
It follows from {\eqref{ulam12}} that
$$\mathcal{L}(\p^x(T_0>\cdot))(\lambda)\le {\frac{4}{\lambda}\frac{K(x)}{\tilde{K}(\frac{1}{\psi^{-1}(\lambda)})}\leq 32\pi^3\frac{K(x)}{\psi^{-1}(\lambda)} }$$ in general case, while under the assumption $\psi(x)\geq a\psi^*(|x|)$,

$$\mathcal{L}(\p^x(T_0>\cdot))(\lambda)\le \frac{4 }{a\lambda}\frac{K(x)}{K\(\frac{1}{\psi^{-1}(\lambda)}\)}.$$
By \cite[Lemma 5]{MR3165234} we have
$$ \p^x(T_0>t)\leq  {\frac{4e}{e-1}\frac{K(x)}{\tilde{K}\(\frac{1}{\psi^{-1}(1/t)}\)} \leq 32\pi^3}\frac{e}{e-1}\frac{K(x)}{t\psi^{-1}(1/t)}$$ in general case, and in the other considered  case
$$ \p^x(T_0>t)\leq  \frac{e}{e-1}\frac{4}{a}\frac{K(x)}{K\(\frac{1}{\psi^{-1}(1/t)}\)}, $$ which ends the proof.
\end{proof}

\begin{prop}\label{optimaltail}Let $K$ be non-decreasing on $[0,\infty)$ and $\kappa=0$. Then 
$$ \p^x(T_0>t)\leq   \[8\frac{K(x)}{K(R_t)}\]\wedge 1,$$
where $R_tK(R_t)=t$.
\end{prop}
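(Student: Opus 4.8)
The plan is to compare $T_0$ with the exit time from a symmetric interval $(-R,R)$ and then optimize the radius $R$. The starting point is the elementary inclusion, valid for every $R>0$,
$$\{T_0>t\}\subseteq\{\tau_{(-R,R)}<T_0\}\cup\{\tau_{(-R,R)}\wedge T_0>t\},$$
which holds because on $\{T_0>t\}\cap\{\tau_{(-R,R)}\ge T_0\}$ one has $\tau_{(-R,R)}\wedge T_0=T_0>t$. First I would fix $t>0$ and treat the main range $0<|x|<R$, where the auxiliary estimates apply.

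Next I would bound the two events separately. For the first, Proposition~\ref{MRestimate1}, using $\kappa=0$ and the monotonicity of $K$, gives $\p^x(\tau_{(-R,R)}<T_0)\le 4K(x)/K(R)$. For the second, Chebyshev's inequality together with Proposition~\ref{exittime1} yields $\p^x(\tau_{(-R,R)}\wedge T_0>t)\le t^{-1}\E^x[\tau_{(-R,R)}\wedge T_0]\le 4RK(x)/t$. Summing,
$$\p^x(T_0>t)\le 4\frac{K(x)}{K(R)}+4\frac{RK(x)}{t}.$$
Now I would choose $R=R_t$ with $R_tK(R_t)=t$; such an $R_t$ exists because $r\mapsto rK(r)$ is continuous (continuity of $K$ was established earlier), equals $0$ at $r=0$, and tends to $\infty$ since $K$ is non-decreasing with $K(1)>0$. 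With this choice $4R_tK(x)/t=4K(x)/K(R_t)$, so $\p^x(T_0>t)\le 8K(x)/K(R_t)$.

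Finally I would dispose of the remaining cases: if $x=0$ then $\p^0(T_0>t)=0$ because $0$ is regular for $\{0\}$; and if $|x|\ge R_t$ then $K(x)=K(|x|)\ge K(R_t)$ by monotonicity, whence $8K(x)/K(R_t)\ge 1\ge\p^x(T_0>t)$. Combining all cases with the trivial bound $\p^x(T_0>t)\le 1$ gives the asserted inequality. I do not expect any genuine obstacle: the argument is short once the set decomposition is in place, and the only real idea is the balancing choice $R_tK(R_t)=t$, which equalizes the two error terms; the only minor technical points are the existence of $R_t$ and the degenerate ranges $x=0$ and $|x|\ge R_t$.
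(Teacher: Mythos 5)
Your proof is correct and follows exactly the same route as the paper: the same event decomposition into $\{\tau_{(-R,R)}<T_0\}$ and $\{\tau_{(-R,R)}\wedge T_0>t\}$, the same two estimates (Proposition~\ref{MRestimate1} and Chebyshev with Proposition~\ref{exittime1}), and the same balancing choice $R_tK(R_t)=t$. The only difference is that you spell out existence of $R_t$ and the degenerate cases $x=0$, $|x|\ge R_t$, which the paper leaves implicit.
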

\begin{proof}Let $R, t>0$.
We have
$$\p^x(T_0>t)\leq \p^x(\tau_{(-R,R)}\wedge T_0>t)+\p^x(\tau_{(-R,R)}<T_0).$$
Let $|x|<R$. By Chebyshev's inequality and Proposition
\ref{exittime1} we obtain $$\p^x(\tau_{(-R,R)}\wedge T_0>t)\leq
\frac{\E^x\tau_{(-R,R)}\wedge T_0}{t}\leq4 \frac{RK(x)}{t},$$ while by  Lemma
\ref{MRestimate1},
$$\p^x(\tau_{(-R,R)}<T_0)\leq 4 \frac{K(x)}{K(R)}.$$
Setting  $RK(R)=t$ we obtain the conclusion.


\end{proof}

\begin{lem}\label{0hitLB}Let $K$ be non-decreasing on $[0,\infty)$. For $x\in \R, t>0$,
$$\p^x(T_0>t)\geq \frac {C_3}{6C_1}\( \frac{K(x)}{ K(1/\psi^{-1}(1/t))}\wedge 1\).$$
\end{lem}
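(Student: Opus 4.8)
The plan is to obtain the lower bound by a probabilistic argument that controls the process on a well-chosen time scale, using the exit-time estimates already established. Fix $t>0$ and set $R = 1/\psi^{-1}(1/t)$, so that by the scaling relation between $\psi^*$ and $V$ (Lemma~\ref{ch1V}) together with Lemma~\ref{potEst} (or directly with \eqref{B0}), the process starting from $0$ stays inside $(-R,R)$ up to time $ct$ with probability bounded below, for a suitable absolute $c>0$. More precisely, I would first argue that $\p^0(\tau_{(-R,R)} > t) $ is bounded away from $0$: by \eqref{B0} we have $\p^0(|X_t|\ge R)\le C_2 t/V^2(R)$, and since $V^2(R)\approx 1/\psi^*(1/R) = 1/\psi^*(\psi^{-1}(1/t)) = t$ up to the constant $C_1^2$, this probability is at most $C_2 C_1^2$; shrinking $R$ to $R/N$ for a fixed large $N$ if necessary (absorbing the change into the comparability constant via monotonicity of $K$), one gets $\p^0(\tau_{(-R,R)}>t)\ge 1/2$, say. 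The precise bookkeeping of constants is where Lemma~\ref{halfspace} applied to the half-line, giving $\p^x(\tau_{(0,\infty)}>t)\approx V(x)/\sqrt t \wedge 1$, is cleaner: it yields directly that from a point at distance of order $R$ from the boundary the survival probability past time $t$ is of order one.

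The main step is then to compare $\p^x(T_0>t)$ with $\p^0(T_0>t)$-type quantities by conditioning on the first visit of the process started from $0$ to a neighbourhood of $x$, or rather to go the other way and use the Green-function machinery. Concretely, I would write
\[
\p^x(T_0>t)\ \ge\ \p^x\big(\tau_{(-R,R)}\wedge T_0 > t\big)\ \ge\ \p^x\big(\tau_{(-R,R)} > t,\ \tau_{(-R,R)} < T_0\big),
\]
and then bound the right-hand side below. On the event $\{\tau_{(-R,R)}<T_0\}$ the process avoids $0$; Lemma~\ref{MRestimate1} gives $\p^x(\tau_{(-R,R)}<T_0)\ge \tfrac16 K(x)/K(R) = \tfrac16 K(x)/K(1/\psi^{-1}(1/t))$, which already supplies the factor $K(x)/K(1/\psi^{-1}(1/t))$ we want. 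What remains is to show that, roughly independently, the exit time $\tau_{(-R,R)}$ exceeds $t$ with probability of order one — this is where $C_3$ (from Lemma~\ref{halfspace}) enters, since the constant $C_4 = C_3^2/4$ in Lemma~\ref{exit} and the half-line survival estimate combine to give the bound $C_3/(6C_1)$ in the statement. I would make this precise by using the strong Markov property at the hitting time of an intermediate scale (say $|X|$ reaching $R/2$) and applying Lemma~\ref{halfspace}-type survival bounds, so that the probability of surviving in $(-R,R)$ for time $t$ after that is at least $C_3/C_1$ or so.

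The delicate point — and the one I expect to be the main obstacle — is decoupling the two requirements "$\tau_{(-R,R)}<T_0$" and "$\tau_{(-R,R)}>t$" without losing more than a constant factor. These events are not independent: demanding a long exit time might conceivably push the process back toward $0$ and raise the chance of hitting it. The clean way around this is to observe that the bound from Lemma~\ref{MRestimate1} is really a bound on the probability that $X_{\tau_{(-R,R)}\wedge T_0}$ exits through $\{|y|\ge R\}$, and that conditionally on that happening the last portion of the path (after the process last leaves the disc of radius, say, $R/4$) behaves like the process in a half-line, for which Lemma~\ref{halfspace} gives the time-survival estimate uniformly. So the actual argument is: decompose according to the first time $\sigma$ the process reaches level $\pm R/2$ (before $T_0$), use Lemma~\ref{exit} / \ref{MRestimate1} to show $\p^x(\sigma < T_0)\gtrsim K(x)/K(R)$, then restart at $X_\sigma$ and apply the half-line survival bound of Lemma~\ref{halfspace} to get $\p^{X_\sigma}(\tau_{(-R,R)}>t)\gtrsim V(R/2)/\sqrt t \wedge 1 \gtrsim 1$ since $V(R)\approx \sqrt t$. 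Multiplying the two gives the claimed lower bound with the explicit constant $C_3/(6C_1)$ once the constants from Lemmas~\ref{ch1V}, \ref{halfspace}, and \ref{MRestimate1} are tracked.
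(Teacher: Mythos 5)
Your final paragraph recovers essentially the paper's argument: set $R=1/\psi^{-1}(1/t)$, reach $\{|y|\ge R\}$ before hitting $0$ (Proposition~\ref{MRestimate1} gives this with probability $\ge\frac16 K(x)/K(R)$), then restart and use the half-line survival bound (Lemma~\ref{halfspace}) together with $V(R)\ge\sqrt t/C_1$ (Lemma~\ref{ch1V}) to get survival probability $\ge C_3/C_1$. This is exactly the paper's two-step structure, where the ``restart and survive'' half is stated as a base case for $|x|\ge R$, and the $|x|<R$ case is reduced to it via the strong Markov property at $\tau_{(-R,R)}\wedge T_0$.

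There is, however, one genuine slip you should fix: after restarting at $X_\sigma$ with $|X_\sigma|\ge R/2$ (or $\ge R$), you claim $\p^{X_\sigma}(\tau_{(-R,R)}>t)\gtrsim 1$. That is false in general --- if $X_\sigma$ lands close to $\pm R$, the exit time from $(-R,R)$ can be tiny. The correct quantity is $\p^{X_\sigma}(T_0>t)$, which you then bound from below by the exit time from a half-line, $\p^{X_\sigma}(\tau_{(0,\infty)}>t)$ (when $X_\sigma>0$), and to this Lemma~\ref{halfspace} applies and gives order one. This is in fact dictated by the event you actually need ($T_0>t$, not $\tau_{(-R,R)}>t$); the earlier decomposition $\p^x(\tau_{(-R,R)}>t,\ \tau_{(-R,R)}<T_0)$ that you flagged as delicate is also the source of this confusion and is not the route the paper (or your own final paragraph, once corrected) takes. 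Likewise, the opening discussion of bounding $\p^0(\tau_{(-R,R)}>t)$ from below is a detour that the final argument does not use. With the replacement $\tau_{(-R,R)}\mapsto T_0$ (via the half-line) at the restart step, your outline is correct and matches the paper.
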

\begin{proof}

For $x\geq 1/\psi^{-1}(1/t)$ we have, by Lemma \ref{ch1V},  $V(x)\ge V({1/}\psi^{-1}(1/t))\ge \frac {\sqrt{t}}{C_1}$. Hence   by Lemma \ref{halfspace},
\begin{equation}\label{estimate0}\p^x(T_0>t)\ge \p^x(\tau_{(0,\infty)}>t)\ge  \frac {C_3}{C_1}.\end{equation}
Let $R=1/\psi^{-1}(1/t)$. Then for $0< x<R$, by Proposition \ref{MRestimate1} and the strong Markov property
\begin{eqnarray*}\p^x(T_0>t)&\geq& \E^x \{|X_{\tau_{(-R,R)}\wedge T_0}|\ge R ; \p^{X_{\tau_{(-R,R)}\wedge T_0}}(T_0>t)\}\\
&\geq& \frac {C_3}{C_1} \p^x \{|X_{\tau_{(-R,R)}\wedge T_0}|\geq R\}\ge \frac {C_3}{6C_1} \frac{K(x)}{K(R)}.\end{eqnarray*}
The proof is completed.

\end{proof}

The assumption about monotonicity of $K$ can be removed if we assume the lower scaling condition of $\psi$.
\begin{prop}\label{LBWLSC}

Let  $\psi\in\WLSC{\alpha}{\gamma}$, $\alpha>1$.  For $x\in \R, t>0$, $$\p^x(T_0>t) \geq  c\(\frac{K(x)}{K(1/\psi^{-1}(1/t))}\wedge 1\),$$
where $ c$ depends only on the scalings.
\end{prop}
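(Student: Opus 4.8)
The plan is to reduce the general case to the monotone case already handled in Lemma \ref{0hitLB}, by comparing $K$ with the function $\tilde K$ (associated with $\psi^*$), which is always the characteristic exponent of a symmetric unimodal L\'evy process and therefore has all the good properties we need. First I would observe that under $\psi\in\WLSC{\alpha}{\gamma}$ with $\alpha>1$ we have $\psi\ge\gamma\psi^*$, so by Lemma \ref{KWLSC} (first conclusion) $K$ and $\tilde K$ are comparable, $\tilde K(x)\le K(x)\le \gamma^{-1}\tilde K(x)$ for all $x$, with constants depending only on the scalings. Moreover, since $\psi^*$ is non-decreasing, $\psi^*(s)/s$ need not be, but the process with exponent $\tilde K$ is unimodal, so its own compensated potential kernel --- which is exactly $\tilde K$ --- is non-decreasing. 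Thus $\tilde K$ is a non-decreasing subadditive function on $[0,\infty)$.

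Next I would apply Lemma \ref{0hitLB} \emph{to the auxiliary unimodal process} $\tilde X$ whose characteristic exponent is $\tilde\psi := \psi^*$ and whose compensated potential kernel is $\tilde K$. The one subtlety is that Lemma \ref{0hitLB} is a statement about the tail function $\tilde\p^x(\tilde T_0>t)$ of that auxiliary process, not about our $X$. So the real content is to transfer the lower bound from $\tilde X$ to $X$. For this I would redo the short argument of Lemma \ref{0hitLB} directly for $X$: the two ingredients there are (i) the half-space estimate $\p^x(\tau_{(0,\infty)}>t)\ge (C_3/C_1)$ for $x\ge 1/\psi^{-1}(1/t)$, which holds for $X$ itself since Lemma \ref{halfspace} and Lemma \ref{ch1V} are stated for general symmetric L\'evy processes and $\psi^{-1}=(\psi^*)^{-1}$; and (ii) the exit-probability bound $\p^x(|X_{\tau_{(-R,R)}\wedge T_0}|\ge R)\ge \tfrac16 K(x)/K(R)$ from Proposition \ref{MRestimate1}, which requires only that $K$ be non-decreasing. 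Since our $K$ need \emph{not} be monotone, this is precisely the gap, and the fix is to replace $K$ by $\tilde K$ in that step: I would prove the analogue of Proposition \ref{MRestimate1} with $\tilde K$ in place of $K$ by rerunning its proof, using $\Go(x,y)\le 2\tilde K(x)\cdot\gamma^{-1}$ (from Proposition \ref{Greenb} combined with $K\le\gamma^{-1}\tilde K$) and $\Go(x,2R)\ge K(x)\p^{2R}(T_0<\infty)\ge \tilde K(x)\p^{2R}(T_0<\infty)$ from Proposition \ref{Greenb} (the lower bound there does require monotonicity of $K$; but note that for $xy\ge 0$ it only uses $K(|y|)-K(|y|-|x|)\ge 0$ --- so instead I would bound $\Go(x,2R)\ge K(x) - K(x)K(2R)\kappa - [K(2R)-K(2R-x)]$ and control the last bracket by subadditivity and the comparability $K\approx\tilde K$, monotone). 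Either way one obtains $\p^x(|X_{\tau_{(-R,R)}\wedge T_0}|\ge R)\ge c\,\tilde K(x)/\tilde K(R)$ with $c$ depending only on the scalings.

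Putting these together with $R=1/\psi^{-1}(1/t)$ and the strong Markov property exactly as in Lemma \ref{0hitLB} gives, for $0<|x|<R$,
$$\p^x(T_0>t)\ \ge\ \frac{C_3}{C_1}\,\p^x\!\left(|X_{\tau_{(-R,R)}\wedge T_0}|\ge R\right)\ \ge\ c'\,\frac{\tilde K(x)}{\tilde K(R)}\ \ge\ c''\,\frac{K(x)}{K(1/\psi^{-1}(1/t))},$$
while for $|x|\ge R$ the half-space bound gives $\p^x(T_0>t)\ge C_3/C_1 \ge c''$ and $K(x)/K(R)\ge 1$ there by monotonicity of $\tilde K$ and comparability; combining the two ranges yields the claimed bound with $c$ depending only on the scalings. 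The main obstacle is step (ii): making Proposition \ref{MRestimate1} work without monotonicity of $K$, which is resolved by passing to the comparable monotone majorant $\tilde K$ and keeping careful track that every constant introduced depends only on $\alpha$ and $\gamma$.
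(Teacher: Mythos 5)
Your plan is to replace $K$ by a comparable non-decreasing majorant $\tilde K$ and rerun the geometric argument of Lemma \ref{0hitLB} via an analogue of Proposition \ref{MRestimate1}. The trouble is exactly at the step you flag as the gap: bounding $\Go(x,2R)$ from below without monotonicity of $K$. Writing
$$\Go(x,2R)= K(x)-K(x)K(2R)\kappa+\bigl[K(2R)-K(2R-x)\bigr],$$
you propose to control the bracket by subadditivity and by the comparability $K\approx\tilde K$. Neither gives anything useful. Subadditivity only yields $|K(2R)-K(2R-x)|\le K(x)$, which allows the bracket to be as negative as $-K(x)$ and hence exactly cancels the leading term, leaving $\Go(x,2R)\ge -K(x)K(2R)\kappa\le0$. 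Comparability gives $K(2R)-K(2R-x)\ge\tilde K(2R)-\gamma^{-1}\tilde K(2R-x)\ge(1-\gamma^{-1})\tilde K(2R)$, and since $\gamma\le 1$ this is a negative error of order $\tilde K(R)$, far larger in magnitude than $\tilde K(x)$ when $|x|\ll R$. In either case the error you need to absorb is at least as large as the quantity you want to bound below, so the needed estimate $\p^x(|X_{\tau_{(-R,R)}\wedge T_0}|\ge R)\gtrsim\tilde K(x)/\tilde K(R)$ does not follow. This is a genuine gap in the argument, not a bookkeeping issue.

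The paper avoids this obstruction altogether by staying on the Laplace-transform side. Starting from $\lambda\mathcal{L}(\p^x(T_0>\cdot))(\lambda)=K^\lambda(x)/u^\lambda(0)$ and the two-sided estimates of Lemma \ref{potEst} (which only need $\gamma\psi^*\le\psi$, not monotonicity of $K$), it derives via \eqref{psiInvScal} and Lemma \ref{KWLSC} a polynomial decay of the Laplace transform, as recorded in \eqref{WUSC0}, then applies the Tauberian-type estimate of \cite[Lemma 13]{MR3165234} to convert this into $\p^x(T_0>t)\gtrsim K^{1/t}(x)/K(1/\psi^{-1}(1/t))$, and finally uses Lemma \ref{potEst} again to replace $K^{1/t}(x)$ by $K(x)$ when $t\ge 1/\psi^*(1/x)$, and the half-space bound \eqref{estimate0} otherwise. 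If you wish to keep the geometric route you would need a genuinely new lower bound on $\Go$ valid under WLSC without pointwise monotonicity of $K$; the direct replacement of $K$ by a comparable monotone function does not provide it.
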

\begin{proof}Since $\psi\in\WLSC{\alpha}{\gamma}$ we have $\gamma\psi^*(r)\leq \psi(r)$ for any $r\geq 0$.
By Lemma \ref{potEst},
$$\frac{2\gamma}{ 3\pi^2 }\frac{K^\lambda(x)}{ K\(\frac{1}{\psi^{-1}(\lambda)}\)}\leq \lambda\mathcal{L}(\p^x(T_0>\cdot))(\lambda)\leq \frac{4}{\gamma}\frac{K^\lambda(x)}{  K\(\frac{1}{\psi^{-1}(\lambda)}\)}.$$
This, Lemma \ref{KWLSC} {and \eqref{psiInvScal}} imply for $\lambda>0$ and $s>1$,
\begin{eqnarray} \label{WUSC0}
\frac{\mathcal{L}(\p^x(T_0>\cdot))(\lambda s)}{\mathcal{L}(\p^x(T_0>\cdot))(\lambda)}&\leq& \frac{6\pi^2}{\gamma^2}\frac{1}{s}\frac{K^{\lambda s}(x)}{K^\lambda(x)}\frac{K\(\frac{1}{\psi^{-1}(\lambda)}\)}{K\(\frac{1}{\psi^{-1}(\lambda s)}\)}\le c_1 \frac{\psi^{-1}(\lambda)}{\psi^{-1}(\lambda s)}\leq c_2 s^{-1/2},
\end{eqnarray}
where $c_2$ depends only on the scalings.
 Hence, by \cite[Lemma 13]{MR3165234} there exists a constant $c_3$ that depends only on the scalings such that
$$\p^x(T_0>t) \geq c_3 \frac{K^{1/t}(x)}{ K\(\frac{1}{\psi^{-1}(1/t)}\)}.$$
 For $t\ge 1/\psi^*(1/x) $, by Lemma  \ref{potEst},    $K^{1/t}(x)\ge \frac{\gamma}{10\pi^2} K(x), $ which gives the conclusion in this case.
  We complete the proof  by applying \eqref{estimate0} for $t< 1/\psi^*(1/x) $ .


\end{proof}


From the above lower and upper bounds we derive two corollaries providing two sided sharp estimates.
\begin{cor}\label{unimodal}
 If $X$ is unimodal then for $x\in\R$ and $t>0$,
$$ \p^x(T_0>t)\approx \frac{K(x)}{K(1/\psi^{-1}(1/t))}\wedge 1.$$
The comparability constant is absolute.
\end{cor}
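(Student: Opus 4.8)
The plan is to combine the general upper bound of Proposition~\ref{0hit} with the lower bound valid for unimodal processes. The key observation is that unimodality is enough to push through both inequalities \emph{with absolute constants}.

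First I would note that if $X$ is unimodal, then $K$ is non-decreasing on $[0,\infty)$ — this was already recorded in the Preliminaries as a sufficient condition for monotonicity of $K$. Hence Lemma~\ref{0hitLB} applies verbatim and gives
$$\p^x(T_0>t)\ge \frac{C_3}{6C_1}\(\frac{K(x)}{K(1/\psi^{-1}(1/t))}\wedge 1\),$$
with $C_1,C_3$ absolute constants. This settles the lower bound; no scaling hypotheses on $\psi$ are needed here because Lemma~\ref{0hitLB} only uses monotonicity of $K$.

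For the upper bound I would invoke the first inequality of Proposition~\ref{0hit}, namely
$$\p^x(T_0>t)\le \[7\,\frac{K(x)}{\tilde K\(1/\psi^{-1}(1/t)\)}\]\wedge 1,$$
which holds for every symmetric L\'evy process with $\{0\}$ regular for itself. It therefore suffices to compare $\tilde K$ with $K$. For a unimodal process the L\'evy measure is unimodal, so by \cite{MR705619} and the discussion of $W$ in the excerpt we have $\psi^*\le 2W$ and $W\le 24\psi^*$; more to the point, for unimodal $X$ one has $\psi(x)\approx\psi^*(x)$ with absolute constants (equivalently $\psi\ge a\psi^*$ for an absolute $a$), which is exactly the hypothesis under which Lemma~\ref{potEst} gives $\tilde K(x)\le K(x)\le a^{-1}\tilde K(x)$. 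Consequently $\tilde K(1/\psi^{-1}(1/t))\approx K(1/\psi^{-1}(1/t))$ with absolute constants, and substituting into the displayed bound yields
$$\p^x(T_0>t)\le \[c\,\frac{K(x)}{K(1/\psi^{-1}(1/t))}\]\wedge 1$$
for an absolute constant $c$. Combining the two bounds gives the claimed two-sided estimate with an absolute comparability constant.

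The only slightly delicate point is justifying $\psi\approx\psi^*$ (absolutely) for unimodal processes; everything else is a direct citation. I expect this to follow from the fact that for a unimodal L\'evy measure the symbol is already "almost increasing" — concretely from the estimate $\tfrac12\psi^*\le W\le 24\psi^*$ together with the monotonicity of $W$, which forces $\psi$ and $\psi^*$ to be comparable up to a universal constant. So the main obstacle is bookkeeping of the absolute constants rather than any new idea; the proof is essentially an assembly of Proposition~\ref{0hit}, Lemma~\ref{0hitLB}, and Lemma~\ref{potEst}.
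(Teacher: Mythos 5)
Your overall structure matches the paper's proof: lower bound from Lemma~\ref{0hitLB} (using that $K$ is monotone for unimodal $X$), upper bound from Proposition~\ref{0hit} together with $\psi\geq a\psi^*$ for an absolute $a$. The paper simply invokes the second inequality of Proposition~\ref{0hit} once $\psi\geq\pi^{-2}\psi^*$ is known, whereas you go through $\tilde K$ and the first inequality; the two routes are algebraically equivalent.

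The genuine gap is in your justification of $\psi\approx\psi^*$. You argue that the two-sided estimate $\tfrac12\psi^*\leq W\leq 24\psi^*$ ``together with the monotonicity of $W$'' forces $\psi$ and $\psi^*$ to be comparable. This is not a valid deduction: the estimate between $W$ and $\psi^*$ holds for \emph{every} symmetric L\'evy process (it is quoted in the Preliminaries without any unimodality hypothesis), while $\psi\approx\psi^*$ certainly fails in general (e.g.\ for symbols that oscillate). The relation between $W$ and $\psi^*$ alone can therefore not encode anything specific to the unimodal case. What is actually needed is that the L\'evy \emph{density} $\nu(\cdot)$ is non-increasing on $(0,\infty)$, so that Lemma~\ref{cos} applies with $f=\nu$ to give $\psi(x)\geq\tfrac{2}{\pi^2}W(x)$; combined with $W\geq\tfrac12\psi^*$ this yields $\psi\geq\pi^{-2}\psi^*$ with an absolute constant. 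Equivalently, one may (as the paper does) simply cite \cite[Proposition 2]{MR3165234}. With that piece repaired, the rest of your argument is correct and the constants are absolute as claimed.
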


\begin{proof}
If $X$ is unimodal then $K$ is increasing and $\psi \ge \pi^{-2}\psi^*$ (see \cite[Proposition 2]{MR3165234}), hence the upper bound follows from Proposition \ref{0hit}, while the lower bound is a consequence of Lemma \ref{0hitLB}.

\end{proof}



\begin{cor}\label{hitpropWLSC}
Let $\psi\in\WLSC{\alpha}{\gamma}$, $\alpha>1$. Then  for $x\in\R$ and $t>0$,
$$ \p^x(T_0>t)\approx \frac{K(x)}{K(1/\psi^{-1}(1/t))}\wedge 1 \approx \frac{1}{t\psi^{-1}(1/t)|x|\psi(1/x)}\wedge 1.$$
The comparability constants depend on the scalings.\end{cor}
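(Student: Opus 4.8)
The plan is to deduce both comparabilities directly from results already proved, with no new argument needed. First I would record that the hypothesis $\psi\in\WLSC{\alpha}{\gamma}$ with $\alpha>1$ forces $\gamma\psi^*(|x|)\le\psi(x)$ for all $x$, and also $\int_0^\infty(1+\psi(x))^{-1}dx<\infty$ (since $\psi(x)\ge\gamma\psi(1)|x|^\alpha$ for $|x|\ge 1$ and $\alpha>1$), so the standing assumption \eqref{0reg} is automatic and $\psi^{-1}=(\psi^*)^{-1}$ is well defined. Both sides of the asserted equivalences vanish at $x=0$, so we may assume $x\neq 0$.

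For the first relation $\p^x(T_0>t)\approx K(x)/K(1/\psi^{-1}(1/t))\wedge1$, the upper bound is the second estimate of Proposition \ref{0hit} with $a=\gamma$, and the matching lower bound $\p^x(T_0>t)\ge c\bigl(K(x)/K(1/\psi^{-1}(1/t))\wedge1\bigr)$ with $c$ depending only on the scalings is exactly Proposition \ref{LBWLSC}. For the second relation I would invoke Lemma \ref{KWLSC}: under the lower scaling hypothesis with $\alpha>1$ one has $K(y)\approx 1/(|y|\psi(1/y))$ for $y\ne0$ and $\psi\approx\psi^*$, both with constants depending only on $\alpha,\gamma$. Applying the first equivalence at $y=x$ and at $y=1/\psi^{-1}(1/t)$, and using $\psi^*(\psi^{-1}(\lambda))=\lambda$ together with $\psi\approx\psi^*$, gives
$$K\!\left(\frac1{\psi^{-1}(1/t)}\right)\approx\frac{\psi^{-1}(1/t)}{\psi\bigl(\psi^{-1}(1/t)\bigr)}\approx\frac{\psi^{-1}(1/t)}{\psi^*\bigl(\psi^{-1}(1/t)\bigr)}=t\,\psi^{-1}(1/t),$$
so that $K(x)/K(1/\psi^{-1}(1/t))\approx 1/\bigl(t\,\psi^{-1}(1/t)\,|x|\,\psi(1/x)\bigr)$. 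Finally, since $f\approx g$ implies $f\wedge1\approx g\wedge1$ (with the same constants, after enlarging the comparability interval to contain $1$), taking the minimum with $1$ on both sides closes the argument.

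The computation is essentially bookkeeping with the scaling constants, which all reduce to $\alpha$ and $\gamma$. The only step requiring a moment's care is the passage $\psi(\psi^{-1}(\lambda))\approx\lambda$: only $\psi^*(\psi^{-1}(\lambda))=\lambda$ holds by definition, and replacing $\psi^*$ by $\psi$ genuinely uses the equivalence $\psi\approx\psi^*$ supplied by Lemma \ref{KWLSC}, which is where $\alpha>1$ enters beyond Propositions \ref{0hit} and \ref{LBWLSC}.
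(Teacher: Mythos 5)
Your proposal is correct and follows exactly the same route as the paper's own (very terse) proof: upper bound from Proposition \ref{0hit} (using $\psi\ge\gamma\psi^*$, which follows from WLSC with $\alpha>1$), lower bound from Proposition \ref{LBWLSC}, and the explicit form via Lemma \ref{KWLSC}. The paper simply leaves the identity $K(1/\psi^{-1}(1/t))\approx t\psi^{-1}(1/t)$ and the passage $\psi(\psi^{-1}(\lambda))\approx\lambda$ implicit; your spelling them out is just the bookkeeping the authors omit.
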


\begin{proof}  By Lemma \ref{KWLSC} we have $K(x)\approx \frac{1}{|x|\psi(1/x)}$ and the conclusion follows immediately from Proposition \ref{0hit} and Proposition \ref{LBWLSC}.   \end{proof}

\begin{exmp}  \label{exp1} Let $\psi(x)= |x|+x^2$. The corresponding process, which is the sum of the Cauchy process and independent Brownian motion, is obviously unimodal. Then
$$K(x)\approx \int^\infty_{1/x}\frac{dr}{\psi(r)}\approx \log (1+x)$$
{and by} Corollary \ref{unimodal},
$$ \p^x(T_0>t)\approx  \frac{ \log (1+x)}{\log (1+\sqrt{t})}\wedge 1.$$
  \end{exmp}

\begin{exmp}\label{exp2}{
We consider $\psi(x)=2\int^{1}_0(1-\cos(|x|s))\nu(ds)$, where $\nu$ is singular.
Namely, let  $\nu(ds)=\sum^\infty_{k=1}\delta_{1/k}(ds)\(k^{\alpha}-(k-1)^{\alpha}\)$
or $\nu(ds)=s^{-\beta}F(ds)$, where $\alpha\in(1,2)$ and $\beta=\alpha+\log 2/\log 3$ and $F$ is the standard Cantor measure on [0,1].
In both cases we claim that
$\psi(x)\approx |x|^{\alpha}\wedge x^2$. Indeed, by the integration by parts and \cite[Lemma 2]{GK2004} to verify the claim in the second case, we obtain that $\int^{|x|^{-1}}_0s^2\nu(ds)\approx 1\wedge |x|^{\alpha-2}$. Moreover $\int_{|x|^{-1}}^\infty \nu(ds)\leq c |x|^{\alpha}$ for $|x|\geq 1$ and $\int_{|x|^{-1}}^\infty \nu(ds)=0$ for $|x|<1$.
Since
$$ 2(1-\cos 1)|x|^2\int^{|x|^{-1}}_0s^2\nu(ds) \leq \psi(x)\leq |x|^2\int^{|x|^{-1}}_0s^2\nu(ds)+2\int_{|x|^{-1}}^\infty \nu(ds)$$
we get the claim.
Hence $\psi\in \WLSC{\alpha}{\gamma}$ for some $0<\gamma\le 1$. Applying  Corollary \ref{hitpropWLSC} we obtain
$$ \p^x(T_0>t)\approx \frac{|x|^{\alpha-1}\vee |x|}{t^{1-1/\alpha}\vee t^{1/2}}\wedge 1.$$}
Since the L\'evy measure is singular   the process with symbol $\psi(x)$ can not be unimodal. This illustrates that Corollary \ref{hitpropWLSC} does not follow from Corollary \ref{unimodal}.
\end{exmp}

\begin{rem} There are recent results obtained by Juszczyszyn and Kwa\'{s}nicki \cite{2014arXiv1403.3714J}  where not only the behaviour of the tail function $\p^x(T_0>t)$ was described but also its derivatives. Their assumptions on the process were much more restrictive than ours. They assumed complete monotonicity of the L\'evy density and some additional property of the first two  derivatives of the symbol  of  the process. The  results of our paper regarding the tail functions are more general, however our methods do now allow us to treat the derivatives. The processes from Example \ref{exp1} and \ref{exp2}  do not satisfy the assumptions of \cite{2014arXiv1403.3714J}. In Example \ref{exp1} the symbol  $\psi$ fails  the   requirements of  \cite{2014arXiv1403.3714J}, while the L\'evy measures in  Example \ref{exp2}   are singular.
\end{rem}

Now we turn to asymptotics of the tail function  when $t\to \infty$   not only  in the case of hitting $\{0\}$ but for hitting arbitrary compact set as well.

\begin{prop}\label{asymp}
Let  $\psi$ be regularly varying at $0$ with index $\delta\in(1,2]$. Then for a compact set $B$ such that $0\in B$ we have for $x\in\R$,
$$\lim_{t\to\infty}\[t\psi^{-1}(1/t)\p^x(T_B>t)\]=\frac{\delta \Gamma\(1-\frac 1 \delta\)\sin^2\frac{\pi}{\delta}}{\pi}\[K(x)-\E^xK(X_{T_B})\].$$

If  $\psi$ is regularly varying at $0$ with index $1$ then there is a function $L(u)$ slowly varying at $0$ such that

$$\lim_{t\to\infty}\[L(1/t)\p^x(T_B>t)\]=K(x)-\E^xK(X_{T_B}).$$
We can take   $L(u)= \frac{1}{\pi}\int^\infty_{\psi^{-1}(u)}\frac{dr}{\psi(r)},\  u>0$.
\end{prop}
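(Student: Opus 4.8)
The plan is to route through the Laplace transform and then invert with Tauberian theorems, as the section heading indicates. First I would dispose of trivialities. Write $D=\R\setminus B$, so that $T_B=\tau_D$. Regular variation of $\psi$ at $0$ with index $\delta\ge1$ forces $\int_0^1 ds/\psi(s)=\infty$, hence $\int_0^\infty ds/\psi(s)=\infty$, so $\kappa=0$ and Lemma \ref{php1} gives $\p^x(T_B<\infty)=1$ for every $x$. If $x\in B$ then $T_B=0$ and $X_{T_B}=x$, so both sides of the asserted limits vanish; hence I may assume $x\notin B$, in which case $t\mapsto\p^x(T_B>t)$ is non-increasing and positive on $(0,\infty)$.

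Second, I would record a closed form for the Laplace transform. Since $0\in B$ we have $T_0\ge T_B$ and $T_0=T_B+T_0\circ T_B$, so the strong Markov property gives $h^\lambda(x)=\E^x e^{-\lambda T_0}=\E^x\big[e^{-\lambda T_B}h^\lambda(X_{T_B})\big]$; multiplying by $u^\lambda(0)$ and using $u^\lambda=u^\lambda(0)h^\lambda$ together with $u^\lambda=u^\lambda(0)-K^\lambda$ yields, after rearranging,
$$\mathcal L\big(\p^x(T_B>\cdot)\big)(\lambda)=\frac1\lambda\big(1-\E^x e^{-\lambda T_B}\big)=\frac{K^\lambda(x)-\E^x\big[e^{-\lambda T_B}K^\lambda(X_{T_B})\big]}{\lambda\,u^\lambda(0)}.$$
This is the same manipulation as in the proof of Proposition \ref{0hit}, but with $B$ in place of $\{0\}$ the numerator is retained exactly. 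Letting $\lambda\to0^+$: $K^\lambda(x)\uparrow K(x)$; on $\{T_B<\infty\}$, which has full measure, $e^{-\lambda T_B}\to1$ and $K^\lambda(X_{T_B})\to K(X_{T_B})$, the integrand being dominated by $\sup_{z\in B}K(z)<\infty$ (continuity of $K$, compactness of $B$, and $K^\lambda\le K$). Hence dominated convergence shows the numerator tends to $c_B(x):=K(x)-\E^x K(X_{T_B})\ge0$.

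Third, I would insert the asymptotics of $u^\lambda(0)$ from Lemma \ref{potential} and invert. Assume first $c_B(x)>0$. When $\delta\in(1,2]$, $\lambda u^\lambda(0)\cong\psi^{-1}(\lambda)/(\delta\sin(\pi/\delta))$; since $\psi^*\cong\psi$ near $0$, $\psi^{-1}=(\psi^*)^{-1}$ is regularly varying at $0$ with index $1/\delta$, so $1/\psi^{-1}(1/t)=t^{1/\delta}\ell(t)$ with $\ell$ slowly varying at $\infty$. Karamata's Tauberian theorem gives $\int_0^t\p^x(T_B>s)\,ds\cong\frac{\delta\sin(\pi/\delta)\,c_B(x)}{\Gamma(1+1/\delta)}\,t^{1/\delta}\ell(t)$, and, since $\p^x(T_B>\cdot)$ is monotone, the monotone density theorem then gives $\p^x(T_B>t)\cong\frac{\delta\sin(\pi/\delta)}{\Gamma(1/\delta)}\,c_B(x)\,(t\psi^{-1}(1/t))^{-1}$, using $\delta\,\Gamma(1+1/\delta)=\Gamma(1/\delta)$ and $t^{1/\delta-1}\ell(t)=(t\psi^{-1}(1/t))^{-1}$; the Gamma reflection formula $\Gamma(1/\delta)\Gamma(1-1/\delta)=\pi/\sin(\pi/\delta)$ rewrites the constant as $\delta\,\Gamma(1-1/\delta)\sin^2(\pi/\delta)/\pi$, giving the first assertion. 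When $\delta=1$, Lemma \ref{potential} gives $\lambda u^\lambda(0)\cong\lambda L(\lambda)$ with $L(u)=\frac1\pi\int_{\psi^{-1}(u)}^\infty dr/\psi(r)$ slowly varying at $0$, and the same scheme with Karamata index $\rho=1$ gives $L(1/t)\p^x(T_B>t)\to c_B(x)=K(x)-\E^x K(X_{T_B})$. (The Tauberian facts are all from \cite{MR1015093}.)

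The argument is largely assembly of tools already in hand, so I do not expect a genuine obstacle; two points need a little care. First, the degenerate case $c_B(x)=0$: here I would not use the Tauberian theorem but the elementary one-sided bound $\mathcal L(g)(\lambda)\ge e^{-1}\lambda^{-1}g(1/\lambda)$, valid for any non-increasing $g\ge0$, which gives $\p^x(T_B>t)\le e\,t^{-1}\mathcal L\big(\p^x(T_B>\cdot)\big)(1/t)=o\big(1/(t\,u^{1/t}(0))\big)$, so by Lemma \ref{potential} the limit equals $0$, matching the right-hand side which is then also $0$. Second, matching the multiplicative constant and the slowly varying factor exactly — the bookkeeping $t^{1/\delta-1}\ell(t)=(t\psi^{-1}(1/t))^{-1}$, $\delta\,\Gamma(1+1/\delta)=\Gamma(1/\delta)$, and the Gamma reflection identity — is the one place where the precise form of the stated constant is pinned down.
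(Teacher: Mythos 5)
Your proof is correct and follows essentially the same route as the paper: you derive the key Laplace-transform identity $\lambda u^\lambda(0)\mathcal L(\p^x(T_B>\cdot))(\lambda)=K^\lambda(x)-\E^x[e^{-\lambda T_B}K^\lambda(X_{T_B})]$ (the paper gets it by noting $G^\lambda_{B^c}(x,0)=0$ rather than through $h^\lambda$, but it is the same strong Markov computation), pass to the limit by dominated convergence, feed in Lemma \ref{potential} for $u^\lambda(0)$, and conclude via Karamata's Tauberian theorem and the monotone density theorem with the same Gamma-function bookkeeping. Your extra care about the degenerate case $K(x)=\E^xK(X_{T_B})$ is a reasonable addition the paper leaves implicit; the only nit is that your one-sided bound should read $\mathcal L(g)(\lambda)\ge(1-e^{-1})\lambda^{-1}g(1/\lambda)$, which is what you actually need and is still in the right direction.
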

\begin{proof}
Observe that
$$\mathcal{L}(\p^x(T_B>\cdot))(\lambda)=\frac{1}{\lambda}\[1-\E^xe^{-\lambda T_B} \]
.$$
Since $0\in B$,  $0$ is regular for $B$. By symmetry $G^\lambda_{B^c}(x,0)=0$. Hence
\begin{eqnarray}
\lambda u^\lambda(0)\mathcal{L}(\p^x(T_B>\cdot))(\lambda)&=&u^\lambda(0)-u^\lambda(x) -\E^xe^{-\lambda T_B}\[u^\lambda(0)-u^\lambda(X_{T_B})\]\nonumber\\&&+u^\lambda(x)-\E^xe^{-\lambda T_B}(u^\lambda(X_{T_B}))\nonumber\\
&=&u^\lambda(0)-u^\lambda(x) -\E^xe^{-\lambda T_B}\[u^\lambda(0)-u^\lambda(X_{T_B})\]+G^\lambda_{B^c}(x,0)\nonumber\\
&=&K^\lambda(x) -\E^xe^{-\lambda T_B}K^\lambda(X_{T_B})\label{laplaceComp}.
\end{eqnarray}
Since $K$ is continuous and $B$ is compact, by the dominated convergence theorem and Lemma \ref{php1},
$$\lim_{\lambda\to0}\lambda u^\lambda(0)\mathcal{L}(\p^x(T_B>\cdot))(\lambda)=K(x)-\E^xK(X_{T_B}).$$

Let  $\psi$ be regularly varying at $0$ with index $\delta\in(1,2]$. By Lemma \ref{potential},
$$\lim_{\lambda\to 0} \psi^{-1}(\lambda)\mathcal{L}(\p^x(T_B>\cdot))(\lambda)=\delta \sin\(\frac{\pi}{\delta}\)[ K(x) -\E^x K(X_{T_B})].$$
Define $U(s)=\int^s_0\p^x(T_{B}>t)dt$.  We have $$\mathcal{L}U(\lambda)=\frac{1}{\lambda}\mathcal{L}(\p^x(T_B>\cdot))(\lambda).$$
Hence
$$\lim_{\lambda\to 0} \lambda\psi^{-1}(\lambda)\mathcal{L}U(\lambda)=\delta \sin\(\frac{\pi}{\delta}\) [K(x)-\E^x K(X_{T_B})].$$
Since $\psi^{-1}$ is regularly varying at $0$ with index $1/\delta$ the Tauberian theorem (\cite[Theorem 1.7.1]{MR1015093}) implies
$$\lim_{t\to \infty} \psi^{-1}(1/t)U(t)=\frac{\delta \sin\(\frac{\pi}{\delta}\)}{\Gamma(1+1/\delta)} [K(x)-\E^x K(X_{T_B})].$$
By the monotone density theorem (\cite[Theorem 1.7.2]{MR1015093}),
 $$\lim_{t\to\infty}\[t\psi^{-1}(1/t)\p^x(T_B>t)\]=\frac{\delta \sin\frac{\pi}{\delta}}{\Gamma(1/\delta)}[K(x)-\E^x K(X_{T_B})].$$

 If $\psi$ is regularly varying  at $0$ with index $1$,  then by Lemma \ref{potential},    $$u^\lambda(0)\cong \frac{1}{\pi}\int^\infty_{\psi^{-1}(\lambda)}\frac{dr}{\psi(r)}= L(\lambda),$$ where
 $L(\lambda)$ is slowly varying at $0$. Hence
 $$\lim_{\lambda\to0}\lambda L(\lambda)\mathcal{L}(\p^x(T_B>\cdot))(\lambda)=K(x)-\E^xK(X_{T_B}).$$
By the monotone density theorem 
 $$\lim_{t\to\infty}L(1/t)\p^x(T_B>t)=K(x)-\E^x K(X_{T_B}).$$
\end{proof}

\begin{cor}\label{0hit_a}Let   $\psi$ be regularly varying at $0$ with index $\delta\in[1,2]$. Then for $x\in\R$,
$$\lim_{t\to\infty}\[t\psi^{-1}(1/t)\p^x(T_0>t)\]=\frac{\delta \Gamma\(1-\frac 1 \delta\)\sin^2\frac{\pi}{\delta}}{\pi}K(x), \quad \delta>1,$$
and

 $$\lim_{t\to\infty}L(1/t)\p^x(T_0>t)=K(x), \quad \delta=1,$$
where $L(u)= \frac{1}{\pi}\int^\infty_{\psi^{-1}(u)}\frac{dr}{\psi(r)}$.

\end{cor}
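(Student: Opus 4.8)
The plan is to derive Corollary \ref{0hit_a} as the special case $B=\{0\}$ of Proposition \ref{asymp}. Indeed, taking $B=\{0\}$ (which is compact and contains $0$), the hitting time $T_B$ is precisely $T_0$, and on the event $\{T_0<\infty\}$ we have $X_{T_0}=0$, so $K(X_{T_0})=K(0)=0$. Hence $\E^xK(X_{T_0})=0$ and the bracketed quantity $K(x)-\E^xK(X_{T_B})$ in Proposition \ref{asymp} reduces to $K(x)$.

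First I would note that the hypothesis of Corollary \ref{0hit_a} allows $\delta=1$, whereas Proposition \ref{asymp} treats $\delta\in(1,2]$ and $\delta=1$ in two separate statements; I would simply invoke the corresponding case. For $\delta\in(1,2]$, Proposition \ref{asymp} gives
$$\lim_{t\to\infty}\bigl[t\psi^{-1}(1/t)\p^x(T_B>t)\bigr]=\frac{\delta \Gamma\bigl(1-\tfrac1\delta\bigr)\sin^2\tfrac\pi\delta}{\pi}\bigl[K(x)-\E^xK(X_{T_B})\bigr],$$
and substituting $B=\{0\}$ and $\E^xK(X_{T_0})=0$ yields the first claimed asymptotic. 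For $\delta=1$, the second part of Proposition \ref{asymp} with $B=\{0\}$ gives $\lim_{t\to\infty}L(1/t)\p^x(T_0>t)=K(x)$ with $L(u)=\frac1\pi\int^\infty_{\psi^{-1}(u)}\frac{dr}{\psi(r)}$, which is exactly the second claim.

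There is essentially no obstacle here — the proof is a one-line specialization. The only point that merits a word of care is verifying that $B=\{0\}$ legitimately falls under the scope of Proposition \ref{asymp}: we need $0$ regular for $B=\{0\}$, which is precisely the standing assumption \eqref{0reg} of the paper, so $\p^x(T_B>t)=\p^x(T_0>t)$ with no subtlety about whether the process hits the point. Thus the proof consists of the single observation $\E^xK(X_{T_0})=K(0)\,\p^x(T_0<\infty)=0$ together with an application of Proposition \ref{asymp}.

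\begin{proof}
Apply Proposition \ref{asymp} with $B=\{0\}$. Since $0$ is regular for $\{0\}$ by \eqref{0reg}, we have $T_B=T_0$, and on $\{T_0<\infty\}$ the process satisfies $X_{T_0}=0$, so $K(X_{T_0})=K(0)=0$ and therefore $\E^xK(X_{T_0})=0$. Substituting this into the conclusion of Proposition \ref{asymp} gives, for $\delta\in(1,2]$,
$$\lim_{t\to\infty}\bigl[t\psi^{-1}(1/t)\p^x(T_0>t)\bigr]=\frac{\delta \Gamma\bigl(1-\tfrac 1 \delta\bigr)\sin^2\tfrac{\pi}{\delta}}{\pi}K(x),$$
and for $\delta=1$,
$$\lim_{t\to\infty}L(1/t)\p^x(T_0>t)=K(x),\qquad L(u)=\frac{1}{\pi}\int^\infty_{\psi^{-1}(u)}\frac{dr}{\psi(r)}.$$
\end{proof}
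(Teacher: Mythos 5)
Your proof is correct and follows exactly the paper's own argument: the paper's proof of Corollary \ref{0hit_a} is the single observation that $\E^x K(X_{T_0})=0$ (since $X_{T_0}=0$ on $\{T_0<\infty\}$ and $K(0)=0$), followed by an application of Proposition \ref{asymp} with $B=\{0\}$. Your added remark about regularity of $\{0\}$ is a fair sanity check but not a new ingredient.
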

\begin{proof} Since $ \E^x K(X_{T_0})=0$ it follows from Proposition \ref{asymp}.

\begin{rem} We again compare   \cite{2014arXiv1403.3714J} with our results with regard to asymptotics of the tail function of $T_0$.    For example the case
 $\psi(x)= |x|+|x|^2$ is not covered in \cite{2014arXiv1403.3714J}. In Example \ref{exp2} we provided sharp estimates of $\p^x(T_0>t)$ and Corollary \ref{0hit_a} exhibits the asymptotics at infinity.
 Note that $\psi^{-1}(u)= \frac u{\sqrt{u+\frac14}+ \frac12}\cong u, \quad u\to 0$.  Next,
 $$ L(u)=\frac{1}{\pi}\int^\infty_{\psi^{-1}(u)}\frac{dr}{r+r^2}=\frac1\pi \left[\log \left( u +{\sqrt{u+\frac14}+ \frac12}\right)-\log  u\right]\cong -\frac{\log  u}\pi, \quad u\to 0^+.$$
Then, by the second part of  Corollary \ref{0hit_a},
$$\lim_{t\to\infty}\log t \, \p^x(T_0>t)=\pi K(x),$$
where
$K(x) \approx \log (1+x).$


 \end{rem}
\end{proof}

The next example illustrates that the decay of the tail function of $T_0$ can be very slow. Note that the intensity of  small  jumps of the process below is larger than
the corresponding intensity of the Cauchy process while it is smaller than the corresponding intensity for any symmetric $\alpha$-stable  process, $\alpha>1$.
Therefore the considered process   is in some sense between the Cauchy process and any symmetric $\alpha$-stable  processes, $\alpha>1$. Note that the Cauchy process hits  points with $0$ probability, while   $\alpha$-stable  processes, $\alpha>1$,  hit points with  probability $1$.

\begin{exmp}\label{LCauchy}Let $\nu(r)= \frac {\log^2(2+1/|r|) \log(2+|r|)} {r^2 }, \ r\in\R$. By Lemma \ref{cos}, $$\psi(x)\approx \int^\infty_0(1\wedge(xr)^2)\nu(r){dr}= x^2 \int^{1/x}_0r^2\nu(r){dr}+\int_{1/x}^\infty\nu(r){dr}.$$ Elementary
calculations  show that
$$\psi(x)\approx x  {\log^2(2+x) \log(2+1/x)},\quad x\ge0$$ and
$$\psi^{-1}(x)\approx  \frac{x}{\log^2(2+x) \log(2+1/x)},\quad x\ge0.$$
It is clear that $\psi$ can not have the weak lower scaling property with any $1<\alpha\leq2$, but $\psi= \psi^*$ since $\nu(r)/r$ is decreasing on $(0,\infty)$. Hence, by Lemma \ref{KWLSC},
$$K(x)\approx \int^\infty_{1/x}\frac{dr}{\psi(r)}\approx \frac{\log\log (2+x)}{\log (2+1/x)}, \quad x>0.$$
Moreover,
$\psi(x)\cong  c x  \log(1/x)$ and $\psi^{-1}(x)\cong  \frac x  {c\log(1/x)}$ as $x\to 0^+$, where $ c =2\log^22 \int_0^\infty\frac {1-\cos r} {r^2 }dr= \pi\log^22$.    Therefore  $\psi$ is regularly varying at $0$ with index $1$.  Hence
$$ L(u)=\frac{1}{\pi}\int^\infty_{\psi^{-1}(u)}\frac{dr}{\psi(r)}\cong  \frac{1}{c\pi} \log\log (1/u),\quad u\to 0^+$$
and from Corollary \ref{0hit_a} we infer  that
$$ \p^x(T_0>t)\cong  (\pi\log2)^2 \frac{ K(x)}{\log \log t},\quad t\to \infty.$$

 \end{exmp}

\section{Behaviour of harmonic functions}

This section prepares some tools used in the  sequel  for estimating the tail function for the hitting time of an interval. On the other hand the results are interesting on their own.
In the first subsection we prove the global Harnack  inequality under global  weak scaling assumption for $\psi$, while in the second we provide  some boundary type estimates  for harmonic functions.

We start with a lemma which shows a very useful  property of the compensated kernel.
\begin{lem}\label{3K}
Let $0<R\le \infty$.  For $x>0$ we have  $\E^x K(X_{\tau_{(0, R)}})\leq  K(x).$
\end{lem}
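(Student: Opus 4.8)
The plan is to exploit the fact that $K$ is (regular) harmonic away from the origin together with the subadditivity of $K$ established above. The key observation is that the process $(X_{t\wedge T_0})$ started from $x>0$, killed at $0$, can only leave the interval $(0,R)$ either through the point $0$ — in which case $K$ vanishes at the exit position — or by jumping over to the complement of $(0,R)$. So morally $\E^x K(X_{\tau_{(0,R)}})$ should be controlled by the value of the harmonic function $K$ at $x$, provided we are careful about the fact that $K$ is unbounded and the process is only point-recurrent, not transient.

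The first step is to reduce to a bounded domain. For $0<r<x<R'<R$ put $D_{r,R'}=(r,R')$; since $K(\cdot)$ is harmonic on $(0,\infty)$ we have, for $\overline{D_{r,R'}}\subset(0,\infty)$,
\begin{equation*}
K(x)=\E^x K\bigl(X_{\tau_{D_{r,R'}}}\bigr).
\end{equation*}
Now I would let $r\to 0^+$ and $R'\to R$. The exit position $X_{\tau_{D_{r,R'}}}$ converges (using quasi-left continuity of $X$ and the regularity of $\{0\}$ for itself, which forces $T_0<\infty$ a.s.\ on the event that the process approaches $0$) to $X_{\tau_{(0,R)}\wedge T_0}$, and on the event $\{T_0<\tau_{(0,R)}\}$ one has $K(X_{\tau_{(0,R)}\wedge T_0})=K(0)=0$. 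Hence, if the limit can be passed through the expectation, one obtains
\begin{equation*}
K(x)=\E^x K\bigl(X_{\tau_{(0,R)}\wedge T_0}\bigr)=\E^x\bigl[\tau_{(0,R)}<T_0;\,K(X_{\tau_{(0,R)}})\bigr]\ge \E^x\bigl[\tau_{(0,R)}<T_0;\,K(X_{\tau_{(0,R)}})\bigr].
\end{equation*}
To finish, observe that on the event $\{\tau_{(0,R)}\ge T_0\}$ the process hits $0$ before leaving $(0,R)$, and then $K(X_{\tau_{(0,R)}})$ is still a finite random variable whose expectation we must discard in the right direction; but since $K\ge0$, in fact $\E^x K(X_{\tau_{(0,R)}})=\E^x[\tau_{(0,R)}<T_0;K(X_{\tau_{(0,R)}})]+\E^x[\tau_{(0,R)}\ge T_0; K(X_{\tau_{(0,R)}})]$, and a separate argument (using subadditivity $K(X_{\tau_{(0,R)}})\le K(X_{\tau_{(0,R)}\wedge T_0})+K(X_{\tau_{(0,R)}}-X_{\tau_{(0,R)}\wedge T_0})$ and the strong Markov property at $T_0$, noting $X_{T_0}=0$) shows the total is still $\le K(x)$. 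The cleanest route is actually to avoid splitting: since $0$ is regular for $\{0\}$ and $X$ is point recurrent, one can argue directly that $\tau_{(0,R)}\wedge T_0$ is the relevant stopping time and that $K(X_{\tau_{(0,R)}})\le K(X_{\tau_{(0,R)\setminus\{?\}}})$ dominated appropriately.

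The main obstacle I anticipate is the passage to the limit $r\to 0$ and the justification that no mass is lost: because $K$ is unbounded at infinity and the jumps of $X$ can be arbitrarily large, one needs a uniform integrability argument for $\{K(X_{\tau_{D_{r,R'}}})\}_{r}$. I would handle this by first treating $R<\infty$, where the only source of largeness is a single big overshooting jump out of $(0,R)$; using the Ikeda--Watanabe formula together with the bound $K(y)\le K(R)+K(y-R)$ from subadditivity and the tail estimate $\nu[\,\cdot\,,\infty)\le C_2 V^{-2}(\cdot)$ from Lemma~\ref{upper_den}, one shows $\E^x K(X_{\tau_{D_{r,R'}}})$ stays bounded uniformly in $r$, and then Fatou (for the $\ge$ direction) combined with the harmonic identity (for the $\le$ direction) gives the claim; the case $R=\infty$ follows by monotone convergence in $R$.
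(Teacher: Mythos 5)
Your proposal has the right skeleton and is essentially the paper's argument: harmonicity of $K$ on $\{0\}^c$ (which you correctly tie to the Yano invariance result), exhaustion of the domain by compactly contained subintervals, and a limit passage via quasi-left continuity. The paper proves Lemma~\ref{3K} exactly this way, using the intervals $(1/n,n)$ for $R=\infty$ and noting the case $R<\infty$ is similar.

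Where you go astray is in overcomplicating the limit step, and the root cause appears to be that you have Fatou's lemma reversed. Fatou gives $\E^x\liminf\le\liminf\E^x$, so applied to the nonnegative random variables $K(X_{\tau_{(r,R)}})$ it directly yields the claimed $\le$ inequality:
\begin{equation*}
\E^x K\bigl(X_{\tau_{(0,R)}}\bigr)\;\le\;\liminf_{r\to 0^+}\E^x K\bigl(X_{\tau_{(r,R)}}\bigr)\;=\;K(x),
\end{equation*}
using quasi-left continuity for the a.s.\ convergence $X_{\tau_{(r,R)}}\to X_{\tau_{(0,R)}}$ as $r\to 0^+$ (for which one needs $\tau_{(0,R)}<\infty$ a.s.; for finite $R$ this is immediate from $\tau_{(0,R)}\le T_0$, and for $R=\infty$ the paper uses $\tau_{(0,\infty)}<\infty$ a.s.). No uniform integrability is required, and there is no ``no mass is lost'' issue to rule out: since $K\ge 0$, any loss of mass in the limit only helps the inequality. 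The Ikeda--Watanabe tail estimates you sketch are thus superfluous, and the equality $K(x)=\E^x K(X_{\tau_{(0,R)}})$ you try to establish first is both unnecessary and, without further hypotheses, not obviously true.

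A secondary confusion concerns $T_0$. Since $0\notin(0,R)$ one always has $\tau_{(0,R)}\le T_0$, hence $\tau_{(0,R)}\wedge T_0=\tau_{(0,R)}$, the event $\{T_0<\tau_{(0,R)}\}$ is empty, and on $\{\tau_{(0,R)}=T_0\}$ the process exits $(0,R)$ at the point $0$, where $K$ vanishes. So the subadditivity/strong-Markov ``separate argument'' at $T_0$ is not needed. Also, the passage $R'\uparrow\infty$ in your last line is again Fatou, not monotone convergence: $\tau_{(0,R_n)}$ is monotone in $n$, but $K(X_{\tau_{(0,R_n)}})$ need not be.
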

\begin{proof} We provide the proof for $R=\infty$, since the case $R<\infty$ is similar. By \cite[Theorem 1.1]{MR2603019} we know that the function $K(x)$ is invariant for the killed process upon hitting $\{0\}$ which implies its harmonicity on $\{0\}^c$.
Let $R>1  $ such that $1/R <x<R$. Then by harmonicity $\E^x K(X_{\tau_{(1/R, R)}})=  K(x)$. Since $\tau_{(1/R, R)}\uparrow \tau_{(0, \infty)}, R\uparrow \infty$ and $\tau_{(0,\infty)}<\infty$ almost surely, the conclusion
follows by continuity of $K$, quasi-left continuity of $X$ and Fatou's lemma.


\end{proof}

\subsection{Harnack inequality}

We say that the global Harnack inequality holds if there is a constant $C_H$ such that  for every $R>0 $ and any non-negative harmonic function on $(-R, R)$ we have
\begin{equation*}\label{HI1}\sup_{x\in (-R/2,R/2)}h(x)\leq C_H\inf_{x\in (-R/2,R/2)}h(x).\end{equation*}
Here we prove  that $\psi\in\WLSC{\alpha}{\gamma}$, $\alpha>1$ is  a sufficient condition. The Harnack inequality will be very important in the next subsection to find the boundary behaviour of certain harmonic functions.

\begin{lem}\label{Green_interval}Let  $\psi\in\WLSC{\alpha}{\gamma}$, $\alpha>1$. Then there are  $\lambda_1, \lambda_2 $ depending on the scalings such that for any $R>0$, 
$$G_{(-R,R)}(x,y)\geq \lambda_2 K(R),\quad |x|,|y|\leq \lambda_1 R.$$
\end{lem}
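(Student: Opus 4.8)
The plan is to produce a lower bound for the Green function $G_{(-R,R)}(x,y)$ that is uniform over $|x|,|y| \le \lambda_1 R$, by combining three ingredients already available: the formula \eqref{G0} for $\Go$ (or rather the two-sided bounds in Proposition \ref{Greenb}), the exit-probability estimates of Proposition \ref{MRestimate1}, and the scaling of $K$ from Lemma \ref{KWLSC} (which gives $K\in\WLSC{\alpha-1}{\gamma_1}$ and $K(x)\approx 1/(|x|\psi(1/x))$). The point of the weak lower scaling is precisely that $K(R)/K(\lambda_1 R)$ stays bounded as long as $\lambda_1$ is a fixed fraction, so "reducing the radius by a constant factor" costs only a constant.

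First I would write, by the strong Markov property at $\tau_{(-R,R)}\wedge T_0$ (the killed process on $(-R,R)$ avoiding $0$ agrees with the one on $(-R,R)\setminus\{0\}$, which is what $\Go$ sees locally), an identity of the form
$$G_{(-R,R)}(x,y) \ge \E^x\Big[\tau_{(-R,R)} < T_0;\, G_{(-R,R)}(X_{\tau_{(-R,R)}\wedge T_0},y)\Big]$$
is not quite right; instead the cleaner route is to use the harmonicity of $G_{(-R,R)}(\cdot,y)$ together with Lemma \ref{GreenRegHarm}-type reasoning. So the concrete first step: for $|x|,|y|\le \lambda_1 R$ with, say, $0<x\le y$, start from a point $x$ and consider the process killed on exiting $(-R,R)$; decompose $G_{(-R,R)}(x,y) = \Go(x,y) - \E^x[\Go(X_{\tau_{(-R,R)}},y)]$ using the strong Markov property at $\tau_{(-R,R)}$ applied to the harmonic function $\Go(\cdot,y)$ on $\{0\}^c$ (valid since $(-R,R)\subset\{0\}^c\cup\{0\}$ and $0$ is polar-ish — here one passes to the limit from $D_{r,R}$ exactly as in Lemma \ref{GreenRegHarm}). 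By Proposition \ref{Greenb}, $\Go(X_{\tau_{(-R,R)}},y)\le 2K(y)$, and the exit happens with $|X_{\tau_{(-R,R)}}|\ge R$. Thus
$$G_{(-R,R)}(x,y) \ge \Go(x,y) - 2K(y)\,\p^x(\tau_{(-R,R)}<T_0) - \text{(small correction from }T_0\text{)}.$$
Now use the lower bound in Proposition \ref{Greenb}, $\Go(x,y)\ge K(|x|\wedge|y|)\,\p^{|x|\vee|y|}(T_0<\infty)$, together with Lemma \ref{php1} so that $\p^{R'}(T_0<\infty) = 1 - \kappa K(R')$; when $\psi\in\WLSC{\alpha}{\gamma}$ with $\alpha>1$ the integral $\int^\infty 1/\psi$ is finite so $\kappa>0$, and one checks $\kappa K(\lambda_1 R)$ can be made $\le 1/2$ — but this is false for large $R$ since $K(\infty)=\infty$; so actually $\p^{|x|\vee|y|}(T_0<\infty)$ is \emph{not} bounded below uniformly. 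This is the crux.

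**The main obstacle, and how to get around it.** The genuine difficulty is that neither $\Go$ nor the naive restriction argument respects the confinement to $(-R,R)$: over large scales $\Go(x,y)$ is dominated by the $K(y)K(x)\kappa$ term and degenerates. The right fix — and I expect this is what the authors do — is to not start from $\Go$ at all but to work intrinsically inside $(-R,R)$, bounding $G_{(-R,R)}(x,y)$ from below by restricting further to a half-interval and invoking the renewal-function Green estimate \eqref{Gformula_F} together with Lemma \ref{exit}. Concretely: for $x,y$ both in $(-\lambda_1 R,\lambda_1 R)$, bound $G_{(-R,R)}(x,y)\ge G_{(0,R)}(x',y')$ after translating so the interval becomes $(0,R)$ and the points land near its center (possible since $\lambda_1$ is small), and then use $G_{(0,R)}(x,y)=\int_0^x V'(u)V'(y-x+u)\,du$ adjusted for the finite interval, which near the center is $\gtrsim (\text{length})\cdot \inf V' \gtrsim R\,V'(\text{order }R)\gtrsim V^2(R)/R$ by the usual renewal estimates; finally $V^2(R)/R\approx K(R)$ by Lemma \ref{KWLSC}. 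The scaling $\psi\in\WLSC{\alpha}{\gamma}$, $\alpha>1$, enters to guarantee $V'$ does not decay too fast and that $V^2(R)/R\approx K(R)$. So the key steps in order: (i) reduce by translation/monotonicity to $G_{(0,cR)}$ with both points in the middle third; (ii) use \eqref{Gformula_F} and the scaling of $V$ (Lemma \ref{ch1V}) to get $G_{(0,cR)}(x,y)\gtrsim V^2(R)/R$ there; (iii) convert $V^2(R)/R\approx K(R)$ via Lemma \ref{KWLSC}; (iv) choose $\lambda_1$ (hence the "middle third") and read off $\lambda_2$ from the accumulated constants, all depending only on $\alpha,\gamma$. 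The one technical point to handle carefully is passing from the half-line formula \eqref{Gformula_F} to the bounded interval $(0,cR)$ — this is standard (one subtracts a harmonic correction, or uses $G_{(0,cR)}\ge G_{(0,\infty)} - \E^x[G_{(0,\infty)}(X_{\tau},y);\tau_{(0,cR)}<\tau_{(0,\infty)}]$ and estimates the correction by $C_2 \E^x\tau/V^2(cR)\cdot V(y)$ via Lemma \ref{upper_den}, which is a constant fraction of the main term once $x,y$ are deep inside).
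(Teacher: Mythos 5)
Two issues. First, a factual slip: under $\psi\in\WLSC{\alpha}{\gamma}$ with $\alpha>1$ one has $\psi(1)\ge\gamma s^{-\alpha}\psi(s)$ for $0<s\le 1$, so $\int_0^1 ds/\psi(s)=\infty$ and therefore $\kappa=0$, not $\kappa>0$ as you claim. The process is point-recurrent and $\p^z(T_0<\infty)=1$ for every $z$, so the obstacle you identify for your first route (that $\p^{|x|\vee|y|}(T_0<\infty)$ is not bounded below) does not exist. The genuine obstruction is different: since $0\in(-R,R)$ you cannot decompose $G_{(-R,R)}(x,y)=\Go(x,y)-\E^x\Go(X_{\tau_{(-R,R)}},y)$, and using $G_{(-R,R)\setminus\{0\}}\le G_{(-R,R)}$ instead leaves you with $\Go(x,y)=K(x)+K(y)-K(y-x)$, which is small when both $x$ and $y$ are near $0$; no control on the correction term can produce $K(R)$ from that starting point. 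The fix is to translate so that $0$ leaves the interval: $G_{(-R,R)}(x,y)=G_{(0,2R)}(x+R,y+R)$ with the points now near $R$, and that is exactly the move the paper makes.

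Second, your second route, after the translation, has a circularity as written. You estimate $G_{(0,2R)}(a,b)$ for $a,b$ near $R$ via \eqref{Gformula_F} and ``the usual renewal estimates'' for $V'$ (in effect $V'(u)\approx V(u)/u$), but under the bare assumption $\psi\in\WLSC{\alpha}{\gamma}$ the only mechanism in this paper for controlling $V'$ pointwise is the Harnack inequality applied to the harmonic function $V'$ (Remark \ref{Hproperty}(a)), and Theorem \ref{Harnack} is proved \emph{using} Lemma \ref{Green_interval}. The paper avoids $V'$ entirely: after the translation it writes $G_{(0,2R)}(a,b)=\Go(a,b)-\E^a\Go(X_{\tau_{(0,2R)}},b)$, substitutes the identity $\Go(a,b)=K(a)+K(b)-K(b-a)$ (valid with $\kappa=0$), and uses Lemma \ref{3K} ($\E^a K(X_{\tau_{(0,2R)}})\le K(a)$, a Fatou consequence of harmonicity of $K$ on $\{0\}^c$) to cancel the leading terms, reducing everything to $\E^a K(X_{\tau_{(0,2R)}}-b)-K(b-a)\ge\inf_{|z|\ge R(1-\delta)}K(z)-\sup_{|z|\le\delta R}K(z)$, which is controlled by the WLSC of $K$ from Lemma \ref{KWLSC} alone. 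This is non-circular, elementary, and also spares you the finite-interval correction to \eqref{Gformula_F} that your route would still need to estimate.
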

\begin{proof}
Since $\psi\in\WLSC{\alpha}{\gamma}$, $\alpha>1$, then $ K$ has the lower scaling property with index $\alpha-1$. Hence there exists $\delta <1/2$, depending on the scalings, such that $$\inf_{z\ge R(1-\delta)}K(z)- \sup _{|z|\le\delta R}K(z)\ge (1/2)\inf_{z\ge R(1-\delta)}K(z)\ge (1/2)\inf_{z\ge R/2}K(z).$$
Let $\lambda=\frac{\delta}{2}$ and $|x|,\,|y|\leq \lambda R$. By Lemma \ref{3K} we have $\E^{x+R}K(X_{\tau_{(0,2R)}})\leq K(x+R)$.
Hence
\begin{eqnarray*}
G_{(-R,R)}(x,y)&=&G_{(0,2R)}(x+R,y+R)=\Go(x+R,y+R)-\E^{x+R}\Go(X_{\tau_{(0,2R)}},y+R)\\&=&K(x+R)-K(y-x)-\E^{x+R} K(X_{\tau_{(0,2R)}})+\E^{x+R} K(X_{\tau_{(0,2R)}}-y-R)\\
&\ge &   \E^{x+R} K(X_{\tau_{(0,2R)}}-y-R)-  K(y-x)  \\
&\geq&\inf_{z\ge R(1-\lambda)}K(z)- \sup _{|z|\le2\lambda R}K(z)\\
&\geq& (1/2)\inf_{z\ge R/2}K(z)\geq \lambda_2 K(R),
\end{eqnarray*}
where $\lambda_2$ depends only on the scalings.
\end{proof}

\begin{prop}\label{PropKrylovSafonov}Let  $\psi\in\mathrm{WLSC}(\alpha,\gamma)$, $\alpha>1$.  There exists a constant $\delta\le \lambda_1$ dependent only on the scalings  such that
for any $R>0$ and any non-empty Borel $A\subset (-\delta R,\delta R)$,
$$\p^x(T_{A}<\tau_{(-R,R)})\geq \frac 12,\qquad |x|\leq \delta R.$$
\end{prop}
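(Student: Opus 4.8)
The plan is to reduce the statement to a capacity/Green-function comparison and then invoke the lower bound on $G_{(-R,R)}$ from Lemma \ref{Green_interval}. First I would fix $\delta\le\lambda_1$ (to be shrunk below) and let $A\subset(-\delta R,\delta R)$ be a non-empty Borel set. Writing $\sigma_A=T_A\wedge\tau_{(-R,R)}$, the quantity I want to bound below is $\p^x(T_A<\tau_{(-R,R)})=\p^x(X_{\sigma_A}\in A)$ for $|x|\le\delta R$. The key object is the equilibrium (capacitary) measure $\mu_A$ of $A$ relative to $(-R,R)$, characterized by $G_{(-R,R)}\mu_A(z):=\int_A G_{(-R,R)}(z,w)\,\mu_A(dw)=\p^z(T_A<\tau_{(-R,R)})$ for all $z$, with $G_{(-R,R)}\mu_A\le 1$ everywhere and $G_{(-R,R)}\mu_A=1$ q.e.\ on $A$; this is standard potential theory for the transient killed process $X^{(-R,R)}$, whose Green function exists and is finite here because $(-R,R)$ is bounded. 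Then $\p^x(T_A<\tau_{(-R,R)})=G_{(-R,R)}\mu_A(x)$.

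The two facts I would combine are: (i) an \emph{upper} bound $\mu_A(A)\cdot\big(\inf_{|z|,|w|\le\delta R}G_{(-R,R)}(z,w)\big)\le \sup_{z}G_{(-R,R)}\mu_A(z)\le 1$, using that the equilibrium measure is supported on $\overline A\subset[-\delta R,\delta R]$ and that $G_{(-R,R)}\mu_A$ attains values $\le 1$; by Lemma \ref{Green_interval} (valid since $\delta\le\lambda_1$) this gives $\mu_A(A)\le 1/(\lambda_2 K(R))$; and (ii) a \emph{lower} bound for $x$ with $|x|\le\delta R$:
$$\p^x(T_A<\tau_{(-R,R)})=\int_A G_{(-R,R)}(x,w)\,\mu_A(dw)\ \ge\ \Big(\inf_{|x|,|w|\le\delta R}G_{(-R,R)}(x,w)\Big)\mu_A(A)\ \ge\ \lambda_2 K(R)\,\mu_A(A),$$
which is the wrong direction on its own. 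So instead I would use a \emph{two-point / Harnack-type} comparison: for $|x|\le\delta R$ and \emph{any} fixed reference point $x_0$ with $|x_0|\le\delta R$, Lemma \ref{Green_interval} gives $G_{(-R,R)}(x,w)\ge \lambda_2 K(R)\ge \tfrac{\lambda_2 K(R)}{\,\text{(upper bound for }G\text{ near the boundary region)}\,}\,G_{(-R,R)}(x_0,w)$ for $w\in A$, once I also have an \emph{upper} bound $G_{(-R,R)}(x_0,w)\le C\,K(R)$ for such $w$ — which follows from $G_{(-R,R)}\le G_{\{0\}^c}$ together with Proposition \ref{Greenb} and the lower scaling of $K$ (so $K(x_0),K(w)\lesssim K(\delta R)\lesssim K(R)$). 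Integrating against $\mu_A$ then yields $\p^x(T_A<\tau_{(-R,R)})\ge c_0\,\p^{x_0}(T_A<\tau_{(-R,R)})$ with $c_0$ depending only on the scalings; i.e.\ a uniform Harnack inequality for the harmonic function $z\mapsto\p^z(T_A<\tau_{(-R,R)})$ on the strip $|z|\le\delta R$, with constant independent of $A$.

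Finally I would upgrade the uniform bound $c_0$ to the constant $\tfrac12$ by a standard iteration / ``sweeping-out'' argument at the cost of shrinking $\delta$: the function $u(z)=\p^z(T_A<\tau_{(-R,R)})$ is regular harmonic off $\overline A$, equals $1$ q.e.\ on $A$, and by the Harnack step satisfies $\inf_{|z|\le\delta R}u\ge c_0\sup_{|z|\le\delta R}u$; since $A$ is non-empty and contained deep inside, $\sup_{|z|\le\delta R}u\ge \tfrac12$ after possibly one more comparison (e.g.\ comparing $A$ with a single point $a\in A$ and using that $\p^z(T_a<\tau_{(-R,R)})$ is bounded below on $|z|\le\delta R$ by Proposition \ref{MRestimate1}-type estimates with $K(z),K(R)$ comparabilities, then shrinking $\delta$ so that the resulting constant exceeds $1/(2c_0)$). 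Combining, $\p^x(T_A<\tau_{(-R,R)})\ge c_0\cdot\tfrac{1}{2c_0}=\tfrac12$ for $|x|\le\delta R$. The main obstacle I anticipate is organizing the capacitary-measure argument cleanly in this recurrent (point-recurrent) setting and getting the matching upper bound for $G_{(-R,R)}$ on the inner strip with a constant depending only on the scalings; all of that rests on Lemma \ref{Green_interval}, Proposition \ref{Greenb}, and the lower/upper scaling of $K$ from Lemma \ref{KWLSC}.
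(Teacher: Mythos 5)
Your approach has a genuine gap, and also overlooks a much shorter route that the paper actually uses.

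The central technical step in your plan is the upper bound $G_{(-R,R)}(x_0,w)\le C\,K(R)$ for $|x_0|,|w|\le\delta R$, which you propose to obtain from ``$G_{(-R,R)}\le G_{\{0\}^c}$ together with Proposition \ref{Greenb}.'' But the domain monotonicity $G_D\le G_{D'}$ requires $D\subseteq D'$, and $(-R,R)\not\subset\{0\}^c$ (the origin lies in $(-R,R)$ but not in $\{0\}^c$), so this inequality is simply false as stated. What is true is $G_{(-R,0)\cup(0,R)}\le G_{\{0\}^c}$, and one can relate $G_{(-R,R)}$ to $G_{(-R,0)\cup(0,R)}$ by the strong Markov property at $T_0$, but that adds a term involving $G_{(-R,R)}(0,\cdot)$ that you would still have to control. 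So the key ingredient of your Harnack-type Green-function comparison is unjustified. Separately, the closing line ``$\p^x(T_A<\tau_{(-R,R)})\ge c_0\cdot\frac{1}{2c_0}=\frac12$'' is numerically incoherent: $\frac{1}{2c_0}>1$ whenever the Harnack constant satisfies $c_0<1/2$, and a probability cannot exceed $1$; you would need to argue that shrinking $\delta$ forces $c_0\ge 1/2$, which you do not do (and which would again depend on the missing upper bound for $G_{(-R,R)}$).

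The paper's own argument is considerably more elementary and sidesteps equilibrium measures and Harnack entirely. Since $A$ is non-empty, pick any $a\in A$; then $T_A\le T_a$, so it suffices to show $\p^x(T_a>\tau_{(-R,R)})\le\frac12$. By translation and domain monotonicity, for $|x-a|\le R/4$,
\[
\p^x(T_a>\tau_{(-R,R)})\ \le\ \p^{x-a}\bigl(T_0>\tau_{(-R/2,R/2)}\bigr)\ \le\ C_2\,\frac{\E^{x-a}\tau_{(-R/2,0)\cup(0,R/2)}}{V^2(R/2)}\ \le\ 8C_2\,\frac{R\,K(x-a)}{V^2(R)},
\]
using \eqref{B1} and Proposition \ref{exittime1}. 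Since $V^2(R)/R\approx K(R)$ (Lemma \ref{KWLSC}), the right-hand side is $\lesssim K(x-a)/K(R)$, and the WLSC property of $K$ with index $\alpha-1>0$ lets one choose $\delta$ (depending only on the scalings) small enough that this is $\le\frac12$ whenever $|x-a|<2\delta R$. The point is that the ``non-empty Borel $A$'' reduces instantly to a single-point hitting estimate, for which the paper already has sharp tools; no capacitary measure or Green-function Harnack comparison on the inner strip is needed. If you want to pursue your route, you would first need a correct, scale-invariant \emph{upper} bound on $G_{(-R,R)}$ in the inner strip, which is more work than the whole proposition.
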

\begin{proof}
Let $ |a|\le R/4$.
Let $D=(-R/2,0)\cup (0,R/2)$. By \eqref{B1} and then by Proposition \ref{exittime1}, for $|x-a|\le R/4$,
$$\p^x(T_{a}>\tau_{(-R,R)})\le \p^{x-a}(T_{0}>\tau_{(-R/2,R/2)})\leq C_2 \frac {\E^{x-a} \tau_D} {V^2(R/2)}\le 8C_2  K(x-a)\frac  R{V^2(R)}. $$
Since $\frac  {V^2(R)}R\approx K(R)$, with comparability constant dependent on the scalings,  then
$$\p^x(T_{a}>\tau_{(-R,R)})\le c  \frac {K(x-a)} {K(R)},\quad |x-a|<R/4,$$ with $c$ dependent on the scalings. Next,  we can use WLSC property for $K$ with index $\alpha-1$ to  choose $\delta<1/2$ (dependent only on the scalings)  small enough, such that
$$ \p^x(T_{a}>\tau_{(-R,R)})\le 1/2,\quad |x-a|<2\delta R.$$
Let $x\in A\subset (-\delta R,\delta R)$ and $a\in A$. Then

$$\p^x(T_{A}>\tau_{(-R,R)})\le \p^x(T_{a}>\tau_{(-R,R)})\le 1/2.$$
%
\end{proof}

Let $R_0=\delta R$, with $\delta$ chosen in the preceding proposition.
\begin{prop}\label{PropSuppportOut} Let $\psi\in\WLSC{\alpha}{\gamma}$, $\alpha>1$.
 Then for any $R>0$, and any non-negative function $F$ such that $(\mathrm{supp} F)^c \subset (-R,R)$,
$$\E^xF(X_{\tau_{(-R_0,R_0)}})\leq \frac2{\lambda_2} \E^yF(X_{\tau_{(-R,R)}}),\qquad |x|,\,|y|<R_0.$$
\end{prop}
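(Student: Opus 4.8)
The plan is to exploit the Ikeda--Watanabe formula together with the lower bound on the Green function of a large interval from Lemma \ref{Green_interval}, and the escape probability estimate from Proposition \ref{PropKrylovSafonov}. First I would express the left-hand side using the Ikeda--Watanabe formula: since $(\supp F)^c\subset(-R,R)$, for $|x|<R_0$ we have
$$\E^xF(X_{\tau_{(-R_0,R_0)}})=\int_{(-R_0,R_0)}\!\!\int_{\R}F(z)\,\nu(z-w)\,dz\;G_{(-R_0,R_0)}(x,w)\,dw,$$
and analogously with $R_0$ replaced by $R$ on the right-hand side. (One should note there is no contribution from the process creeping to the boundary, since $\{X_{\tau}\in\partial(-R_0,R_0)\}$ has probability zero and in any case $F$ vanishes near those points; this is where the support condition is used.) So the whole statement reduces to a pointwise comparison of Green functions: it suffices to show $G_{(-R_0,R_0)}(x,w)\le \frac2{\lambda_2}G_{(-R,R)}(y,w)$ for all $|x|,|y|<R_0$ and all $w$ in the support of the integrand, i.e. for $w\in(-R_0,R_0)$ but with $|z-w|$ such that $z$ can lie in $\supp F$; crucially this forces $|z|\ge R$, but $w$ itself only ranges over $(-R_0,R_0)$.

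For the upper bound on $G_{(-R_0,R_0)}(x,w)$ I would use Proposition \ref{Greenb} and subadditivity: $G_{(-R_0,R_0)}(x,w)\le \Go(x,w)\le 2K(w)$, or more crudely $\le 2(K(x)\wedge K(w))\le 2K(R_0)$. For the lower bound on the right-hand side, since $R_0=\delta R\le \lambda_1 R$ and both $|y|,|w|\le R_0\le\lambda_1 R$, Lemma \ref{Green_interval} gives directly $G_{(-R,R)}(y,w)\ge \lambda_2 K(R)\ge \lambda_2 K(R_0)$ (using monotonicity of $K$ under WLSC, which holds since $K\in\WLSC{\alpha-1}{\cdot}$ and $R\ge R_0$). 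Combining, $G_{(-R_0,R_0)}(x,w)\le 2K(R_0)\le \frac2{\lambda_2}G_{(-R,R)}(y,w)$, which is exactly the pointwise bound needed. Plugging this into the two Ikeda--Watanabe representations and integrating against the common nonnegative kernel $\int F(z)\nu(z-w)\,dz\,dw\ge 0$ yields the claim.

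The main obstacle is bookkeeping about where the relevant masses live: one must be careful that in the integral for $\E^xF(X_{\tau_{(-R_0,R_0)}})$ the variable $w$ genuinely ranges only over $(-R_0,R_0)$ (the interior of the small interval), so that Lemma \ref{Green_interval}'s hypothesis $|w|\le\lambda_1R$ is automatically satisfied — this is what makes the constant $2/\lambda_2$ uniform in $R$. A secondary point is to confirm $K(R)\ge K(R_0)$, which needs only monotonicity of $K$ (guaranteed here because $\psi\in\WLSC{\alpha}{\gamma}$ with $\alpha>1$ forces, via Lemma \ref{KWLSC}, $K(x)\approx \frac{1}{|x|\psi(1/x)}$ and the lower scaling of $K$, hence $K$ is comparable to an increasing function; alternatively one absorbs the comparability constant into $\lambda_2$). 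No genuinely hard estimate is involved — the proposition is essentially a clean consequence of the two preceding lemmas via the Ikeda--Watanabe formula.
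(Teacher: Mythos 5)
Your overall strategy coincides with the paper's: apply the Ikeda--Watanabe formula to both $\E^xF(X_{\tau_{(-R_0,R_0)}})$ and $\E^yF(X_{\tau_{(-R,R)}})$, bound $G_{(-R_0,R_0)}(x,w)$ from above by a constant times $K(R_0)$, bound $G_{(-R,R)}(y,w)$ from below by $\lambda_2 K(R)$ via Lemma~\ref{Green_interval}, and compare. The lower-bound half of your argument is exactly right.

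However, your upper-bound step rests on a false inequality. You write $G_{(-R_0,R_0)}(x,w)\le \Go(x,w)$, but domain monotonicity of Green functions requires $(-R_0,R_0)\subset\{0\}^c$, which fails since $0\in(-R_0,R_0)$. The inequality genuinely breaks: as $x=w\to 0$, $\Go(x,x)=2K(x)-\kappa K(x)^2\to 0$ because $K$ is continuous with $K(0)=0$, whereas Lemma~\ref{Green_interval} applied to the interval $(-R_0,R_0)$ gives $G_{(-R_0,R_0)}(x,x)\ge \lambda_2K(R_0)>0$ for all $|x|\le\lambda_1R_0$. The correct route, which is what the paper takes, is to translate the interval off the origin first: by translation invariance
$$G_{(-R_0,R_0)}(x,w)=G_{(0,2R_0)}(x+R_0,w+R_0)\le \Go(x+R_0,w+R_0),$$
now valid because $(0,2R_0)\subset\{0\}^c$. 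One then applies Proposition~\ref{Greenb} to obtain $\Go(x+R_0,w+R_0)\le 2K(x+R_0)$, and uses subadditivity of $K$ together with the (scaling-implied) comparability of $K$ with a non-decreasing function to bound this by a constant multiple of $K(R_0)$. With that fix your proof closes and matches the paper's.
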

\begin{proof}
Denote $\nu(w,A)= \nu(A-w), w\in \R$ and Borel $A\subset\R$.
By the Ikeda-Watanabe formula and Lemma \ref{Green_interval},
\begin{eqnarray*}\label{Supp2} \E^yF(X_{\tau_{(-R,R)}}) &\geq& \int_{(-R,R)^c}\int_{-R_0}^{R_0} F(z) G_{(-R,R)}(y,w)\nu(w,dz)dw\\&\geq& \lambda_2 K(R) \int_{(-R,R)^c}\int_{-R_0}^{R_0} F(z)\nu(w,dz)dw.\end{eqnarray*}
Again, by  the Ikeda-Watanabe formula, subadditivity of $K$ and Proposition \ref{Greenb},
\begin{eqnarray*}
\E^xF(X_{\tau_{(-R_0,R_0)}})&\leq&\int_{(-R,R)^c}\int_{-R_0}^{R_0}F(z)\Go(x+R_0,w+R_0)\nu(w,dz)dw\\ &\leq& 2K(R_0)\int_{(-R,R)^c}\int_{-R_0}^{R_0}F(z)\nu(w,dz)dw.
\end{eqnarray*}
Hence
$$\E^xF(X_{\tau_{(-R_0,R_0)}})\leq \frac2{\lambda_2} \E^yF(X_{\tau_{(-R,R)}}).$$
\end{proof}

\begin{thm}\label{Harnack} If $\psi\in\WLSC{\alpha}{\gamma}$, $\alpha>1$,
then the global scale invariant Harnack inequality holds. That is there is a constant $C_H$ dependent only the scalings such that for any $R>0$ and any non-negative harmonic function on $(-R,R)$  we have
\begin{equation}\label{HI1}\sup_{x\in (-R/2,R/2)}h(x)\leq C_H\inf_{x\in (-R/2,R/2)}h(x).\end{equation}

\end{thm}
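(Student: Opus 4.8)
My plan is to derive the Harnack inequality from the hitting estimate of Proposition~\ref{PropKrylovSafonov}, used for \emph{one-point} sets, together with optional stopping, and then to globalize by a Harnack chain of boundedly many links. The crucial remark is that, since $X$ is point recurrent, a singleton $A=\{a\}$ is admissible in Proposition~\ref{PropKrylovSafonov}: for any $r>0$, $\p^x(T_a<\tau_{(-r,r)})\ge 1/2$ as soon as $|x|,|a|\le\delta r$, and by translation invariance of $X$ the same holds for intervals centred at any point. I will first convert this into a local Harnack inequality with the \emph{absolute} constant $2$, valid on intervals of radius $\delta r$, and then chain.

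Fix $z$ with $|z|<R/2$ and set $r=R/2$, so that $[z-r,z+r]\subset(-R,R)$. Let $h\ge 0$ be harmonic on $(-R,R)$ and take $a,x$ with $|a-z|<\delta r$, $|x-z|<\delta r$ and $x\neq a$. Applying the definition of harmonicity to the bounded open set $B=(z-r,z+r)\setminus\{a\}$, whose closure $[z-r,z+r]$ is contained in $(-R,R)$, and noting that $\tau_B=T_a\wedge\tau_{(z-r,z+r)}$ and that $X_{\tau_B}=a$ on $\{T_a<\tau_{(z-r,z+r)}\}$ (right continuity of $X$ and closedness of $\{a\}$), we get, since $h\ge 0$,
$$h(x)=\E^x h(X_{\tau_B})\ \ge\ h(a)\,\p^x\!\bigl(T_a<\tau_{(z-r,z+r)}\bigr)\ \ge\ \tfrac12\,h(a).$$
Since the case $x=a$ is trivial, taking the infimum over admissible $x$ and the supremum over admissible $a$ gives $\sup_{|x-z|<\delta r}h(x)\le 2\inf_{|x-z|<\delta r}h(x)$. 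The points requiring (routine) care here are only: that $\tau_B<\infty$ almost surely, which holds because the recurrent process leaves every bounded set; the identification $X_{\tau_B}=a$ on the hitting event; and the admissibility of $T_a$ as a stopping time.

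It remains to pass from radius $\delta R/2$ centred at an arbitrary point of $(-R/2,R/2)$ to the whole interval $(-R/2,R/2)$. Given $x,y\in(-R/2,R/2)$, interpolate by points $x=z_0,z_1,\dots,z_n=y$ lying on the segment $[x,y]\subset(-R/2,R/2)$ with $|z_i-z_{i+1}|<\delta R/2$; since $|x-y|<R$ one may take $n\le 1+\lceil 2/\delta\rceil$. Each consecutive pair lies in the interval of radius $\delta R/2$ about $z_i$, so the local step gives $h(z_i)\le 2h(z_{i+1})$, and hence $h(x)\le 2^{n}h(y)\le 2^{1+\lceil 2/\delta\rceil}h(y)$. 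Taking the supremum over $x$ and the infimum over $y$ yields $\sup_{(-R/2,R/2)}h\le C_H\inf_{(-R/2,R/2)}h$ with $C_H=2^{1+\lceil 2/\delta\rceil}$, which depends only on $\delta$, hence only on the scalings. The step I expect to be the main obstacle is precisely this globalization: one must exploit that the local Harnack radius $\delta R/2$ does not shrink as $z$ approaches $\pm R/2$ (there is always a margin $R/2$ on each side inside $(-R,R)$), so that the number of chain links, and thus the exponent in $C_H$, stays bounded by a scaling-dependent constant; a naive chain with shrinking intervals would instead produce an $R$-dependent constant. As an alternative one can bypass the one-point version of Proposition~\ref{PropKrylovSafonov} and run the classical Krylov--Safonov/Bass--Levin scheme: decompose $h(x)=\E^x h(X_{\tau_{(-R_0,R_0)}})$ into the parts where the exit point of $(-R_0,R_0)$ lands inside or outside $(-R,R)$, bound the exterior part uniformly in $x$ by Proposition~\ref{PropSuppportOut} applied to $F=h\,\ind_{(-R,R)^c}$ (which is at most a constant times $\inf h$), and iterate the spreading of positivity furnished by Proposition~\ref{PropKrylovSafonov} and the Green-function lower bound of Lemma~\ref{Green_interval}; in that route controlling the exterior term is the main difficulty.
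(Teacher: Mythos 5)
Your proof is correct, and it takes a genuinely different and more economical route than the paper's. The paper's own proof invokes both Proposition~\ref{PropKrylovSafonov} and Proposition~\ref{PropSuppportOut} and then runs the Bass--Levin/Krylov--Safonov iteration scheme (first for bounded harmonic functions, then lifting the boundedness restriction); you replace the entire iteration by a single application of the singleton case of Proposition~\ref{PropKrylovSafonov} via the mean-value property on the punctured interval $B=(z-r,z+r)\setminus\{a\}$. The verification is sound: $B$ is a bounded open set with $\overline{B}=[z-r,z+r]\subset(-R,R)$, so the paper's definition of harmonicity applies directly; $\tau_B=T_a\wedge\tau_{(z-r,z+r)}$ and, by right-continuity of $X$ and closedness of $\{a\}$, $X_{\tau_B}=a$ on $\{T_a<\tau_{(z-r,z+r)}\}$; dropping the nonnegative remainder gives $h(x)\ge h(a)\,\p^x(T_a<\tau_{(z-r,z+r)})\ge\tfrac12 h(a)$. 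This yields a local Harnack constant $2$ on balls of radius $\delta R/2$, uniformly for centers in $(-R/2,R/2)$, and your Harnack chain has length bounded by a constant depending only on $\delta$ (hence on the scalings), as you correctly emphasize. No separate reduction to bounded $h$ is needed in your argument, because the identity $h(x)=\E^x h(X_{\tau_B})$ already forces $\E^x h(X_{\tau_B})<\infty$ for real-valued $h\ge0$, and you only use it via a one-sided lower bound. What each approach buys: yours is shorter, avoids Proposition~\ref{PropSuppportOut} entirely, and makes transparent that the Harnack inequality here is a direct consequence of uniform point-hitting (the hallmark of a point-recurrent process with WLSC index $\alpha>1$); the paper's Bass--Levin route, on the other hand, is the one that survives in settings where singletons are polar (higher dimensions, or $\alpha\le1$), which is why it is the standard template even though it is overkill in this particular one-dimensional, point-recurrent situation. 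One small phrasing quibble: singletons are "admissible" in Proposition~\ref{PropKrylovSafonov} simply because they are non-empty Borel sets; point recurrence is what makes the resulting bound non-vacuous, not what makes the set admissible. Also worth saying explicitly that you use $\delta<1/2$ (guaranteed by the cited proposition) so that the chain length is genuinely bounded.
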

\begin{proof}
We prove the result for bounded harmonic functions. The boundedness assumption   can be removed in a  similar way as in \cite[Theorem 2.4]{MR2031452}. 

  With  Propositions \ref{PropSuppportOut}  and \ref{PropKrylovSafonov} at hand   we can use the approach of Bass and Levin (\cite{MR1918242})   to get the existence of constants $c_1=c_1(\alpha,\gamma)$ and $a=a(\alpha,\gamma)<1$ such that, for  any function $h$ non-negative and bounded on $\R$ and  harmonic in a ball $(-R,R)$, $R>0$,
$$\sup_{x\in (-aR,aR)}h(x)\leq c_1\inf_{x\in (-aR,aR)}h(x).$$
Next, we use the standard chain argument to get
 $$\sup_{x\in (-R/2,R/2)}h(x)\leq C_H\inf_{x\in (-R/2,R/2)}h(x),$$
 where $C_H=C_H(c_1,a)$.
\end{proof}

\subsection{Boundary behaviour}


In this subsection we prove certain estimates of non-negative functions which are harmonic on $(0, R), 0<R\le \infty$. We show that under appropriate assumptions the function $V(x)$ provides the right order of decay at the boundary at $0$ for harmonic functions we consider. The obtained results are then  used in Section \ref{Interval} to estimate the tail function of the hitting time of an interval.

In our development the following {Property} $\textbf{(H)}$ of the derivative  of $V$ is crucial.  Below, in Remark \ref{Hproperty}, we discus the situations when it holds. We also mention that we do not know any example of a symmetric L\'evy process {with an unbounded symbol} for which the property is not satisfied.

{\it Property} $\textbf{(H)}$.

We say that $X$ satisfies $\textbf{(H)}$ if there is a constant $H\ge 1$ such that for any $0<\delta\le w\le u \le w+2\delta$ we have
$$V^\prime(u)\le H  V^\prime(w).$$
\begin{rem}\label{Hproperty} The assumption $\textbf{(H)}$ is satisfied in the following situations:

a)  $\psi\in\WLSC{\alpha}{\gamma}$, $\alpha>1$. The constant $H$ depends only on the scalings. Since $V^\prime$ is harmonic on $(0, \infty)$ this follows from  Theorem \ref{Harnack}.

b) $X$ is a subordinate Brownian motion and $\psi\in\WLSC{\beta}{\gamma}$, $\beta>0$. The constant $H$ depends only on the scalings. This follows from \cite[Theorem 7]{MR3225805}.

c) $X$ is a special subordinate Brownian motion, since in this case $V^\prime$ is non-increasing \cite[Lemma 7.5]{BGR3}.

\end{rem}

\begin{lem}\label{GreenH} Suppose that $\textbf{\textrm{(H)}}$ holds. Then for $0<x< \delta<y/3$  we have

$$G_{(0,\infty)}(x,y)\le H^2 G_{(0,\infty)}(2\delta,y)\frac {V(x)}{V(\delta)}.$$

\end{lem}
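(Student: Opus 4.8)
The plan is to use the explicit formula \eqref{Gformula_F} for the Green function of the half-line, namely
$$G_{(0,\infty)}(x,y)=\int^{x}_0V'(u)V'(y-x+u)\,du,\quad 0<x<y,$$
together with Property \textbf{(H)} to compare the two integrals $\int_0^x V'(u)V'(y-x+u)\,du$ and $\int_0^{2\delta}V'(u)V'(y-2\delta+u)\,du$. The idea is that on the whole range of integration of the first integral the argument $u$ lies in $(0,x)\subset(0,\delta)$, while the shift $y-x+u$ stays close (within a controlled additive amount) to the corresponding shift $y-2\delta+u$ appearing in the second integral, so Property \textbf{(H)} lets us replace $V'(y-x+u)$ by a bounded multiple of $V'(y-2\delta+u)$. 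One then has to match up the ``$V'(u)$'' part, where the ranges $(0,x)$ and $(0,2\delta)$ differ; this is where the factor $V(x)/V(\delta)$ will come from after we also control $V'$ on $(0,2\delta)$ by its values on a comparable subinterval.

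Concretely, first I would bound, for $0<u<x<\delta$,
$$V'(y-x+u)\le H\,V'(y-2\delta+u'),$$
for a suitable $u'$: since $x<\delta$ and $u<\delta$ we have $|(y-x+u)-(y-2\delta+u)|=|2\delta-x|<2\delta$, and both points exceed $y-2\delta>y/3-\ldots$; after checking they are $\ge \delta$ (which holds because $y>3\delta$ forces $y-2\delta>\delta$) Property \textbf{(H)} applies directly with $w=y-2\delta+u$, $u=y-x+u$, giving $V'(y-x+u)\le H\,V'(y-2\delta+u)$. Hence
$$G_{(0,\infty)}(x,y)\le H\int_0^x V'(u)V'(y-2\delta+u)\,du.$$
Next I would compare $\int_0^x V'(u)V'(y-2\delta+u)\,du$ with $\int_0^{2\delta}V'(u)V'(y-2\delta+u)\,du=G_{(0,\infty)}(2\delta,y)$. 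For this I split $(0,2\delta)$ into the subintervals $(0,\delta)$ and $(\delta,2\delta)$ (and further dyadic-type pieces of $(0,\delta)$ down to scale $x$ if needed), and on each piece use \textbf{(H)} to say that $V'(y-2\delta+u)$ is comparable across the piece and $V'(u)$ likewise, so that the integral over $(0,2\delta)$ is at least a constant multiple of $\frac{V(2\delta)}{V(x)}$ (or rather $\frac{\int_0^{2\delta}V'}{\int_0^x V'}=\frac{V(2\delta)}{V(x)}$) times the integral over $(0,x)$; using $V(2\delta)\ge V(\delta)$ and another application of \textbf{(H)} to pass from $V(2\delta)$ to a multiple of $V(\delta)$, the total constant comes out as $H^2$, yielding
$$\int_0^x V'(u)V'(y-2\delta+u)\,du\le H\,\frac{V(x)}{V(\delta)}\,G_{(0,\infty)}(2\delta,y),$$
and combining the two displays gives the claim.

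The main obstacle I anticipate is the bookkeeping in the second comparison: one must be careful that \textbf{(H)}, as stated, only compares $V'(u)$ and $V'(w)$ when $\delta\le w\le u\le w+2\delta$, i.e. both arguments are at least $\delta$ and within additive distance $2\delta$. Near $u=0$ the first argument of $V'$ can be much smaller than $\delta$, so a single application of \textbf{(H)} does not suffice; instead one iterates it over a geometric sequence of scales $x,2x,4x,\dots$ up to $\delta$, picking up only a constant (a fixed power of $H$, here ending up as $H^2$ overall) because the slowly-varying nature of $V$ and the subadditivity \eqref{subad} keep $V(x)/V(\delta)$ as the right comparison quantity. Care is also needed to verify the hypotheses of \textbf{(H)} are literally met in the first step — in particular that $y-2\delta\ge\delta$, which is exactly why the hypothesis $y/3>\delta$, i.e. $y>3\delta$, is imposed. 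Everything else is a routine estimate on the explicit integral formula.
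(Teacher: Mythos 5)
Your first step is fine: applying $\textbf{(H)}$ with $w=y-2\delta+u$ and the larger argument $y-x+u$ is legitimate (the hypotheses $\delta\le w$, $u\le w+2\delta$ are all verified since $y>3\delta$ and $x<\delta$), and it yields $G_{(0,\infty)}(x,y)\le H\int_0^x V'(u)V'(y-2\delta+u)\,du$. The problem is the second step, and it is a genuine gap, not mere bookkeeping. Property $\textbf{(H)}$ is one-sided: it says $V'$ is almost non-increasing ($V'(u)\le HV'(w)$ when $w\le u\le w+2\delta$), and gives \emph{no} lower bound on $V'$ at larger arguments in terms of $V'$ at smaller ones. When you write that ``$V'(y-2\delta+u)$ is comparable across the piece'', you are implicitly using a two-sided comparison that $\textbf{(H)}$ simply does not provide. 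Concretely: after your step 1, the second factor $V'(y-2\delta+u)$ with $u\in(0,x)$ lives at the \emph{left} end of the window $(y-2\delta,\,y)$. To dominate $\int_0^x V'(u)V'(y-2\delta+u)\,du$ by a small multiple of $\int_0^{2\delta}V'(u)V'(y-2\delta+u)\,du$ you would effectively need a lower bound of the form $V'(y-2\delta)\lesssim V'(y-2\delta+u)$ for $u$ up to $\delta$, which is the \emph{reverse} of $\textbf{(H)}$. A step-function $V'$ (equal to $1$ up to some $M\in(y-2\delta,\,y-2\delta+x)$ and equal to a tiny $\varepsilon$ afterwards) satisfies $\textbf{(H)}$ with $H=1$, but then $\int_0^x V'(u)V'(y-2\delta+u)\,du$ and $G_{(0,\infty)}(2\delta,y)$ are both of size $\approx M-(y-2\delta)$, so their ratio is about $1$, far larger than $V(x)/V(\delta)$; your claimed step-2 inequality fails. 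Your dyadic-iteration fallback does not repair this: iterating $\textbf{(H)}$ over $\log_2(\delta/x)$ scales would cost $H^{\log_2(\delta/x)}$, not a fixed power of $H$, and in any case the directionality problem is the same on every piece.

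The paper's proof dodges precisely this trap by choosing a different intermediate object. Instead of replacing $V'(y-x+u)$ by $V'(y-2\delta+u)$ pointwise, it first uses $\textbf{(H)}$ (with $w=y-x$) to collapse the integrand to the \emph{single} value $V'(y-x)$, giving $G_{(0,\infty)}(x,y)\le H\,V'(y-x)V(x)$. The point $y-x$ equals $y-2\delta+(2\delta-x)$ and is therefore \emph{larger} than every $y-2\delta+u$ with $u\in(0,\delta]$, and lies within $2\delta$ of each such point. Thus in the second step $\textbf{(H)}$ can be applied in the correct direction, $V'(y-x)\le H\,V'(y-2\delta+u)$, and integrating against $V'(u)\,du$ over $(0,\delta)$ produces $V'(y-x)V(\delta)\le H\,G_{(0,\infty)}(2\delta,y)$. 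Combining gives the stated bound with $H^2$. The moral: you must keep the quantity you carry through step 1 at the top of the relevant window, so that the one-sided $\textbf{(H)}$ can be used again in step 2; your choice puts it at the bottom, from where $\textbf{(H)}$ gives nothing useful.
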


\begin{proof} Let $0\le u \le x$. Since $x<y$, and  $\delta \le  y-x \le   y-x + u\le  y-x+\delta $ by \eqref{Gformula_F} and the property $\textbf{(H)}$ we have
$$G_{(0,\infty)}(x,y)=\int^{x}_0V'(u)V'( y-x + u)du\le H V'( y-x )V(x).$$
Next, since    $ \delta < y-2\delta +u \le    y-x \le y-2\delta +u +2\delta$ for $0\le u\le \delta $, using again the property $\textbf{(H)}$ we arrive at
$$V'( y-x )\le H V'(y-2\delta +u).$$
Multiplying both sides by $ V'( u )$ and integrating  over $[0, \delta]$ we obtain
$$V'( y-x )V(\delta)\le H \int^{\delta }_0 V'(y-2\delta +u)V'( u )du\le H \int^{2\delta }_0 V'(y-2\delta +u)V'( u )du= H G_{(0,\infty)}(2\delta,y), $$
which completes the proof.
\end{proof}

 According to \cite[Theorem VI.20]{MR1406564} we have for any non-negative function $f$ on $[0,\infty)$,
\begin{equation*}\label{Gformula}\int^{\infty }_0 f(y)G_{(0,\infty)}(x,y)dy= \int^{\infty }_0V'(y)dy\int^{x }_0f(x+y-u)V'(u)du.\end{equation*} 
Applying this to the indicator of the interval $[0, z],\, z>0,$  
 we have
$M(x, z)= \int^{z}_0G_{(0,\infty)}(x,y)dy= \int^{z}_0V'(y)dy\int^{x}_{(x+y-z)\vee0}V'(u)du.$ It is then  clear that   \begin{equation}\label{Green} V(z-x)V(x)\le M(x, z)\le  V(x)V(z),\quad 0<x\le z<\infty.  \end{equation}

\begin{lem} \label{H_cor}
Suppose that  $\textbf{(H)}$ holds. Let F(z) be non-negative subadditive on $\R$ and $\E^{x}F(X_{\tau_{(0,\infty)}})\le   F(x)$,  $x>0$. Then, for $0<x<1$,
$$\E^{x}[X_{\tau_{(0,\infty)}}\leq-2;F(X_{\tau_{(0,\infty)}})]\le cH^2 F^*(1) \frac{V(x)}{V(1)},$$
where $F^*(x)= \sup_{|z|\le |x|}F(z).$ The constant $c$ is absolute.
\end{lem}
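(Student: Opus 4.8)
The strategy is to expand the quantity by the Ikeda--Watanabe formula and split the integration over the intermediate jumping point $w$ into a \emph{far} part $w\ge 3/2$, handled by the boundary Green estimate of Lemma~\ref{GreenH}, and a \emph{near} part $0<w<3/2$, handled by a uniform bound on the weighted jump intensity that has to be squeezed out of the hypothesis $\E^yF(X_{\tau_{(0,\infty)}})\le F(y)$ itself.

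First dispose of the range $1/2\le x<1$: by subadditivity \eqref{subad} of $V$ we have $V(x)\ge V(1/2)\ge V(1)/2$, so $\E^{x}[X_{\tau_{(0,\infty)}}\le-2;F(X_{\tau_{(0,\infty)}})]\le\E^xF(X_{\tau_{(0,\infty)}})\le F(x)\le F^*(1)\le 2F^*(1)V(x)/V(1)$, already of the required form. So assume $0<x<1/2$. By the Ikeda--Watanabe formula (no creeping term, since $(-\infty,-2]$ lies in the complement of $\overline{(0,\infty)}$),
$$\E^{x}[X_{\tau_{(0,\infty)}}\le-2;F(X_{\tau_{(0,\infty)}})]=\int_0^\infty G_{(0,\infty)}(x,w)\,g(w)\,dw,\qquad g(w):=\int_{(-\infty,-2]}F(z)\,\nu(dz-w),$$
and I split $\int_0^\infty=\int_0^{3/2}+\int_{3/2}^\infty$. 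For $w\ge 3/2$, Lemma~\ref{GreenH} with $\delta=1/2$ (legitimate since $x<1/2<w/3$) gives $G_{(0,\infty)}(x,w)\le H^2\frac{V(x)}{V(1/2)}G_{(0,\infty)}(1,w)\le 2H^2\frac{V(x)}{V(1)}G_{(0,\infty)}(1,w)$; integrating against $g$, enlarging the range back to $(0,\infty)$, and using Ikeda--Watanabe at the base point $1$ together with the hypothesis yields
$$\int_{3/2}^\infty G_{(0,\infty)}(x,w)g(w)\,dw\le 2H^2\frac{V(x)}{V(1)}\,\E^1F(X_{\tau_{(0,\infty)}})\le 2H^2\frac{V(x)}{V(1)}F(1)\le 2H^2F^*(1)\frac{V(x)}{V(1)}.$$

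For the near part I claim that $\sup_{0<w<3/2}g(w)\le c_1F^*(1)/V^2(1)$ with $c_1$ absolute; granting this, \eqref{Green} gives $\int_0^{3/2}G_{(0,\infty)}(x,w)g(w)\,dw\le c_1\frac{F^*(1)}{V^2(1)}M(x,3/2)\le c_1\frac{F^*(1)}{V^2(1)}V(x)V(3/2)\le 2c_1F^*(1)\frac{V(x)}{V(1)}$, using $V(3/2)\le 2V(1)$, which finishes the proof after absorbing constants (with $H\ge1$). To prove the claim, write $g(w)=\int_{(-\infty,-2-w]}F(\zeta+w)\,\nu(d\zeta)$ and set $I_0:=\int_{(-\infty,-2]}F(z)\,\nu(dz)$; applying subadditivity of $F$ in both directions ($F(\zeta+w)\le F(\zeta)+F(w)$ and $F(\zeta)\le F(\zeta+w)+F(-w)$), the crude bound $F^*(2r)\le 2F^*(r)$, and the tail estimate $\nu([2,\infty))\le C_2/V^2(2)\le C_2/V^2(1)$ coming from \eqref{B} and monotonicity of $V$, one obtains $g(w)\le I_0+2C_2F^*(1)/V^2(1)$ for $0<w<3/2$ and $g(w)\ge I_0-6C_2F^*(1)/V^2(1)$ for $1\le w\le 2$. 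It remains to bound $I_0$: a direct computation from \eqref{Gformula_F} gives $M(1,1)=\tfrac12V^2(1)$, hence by \eqref{Green} $\int_1^2 G_{(0,\infty)}(1,w)\,dw=M(1,2)-M(1,1)\ge V^2(1)-\tfrac12V^2(1)=\tfrac12V^2(1)$; combining this with the lower bound on $g$ on $[1,2]$ and with $\int_0^\infty G_{(0,\infty)}(1,w)g(w)\,dw=\E^1[X_{\tau_{(0,\infty)}}\le-2;F(X_{\tau_{(0,\infty)}})]\le F(1)\le F^*(1)$ gives $\tfrac12V^2(1)\big(I_0-6C_2F^*(1)/V^2(1)\big)^+\le F^*(1)$, so $I_0\le(6C_2+2)F^*(1)/V^2(1)$ and the claim holds with $c_1=8C_2+2$.

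The only genuinely delicate point is the bound on $I_0=\int_{(-\infty,-2]}F\,d\nu$. The naive linear estimate $F(z)\lesssim|z|F^*(1)$ is useless, since $\int_{|z|\ge2}|z|\,\nu(dz)$ need not be finite; one really has to convert the qualitative invariance/finiteness hypothesis $\E^yF(X_{\tau_{(0,\infty)}})\le F(y)$ into the quantitative bound $I_0\lesssim F^*(1)/V^2(1)$, and the mechanism for this is the strict positivity of $\int_1^2 G_{(0,\infty)}(1,\cdot)$ furnished by \eqref{Gformula_F}--\eqref{Green}, combined with the near-constancy of $g$ on bounded intervals coming from subadditivity of $F$ and the Lévy tail bound \eqref{B}. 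Everything else ($\textbf{(H)}$ enters only through Lemma~\ref{GreenH}, producing the factor $H^2$) is routine.
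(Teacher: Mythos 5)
Your proof is correct and follows essentially the same route as the paper's: the Ikeda--Watanabe formula, a split of the jump-off position into a far range (treated with Lemma~\ref{GreenH} and the invariance hypothesis at a fixed base point) and a near range (requiring a separate Green-function argument to dominate $\int_{(-\infty,-2]}F\,d\nu$ by $F^*(1)/V^2(1)$). Your version is somewhat more streamlined, working uniformly at base point $1$ and split level $3/2$ where the paper uses $2$, $4$ and $6$, but the underlying mechanism is identical.
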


\begin{proof}
By the Ikeda-Watanabe formula, 
\begin{eqnarray*}
\E^{x}[X_{\tau_{(0,\infty)}}\leq-2;F(X_{\tau_{(0,\infty)}})]
&=&\int^{-2}_{-\infty}F(z)\int^\infty_0G_{(0,\infty)}(x,y)\nu(z, dy)dz.
\end{eqnarray*}
By Lemma \ref{GreenH}, 
$$G_{(0,\infty)}(x,y)\le H^2\frac{V(x)}{V(2)}G_{(0,\infty)}(4,y),\quad  x\le 2<6\le y.$$
Hence,
\begin{eqnarray*}
\int^\infty_{6}G_{(0,\infty)}(x,y)\nu(z, dy)
&\leq&H^2\frac{V(x)}{V(2)}\int^\infty_{6}G_{(0,\infty)}(4,y)\nu(z, dy).
\end{eqnarray*}
Note that $\E^z F(X_{\tau_{(0,\infty)}})\le F(z)$, which implies 
$$I=\int^{-2}_{-\infty}F(z)\int^\infty_{6}G_{(0,\infty)}(x,y)\nu(z, dy)dz\leq H^2\frac{V(x)}{V(2)}\E^{4}F(X_{\tau_{(0,\infty)}})\leq H^2\frac{V(x)}{V(2)}F(4).$$
 Observe that by subadditivity of $F$, $ F(w+y)\le  F(w)+ F(y)\le F(w)+ F^*(6)$ if $0<y<6$ and $w<-2-y$. By \eqref{Green} we have  $\int^{6}_{0}G_{(0,\infty)}(x,y)dy\le V(6)V(x)\le 2V(4)V(x)$, hence
\begin{eqnarray*}
II&=&\int^{-2}_{-\infty}F(z)\int_0^{6}G_{(0,\infty)}(x,y)\nu(z, dy)dz=  \int_0^{6}G_{(0,\infty)}(x,y) \int^{-2-y}_{-\infty}F(w+y)\nu(dw)dy\\
&\leq&2V(x)\frac{V(2)V(4)}{V(2)}\int^{-2}_{-\infty}F(w)\nu(dw)+ 2V(x)F^*(6)V(4)\nu([2,\infty)). 
\end{eqnarray*}
Note that by \eqref{Green},  $V(2)V(4)\le  \int_0^{6}G_{(0,\infty)}(2,y)dy,$ and $F(w-y) \le  F(w)+F^*(6)$ if $0<y<6$ and $w<-6$ 
which imply
\begin{eqnarray*}{V(2)V(4)}\int^{-12}_{-\infty}F(w)\nu(dw)&\le& \int_0^{6} \int^{-12}_{-\infty}F(w)G_{(0,\infty)}(2,y)\nu(dw)dy\\&\le&  \int_0^{6} \int^{-6-y}_{-\infty}F(w)G_{(0,\infty)}(2,y)\nu(dw)dy\\&=&  \int_0^{6} \int^{-6}_{-\infty}F(w-y)G_{(0,\infty)}(2,y)\nu(y, dw)dy\\&\le& \int_0^{6} \int^{-6}_{-\infty}(F(w)+F^*(6))G_{(0,\infty)}(2,y)\nu(y, dw)dy\\&\le&  \E^{2} F(X_{\tau_{(0, \infty)}})+F^*(6)\p^{2}(X_{\tau_{(0, \infty)}}\le -6)\\&\le& F(2)+F^*(6).\end{eqnarray*}  
Next, by  Lemma \ref{upper_den},   $$\int_{-12}^{-2}F(w)\nu(dw)\le F^*(12)\nu[2,\infty)\le C_2 \frac {F^*(12)}{V^2(2)}.$$

Combining all the estimates obtained above and using subaddativity of $F^*$ and $V$   we conclude that there is an absolute constant $c$ such that  $$\E^{x}[X_{\tau_{(0,\infty)}}\leq-2;F(X_{\tau_{(0,\infty)}})]= I+II\le cH^2 F^*(1) \frac{V(x)}{V(1)},\quad {0<x<1},$$
which   ends the proof.

%
\end{proof}

\begin{lem}\label{BHPLowerGeneral}

{Let $\psi\in\WLSC{\alpha}{\gamma}$, $\alpha>1$} and
let $F$ be a non-negative harmonic function on $(0,2R), R>0$.  Suppose that $ r>0 $ is such that  $V(R)\geq 2V(r)/C_4$, where $C_4$ is the constant from Lemma \ref{exit}. Then for $0<x<r$,
$$\frac{F(x)}{F(r)}\geq \frac {C_4}2 \ (C_H)^{R/r+1} \frac{V(x)}{V(r)}, $$ where $C_H$ is the constant from the Harnack inequality \eqref{HI1}, which depends  only on the scalings.
%
\end{lem}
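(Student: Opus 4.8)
The plan is to derive the lower bound for $F(x)$ by tracking a single excursion: from $x\in(0,r)$ the process leaves $(0,r)$ through the upper endpoint, lands --- with probability of the right order --- at a point lying safely inside $(0,2R)$, and from there a finite chain of Harnack inequalities carries a definite fraction of $F(r)$ back down. First, from $C_4\le1$ and the hypothesis we get $V(R)\ge 2V(r)>V(r)$, hence $r<R$ (as $V$ is strictly increasing). Lemma~\ref{exit} applied to $(0,r)$ gives $\p^x\bigl(X_{\tau_{(0,r)}}\ge r\bigr)\ge C_4\,V(x)/V(r)$ for $0<x<r$. To control the overshoot I would note that $\E^x\tau_{(0,r)}=\int_0^r G_{(0,r)}(x,y)\,dy\le\int_0^r G_{(0,\infty)}(x,y)\,dy=M(x,r)\le V(x)V(r)$ by \eqref{Green}, and then, by the Ikeda--Watanabe formula and the tail bound $\nu((t,\infty))\le C_2/V^2(t)$ of Lemma~\ref{upper_den} (a jump out of $(0,r)$ landing at $\ge\tfrac32 R$ must have length $>\tfrac32 R-r\ge\tfrac12 R$),
$$\p^x\bigl(X_{\tau_{(0,r)}}\ge \tfrac32 R\bigr)\ \le\ \nu\bigl((\tfrac12 R,\infty)\bigr)\,\E^x\tau_{(0,r)}\ \le\ \frac{C_2\,V(x)V(r)}{V^2(R/2)}.$$
Since $\psi^*$ has the doubling property by \eqref{Psi*ScalingGeneral}, Lemma~\ref{ch1V} gives $V(R/2)\ge c\,V(R)$ with $c>0$ absolute, and then the hypothesis $V(R)\ge 2V(r)/C_4$ (whose numerical constant is exactly what is needed here) makes the last quantity $\le\tfrac12 C_4\,V(x)/V(r)$. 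Hence $\p^x\bigl(r\le X_{\tau_{(0,r)}}<\tfrac32 R\bigr)\ge \tfrac12 C_4\,V(x)/V(r)$.

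Next comes the Harnack chain. Since $X$ is translation invariant, Theorem~\ref{Harnack} gives: for every interval $I\subset(0,2R)$ and every non-negative $h$ harmonic on $I$, $\sup h\le C_H\inf h$ on the middle half of $I$. Choosing overlapping sub-intervals of $(0,2R)$ whose middle halves cover $[r,\tfrac32 R)$ --- a geometric chain $r=z_0<z_1<\cdots$ of ratio $2$ up to scale $\approx R/2$ (each step using an interval $(0,3z_i)\subset(0,2R)$), followed by a bounded number of further steps ending with one application on $(0,2R)$ itself --- one links $r$ to an arbitrary $z\in[r,\tfrac32 R)$ in at most $\lceil R/r\rceil+1$ steps, so that $F(z)\ge C_H^{-(\lceil R/r\rceil+1)}F(r)$ for all such $z$ (here $F$ is harmonic on each sub-interval because it is harmonic on $(0,2R)$). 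Finally, for $\epsilon<x<r$, harmonicity of $F$ on $(0,2R)\supset[\epsilon,r]$ gives $F(x)=\E^x F(X_{\tau_{(\epsilon,r)}})\ge C_H^{-(\lceil R/r\rceil+1)}F(r)\,\p^x\bigl(r\le X_{\tau_{(\epsilon,r)}}<\tfrac32 R\bigr)$, using $F\ge0$ off the good event; letting $\epsilon\downarrow0$ --- on $\{X_{\tau_{(\epsilon,r)}}\ge r\}$ the exit time and location are eventually constant in $\epsilon$, so $\p^x(r\le X_{\tau_{(\epsilon,r)}}<\tfrac32 R)\to\p^x(r\le X_{\tau_{(0,r)}}<\tfrac32 R)$, which also disposes of the boundary point $0$ --- and inserting the two probability bounds above yields
$$F(x)\ \ge\ \frac{C_4}{2}\,C_H^{-(\lceil R/r\rceil+1)}\,\frac{V(x)}{V(r)}\,F(r),$$
which is the assertion.

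The main obstacle is the overshoot step. Because $X$ leaves $(0,r)$ with jumps of every size of order $r$, it typically does not land near $r$, and one must rule out --- with total error only of order $V(x)/V(r)$ --- that it lands too close to the boundary $2R$, where $F$ is not known to be harmonic and the Harnack chain would need unboundedly many steps. This is precisely why the hypothesis is phrased as $V(R)\ge 2V(r)/C_4$: through the L\'evy-tail estimate of Lemma~\ref{upper_den} it forces the event $\{X_{\tau_{(0,r)}}\ge\tfrac32 R\}$ to be suitably rare, the necessary bookkeeping comparing the values of $V$ at the comparable scales $r$, $R/2$, $R$ via the doubling of $\psi^*$. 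The length of the chain --- and hence the power of $C_H$ --- is controlled by $R/r$ because the safe landing region has diameter of order $R$ while each Harnack step advances only by a bounded factor; a more careful (geometric) chain would in fact replace $C_H^{-(\lceil R/r\rceil+1)}$ by a bound polynomial in $R/r$.
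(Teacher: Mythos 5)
Your overall architecture --- exit from $(0,r)$ on the right, a Harnack chain to pull a definite fraction of $F(r)$ back down, and the $\varepsilon\downarrow 0$ limit to handle the boundary --- is precisely the paper's architecture. But the overshoot bound, which is where the specific hypothesis $V(R)\ge 2V(r)/C_4$ must bite, is where you part company with the paper, and your route does not actually close.

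You bound $\p^x\bigl(X_{\tau_{(0,r)}}\ge \tfrac32 R\bigr)$ by the Green function estimate $\E^x\tau_{(0,r)}\le V(x)V(r)$ together with the L\'evy tail bound $\nu\bigl((R/2,\infty)\bigr)\le C_2/V^2(R/2)$, arriving at $C_2V(x)V(r)/V^2(R/2)$. For this to be $\le \tfrac12 C_4 V(x)/V(r)$ you need
$$V^2(R/2)\ \ge\ \frac{2C_2}{C_4}\,V^2(r).$$
The hypothesis gives only $V(R)\ge (2/C_4)V(r)$; after converting $V(R)$ to $V(R/2)$ via doubling of $\psi^*$ (costing some absolute $c_0<1$), you get $V^2(R/2)\ge c_0^2(2/C_4)^2V^2(r)$, and the needed inequality becomes $C_2C_4\le 2c_0^2$ --- a relation between the absolute constants $C_1,C_2,C_3$ that is not guaranteed and is certainly not what the hypothesis was calibrated for. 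So the sentence ``whose numerical constant is exactly what is needed here'' is not correct: your estimate comes with an extra, uncontrolled factor of $C_2$ (and the doubling constant), and the stated hypothesis does not absorb it.

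The paper avoids this entirely and gets the clean constant by never touching the L\'evy measure: on the event $\{X_{\tau_{(0,r)}}\ge R\}$ one has $\tau_{(0,r)}=\tau_{(0,R)}$, so $\{X_{\tau_{(0,r)}}\ge R\}\subset\{X_{\tau_{(0,R)}}\ge R\}$, and then Lemma \ref{exit} applied to $(0,R)$ gives $\p^x(X_{\tau_{(0,r)}}\ge R)\le V(x)/V(R)$. Subtracting this from the lower bound $C_4V(x)/V(r)$ and using $V(x)/V(R)\le \tfrac12 C_4 V(x)/V(r)$ (which \emph{is} exactly the hypothesis) yields $\p^x\bigl(X_{\tau_{(0,r)}}\in[r,R)\bigr)\ge \tfrac12 C_4\,V(x)/V(r)$ with no extraneous constants. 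Replacing your overshoot step by this one-line comparison repairs the proof; the Harnack chain and the $\varepsilon\downarrow 0$ argument you give are fine (and you correctly write the exponent as $-(\lceil R/r\rceil+1)$, which is the direction the paper's proof implicitly intends, the sign in \eqref{Harnack1} notwithstanding).
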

\begin{proof}  Since $ F$ is harmonic then using the Harnack inequality (Theorem \ref{Harnack}) we have for every $r\le x,y\le R$ such that  $|x-y|<r$,
$F(x)\ge C_HF(y)$. By the chaining argument we have for any $r\le x\le R$,
\begin{equation} \label{Harnack1}F(x)\ge (C_H)^{R/r+1}F(r).\end{equation}
   By Lemma \ref{exit}, \begin{eqnarray*}\p^x(X_{\tau_{(0,r)}}\in[r,R))&=& \p^x(X_{\tau_{(0,r)}}\ge r) -  \p^x(X_{\tau_{(0,r)}}\ge R)
\ge \p^x(X_{\tau_{(0,r)}}\ge r) -  \p^x(X_{\tau_{(0,R)}}\ge R)\\&\ge& C_4 \frac{V(x)}{V(r)}- \frac{V(x)}{V(R)}\ge \frac {C_4}2 \frac{V(x)}{V(r)}. \end{eqnarray*}
Note that by \eqref{Harnack1}, quasi left-continuity of $X$ and harmonicity of $F$,
\begin{eqnarray*}F(x)&=&\lim_{\varepsilon\to0^+}\E^xF(X_{\tau_{(\varepsilon,r)}})\geq \lim_{\varepsilon\to0^+}\E^x\[F(X_{\tau_{(\varepsilon,r)}}),X_{\tau_{(\varepsilon,r)}}\in[r,R]\]\\
&\geq&C_H^{R/r+1} F(r)\lim_{\varepsilon\to0^+} \p^x(X_{\tau_{(\varepsilon,r)}}\in[r,R])=C_H^{R/r+1} F(r) \p^x(X_{\tau_{(0,r)}}\in[r,R])\\&\geq& \frac{C_H^{R/r+1}C_4}{2}F(r)\frac{V(x)}{V(r)}.
\end{eqnarray*}
\end{proof}


%

\section{Hitting times of intervals}\label{Interval}

Throughout this section  $B_R= [-R,R], \ R>0$. The goal is to find sharp estimates for the tail function of $T_{B_R}$ and  we start with the case $R=1$. Once this is done we use the scaling argument to treat any $R>0$. The proposition below provides an effective tool for the upper bound.

\begin{prop}\label{upperB} Suppose that the condition $\textbf{(H)}$  holds. Then
  $$\p^x(T_{B_1}>1)\leq  cH^2\frac{V(|x|-1)} {V(|x|)}\left[\sup_{|z|\le 1}\p^{z}(T_{0}>1/2)+ \p^{x-1}(T_{0}>1/2)\right] , \quad 
 1> 1/\psi^*(1),\, |x|>1.$$
 The constant $c$ is absolute.

\end{prop}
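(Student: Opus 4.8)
The plan is to estimate $\p^x(T_{B_1}>1)$ for $x>1$ (the case $x<-1$ is symmetric) by decomposing according to where the process is at time $1/2$, and by exploiting that the exterior set $B_1^c$ looks, near $x$, like the half-line $(1,\infty)$ after translation. First I would write, by the Markov property at time $1/2$,
$$\p^x(T_{B_1}>1)=\E^x\left[T_{B_1}>1/2;\ \p^{X_{1/2}}(T_{B_1}>1/2)\right]\le \E^x\left[\tau_{(1,\infty)}>1/2;\ \p^{X_{1/2}}(T_{B_1}>1/2)\right]+(\text{error}),$$
and then split the event $\{T_{B_1}>1/2\}$ according to whether the process stayed in $(1,\infty)$ up to time $1/2$ or left it (necessarily by a jump, since $1$ is the left endpoint of $B_1$ and the process must avoid $B_1$). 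On the first event one uses the half-line exit estimate (Lemma \ref{halfspace}): $\p^{x-1}(\tau_{(0,\infty)}>1/2)\approx V(x-1)/\sqrt{1/2}\wedge 1$, and since $1>1/\psi^*(1)$ forces $V(1)\gtrsim 1$ (via Lemma \ref{ch1V}, $V(1)\approx 1/\sqrt{\psi^*(1)}\ge 1$... more precisely $V(1)^2\psi^*(1)\ge c$, and $1>1/\psi^*(1)$ gives $V(1)\ge c$), one can replace $\sqrt{1/2}$ by a constant multiple of $V(1)$ and get the factor $V(x-1)/V(x)$ up to the ratio $V(x)/V(1)$, which is harmless because $V$ is subadditive and increasing so $V(x)\le V(x-1)+V(1)$.

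The key analytic input is Property $\textbf{(H)}$, which I would use exactly as packaged in Lemma \ref{H_cor}: after translating by $-1$ so that the exterior half-line becomes $(0,\infty)$, the function $z\mapsto \p^{z+1}(T_{B_1}>1/2)$ (extended suitably) should be checked to be subadditive-dominated and to satisfy the super-mean-value inequality $\E^{w}F(X_{\tau_{(0,\infty)}})\le F(w)$ needed there — this holds because $T_{B_1}$ is a hitting time, so the map $w\mapsto \p^{w}(T_{B_1}>1/2)$ is excessive for the process killed on hitting $B_1$, hence in particular dominates its harmonic average at the exit of any subdomain of $B_1^c$. Lemma \ref{H_cor} then bounds the contribution of paths that exit $(1,\infty)$ (equivalently $(0,\infty)$ after translation) landing at distance $\ge 2$ by $cH^2 F^*(1)\,V(x-1)/V(1)$, where $F^*(1)=\sup_{|z|\le 1}\p^{z+1}(T_{B_1}>1/2)\le \sup_{|z|\le 1}\p^{z}(T_0>1/2)$ because hitting $B_1$ is easier than hitting the single point $0$ translated appropriately — more carefully, from a starting point within distance $1$ of $B_1$ one bounds $\p(T_{B_1}>1/2)$ by the probability of not hitting a conveniently placed single point, which is of the stated form. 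Exits landing within distance $2$ of $B_1$ are handled by the same kind of crude L\'evy-measure bound (Lemma \ref{upper_den}) that appears inside the proof of Lemma \ref{H_cor}.

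Finally, on the event that the process stays in $(1,\infty)$ up to time $1/2$, I would bound $\p^{X_{1/2}}(T_{B_1}>1/2)\le 1$ trivially and use $\p^{x}(\tau_{(1,\infty)}>1/2)\le 2(V(x-1)/\sqrt{1/2}\wedge1)\le c\,V(x-1)/V(1)\le c\,V(x-1)/V(x)\cdot (V(x)/V(1))$; combined with subadditivity $V(x)\le V(x-1)+V(1)$ this yields the term $c\,V(x-1)/V(x)$ times a constant, which gets absorbed into the bracketed $\sup$ term (bounded below away from $0$ is not needed — one just keeps it as an additive constant times $V(|x|-1)/V(|x|)$, and notes $1\le$ (constant)$\cdot\sup_{|z|\le1}\p^z(T_0>1/2)$ is false in general, so instead one keeps the trivial-bound contribution as genuinely part of the $\p^{x-1}(T_0>1/2)$ term via the half-line estimate giving $\p^{x-1}(\tau_{(0,\infty)}>1/2)\le 2\p^{x-1}(T_0>1/2)\cdot(\text{const})$ — here one uses that for $z\ge 1/\psi^{-1}(2)$ the half-line survival probability and $\p^z(T_0>1/2)$ are comparable via Lemma \ref{halfspace} and Lemma \ref{0hitLB}-type lower bounds, while for smaller $z$ the factor $V(x-1)/V(x)$ is already of order a constant). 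The main obstacle is this last bookkeeping: showing that the "crude" pieces — the stay-in-half-line term and the short-jump exit term — are genuinely dominated by the two quantities in the bracket rather than producing an uncontrolled additive constant; this is where $1>1/\psi^*(1)$ is essential, as it provides the uniform lower bound on $V(1)$ that makes all the $V(1)$-denominators harmless.
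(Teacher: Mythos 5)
Your overall strategy is close to the paper's: decompose via the exit from the half-line $(1,\infty)$, invoke Lemma~\ref{H_cor} (the packaged consequence of property $\textbf{(H)}$) to control the jump-out contribution, and use $1>1/\psi^*(1)$ to get $V(1)\approx 1$ via Lemma~\ref{ch1V}. However, there are two concrete issues, the second of which is a genuine gap.

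First, you propose to feed $F(z)=\p^{z+1}(T_{B_1}>1/2)$ into Lemma~\ref{H_cor}. That lemma requires $F$ itself to be subadditive on $\R$, and for your $F$ this is unclear because $T_{B_1}$ lacks the translation invariance that makes the point-hitting tail $\p^z(T_0>t)=\p^0(T_z>t)$ behave well. The paper avoids this by first using the elementary monotonicity $T_{B_1}\le T_1$ (for starting points $z\le-1$ one has $\p^z(T_{B_1}>1/2)\le\p^z(T_1>1/2)=\p^{z-1}(T_0>1/2)$), which reduces the jump-out term to $\E^{x-1}[X_{\tau_{(0,\infty)}}\le-2;\p^{X_{\tau_{(0,\infty)}}}(T_0>1/2)]$, and only \emph{then} applies Lemma~\ref{H_cor} to the translation-invariant $F(z)=\p^z(T_0>1/2)$. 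Saying ``subadditive-dominated'' does not suffice: domination by a subadditive function is not an assumption of Lemma~\ref{H_cor}, and subadditivity is used twice inside its proof.

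Second, and more seriously, your treatment of the stay-in-half-line term $\p^x(\tau_{(1,\infty)}>1/2)$ does not close. You correctly observe that one cannot simply bound this by a constant times $\sup_{|z|\le1}\p^z(T_0>1/2)$ because the latter need not be bounded below. Your fix is to claim comparability $\p^{x-1}(\tau_{(0,\infty)}>1/2)\approx\p^{x-1}(T_0>1/2)$ for $x-1$ not too small (invoking Lemma~\ref{0hitLB}-type lower bounds), and to claim $V(x-1)/V(x)\approx 1$ for $x-1$ small. Both halves fail. The lower bound of Lemma~\ref{0hitLB} requires $K$ non-decreasing, an assumption not made in Proposition~\ref{upperB} (only property $\textbf{(H)}$ is assumed); Proposition~\ref{LBWLSC} would require WLSC, also not assumed here. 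And for $x\downarrow 1$ the ratio $V(x-1)/V(x)\to 0$, so it is certainly not of constant order there. The paper's argument for this term avoids any lower bound on $\p^z(T_0>1/2)$: it writes $\p^x(\tau_{(1,\infty)}>1/2)\le 2\sqrt2\,V(x-1)$, inserts $1=\p^2(\tau_{(1,\infty)}>1)/\p^2(\tau_{(1,\infty)}>1)$, uses the two-sided half-line estimate of Lemma~\ref{halfspace} together with $1/C_1\le V(1)\le C_1$ to get $\p^2(\tau_{(1,\infty)}>1)\ge C_3 V(1)/C_1$, then $V(x-1)/V(1)\le 2V(x-1)/V(x)$ for $1<x\le2$, and finally the trivial inclusion $\{\tau_{(1,\infty)}>1\}\subseteq\{T_1>1\}$ to convert $\p^2(\tau_{(1,\infty)}>1)\le\p^1(T_0>1/2)\le\sup_{|z|\le1}\p^z(T_0>1/2)$. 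You need a chain of this type rather than a pointwise comparability that relies on extra hypotheses.

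Minor: the paper decomposes at the stopping time $\tau_{(1,\infty)}$ via the strong Markov property, $\p^x(T_{B_1}>1)\le\p^x(\tau_{(1,\infty)}>1/2)+\E^x\p^{X_{\tau_{(1,\infty)}}}(T_{B_1}>1/2)$, which is cleaner than decomposing at the deterministic time $1/2$ and then re-partitioning; also the case $|x|\ge2$ is dispatched immediately since then $V(|x|-1)/V(|x|)\ge1/3$ and $\p^x(T_{B_1}>1)\le\p^{x-1}(T_0>1/2)$.
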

\begin{proof} If $|x| \ge2$ we have, by subadditivity of $V$,  $\frac{V(|x|-1)}{V(|x|)}\ge \frac13$, hence the conclusion is obvious.

Let  $1<x<2$. The condition    $1> 1/\psi^*(1)$, by Lemma  \ref{ch1V}, implies $1/V(1)\le C_1$.  Then,  by Lemma \ref{halfspace} and subadditivity of $V$, $$\p^x(\tau_{(1,\infty)}>1/2)\le 2\sqrt2  \frac{C_1}{{C_3}}\frac{V(x-1)}{V(1)}\p^2(\tau_{(1,\infty)}>1)\le
{4}\sqrt2 \frac{C_1}{{C_3}}\frac{V(x-1)}{V({x})}{\p^1(T_{0}>1/2)}.$$ Since
$$\p^x(T_{B_1}>1)\le  \p^x(\tau_{(1,\infty)}>1/2)+   \E^x\p^{X_{\tau_{(1,\infty)}}}(T_{B_1}>1/2)$$
 it is enough to estimate the harmonic function
 \begin{eqnarray*}
\E^x\p^{X_{\tau_{(1,\infty)}}}(T_{B_1}>1/2)&\le& \E^x[X_{\tau_{(1,\infty)}}\leq-1; \p^{X_{\tau_{(1,\infty)}}}(T_{1}>1/2)]\\&=&\E^{x-1}[X_{\tau_{(0,\infty)}}\leq-2;\p^{X_{\tau_{(0,\infty)}}}(T_{0}>1/2)].
\end{eqnarray*}
 Let $F(z)= \p^{z}(T_{0}>1/2)$. Observe that this function is subadditive and satisfies the assumptions of Lemma \ref{H_cor}.  Therefore the conclusion follows from Lemma \ref{H_cor}.

\end{proof}


\begin{cor} \label{ball22} Let   $\psi\in\WLSC{\alpha}{\gamma}$, $\alpha>1$. If   $|x|>1$ and $ 1> 1/\psi^*(1)$ then
$$\p^x(T_{B_1}>1)\le  c\frac{V(|x|-1) K(|x|)} {V(|x|)  K(1/\psi^{-1}(1))}\wedge 1 \approx  \frac{V(|x|-1) K(|x|)} {V(|x|) \psi^{-1}(1)}\wedge 1.$$
The constant $c$ depends only on the scalings.



\end{cor}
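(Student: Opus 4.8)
The plan is to combine the upper bound of Proposition~\ref{upperB} with the tail estimates for $\p^z(T_0>\cdot)$ from Section~3 and then rewrite the result in the stated form using the scaling properties of $K$ and $\psi^{-1}$.

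First I would apply Proposition~\ref{upperB}: its hypothesis $1>1/\psi^*(1)$ is the standing assumption, and by Remark~\ref{Hproperty}(a) the constant $H$ in property $\textbf{(H)}$ depends only on the scalings because $\psi\in\WLSC{\alpha}{\gamma}$ with $\alpha>1$. Thus for $|x|>1$
$$\p^x(T_{B_1}>1)\le cH^2\,\frac{V(|x|-1)}{V(|x|)}\Big[\sup_{|z|\le 1}\p^{z}(T_0>1/2)+\p^{x-1}(T_0>1/2)\Big].$$

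Next I would bound the bracket. Since $\psi\in\WLSC{\alpha}{\gamma}$ we have $\psi\ge\gamma\psi^*$, so Proposition~\ref{0hit} gives $\p^{y}(T_0>1/2)\le\frac{7}{\gamma}\,K(y)/K\!\big(1/\psi^{-1}(2)\big)$ for every $y$. To replace $K(|x|-1)$ and $\sup_{|z|\le1}K(z)$ by a multiple of $K(|x|)$ I would use that, by Lemma~\ref{KWLSC}, $K\in\WLSC{\alpha-1}{\gamma_1}$; applying this lower scaling with the factors $1/|z|\ge1$, $|x|/(|x|-1)\ge1$ and $|x|\ge1$ (all admissible since $|z|\le1<|x|$) yields $K(z)\le\gamma_1^{-1}K(1)$ for $|z|\le1$, $K(|x|-1)\le\gamma_1^{-1}K(|x|)$, and $K(1)\le\gamma_1^{-1}K(|x|)$. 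Combining these with $V(|x|-1)/V(|x|)\le1$ gives
$$\p^x(T_{B_1}>1)\le c_1\,\frac{V(|x|-1)}{V(|x|)}\cdot\frac{K(|x|)}{K\!\big(1/\psi^{-1}(2)\big)},$$
with $c_1$ depending only on the scalings.

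Finally I would identify $K\!\big(1/\psi^{-1}(2)\big)$. By Lemma~\ref{KWLSC}, $K(y)\approx 1/(y\psi(1/y))$, so $K\!\big(1/\psi^{-1}(2)\big)\approx\psi^{-1}(2)/\psi(\psi^{-1}(2))$; since $\psi\approx\psi^*$ and $\psi^*(\psi^{-1}(2))=2$ this is $\approx\psi^{-1}(2)$, and $\psi^{-1}(2)\approx\psi^{-1}(1)$ because $\psi^{-1}$ is non-decreasing and, by \eqref{psiInvScal}, satisfies the weak upper scaling condition with index $1/\alpha$. The same computation gives $K\!\big(1/\psi^{-1}(1)\big)\approx\psi^{-1}(1)$. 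Substituting yields $\p^x(T_{B_1}>1)\le c_2\,V(|x|-1)K(|x|)/\big(V(|x|)K(1/\psi^{-1}(1))\big)$ with $c_2\ge1$ depending only on the scalings; since the left-hand side is $\le1$ we may insert $\wedge\,1$, and the stated $\approx$ with $\psi^{-1}(1)$ follows from $K(1/\psi^{-1}(1))\approx\psi^{-1}(1)$. The only point requiring care is the supremum over $|z|\le1$: as $K$ is not assumed monotone, controlling $\sup_{|z|\le1}K(z)$ must be done via the lower scaling of $K$ rather than by monotonicity, and this is the one place the $\alpha>1$ hypothesis is essential beyond Proposition~\ref{upperB}.
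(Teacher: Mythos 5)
Your proposal is correct and follows essentially the same route as the paper, which in its terse form simply invokes Remark~\ref{Hproperty}, Propositions~\ref{upperB} and \ref{0hit}, and Lemma~\ref{KWLSC}; you supply exactly the bookkeeping those references require (using $\psi\ge\gamma\psi^*$ to invoke the second bound in Proposition~\ref{0hit}, then the weak lower scaling of $K$ to absorb $\sup_{|z|\le1}K(z)$ and $K(|x|-1)$ into $K(|x|)$, and the scaling of $\psi^{-1}$ together with $K(1/\psi^{-1}(u))\approx\psi^{-1}(u)/u$ to replace $\psi^{-1}(2)$ by $\psi^{-1}(1)$). One small caveat: the $\alpha>1$ hypothesis is doing more work than just controlling $\sup_{|z|\le1}K(z)$ — it is also what makes Lemma~\ref{KWLSC} give $K\in\WLSC{\alpha-1}{\gamma_1}$ and $K(y)\approx1/(y\psi(1/y))$, and what gives property $\textbf{(H)}$ with a scalings-dependent constant via Theorem~\ref{Harnack} — but this does not affect the validity of the argument.
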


\begin{proof}

By Remark \ref{Hproperty} we find a constant $H$ dependent only on the scalings such that the property $\textbf{(H)}$ holds. Therefore, by applying Propositions \ref{upperB} and \ref{0hit} together with Lemma \ref{KWLSC} we end the proof.

\end{proof}

\begin{rem}\label{ext}If  $ \p^x(T_0>t)\le  c\(\frac{K(x)}{K(1/\psi^{-1}(1/t))}\wedge 1\)$ the WLSC assumption  is merely to assure the property $\textbf{(H)}$. However there are many examples for which $V^\prime$ is  non-increasing and then this property holds automatically with the constant $H=1$. For example if $X$ is a special subordinate Brownian motion satisfying \eqref{0reg}, then the estimate from the preceding corollary holds with an absolute constant. In particular   $\psi(x)=|x|+|x|^2$ defines a special subordinate Brownian motion and it does not have the lower scaling property with index $\alpha >1$.\end{rem}

Next, we deal with the lower bound.
\begin{prop} \label{ball} Let
$\psi\in\WLSC{\alpha}{\gamma}$, $\alpha>1$, and  let  $ 1>1/\psi^*(1)$.  There is $x^*\ge 2$, which depends only on the scaling characteristics, such that    for $|x|\ge x^*$ we have
%

$$\p^x(T_{B_1}>2)\ge \Ce   \left(\frac{K(|x|)}{K(1/\psi^{-1}(1))}\wedge 1\right)\approx \left(\frac{K(|x|)}{\psi^{-1}(1)}\wedge 1\right) .$$
The constant $\Ce $ depends only on the scalings.
\end{prop}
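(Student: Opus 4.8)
The plan is to reduce, via the strong Markov property, to a one-interval estimate in the spirit of Proposition \ref{MRestimate1}, and then to prove that estimate with the Green function of $B_1^c$ playing the role that $\Go$ played there. By symmetry assume $x\ge x^*>0$, and set $R=\Cm/\psi^{-1}(1)$ with $\Cm$ a scaling-dependent constant to be fixed. First I would dispose of the far regime $|x|\ge R$: there
$\p^x(T_{B_1}>2)\ge\p^x(\tau_{(1,\infty)}>2)=\p^{x-1}(\tau_{(0,\infty)}>2)\ge C_3\bigl(V(x-1)/\sqrt2\wedge1\bigr)$
by Lemma \ref{halfspace}, and using Lemma \ref{ch1V}, subadditivity of $V$, the identity $\psi^*(\psi^{-1}(1))=1$ and $\psi^*\in\WLSC{\alpha}{\gamma}$ one checks that $V(x-1)\ge V(R-1)\ge\sqrt2$ once $\Cm$ is large enough depending only on $\alpha,\gamma$; hence $\p^x(T_{B_1}>2)\ge C_3$, which dominates the right-hand side (the latter being $\le\Ce$, once $\Ce\le C_3$).

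For the intermediate regime $x^*\le x<R$ I would apply the strong Markov property at $\sigma=\tau_{(-R,R)}\wedge T_{B_1}$: on $\{|X_\sigma|\ge R\}=\{\tau_{(-R,R)}<T_{B_1}\}$ we have $T_{B_1}=\sigma+T_{B_1}$ after shift and $2-\sigma\le2$, so the far-regime bound gives $\p^{X_\sigma}(T_{B_1}>2-\sigma)\ge C_3$ and therefore
$$\p^x(T_{B_1}>2)\ \ge\ C_3\,\p^x\bigl(\tau_{(-R,R)}<T_{B_1}\bigr).$$
So everything reduces to showing $\p^x(\tau_{(-R,R)}<T_{B_1})\ge c\,K(x)/K(R)$ with $c,x^*$ depending only on the scalings; since $K$ is doubling under $\WLSC{\alpha}{\gamma}$, $\alpha>1$, and $K(1/\psi^{-1}(1))\approx\psi^{-1}(1)$ by Lemma \ref{KWLSC}, one has $K(R)\approx\psi^{-1}(1)$, and the conclusion follows (the $\wedge1$ being harmless because $K(x)\lesssim K(R)$ for $x<R$).

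The heart of the matter is this last estimate, and it is where the purely one-sided half-line bound (which would only give the weaker $\gtrsim V(x)/V(R)$) must be replaced by an argument that sees the excursions of the process to the far side of $B_1$. Put $D_1=(-R,R)\setminus B_1$ and $\Psi(y)=G_{B_1^c}(y,2R)$. Then $\Psi$ is harmonic on $D_1$ (a Green function is harmonic off its pole, and $2R\notin D_1$), vanishes on $B_1$, and by Proposition \ref{Greenb} satisfies $\Psi(y)\le\Go(y,2R)\le 2K(2R)\le 4K(R)$ for $|y|\ge R$; in particular $\Psi$ is bounded by $2K(2R)$ everywhere. Using the identity $G_{B_1^c}(y,z)=\Go(y,z)-\E^y\Go(X_{T_{B_1}},z)$ (strong Markov at $T_{B_1}$, valid because $\{0\}\subset B_1$ and $\kappa=0$ under $\WLSC{\alpha}{\gamma}$, $\alpha>1$), together with $\Go(x,2R)\ge K(x)$ and $\Go(z,2R)\le 2K(1)$ for $|z|\le1$ (Proposition \ref{Greenb}, recalling $K$ is non-decreasing), one gets $\Psi(x)\ge K(x)-2K(1)\ge\tfrac12K(x)$ once $x^*$ is chosen so large that $K(x^*)\ge 4K(1)$ — possible since $K$ has the lower scaling property with index $\alpha-1>0$. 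Arguing as in Lemma \ref{GreenRegHarm} (exhaust $D_1$ from inside, use continuity, the bound $\Psi\le 2K(2R)$, and quasi-left-continuity), $\Psi(x)=\E^x\Psi(X_{\tau_{D_1}})$; since $X_{\tau_{D_1}}\in B_1$ (contributing $0$) or $|X_{\tau_{D_1}}|\ge R$,
$$\tfrac12K(x)\ \le\ \Psi(x)\ =\ \E^x\bigl[\,|X_{\tau_{D_1}}|\ge R;\ \Psi(X_{\tau_{D_1}})\,\bigr]\ \le\ 4K(R)\,\p^x\bigl(|X_{\tau_{D_1}}|\ge R\bigr),$$
and $\{|X_{\tau_{D_1}}|\ge R\}=\{\tau_{(-R,R)}<T_{B_1}\}$, whence $\p^x(\tau_{(-R,R)}<T_{B_1})\ge K(x)/(8K(R))$.

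The main obstacle I anticipate is exactly the lower bound $\Psi(x)\gtrsim K(x)$: one must recognize that the cost of avoiding the whole interval $B_1$ (not merely of staying on one side of it) is measured by $K$ rather than by $V$, and the clean device for this is the representation $G_{B_1^c}=\Go-\E[\Go(X_{T_{B_1}},\cdot)]$, which reduces everything to the two-sided estimates for $\Go$ from Proposition \ref{Greenb} and to the growth of $K$. The constant $x^*$ enters only through the lower-scaling absorption $K(x^*)\ge 4K(1)$ and through $R-1\ge R/2$, both scaling-dependent as required.
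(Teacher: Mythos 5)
Your strategy is appealingly geometric---it reduces to the one-interval estimate $\p^x(\tau_{(-R,R)}<T_{B_1})\gtrsim K(x)/K(R)$, proved by running $\Psi(y)=G_{B_1^c}(y,2R)$ as a barrier, in close analogy with Proposition~\ref{MRestimate1}. The strong Markov reduction in the intermediate regime and the far-regime half-line bound are both correct. However, there is a genuine gap at the heart of the argument. You invoke the lower bound $\Go(x,2R)\ge K(x)$ from Proposition~\ref{Greenb} (and again $\Go(z,2R)\le 2K(1)$ for $|z|\le 1$), but the lower bound in Proposition~\ref{Greenb} is proved \emph{only under the additional hypothesis that $K$ is non-decreasing}: it uses $K(2R)-K(2R-x)\ge 0$, which has no substitute from subadditivity alone. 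Proposition~\ref{ball} assumes only $\psi\in\WLSC{\alpha}{\gamma}$ with $\alpha>1$, and the paper is explicit (in Section~2, around the definition of $K$) that no general criterion guaranteeing monotonicity of $K$ under such scaling assumptions is known; the sufficient conditions mentioned there are either unimodality of $X$ or monotonicity of $\psi(x)/x$, neither of which is implied by WLSC. Under WLSC one only gets $K$ \emph{comparable to} a non-decreasing function (via $K\approx 1/(x\psi(1/x))$, Lemma~\ref{KWLSC}), and a comparability constant $C>1$ destroys the sign of $K(2R)-K(2R-x)$: one cannot conclude $\Go(x,2R)\gtrsim K(x)$ this way. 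This is precisely the distinction the paper draws between Lemma~\ref{0hitLB} and Proposition~\ref{MRestimate1} (which require $K$ non-decreasing) versus Proposition~\ref{LBWLSC} (which replaces monotonicity by scaling at the Laplace-transform level).

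The paper's proof avoids exactly this issue: it never uses a pointwise lower bound for $\Go$ at all. Instead it works with the Laplace transform identity \eqref{laplaceComp}, estimates $\lambda u^\lambda(0)\mathcal{L}\p^x(T_{B_1}>\cdot)(\lambda)\ge K^\lambda(x)-\E^xK(X_{T_{B_1}})$, and then inverts using the $\WUSC$ property of $\mathcal{L}\p^x(T_0>\cdot)$ (display \eqref{WUSC0}, which in turn rests on Lemma~\ref{potEst} and Lemma~\ref{KWLSC}) together with \cite[Lemma~5]{MR3165234}. The choice of $x^*$ then enters only through the scaling-based absorption $c_2K(x)-\sup_{|z|\le 1}K(z)\ge\frac{c_2}{2}K(x)$, which uses $K\in\WLSC{\alpha-1}{\gamma_1}$ and nothing about pointwise monotonicity. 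Your argument is a correct and arguably cleaner alternative \emph{under the extra hypothesis that $K$ is non-decreasing} (e.g.\ unimodal $X$, or special subordinate Brownian motions as in Remark~\ref{ext}), but as written it does not prove the stated proposition in its full generality.
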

\begin{proof}

%
By symmetry we may assume that $x>0$.
Let $f(t)= \p^x(T_{B_1}>t),\, f_0(t)= \p^x(T_0>t)$.
We begin with a simple observation relating the Laplace transforms of $f(t)$ and  $f_0(t)$. {By \eqref{laplaceComp}},
\begin{eqnarray*}
\lambda u^\lambda(0)\mathcal{L}f(\lambda)&=&K^\lambda(x) -\E^xe^{-\lambda T_{B_1}}K^\lambda(X_{T_B})\\
&\ge&K^\lambda(x) -\E^xK(X_{T_{B_1}})\\
&=& \lambda u^\lambda(0)\mathcal{L}f_0(\lambda)-\E^xK(X_{T_{B_1}}).
\end{eqnarray*}
Let ${\gamma}(\theta, z) = \int_0^zu^{\theta-1}e^{-u}du, z>0, \theta>0 $ be the lower incomplete Gamma function of index $\theta$.
We pick $0<b\leq1$ to be specified later. By  \cite[Lemma 5]{MR3165234}, ${\gamma}(1, 1)f_0(s)\le \mathcal{L}f_0(s^{-1})s^{-1}, \ s>0$. Moreover,  \eqref{WUSC0} implies  $\mathcal{L}f_0(s^{-1})\le c_1\(\lambda s\)^{1/\alpha}\mathcal{L}f_0(
{\lambda }),\ {s\le \lambda^{-1}}, $ where $c_1$ depends on the scalings. Hence 
\begin{align*}
\lambda \mathcal{L}f(\lambda)&=\lambda\int_0^{b\lambda^{-1}}e^{-\lambda s}f(s)ds+\lambda\int_{b\lambda^{-1}}^\infty e^{-\lambda s}f(s)ds\\
&\leq \frac{\lambda}{\gamma(1,1)}\int^{b\lambda^{-1}}_0e^{-
{\lambda s}}\mathcal{L}f_0(s^{-1})s^{-1}ds+f(
b\lambda^{-1})\int_{b\lambda^{-1}}^\infty e^{-\lambda s}\lambda ds\\
&\leq \frac{\lambda}{\gamma(1,1)}\int^{b\lambda^{-1}}_0 c_1\(\lambda s\)^{1/\alpha}\mathcal{L}f_0(
{\lambda })e^{-\lambda s}s^{-1}ds+f(b\lambda^{-1})e^{-b}\\
&= c_1\frac{\gamma(1/\alpha,b)}{\gamma(1,1)}\lambda\mathcal{L}f_0(
{\lambda})+f(b\lambda^{-1})e^{-b}.
\end{align*}
Next we choose the largest $b\le 1$ such that
$2c_1\gamma(1/\alpha,b)
\le {\gamma(1,1)}=1-e^{-1}$.
Since  $\lambda \mathcal{L}f(\lambda)\ge \lambda \mathcal{L}f_0(\lambda)- \frac{\E^xK(X_{T_B})}{u^\lambda(0)}$, then 
$$
f(b\lambda^{-1})\geq  \lambda\mathcal{L}f_0(\lambda){/2}- \frac{\E^xK(X_{T_B})}{u^\lambda(0)}= \frac {K^\lambda(x)}{2u^\lambda(0)}-\frac{\E^xK(X_{T_B})}{u^\lambda(0)}.$$
 If $x\psi^{-1}(\lambda)\le 1$ then  by Lemma  \ref{potEst},  $K^\lambda(x)\ge c_2 K(x)$, hence in this case
 $$f(b\lambda^{-1})\geq   \frac{c_2{K(x)}-{\E^xK(X_{T_B})}}{u^\lambda(0)}.$$
 Letting $\lambda =1/t$ and applying again  Lemma  \ref{potEst} to estimate $u^\lambda(0)$ we have for $t\ge \frac 1{\psi^*(1/x)}$,
  $$f(bt)\geq  c_3 \left[\frac{c_2 {K(x)}-{\sup_{|z|\le 1}K(z)}}{K(1/\psi^{-1}(1/t))}\right].$$
By Lemma \ref{KWLSC}  the lower scaling property with index $\alpha-1$ holds for $K$, therefore  we can find $x^*\ge 2$, dependent on the scalings, such that  $c_2 {K(x)}-{\sup_{|z|\le 1}K(z)}\ge \frac  {c_2}2 K(x), \quad x\ge x^*$. Hence, for  $x\ge x^*$ and $t\ge \frac 1{\psi^*(1/x)}$ we have
$$f(bt)\geq  c_4 \frac{K(x)}{K(1/\psi^{-1}(1/t))}.$$
For  $x\ge x^*\ge 2$ and $ t\le \frac 1{\psi^*(1/x)}$ we apply subaddativity of $V$ and Lemma \ref{ch1V}  to get $\frac{V(x-1)}{\sqrt{t}}\ge \frac12 \frac{V(x)}{\sqrt{t}}\ge \frac1{2C_1} $.  Next, applying  Lemma \ref{halfspace}  to arrive at

$$f(bt)\geq \p^x(\tau_{(1,\infty)}\ge bt) \ge  C_3\left( \frac{V(x-1)}{\sqrt{bt}}\wedge 1\right)\ge C_3\left( \frac{1}{2C_1\sqrt{b}}\wedge 1\right). $$
Therefore we have proved that for $x\ge x^*$ and any $t>0$ we have
$$f(bt)\geq  c_5\min\left\{ \frac{K(x)}{K(1/\psi^{-1}(1/t))}, 1\right\},$$
where $c_5$ depends on the scalings.
In particular taking $t=2/b$ we obtain
$$f(2)\geq  c_5\min\left\{ \frac{K(x)}{K(1/\psi^{-1}(b/2))}, 1\right\}\ge c_6\min\left\{ \frac{K(x)}{K(1/\psi^{-1}(1))}, 1\right\},\quad |x|\ge x^*, $$
where the last inequality follows from scaling property for $K$ and $\psi^{-1}$ {(see Lemma \ref{KWLSC} and \eqref{psiInvScal})}. The constant $c_6$ depends on the scalings.






\end{proof}


\begin{lem} \label{ball20} Let  $\psi\in\WLSC{\alpha}{\gamma}$, $\alpha>1$. If   $1<|x|<x^*$ and $ 1> 1/\psi^*(1)$, where $x^*$ is chosen in the preceding lemma,  then
$$\p^x(T_{B_1}>1)\ge  c\frac{V(|x|-1) K(|x|)} {V(|x|) {K(1/\psi^{-1}(1))}}\wedge 1  .$$
The constant $c$ depends only on the scalings.


\end{lem}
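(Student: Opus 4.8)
The plan is to bootstrap the lower bound for far-away starting points (Proposition \ref{ball}) by first forcing the process away from $B_1$ out to the ``safe'' distance $x^*$ without ever meeting $B_1$, and then applying the strong Markov property. By symmetry take $1<x<x^*$ and put $D=(1,x^*)$. On $\{X_{\tau_D}\ge x^*\}$ the path lies in $(1,\infty)$ throughout $[0,\tau_D]$, hence has not met $B_1=[-1,1]$ by time $\tau_D$, so concatenating with an excursion that avoids $B_1$ for two more units of time gives $\{T_{B_1}>2\}\supseteq\{X_{\tau_D}\ge x^*,\ T_{B_1}\circ\theta_{\tau_D}>2\}$, whence
\[
\p^x(T_{B_1}>2)\ \ge\ \E^x\big[X_{\tau_D}\ge x^*;\ \p^{X_{\tau_D}}(T_{B_1}>2)\big].
\]
On this event the starting point of the post-$\tau_D$ excursion is $\ge x^*$, so Proposition \ref{ball} yields $\p^{X_{\tau_D}}(T_{B_1}>2)\ge\Ce\big(K(|X_{\tau_D}|)/K(1/\psi^{-1}(1))\wedge1\big)$; since under $\psi\in\WLSC{\alpha}{\gamma}$, $\alpha>1$, the kernel $K$ is comparable to a non-decreasing function (Lemma \ref{KWLSC}), this is at least $c_1\big(K(x^*)/K(1/\psi^{-1}(1))\wedge1\big)$ with $c_1$ depending only on the scalings. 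The escape probability is handled by translation and Lemma \ref{exit}: $\p^x(X_{\tau_D}\ge x^*)=\p^{x-1}\big(X_{\tau_{(0,x^*-1)}}\ge x^*-1\big)\ge C_4\,V(x-1)/V(x^*-1)$. Combining, and using $\p^x(T_{B_1}>1)\ge\p^x(T_{B_1}>2)$,
\[
\p^x(T_{B_1}>1)\ \ge\ c_1 C_4\Ce\,\frac{V(x-1)}{V(x^*-1)}\Big(\frac{K(x^*)}{K(1/\psi^{-1}(1))}\wedge1\Big).
\]

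It then remains to recognize the claimed expression inside this bound. Because $1<x<x^*$ and $x^*$ depends only on the scalings, $V(x),V(x^*),V(x^*-1)$ are all comparable to $V(1)$ and $K(x),K(x^*)$ to $K(1)$, with constants depending only on the scalings — this follows from subadditivity of $V$ and $K$, from \eqref{Psi*ScalingGeneral} and Lemma \ref{ch1V}, and from Lemma \ref{KWLSC}; in particular $V(x-1)/V(x^*-1)\ge c_0\,V(x-1)/V(x)$ with $c_0=V(1)/V(x^*)$. Moreover the normalization $1>1/\psi^*(1)$ gives $\psi^{-1}(1)=(\psi^*)^{-1}(1)\le1$, so $1/\psi^{-1}(1)\ge1$ and $K(1/\psi^{-1}(1))\ge cK(1)$; consequently $q:=K(x)/K(1/\psi^{-1}(1))\le M$ for a scaling constant $M$, and $a:=K(x^*)/K(1/\psi^{-1}(1))\ge cq$. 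An elementary check (writing $p=V(x-1)/V(x)\in(0,1]$, splitting on $pq\le1$ versus $pq>1$, and using $q\le M$ together with $a\ge cq$) gives $p\,(a\wedge1)\ge c_2\big((pq)\wedge1\big)$ with $c_2$ depending only on $M$, which is precisely
\[
\p^x(T_{B_1}>1)\ \ge\ c\,\Big(\frac{V(|x|-1)\,K(|x|)}{V(|x|)\,K(1/\psi^{-1}(1))}\wedge1\Big),\qquad 1<|x|<x^*,
\]
with $c$ depending only on the scalings.

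The only delicate point is this last bookkeeping step: the factor $V(|x|-1)/V(|x|)$ degenerates as $|x|\downarrow1$, and one must check that this degeneration is no worse than that of the target. That is guaranteed exactly by the bounds $q\le M$ (this is where the hypothesis $1>1/\psi^*(1)$ is used, via $1/\psi^{-1}(1)\ge1$) and $a\ge cq$, which keep the two sides aligned uniformly in $x$. Everything else — the strong Markov reduction, Lemma \ref{exit}, Proposition \ref{ball} — is applied verbatim; and, as in Corollary \ref{ball22} and Lemma \ref{KWLSC}, one may also note $K(1/\psi^{-1}(1))\approx\psi^{-1}(1)$, so the estimate can equivalently be written with $\psi^{-1}(1)$ in the denominator.
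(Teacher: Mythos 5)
Your proof is correct, but it takes a genuinely different route from the paper's. The paper splits $\p^x(T_{B_1}>1)$ at the half-line exit $\tau_{(1,\infty)}$ into the two pieces $\p^x(\tau_{(1,\infty)}>1)$ and the harmonic function $F(x)=\E^x\p^{X_{\tau_{(1,\infty)}}}(T_{B_1}>1)$, then uses Lemma \ref{BHPLowerGeneral} (which in turn rests on the Harnack inequality, Theorem \ref{Harnack}) to transfer $F$ from $x$ to $x^*$, and Lemma \ref{halfspace} to do the same for the half-line piece; the two pieces are reassembled into $\p^{x^*}(T_{B_1}>2)$ before Proposition \ref{ball} is invoked. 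You instead force the path to exit the interval $(1,x^*)$ on the right without ever entering $B_1$, estimate this escape probability directly by Lemma \ref{exit}, apply Proposition \ref{ball} at the landing point $\ge x^*$, and close with an explicit bookkeeping argument. Your approach bypasses the boundary-Harnack-type machinery entirely, which is a nice simplification; the price is that you throw away the contribution of paths that jump far to the left past $B_1$ (harmless for a lower bound) and must verify by hand that the resulting expression $\frac{V(x-1)}{V(x^*-1)}\bigl(\frac{K(x^*)}{K(1/\psi^{-1}(1))}\wedge1\bigr)$ dominates the claimed $\frac{V(|x|-1)K(|x|)}{V(|x|)K(1/\psi^{-1}(1))}\wedge1$ uniformly for $1<x<x^*$ — your case analysis using $q\le M$ (from $1/\psi^{-1}(1)\ge1$ and subadditivity of $K$) and $a\ge cq$ does handle the degeneration near $|x|=1$ correctly, and all constants depend only on the scalings because $x^*$ does.
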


\begin{proof}
We may and do assume that $1\le x<x^*$. By the strong Markov property
we have for any $z\ge 1$,
$$ \p^z(T_{B_1}>2)\le  \p^z(\tau_{(1,\infty)}>1)+   \E^z\p^{X_{\tau_{(1,\infty)}}}(T_{B_1}>1)$$ and
$$ \p^z(T_{B_1}>1)\ge  \frac12\p^z(\tau_{(1,\infty)}>1)+  \frac12\E^z\p^{X_{\tau_{(1,\infty)}}}(T_{B_1}>1).$$
Using Lemma \ref{BHPLowerGeneral} we estimate the harmonic function $F(z)= \E^z\p^{X_{\tau_{(1,\infty)}}}(T_{B_1}>1)$,

 $$F(x)\ge c_1\frac{V(x-1)}{V(2)} F(x^*),$$ with the constant $c_1$ dependent only on the scalings.
From Lemma \ref{halfspace} and subaddativity of $V$ we infer that  $\p^x(\tau_{(1,\infty)}>1)\ge c_2  \frac{V(x-1)}{V(2)}\p^{x^*}(\tau_{(1,\infty)}>1)$, with $c_2$ dependent only on the scalings.   Hence,
\begin{eqnarray*}\p^x(T_{B_1}>1)&\ge& 1/2\left( \p^x(\tau_{(1,\infty)}>1)+   \E^x\p^{X_{\tau_{(1,\infty)}}}(T_{B_1}>1)\right)\\&\ge& \frac{c_1\wedge c_2}2  \frac{V(x-1)}{V(2)} \left( \p^{x^*}(\tau_{(1,\infty)}>1)+   \E^{x^*}\p^{X_{\tau_{(1,\infty)}}}(T_{B_1}>1)\right)\\&\ge& \frac{c_1\wedge c_2}2   \frac{V(x-1)}{V(2)} \p^{x^*}(T_{B_1}>2).\end{eqnarray*}
Applying Proposition \ref{ball} we get  $\p^{x^*}(T_{B_1}>2) \ge \Ce  \(\frac {K(2)}{K(1/\psi^{-1}(1))}\wedge1\)$, 
 which completes the proof.
\end{proof}




Now we are ready to state and prove the main result of this section.
\begin{thm} \label{ball2}  Let $\psi\in\WLSC{\alpha}{\gamma}$, $\alpha>1$. If  $B_R=[-R,R]$ and  $|x|>R$,
 $$\p^x(T_{B_R}>t)\approx  \frac{V(|x|-R)} {\sqrt{t}\wedge V(R)}\wedge 1, \quad t\le 1/\psi^*(1/R)$$
and
$$\p^x(T_{B_R}>t)    \approx  \frac{V(|x|-R) K(|x|)} {V(|x|) K(1/\psi^{-1}(1/t))}\wedge 1   \approx  \frac{V(|x|-R) K(|x|)} {V(|x|) t\psi^{-1}(1/t)}\wedge 1, \quad t> 1/\psi^*(1/R)  .$$
The comparability constants depend only on the scaling characteristics.



\end{thm}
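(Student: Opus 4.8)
The plan is to split at the critical time $t_R:=1/\psi^*(1/R)$ and to treat the two regimes by different means. For $t>t_R$ I would reduce, by a scaling argument, to the already established estimates for $R=1$, $t=1$ under the extra hypothesis $1>1/\psi^*(1)$, i.e. Corollary~\ref{ball22}, Proposition~\ref{ball} and Lemma~\ref{ball20}. For $t\le t_R$ I would argue directly: the lower bound is read off the half-space estimate, and the upper bound comes from a decomposition at the first exit of a half-line combined with a cut-off L\'evy-system (Ikeda--Watanabe) bound on the overshoot.

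For $t>t_R$, put $\tilde X_s=\tfrac1R X_{ts}$. Then $\tilde X$ is symmetric L\'evy with $\tilde\psi(\xi)=t\psi(\xi/R)$, hence $\tilde\psi^*(\xi)=t\psi^*(\xi/R)$, $\tilde\psi^{-1}(\lambda)=R\psi^{-1}(\lambda/t)$, $\tilde K(x)=\tfrac Rt K(Rx)$ and $\tilde V(x)=t^{-1/2}V(Rx)$; moreover $\tilde\psi\in\WLSC{\alpha}{\gamma}$ with the same constants and $\tilde\psi^*(1)=t\psi^*(1/R)>1$, so $1>1/\tilde\psi^*(1)$. Since $T^{\tilde X}_{B_1}=T^{X}_{B_R}/t$ we get $\p^x(T_{B_R}>t)=\p^{x/R}(T^{\tilde X}_{B_1}>1)$. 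Corollary~\ref{ball22} applied to $\tilde X$ gives the upper bound, while for the lower bound I would combine Proposition~\ref{ball} (valid for $|x|/R\ge x^*$) with Lemma~\ref{ball20} (valid for $1<|x|/R<x^*$), using $\{T_{B_1}>2\}\subset\{T_{B_1}>1\}$ and the fact that in the first range $\tilde V(|x|/R-1)/\tilde V(|x|/R)$ is bounded below by a scaling constant, because $V$ is doubling ($V(2r)\approx V(r)$, a consequence of Lemma~\ref{ch1V} and \eqref{Psi*ScalingGeneral}) and $|x|/R-1\ge|x|/(2R)$ there. Substituting the scaling relations for $\tilde V,\tilde K,\tilde\psi^{-1}$ turns both bounds into $V(|x|-R)K(|x|)/\big(V(|x|)\,K(1/\psi^{-1}(1/t))\big)\wedge1$, and the equivalent form $V(|x|-R)K(|x|)/\big(V(|x|)\,t\psi^{-1}(1/t)\big)\wedge1$ follows from $K(1/\psi^{-1}(1/t))\approx t\psi^{-1}(1/t)$ (Lemma~\ref{KWLSC} and $\psi^*(\psi^{-1}(1/t))=1/t$). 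All constants depend only on $\alpha,\gamma$.

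For $t\le t_R$, Lemma~\ref{ch1V} gives $V(R)\ge C_1^{-1}/\sqrt{\psi^*(1/R)}\ge C_1^{-1}\sqrt t$, so $\sqrt t\wedge V(R)\approx\sqrt t$ and it suffices to prove $\p^x(T_{B_R}>t)\approx V(|x|-R)/\sqrt t\wedge1$. Take $x>R$ (the case $x<-R$ is symmetric). Since $T_{B_R}\ge\tau_{(R,\infty)}$, translation invariance and Lemma~\ref{halfspace} yield $\p^x(T_{B_R}>t)\ge\p^{x-R}(\tau_{(0,\infty)}>t)\ge C_3\big(V(x-R)/\sqrt t\wedge1\big)$, the lower bound. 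For the upper bound we may assume $V(x-R)<\sqrt t$. With $\sigma=\tau_{(R,\infty)}$, on $\{\sigma\le t<T_{B_R}\}$ the process jumps from $(R,\infty)$ to $(-\infty,-R)$, a jump of size $>2R$, so
$$\p^x(T_{B_R}>t)\le\p^x(\sigma>t)+\p^x\big(\sigma\le t,\ X_\sigma<-R\big),$$
the first term being $\le 2\big(V(x-R)/\sqrt t\wedge1\big)$ by Lemma~\ref{halfspace}. For the second, the L\'evy-system formula with cut-off $t\wedge\sigma$, together with $X_s>R$ on $\{s<\sigma\}$ and symmetry of $\nu$, gives
$$\p^x\big(\sigma\le t,\ X_\sigma<-R\big)\le\E^x\!\int_0^{t\wedge\sigma}\!\nu\big((R+X_s,\infty)\big)\,ds\le\nu[2R,\infty)\,\E^x[t\wedge\sigma].$$
By Lemma~\ref{upper_den}, $\nu[2R,\infty)\le C_2/V^2(2R)\le C_2/V^2(R)$; by Lemma~\ref{halfspace}, $\E^x[t\wedge\sigma]=\int_0^t\p^x(\sigma>s)\,ds\le 4V(x-R)\sqrt t$; and $1/V^2(R)\le C_1^2\psi^*(1/R)\le C_1^2/t$ by Lemma~\ref{ch1V} and $t\le t_R$. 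Hence the second term is $\le 4C_1^2C_2\,V(x-R)/\sqrt t$, so $\p^x(T_{B_R}>t)\le c\big(V(x-R)/\sqrt t\wedge1\big)$ with an absolute constant.

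The hard part is the overshoot estimate in the subcritical regime: one must rule out that paths jumping over $B_R$ inflate the tail, and the crucial observations are that such a jump has size $>2R$ and that $V^2(R)\gtrsim t$ there, which is precisely subcriticality; the cut-off in the L\'evy-system formula is what makes this quantitative without needing $\E^x\sigma<\infty$, which is typically infinite for recurrent processes. The remaining work is bookkeeping: carrying the scaling identities for $V$, $K$ and $\psi^{-1}$ through the reduction, and checking that the two regimes agree at $t\approx t_R$ — which they do, since there $\psi^{-1}(1/t)\approx1/R$ and $t\psi^{-1}(1/t)\approx V^2(R)/R$, so that, using $K(x)\approx V^2(x)/|x|$ and the doubling of $V$, the supercritical expression also collapses to $V(|x|-R)/V(R)\wedge1$.
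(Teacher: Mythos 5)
Your proof is correct, and for the main (supercritical) regime $t>1/\psi^*(1/R)$ it is precisely the paper's argument: the space-time rescaling $Y_s=X_{ts}/R$, the exact scaling identities for $\tilde\psi,\tilde V,\tilde K,\tilde\psi^{-1}$, and the reduction to Corollary~\ref{ball22}, Proposition~\ref{ball} and Lemma~\ref{ball20} with the bookkeeping $\{T_{B_1}>2\}\subset\{T_{B_1}>1\}$ and the doubling of $V$ to absorb the ratio $V(|x|/R-1)/V(|x|/R)$ on $\{|x|/R\ge x^*\}$. Where you diverge is the subcritical regime $t\le 1/\psi^*(1/R)$: the paper dispatches it in one line by citing~\cite[Remark~6]{BGR3} together with Lemma~\ref{halfspace}, whereas you re-derive it from scratch. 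Your derivation is sound: you correctly observe that $V(R)\gtrsim\sqrt t$ collapses $\sqrt t\wedge V(R)$ to $\sqrt t$; the lower bound by $\p^{x-R}(\tau_{(0,\infty)}>t)$ is exactly Lemma~\ref{halfspace}; and for the upper bound the decomposition $\{T_{B_R}>t\}\subset\{\sigma>t\}\cup\{\sigma\le t,\,X_\sigma<-R\}$ with $\sigma=\tau_{(R,\infty)}$ is valid, the compensation (L\'evy-system) formula correctly gives $\p^x(\sigma\le t,\,X_\sigma<-R)=\E^x\int_0^{t\wedge\sigma}\nu\big((-\infty,-R-X_s)\big)\,ds$, and the chain $\nu[2R,\infty)\le C_2/V^2(R)\le C_1^2C_2\psi^*(1/R)\le C_1^2C_2/t$ together with $\E^x[t\wedge\sigma]\le 4V(x-R)\sqrt t$ closes the bound. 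The only small caveat is that the paper states only the stationary Ikeda--Watanabe formula; the time-truncated compensation formula you invoke is a standard and routine extension, but strictly speaking you are using a tool the paper does not display, just as the paper itself leans on an external reference here. Net effect: your treatment of the subcritical regime is self-contained where the paper's is not, at the cost of invoking a (standard) stronger form of the jump-counting identity; both are legitimate.
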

\begin{proof}

If $t\leq 1/\psi^*(1/R)$ the estimates hold by \cite[Remark 6]{BGR3} and Lemma \ref{halfspace}.

 Let $t>0, R>0$ be fixed.  We consider a space and time rescaled process $Y_s=X_{ts}/R, s\ge 0$.
  Let $K^t_R$, etc. be objects corresponding to the process $Y$. Then
 \begin{eqnarray*}\psi^t_R(x)&=&  t\psi(x/R),\\
K^t_R(x)&=& \frac Rt K(xR),\\
V^t_R(x)&=&  \frac{V(xR)}{\sqrt{t}},\\
\psi^{-1}_R(x)&=&  R\psi^{-1}( x/t).\end{eqnarray*}
Let $T^{Y}_{B_1}$ be the hitting time of $B_1$ by the process $Y$. Observe that $\psi^t_R(x)$ has exactly the same scaling property (with the same scaling characteristics)  as $\psi(x)$.
Let  $ t>  1/\psi^*(1/R)$ or equivalently  $1>  1/(\psi^t_R)^*(1)$. We now apply 
Corollary \ref{ball22},  Proposition \ref{ball} and Lemma \ref{ball20} to get
\begin{eqnarray*}\p^x(T_{B_R}>t)&=& \p^{x/R}(T^{Y}_{B_1}>1)\\&\approx& \frac{V^t_R(|x/R|-1) K^t_R(|x/R|)} {V^t_R(|x/R|)  K^t_R\left(1/(\psi^t_R)^{-1}(1)\right)}\wedge 1\\
&=& \frac{V(|x|-R) K(|x|)} {V(|x|) K(1/\psi^{-1}(1/t))}\wedge 1 \\  &\approx&  \frac{V(|x|-R) K(|x|)} {V(|x|) t\psi^{-1}(1/t)}\wedge 1
,\end{eqnarray*}
where the comparability constants depend only on the scalings of $\psi$.
%
 \end{proof}

\begin{cor} Assume that the second moment of $X$ is finite and $\psi\in\WLSC{\alpha}{\gamma}$, $\alpha>1$.  Let $R>1$ and  $B_R=[-R,R]$.  Then

$$\p^x(T_{B_R}>t)\approx  \frac{V(|x|-R)} {\sqrt{t}}\wedge 1, \quad t> 0.$$
The comparability constant depends on $\psi$.
\end{cor}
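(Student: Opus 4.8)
The plan is to read off the result directly from Theorem~\ref{ball2}, using the finiteness of the second moment to replace the implicit quantities $V(\cdot)$, $K(\cdot)$ and $\psi^{-1}(\cdot)$ appearing there by their explicit counterparts on the relevant ranges of the arguments.

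First I would record the consequences of the moment hypothesis. Since $\int_\R x^2\,\nu(dx)<\infty$, dominated convergence gives $\psi(\xi)/\xi^2\to\sigma_0^2:=\sigma^2+\tfrac12\int_\R x^2\,\nu(dx)$ as $\xi\to0^+$, and $\sigma_0^2\in(0,\infty)$ because $\psi$ is unbounded (by \eqref{0reg}). As $\psi$ is continuous and strictly positive on $(0,\infty)$, this yields $\psi(\xi)\approx\xi^2$, hence $\psi^*(\xi)\approx\xi^2$, for $\xi\in(0,1]$, with constants depending only on $\psi$. Three consequences follow, all with $\psi$-dependent constants: $V(r)\approx r$ for $r\ge1$ by Lemma~\ref{ch1V}; $K(x)\approx \frac{1}{|x|\psi(1/x)}\approx|x|$ for $|x|\ge1$ by Lemma~\ref{KWLSC} (applicable since $\psi\in\WLSC{\alpha}{\gamma}$ with $\alpha>1$); and $\psi^{-1}(u)=(\psi^*)^{-1}(u)\approx\sqrt u$ for $0<u\le\psi^*(1)$, obtained by inverting the relation $\psi^*(\xi)\approx\xi^2$ on $(0,1]$.

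Next I would split into the two regimes of Theorem~\ref{ball2}. If $t\le1/\psi^*(1/R)$, Lemma~\ref{ch1V} gives $\sqrt t\le 1/\sqrt{\psi^*(1/R)}\le C_1V(R)$, whence $\sqrt t\wedge V(R)\approx\sqrt t$ and the first formula of Theorem~\ref{ball2} already reads $\p^x(T_{B_R}>t)\approx \frac{V(|x|-R)}{\sqrt t}\wedge1$. If $t>1/\psi^*(1/R)$, put $r_t:=1/\psi^{-1}(1/t)$; from $\psi^*(1/r_t)=1/t<\psi^*(1/R)$ and monotonicity of $\psi^*$ one gets $r_t\ge R>1$, and also $1/t\le\psi^*(1)$, so the asymptotics above apply at every point below: $\psi^*(1/r_t)\approx r_t^{-2}$ forces $r_t\approx\sqrt t$, hence $K(1/\psi^{-1}(1/t))=K(r_t)\approx r_t\approx\sqrt t$, while $K(|x|)\approx|x|\approx V(|x|)$ since $|x|>R>1$. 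Substituting these into the second formula of Theorem~\ref{ball2} gives
$$\p^x(T_{B_R}>t)\approx\frac{V(|x|-R)K(|x|)}{V(|x|)K(1/\psi^{-1}(1/t))}\wedge1\approx\frac{V(|x|-R)}{\sqrt t}\wedge1.$$
The two regimes together prove the corollary.

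There is no real obstacle here; the only point requiring care is the bookkeeping of which arguments fed to $V$, $K$, $\psi^{-1}$ lie in the range where the second-moment asymptotics are valid — namely that $|x|>R>1$ and $r_t\ge R>1$ — whereas the argument $|x|-R$ of $V$ in the numerator, which is not bounded away from $0$, must be left untouched.
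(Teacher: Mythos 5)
Your proof is correct and follows the same route as the paper's (very terse) argument: derive $\psi(\xi)\approx\xi^2$ near $0$ from the finite second moment, translate via Lemmas~\ref{ch1V} and~\ref{KWLSC} into $V(w)\approx K(w)\approx w$ for $w\ge1$ and $\psi^{-1}(u)\approx\sqrt u$ near $0$, and substitute into the two regimes of Theorem~\ref{ball2}. You have simply written out the bookkeeping (in particular verifying that the arguments fed to $V$, $K$, $\psi^{-1}$ stay in the ranges where those asymptotics hold, while $|x|-R$ in $V(|x|-R)$ is correctly left alone) that the paper leaves implicit.
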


\begin{proof} If the second moment is finite, then $\psi(x)\approx x^2,  |x|<1$, hence $\psi^{-1}(x)\approx \sqrt{x},0\leq  x<1$. Moreover $ K(w)\approx V(w)\approx w, w>1$. Hence
we obtain the conclusion applying Theorem \ref{ball2}.
\end{proof}

\begin{rem}\label{ext1}    If $X$ is a special subordinate Brownian motion satisfying \eqref{0reg}, then the upper bound from Theorem \ref{ball2} is true without the assumption $\psi\in\WLSC{\alpha}{\gamma}$, $\alpha>1$. This follows from the fact that property $\textbf{(H)}$ holds for such processes (see Remark \ref{Hproperty}). In particular   $\psi(x)=|x|+|x|^2$ defines a special subordinate Brownian and it does not have a lower scaling property with $\alpha >1$ but we have
$$\p^x(T_{B_R}>t)\le c  \frac{V(|x|-R)} {\sqrt{t}\wedge V(R)}\wedge 1, \quad t\le 1/\psi^*(1/R), $$
and
$$\p^x(T_{B_R}>t)\le c   \frac{V(|x|-R) K(|x|)} {V(|x|) K( 1/\psi^{-1}(1/t))}\wedge 1, \quad t> 1/\psi^*(1/R)  .$$
Here the constant $c$ is independent of $R$, $V(x)\approx   \sqrt{x}\wedge x,\ x \ge 0$ and $K(x) \approx \log (1+|x|),\ x\in \R$.

By inspecting the proof of Proposition \ref{ball}
it is clear that we can prove a  lower bound
$$\p^x(T_{B_R}>t)\ge c_R   \frac{V(|x|-R) K(|x|)} {V(|x|) K( 1/\psi^{-1}(1/t))}\wedge 1, \quad t> 1/\psi^*(1/R),$$
but the constant $c_R $ will be dependent on $R$ (to choose $x^*$ as in Proposition \ref{ball} one can use unboundedness of $K$ instead of the scaling property).

 \end{rem}

\begin{exmp}Let $\nu(r)= \frac 1{|r|^3 \log^2(2+1/|r|)}, \ r\in \R$. Since $\nu$ is decreasing on $(0,\infty)$, by Lemma \ref{cos}, $\psi(x)\approx \int^\infty_0(1\wedge(xr)^2)\nu(r){dr}= x^2 \int^{1/x}_0r^2\nu(r){dr}+\int_{1/x}^\infty\nu(r){dr}$. Elementary
calculations  show that
\begin{eqnarray*}\psi(x)&\approx& x^2\frac{\log (2+1/x)}{\log (2+x)}, \quad x>0,\\
\psi^{-1}(x)&\approx& \sqrt{x} \frac{\sqrt{\log (2+x)}}{\sqrt{\log (2+1/x)}},\quad  x>0.\end{eqnarray*}
It is clear that $\psi$ has the weak lower scaling property with any $1<\alpha<2$. Hence
$$K(x)\approx\frac 1{x\psi(1/x)} \approx x \frac{\log (2+1/x)}{\log (2+x)},\quad  x>0$$
and
$$V(x)\approx\frac 1{\sqrt{\psi(1/x)}}\approx x \frac{ \sqrt{\log (2+1/x)}}{\sqrt{\log (2+x)}}, \quad x>0.$$
Hence $$\frac{K(x)}{V(x)}\approx  \frac{ \sqrt{\log (2+1/x)}}{\sqrt{\log (2+x)}}, \quad x>0.$$ Applying Theorem \ref{ball2} we obtain for $|x|>R$,
$$\p^x(T_{B_R}>t)\approx (|x|-R)\frac{ \sqrt{\log (2+1/(|x|-R))}}{\sqrt{t\log (2+(|x|-R))}}  \wedge 1, \quad t\le 1/\psi^*(1/R)$$
and for $t> 1/\psi^*(1/R)$,
$$\p^x(T_{B_1}>t)\approx (|x|-R)\frac{ \sqrt{\log (2+1/(|x|-R))}}{\sqrt{\log (2+(|x|-R))}} \frac{ \sqrt{\log (2+1/|x|)}}{\sqrt{\log (2+|x|)}}\frac{\sqrt{\log (2+t)}} {  {\sqrt{t\log (2+1/t)}}}\wedge 1, $$
since\  $t\psi^{-1}(1/t)\approx \sqrt{t \frac{{\log (2+1/t)}}{{\log (2+t)}}}$.
 \end{exmp}

\section{Heat kernel estimates}


 This section is devoted to finding sharp estimates of the heat kernel of the process $X$ killed after hitting an interval. We apply the previous results on hitting times and the estimates of the heat kernel of the free process obtained in \cite{MR3165234} under the assumption of unimodality of $X$ and  both lower and upper scaling property of $\psi$. At the end of the section we suggest  a certain extension of the main result, which allows to treat symmetric  processes which are not unimodal.


We denote  $ D_R=(-R,-1)\cup(1,R), \ R>1$.

\begin{prop}\label{exittime} Let $\psi\in\WLSC{\alpha}{\gamma}$, $\alpha>1$.
For $|x|\in (1,R)$ and $R>1$ we have
\begin{equation*}
\E^x\tau_{D_R}\le  \Cd R\frac{V(|x|-1) K(|x|)} {V(|x|) },
\end{equation*}
where the constant $\Cd$ depends on the scalings.
\end{prop}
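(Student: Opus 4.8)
The strategy is to bound $\E^x\tau_{D_R}$ by integrating its tail function, splitting the time integral at a scale of order $V(2R)^2$: on the short–time side one uses the hitting–time estimates for the unit interval already proved in Theorem~\ref{ball2}, and on the long–time side a confinement estimate for $(-R,R)$. By symmetry assume $x>1$. With $B_1=[-1,1]$ one has $\tau_{D_R}=\tau_{(-R,R)}\wedge T_{B_1}$, so $\p^x(\tau_{D_R}>t)\le\p^x(T_{B_1}>t)$ and $\p^x(\tau_{D_R}>t)\le\p^x(\tau_{(-R,R)}>t)$. Put $t_0=16V(2R)^2$ and split $\E^x\tau_{D_R}=\int_0^{t_0}\p^x(\tau_{D_R}>t)\,dt+\int_{t_0}^\infty\p^x(\tau_{D_R}>t)\,dt$. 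For the long–time part, shifting by $R$ and applying Lemma~\ref{halfspace} to $\tau_{(0,\infty)}$ gives $\sup_{|z|<R}\p^z(\tau_{(-R,R)}>s)\le 2(V(2R)/\sqrt s\wedge1)\le 1/2$ for $s\ge t_0$; iterating the Markov property yields $\sup_{z\in D_R}\p^z(\tau_{D_R}>kt_0)\le 2^{-k}$, hence $\int_0^\infty\sup_{z\in D_R}\p^z(\tau_{D_R}>s)\,ds\le 2t_0$ and therefore $\int_{t_0}^\infty\p^x(\tau_{D_R}>t)\,dt\le 2t_0\,\p^x(T_{B_1}>t_0)$. By Theorem~\ref{ball2} with radius $1$, $\p^x(T_{B_1}>t_0)\le c\,V(x-1)K(x)/\bigl(V(x)\,t_0\psi^{-1}(1/t_0)\bigr)$; since $V(2R)\approx V(R)$, $t_0\approx1/\psi^*(1/R)$ and $\psi^{-1}(1/t_0)\approx1/R$ by Lemma~\ref{ch1V}, and $K(R)\approx V(R)^2/R$ by Lemma~\ref{KWLSC}, one gets $t_0\psi^{-1}(1/t_0)\approx K(R)$, so this term is $\le c\,R\,V(x-1)K(x)/V(x)$ with $c$ depending only on the scalings.

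For the short–time part, insert the two regimes of Theorem~\ref{ball2}. On $[0,1/\psi^*(1)]$ one has $\p^x(T_{B_1}>t)\le c(V(x-1)/(\sqrt t\wedge V(1))\wedge1)$, and since $1/\psi^*(1)\approx V(1)^2$ its integral is $\le c(V(x-1)^2+V(x-1)V(1))$; this is $\le c\,R\,V(x-1)K(x)/V(x)$ because $V(x)^2\approx xK(x)$ and $r\mapsto r/V(r)$ is comparable to a non–decreasing function while $r\mapsto V(r)/r$ is comparable to a non–increasing one. On $[1/\psi^*(1),t_0]$ put $A=V(x-1)K(x)/V(x)$ and $\Phi(t)=t\psi^{-1}(1/t)\approx K(1/\psi^{-1}(1/t))$ (comparable to a non–decreasing function), and let $t^*=\inf\{t\ge1/\psi^*(1):\Phi(t)\ge A\}$. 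The subinterval where $\min(A/\Phi(t),1)=1$ is nonempty only when $A$ exceeds a scaling multiple of $V(1)^2$, hence only when $x$ is bounded below by a scaling constant, and there it contributes at most $t^*\approx1/\psi^*(1/x)\approx V(x)^2\le c\,RA$. The remaining contribution is $A\int_{t^*}^{t_0}dt/\Phi(t)$, which after the substitution $u=1/t$ equals $A\int_{1/t_0}^{1/t^*}du/\bigl(u\psi^{-1}(u)\bigr)$; using that $\psi^{-1}$ has lower scaling index $1/2$ (see \eqref{psiInvScal}), i.e.\ $\psi^{-1}(u)\ge c(ut_0)^{1/2}\psi^{-1}(1/t_0)$ for $u\ge1/t_0$, this is $\le c\,A\,t_0^{1/2}\psi^{-1}(1/t_0)^{-1}\int_{1/t_0}^\infty u^{-3/2}\,du\le c\,A/\psi^{-1}(1/t_0)\le c\,RA$. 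Summing the three contributions yields $\E^x\tau_{D_R}\le\Cd\,R\,V(x-1)K(x)/V(x)$.

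The delicate point is the integral $\int_{t^*}^{t_0}dt/\Phi(t)$: bounding the non–increasing integrand by its value at an endpoint would lose a spurious factor $\log R$, so one must exploit the genuine growth of $\psi^{-1}$ (its lower scaling exponent $1/2$) to compress the whole range $[t^*,t_0]$ into a bounded multiple of $R$; alternatively the rescaling device used in the proof of Theorem~\ref{ball2} normalises the inner interval and reduces everything to $R$ large. The remaining effort is the bookkeeping that converts the factors $V(2R)$, $K(R)$, $\psi^{-1}(1/t_0)$, $V(1)$ and $\psi^*(1)$ into one another via $V(r)\approx\psi^*(1/r)^{-1/2}$, $K(r)\approx r/\psi(1/r)\approx V(r)^2/r$ and the monotonicity of $r/V(r)$, all with constants depending only on the scaling characteristics.
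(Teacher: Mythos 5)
Your proposal is correct, but it takes a genuinely different route from the paper's own proof, which estimates $\E^x\tau_{D_R}=\int_{D_R}G_{D_R}(x,y)\,dy$ directly: for $|x|>2$ one bounds $G_{D_R}(x,y)\le G_{\{0\}^c}(x-1,y-1)\le 2K(x-1)$ via Proposition~\ref{Greenb}, giving $4RK(x-1)$; for $1<|x|\le 2$ one writes $s(x)=\E^x\tau_{(1,R)}+\E^x s(X_{\tau_{(1,R)}})$, bounds the first term by $2V(x-1)V(R)$ via \cite[Prop.\ 3.5]{MR3007664}, and the second by $4R\,\E^{x-1}\big[K(X_{\tau_{(0,\infty)}})+K(1);X_{\tau_{(0,\infty)}}\le-2\big]$, which is controlled by Lemmas~\ref{3K} and~\ref{H_cor}. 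That argument needs no information about the time-dependence of the tail.

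You instead integrate $\p^x(\tau_{D_R}>t)$ over $t$, splitting at $t_0\approx V^2(2R)$, treating the long-time part by an iterated-Markov confinement bound ($\sup_{z\in D_R}\p^z(\tau_{D_R}>kt_0)\le 2^{-k}$ giving $\int_{t_0}^\infty\le 2t_0\,\p^x(T_{B_1}>t_0)$) and the short- and mid-time parts by plugging both regimes of Theorem~\ref{ball2} into the integral. The delicate step is your observation that $\int dt/(t\psi^{-1}(1/t))$ over $[t^*,t_0]$ is $\lesssim 1/\psi^{-1}(1/t_0)\approx R$, which uses the lower scaling exponent $1/2$ of $\psi^{-1}$ from \eqref{psiInvScal} — this is exactly the point where a cruder endpoint bound would cost a spurious $\log R$; your substitution and the $u^{-3/2}$ integrand handle it correctly. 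The bookkeeping identities $K(r)\approx V^2(r)/r$, $V^2(r)\approx 1/\psi^*(1/r)$, the near-monotonicity of $r/V(r)$ (a consequence of \eqref{Psi*ScalingGeneral}), and $\psi^{-1}(1/t_0)\approx 1/R$ all check out. Two cosmetic caveats: (i) you should take $t_0=CV^2(2R)$ for $C$ large enough, depending on $C_1$ from Lemma~\ref{ch1V}, to guarantee $t_0>1/\psi^*(1)$ so that the second regime of Theorem~\ref{ball2} applies at $t_0$; (ii) the paper's Green-function route is substantially shorter and does not use Theorem~\ref{ball2}, whereas yours leans on it — that is legitimate here (no circularity, since Proposition~\ref{exittime} lives in Section~6 after Theorem~\ref{ball2}), but it makes your proof heavier than necessary. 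What you gain conceptually is a clean illustration that the mean exit time really does follow by integrating the tail estimate, and an explicit demonstration of where the scaling of $\psi^{-1}$ is indispensable.
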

\begin{proof} 

Let $x>1$.
By Proposition  \ref{Greenb},
\begin{eqnarray*}\E^x\tau_{D_R}&=&\int_{(-R,-1)\cup(1,R)} G_{(-R,-1)\cup(1,R)}(x,y)dy\leq
2\int^R_0 G_0(x-1,y)dy\\ &\le& 4R K( x-1),
\end{eqnarray*}
which gives the desired bound if $x>2$, since $\frac{V(x-1)} {V(x) }\ge 1/2$.

Assume that  and $1<x\le 2$. Let $s(u)=  \E^u\tau_{D_R}, u\in \R$. Then by the strong Markov property we have
$$ s(x)= \E^x\tau_{(1,R)}+ \E^{x}s(X_{\tau_{(1,R)}}).$$
%
Next, applying the above estimate and subaddativity of  $K$ we obtain
\begin{eqnarray*} \E^{x}s(X_{\tau_{(1,R)}})&\le& 4 R\E^{x}[K(|X_{\tau_{(1,R)}}|-1);X_{\tau_{(1,R)}}\le -1]\\&\le&
4 R\E^{x}[K(X_{\tau_{(1,R)}})+K(1);X_{\tau_{(1,R)}}\le -1]\\ &\le&
4 R\E^{x}[K(X_{\tau_{(1,\infty)}})+K(1);X_{\tau_{(1,\infty)}}\le -1]\\
&=&
4 R\E^{x-1}[K(X_{\tau_{(0,\infty)}})+K(1);X_{\tau_{(0,\infty)}}\le -2]\\ &\le&
c_1 R\frac{V(x-1) K^*(1)} {V(1) },
\end{eqnarray*}
where in the last step we applied Lemma \ref{3K} and Lemma \ref{H_cor}. Note that the constant $c_1$ depends only on the scalings.   
 Finally, applying \cite[Proposition 3.5]{MR3007664},  subadditivity of $V$ and the estimate $V^2(2)\le c_2  K(2)$ following from Lemma \ref{KWLSC} , we obtain
$$ \E^x\tau_{(1,R)}\le 2V(x-1)V(R)\le 2RV(x-1)V(2)\le c_2 R\frac{V(x-1) K(2)} {V(2) }. $$
%
The proof is completed by observing that, by  Lemma \ref{KWLSC},  $ K^*(1)\approx K(|x|), 1\le|x|\le2$   and by subaddativity  $V(1)\approx V(|x|), 1\le|x|\le 2$. 
\end{proof}

\begin{prop} \label{MRestimate} 

Let  $ R>2$ and  $1<|x|<R$.
If $\psi\in\WLSC{\alpha}{\gamma}$, $\alpha>1$, then
$$\p^x(|X_{\tau_{D_R}}|\ge R)
\le  \Cc \frac{V(|x|-1)}{V(|x|)} \frac{K(|x|)}{K(R)},$$
where the constant $\Cc$ depends only on the scalings.


\end{prop}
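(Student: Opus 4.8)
The strategy mirrors the proof of Proposition \ref{MRestimate1}, but now the exit is from $D_R=(-R,-1)\cup(1,R)$ rather than from $(-R,R)\setminus\{0\}$, so the rôle of the compensated kernel $K(\cdot,2R)$ is played by a suitable Green-function functional on $D_R$, and the extra factor $V(|x|-1)/V(|x|)$ comes from the boundary behaviour at $\pm1$. By symmetry assume $1<x<R$. The plan is to run a ``probe point'' argument: apply the strong Markov property at $\tau_{D_R}$ to a function that is harmonic on $D_R$, is bounded above by a multiple of $K(R)$ on $\{|z|\ge R\}$, and is bounded below by $cK(|x|)V(|x|-1)/V(|x|)$ at a fixed interior comparison point. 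First I would use Lemma \ref{GreenRegHarm}-style harmonicity of $\Go(\cdot,y)$ together with the decomposition $\Go(x,y)=\E^x\Go(X_{\tau_{D_R}\wedge T_0},y)$ restricted to $D_R$ (here $T_0$ never occurs since the process starts in $D_R$ and must leave $D_R$ before reaching $0$), evaluated at a point $y$ with $|y|\ge$ something like $2R$; by Proposition \ref{Greenb} the right-hand side is at most $2K(R)\,\p^x(|X_{\tau_{D_R}}|\ge R)$ plus a contribution from landings in $(-R,R)$, which one controls.

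More concretely, I would take $y=2R$ and write $\Go(x,2R)=\E^x\Go(X_{\tau_{D_R}},2R)$, split the expectation according to whether $|X_{\tau_{D_R}}|\ge R$ or $X_{\tau_{D_R}}\in(-1,1)$ (these are the only possibilities since $D_R^c=(-1,1)\cup\{|z|\ge R\}$ up to endpoints), bound the first piece from above using $\Go(z,2R)\le 2K(R)$ and from below—on the complementary event—using that $\Go(z,2R)$ is small when $z$ is near $0$ because $K$ has the lower scaling property with index $\alpha-1$ (Lemma \ref{KWLSC}), and hence $\Go(z,2R)\le \varepsilon K(R)$ for $|z|<1$ once $R$ is large; this isolates $\p^x(|X_{\tau_{D_R}}|\ge R)\le c\,\Go(x,2R)/K(R)$. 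Then I would bound $\Go(x,2R)$ from above: by Proposition \ref{Greenb} and subadditivity $\Go(x,2R)\le 2K(x)$, which already gives $\p^x(|X_{\tau_{D_R}}|\ge R)\le cK(x)/K(R)$ — but this is missing the factor $V(x-1)/V(x)$. To recover it I would instead compare $x$ with the fixed point $2$ (or $x^*$) via the boundary Harnack / half-line estimate: the function $x\mapsto \Go(x,2R)$, or rather $x\mapsto \p^x(|X_{\tau_{D_R}}|\ge R)$, is regular harmonic in $D_R$ and vanishes at the boundary point $1$, so by Lemma \ref{BHPLowerGeneral} (applied to the half-line $(1,\infty)$) and Lemma \ref{halfspace} together with subadditivity of $V$, one gets $\p^x(|X_{\tau_{D_R}}|\ge R)\le c\,\dfrac{V(x-1)}{V(2)}\,\p^{2}(|X_{\tau_{D_R}}|\ge R)$ for $1<x\le 2$, while for $x>2$ one has $V(x-1)/V(x)\ge 1/2$ so no boundary factor is needed; combining with the bound at the base point $x=2$, namely $\p^{2}(|X_{\tau_{D_R}}|\ge R)\le cK(2)/K(R)\approx cK(|x|)/K(R)$ for $1<|x|\le 2$ (by Lemma \ref{KWLSC}), closes the estimate.

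The main obstacle I expect is handling the middle landings, i.e. the event $X_{\tau_{D_R}}\in(-1,1)$: one must verify that $\Go(z,2R)$ for $|z|<1$ contributes a term that can be absorbed, which forces $R$ to be large (hence the hypothesis $R>2$, and possibly $R\ge R_0$ for some threshold, handled by adjusting constants); and one must be careful that the boundary-Harnack step is applied to a genuinely non-negative harmonic function on the one-sided neighbourhood $(1,R)$ — here the natural choice is $F(z)=\p^z(|X_{\tau_{D_R}}|\ge R)$ restricted to the right component, which is harmonic there, and whose values at the interior comparison point are controlled by the previous paragraph's $K(x)/K(R)$ bound. A secondary technical point is that Lemma \ref{BHPLowerGeneral} is stated as a lower bound for harmonic functions in terms of $V$; to turn it into the desired \emph{upper} bound for $F$ one reads it with the rôles reversed (bounding $F(x)$ above by $F$ at the reference point times $V(x-1)/V(r)$), which is exactly the content of the chaining plus Lemma \ref{exit} estimate and requires only the weak scaling of $\psi$ through $C_H$ and $C_4$.
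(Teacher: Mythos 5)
Your proposal takes a much more circuitous route than the paper, and there is a genuine gap in the step that is supposed to produce the factor $V(|x|-1)/V(|x|)$.

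The paper's actual proof is a direct two-liner: by subadditivity it suffices to consider $1<x\le (R\vee3)/2$; then inequality \eqref{B1} from Lemma \ref{upper_den} gives
$\p^x(|X_{\tau_{D_R}}|\ge R)\le C_2\,\E^x\tau_{D_R}/V^2(R)$, Proposition \ref{exittime} bounds $\E^x\tau_{D_R}\le \Cd R\,V(|x|-1)K(|x|)/V(|x|)$, and Lemma \ref{KWLSC} gives $K(R)\approx V^2(R)/R$, which finishes. The boundary factor $V(|x|-1)/V(|x|)$ is already built into the exit-time estimate, so no boundary Harnack step is needed. Your probe-point argument via $\Go(x,2R)$ can at best produce $\p^x(|X_{\tau_{D_R}}|\ge R)\lesssim K(|x|)/K(R)$ (and even that needs a lower bound on $\Go(z,2R)$ for $|z|\ge R$, which in Proposition \ref{Greenb} is proved under monotonicity of $K$, not under WLSC alone), so you then try to insert the missing $V$-factor afterwards.

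The step that fails is the appeal to Lemma \ref{BHPLowerGeneral}. That lemma is a one-sided estimate: it bounds a nonnegative harmonic function \emph{from below} by $V(x)/V(r)$ times its value at a reference point. Your argument needs the \emph{reverse}, a Carleson-type upper bound $F(x)\lesssim F(r)\,V(x)/V(r)$, and you cannot get this by "reading the lemma with the r\^oles reversed": the chaining-plus-Lemma \ref{exit} argument only produces the lower bound. The paper's genuine upper boundary estimate is Lemma \ref{H_cor} (via the property $\textbf{(H)}$); that is exactly what powers the exit-time bound in Proposition \ref{exittime}. But Lemma \ref{H_cor} requires the harmonic function to be subadditive and to satisfy $\E^xF(X_{\tau_{(0,\infty)}})\le F(x)$, and your choice $F(z)=\p^z(|X_{\tau_{D_R}}|\ge R)$ satisfies neither in any obvious way, so it cannot be substituted in directly. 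In addition, your handling of the middle landings $X_{\tau_{D_R}}\in(-1,1)$ requires $K^*(1)/K(R)$ to be small, which forces $R$ large; the proposition is stated for all $R>2$, and "adjusting constants" does not cover the range $2<R\le R_0$ because the right-hand side can be arbitrarily small there (as $|x|\downarrow1$). The short route through \eqref{B1} and Proposition \ref{exittime} avoids all of these difficulties.
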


\begin{proof}
{By subaddativity of $V$ and $K$   it is enough to consider}
 $1<x\le (R\vee3)/2$. By  \eqref{B1}, and then Proposition \ref{exittime},
$$\p^x(|X_{\tau_{D_R}}|\ge R)
  \le C_2\frac{\E^x\tau_{D_R}}{ V^2(R)}\le    C_2\Cd R\frac{V(x-1) K(x)} {V(x)  V^2(R) }.$$
 %
  The proof is completed { by observing that   $K(R)\approx  V^2(R)/R$ with the comparability constant dependent only on the scalings, which folows from Lemma \ref{KWLSC}.  } \end{proof}

  The following lemma is consequence of \cite[(3.2)]{MR632968}, \cite[Corollary 1]{MR3225805} and  the weak lower scaling property.
   \begin{lem}\label{ball001} Let  $\psi\in\WLSC{\alpha}{\gamma}$, $\alpha>0$. There is a constant $\Cb\ge 1$ dependent on the scalings such that
   for  $R= \chi/\psi^{-1}(1)$,  $\chi>1$, we have
\begin{eqnarray*} \p^0(\tau_{(-R,R) }\le 1) \le  \frac{\Cb}{\chi^\alpha}  .\end{eqnarray*}
    \end{lem}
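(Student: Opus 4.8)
The plan is to combine the classical Pruitt-type exit estimate from \cite{MR632968} with the comparability between $1/V^2$ and $\psi^*$ and the scaling of $\psi^{-1}$. First I would recall the bound from \cite[(3.2)]{MR632968} (or its restatement via \cite[Corollary 1]{MR3225805}): there is an absolute constant such that for every $r>0$ and $t>0$,
\begin{equation*}
\p^0(\tau_{(-r,r)}\le t)\le c\, t\, h(r),
\end{equation*}
where $h(r)$ is the Pruitt function controlling the generator, which by \cite[Corollary 1]{MR3225805} satisfies $h(r)\approx \psi^*(1/r)\approx 1/V^2(r)$ with absolute comparability constants (the last comparison is Lemma \ref{ch1V}). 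Taking $t=1$ this gives $\p^0(\tau_{(-R,R)}\le 1)\le c/V^2(R)$ for all $R>0$.

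Next I would substitute $R=\chi/\psi^{-1}(1)$. Since $\psi^{-1}=(\psi^*)^{-1}$ and $\psi^*$ is (comparable to) a non-decreasing unbounded function with $\psi^*(\psi^{-1}(1))=1$, the quantity $V^2(R)\approx 1/\psi^*(1/R)=1/\psi^*(\psi^{-1}(1)/\chi)$. Now the weak lower scaling property $\psi\in\WLSC{\alpha}{\gamma}$ (hence $\psi^*\in\WLSC{\alpha}{\gamma}$) applied with $\theta=\psi^{-1}(1)/\chi$ and $\lambda=\chi>1$ yields
\begin{equation*}
1=\psi^*(\psi^{-1}(1))=\psi^*\bigl(\chi\cdot\psi^{-1}(1)/\chi\bigr)\ge \gamma\,\chi^{\alpha}\,\psi^*(\psi^{-1}(1)/\chi),
\end{equation*}
so $\psi^*(\psi^{-1}(1)/\chi)\le 1/(\gamma\chi^{\alpha})$, i.e. $1/V^2(R)\le c'/(\gamma\chi^{\alpha})$. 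Combining with the displayed exit bound gives $\p^0(\tau_{(-R,R)}\le 1)\le \Cb\chi^{-\alpha}$ with $\Cb$ depending only on the scaling characteristics $\alpha,\gamma$ (the absolute constants from the Pruitt estimate and from Lemma \ref{ch1V} are absorbed). If needed, enlarge $\Cb$ so that $\Cb\ge 1$.

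I do not expect a genuine obstacle here; the statement is essentially bookkeeping. The only point requiring a little care is making sure the direction of the comparability $h(r)\approx 1/V^2(r)$ is the one needed (we need the \emph{upper} bound $h(r)\le c/V^2(r)$, which is exactly what Lemma \ref{ch1V} provides since it lower-bounds $V$), and checking that the scaling inequality is used with $\lambda=\chi\ge 1$, which is guaranteed by the hypothesis $\chi>1$. One should also note that this lemma uses only $\alpha>0$, so no upper scaling is invoked.
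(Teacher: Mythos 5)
Your proof is correct and follows exactly the route the paper indicates: Pruitt's exit estimate \cite[(3.2)]{MR632968}, the comparison $h(r)\approx\psi^*(1/r)$ from \cite[Corollary 1]{MR3225805}, and then $\psi^*\in\WLSC{\alpha}{\gamma}$ applied with $\lambda=\chi>1$ at $\theta=\psi^{-1}(1)/\chi$ together with $\psi^*(\psi^{-1}(1))=1$. The detour through $V^2(R)$ via Lemma \ref{ch1V} is harmless but unnecessary, since $h(R)\approx\psi^*(1/R)$ already feeds directly into the scaling step.
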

     
{In the next proposition we prove  estimates for some exit times  which  play a crucial role in obtaining the main result of this section.  Recall that $T_{B_1}$ is the first hitting time of $B_1$.}
 \begin{prop} \label{ball02} Let 
$\psi\in\WLSC{\alpha}{\gamma}$, $\alpha>1$.   Assume that $1>1/\psi^*(1)$. There is $\chi\ge 2$ dependent only on the scalings such that  for $R=  \frac{\chi}{\psi^{-1}(1)}>2$ and  $D_R=(-R,-1) \cup (1,R)$  we have
%
$$\p^x(\tau_{D_R}>1)\ge \Ca \p^x(T_{B_1}>1), \quad 1< |x|\le \frac R2.$$
Moreover
$$\p^0(\tau_{(-R/4,R/4 )}>1)\ge 1/2.$$
The  constant $\Ca$ depends only on the scalings.
\end{prop}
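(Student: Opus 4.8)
The plan is to fix once and for all a number $\chi$ depending only on the scalings and large enough for the finitely many inequalities that appear below; the \emph{moreover} part is then immediate and the main inequality is reduced to controlling the event on which $X$ leaves $(-R,R)$ before hitting $B_1$ and before time $1$. For the \emph{moreover} part, write $R/4=(\chi/4)/\psi^{-1}(1)$ and apply Lemma~\ref{ball001} with $\chi'=\chi/4$ (legitimate once $\chi>4$): it gives $\p^0(\tau_{(-R/4,R/4)}\le 1)\le \Cb/(\chi/4)^\alpha$, which is $\le 1/2$ as soon as $\chi\ge 4(2\Cb)^{1/\alpha}$.

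For the main inequality, by symmetry I assume $1<x\le R/2$. Since $D_R=(-R,R)\setminus B_1$ one has $\tau_{D_R}=T_{B_1}\wedge\tau_{(-R,R)}$, hence
\[
\p^x(\tau_{D_R}>1)=\p^x(T_{B_1}>1)-\p^x\big(T_{B_1}>1,\ \tau_{(-R,R)}\le 1\big),
\]
so it suffices to bound the subtracted term by a fixed fraction of $\p^x(T_{B_1}>1)$. On $\{T_{B_1}>1,\ \tau_{(-R,R)}\le 1\}$ we have $\tau_{D_R}=\tau_{(-R,R)}<T_{B_1}$, so $X$ leaves $D_R$ through $(-\infty,-R]\cup[R,\infty)$, i.e. $|X_{\tau_{D_R}}|\ge R$; therefore Proposition~\ref{MRestimate} gives $\p^x\big(T_{B_1}>1,\ \tau_{(-R,R)}\le 1\big)\le \p^x(|X_{\tau_{D_R}}|\ge R)\le \Cc\,\frac{V(x-1)}{V(x)}\,\frac{K(x)}{K(R)}$. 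I also record the cruder estimate: since $(x-R/2,x+R/2)\subset(-R,R)$, translation invariance and Lemma~\ref{ball001} (with $\chi'=\chi/2$) give $\p^x(\tau_{(-R,R)}\le 1)\le \p^0(\tau_{(-R/2,R/2)}\le 1)\le \Cb\,2^\alpha/\chi^\alpha$.

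Now I feed in the two-sided control of $\p^x(T_{B_1}>1)$. Put $\Lambda(x)=\frac{V(x-1)}{V(x)}\frac{K(x)}{K(1/\psi^{-1}(1))}$. By Corollary~\ref{ball22} for the upper bound and Lemma~\ref{ball20}, Proposition~\ref{ball} for the lower bound (for $|x|\ge x^*$ one uses $\p^x(T_{B_1}>1)\ge\p^x(T_{B_1}>2)\ge\Ce(\frac{K(x)}{K(1/\psi^{-1}(1))}\wedge 1)\ge\Ce(\Lambda(x)\wedge 1)$), there are $c_0,c'>0$ depending only on the scalings with $c_0(\Lambda(x)\wedge 1)\le \p^x(T_{B_1}>1)\le c'(\Lambda(x)\wedge 1)$. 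If $\Lambda(x)<1$, then $K\in\WLSC{\alpha-1}{\gamma_1}$ (Lemma~\ref{KWLSC}) gives $K(R)=K(\chi/\psi^{-1}(1))\ge\gamma_1\chi^{\alpha-1}K(1/\psi^{-1}(1))$, whence $\p^x(|X_{\tau_{D_R}}|\ge R)\le \frac{\Cc}{\gamma_1\chi^{\alpha-1}}\Lambda(x)\le \frac{\Cc}{\gamma_1\chi^{\alpha-1}c_0}\,\p^x(T_{B_1}>1)\le\frac12\p^x(T_{B_1}>1)$ once $\chi^{\alpha-1}\ge 2\Cc/(\gamma_1 c_0)$. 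If $\Lambda(x)\ge 1$, then $\p^x(T_{B_1}>1)\ge c_0$, and the cruder estimate yields $\p^x(\tau_{D_R}>1)\ge c_0-\Cb2^\alpha/\chi^\alpha\ge \frac{c_0}{2}\ge\frac{c_0}{2}\p^x(T_{B_1}>1)$ once $\Cb2^\alpha/\chi^\alpha\le c_0/2$. Choosing $\chi$ (depending only on the scalings) to meet all the above requirements, the proposition follows with $\Ca=c_0/2$.

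The delicate point is exactly this dichotomy. A naive bound $\p^x(\tau_{D_R}>1)\ge\p^x(T_{B_1}>1)-\p^x(\tau_{(-R,R)}\le 1)$ is hopeless near $B_1$, where $\p^x(T_{B_1}>1)$ can be far smaller than $\p^x(\tau_{(-R,R)}\le 1)$, while near the centre of $D_R$ the time-free probability $\p^x(|X_{\tau_{D_R}}|\ge R)$ is itself of order one. The resolution is to use the time-free bound of Proposition~\ref{MRestimate} precisely when $\p^x(T_{B_1}>1)$ is small—there the factor $K(x)/K(R)\le (\gamma_1\chi^{\alpha-1})^{-1}K(x)/K(1/\psi^{-1}(1))$ beats the matching lower bound for $\p^x(T_{B_1}>1)$ thanks to the scaling gain $\gamma_1\chi^{\alpha-1}$—and to fall back on the time-dependent bound of Lemma~\ref{ball001} when $\p^x(T_{B_1}>1)$ is comparable to $1$.
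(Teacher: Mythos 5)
Your proof is correct, and it takes a genuinely cleaner route than the paper's. The paper splits the range $1<|x|\le R/2$ into four spatial zones and treats each with its own tool: the refined exit bound from Proposition~\ref{MRestimate} for $x^*\le|x|\le 1+1/\psi^{-1}(1)$, a half-space argument via Lemmas~\ref{halfspace} and \ref{exit} for $1+1/\psi^{-1}(1)<|x|\le 1+\chi_1/\psi^{-1}(1)$, the boundary-Harnack-type Lemma~\ref{BHPLowerGeneral} to reduce $1<|x|<x^*$ to the reference point $x^*$, and Lemma~\ref{ball001} for $|x|\ge 1+\chi_1/\psi^{-1}(1)$. You bypass the boundary Harnack and half-space steps entirely by importing from Theorem~\ref{ball2} (through Corollary~\ref{ball22}, Lemma~\ref{ball20} and Proposition~\ref{ball}) the two-sided comparison $\p^x(T_{B_1}>1)\approx\Lambda(x)\wedge 1$, and then dichotomizing on the \emph{value} $\Lambda(x)$ rather than on $|x|$: when $\Lambda(x)<1$ the scaling gain $K(R)/K(1/\psi^{-1}(1))\gtrsim\chi^{\alpha-1}$ makes $\p^x(|X_{\tau_{D_R}}|\ge R)$ a small multiple of the lower bound on $\p^x(T_{B_1}>1)$, while when $\Lambda(x)\ge 1$ the probability $\p^x(T_{B_1}>1)$ is bounded below by $c_0$ and the crude Lemma~\ref{ball001} bound on $\p^x(\tau_{(-R,R)}\le 1)$ suffices. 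The identity $\tau_{D_R}=T_{B_1}\wedge\tau_{(-R,R)}$ and the observation that the ``bad'' event $\{T_{B_1}>1,\tau_{(-R,R)}\le 1\}$ forces $|X_{\tau_{D_R}}|\ge R$ are exactly what make this dichotomy close. Your choice of $\chi$ meeting finitely many scaling-only conditions, and the use of translation invariance for the crude estimate $\p^x(\tau_{(-R,R)}\le 1)\le\p^0(\tau_{(-R/2,R/2)}\le 1)$ valid for $|x|\le R/2$, are both in order; what your approach buys is a shorter proof with fewer auxiliary tools, at the cost of explicitly depending on the full strength of the hitting-time estimate Theorem~\ref{ball2} (which, being proved in the previous section, is legitimately available).
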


\begin{proof} First observe that $1>1/\psi^*(1)$ is equivalent to $\frac{1}{\psi^{-1}(1)}>1$. Let $x^*$ be the value picked in  Proposition \ref{ball}. We first consider $|x|\ge x^*\ge 2$. By Proposition \ref{ball}, 
$$\p^x(T_{B_1}>2)\ge \Ce \left(\frac{K(|x|)}{K(1/\psi^{-1}(1))}\wedge 1\right).$$
We find $1\le \chi_1\le\chi/4$ satisfying the following conditions
\begin{align*}\frac {K(1/\psi^{-1}(1))} {K(\chi/\psi^{-1}(1))}&\le { \Ce/(2 \Cc)},\\
\frac {V(x^*-1)}{V(\chi/\psi^{-1}(1))}&\le \frac18 (C_3(V(x^*-1)\wedge 1), \\ 
\frac {V(\chi_1/\psi^{-1}(1))}{V(\chi/\psi^{-1}(1))}&\le C_3/(8C_1),\\
\frac{\Cb}{\chi_1^\alpha}&\le \frac12.\end{align*}
Such choice of $\chi_1, \chi$, which are dependent on the scalings,  is possible due to weak lower scaling property  for  $V$, $K$  implied  by  Lemma \ref{ch1V} and Lemma \ref{KWLSC}, respectively.

We set  $R=  \frac{\chi}{\psi^{-1}(1)}$.
The choice of $\chi$ together with Proposition  \ref{MRestimate} imply
$$ \p^x(|X_{\tau_{D_R}}|\ge R) \le  {\Cc}\frac{K(|x|)}{K(R)}\le \frac {\Ce}2\frac{K(|x|-1)}{K(1/\psi^{-1}(1))}.$$
%
Then
 $$\p^x(\tau_{D_R}>2) \ge \p^x(T_{B_1}>2)-  \p^x(|X_{\tau_{D_R}}|\ge R)\ge \frac12  \p^x(T_{B_1}>2)$$
 if $x^*\le |x|\le 1+1/\psi^{-1}(1)$.

 If $1+1/\psi^{-1}(1)<|x|\le 1+  \chi_1/\psi^{-1}(1)$ we use a similar argument based on the exit from a half-space. Indeed, by Lemmas \ref{halfspace} and \ref{exit},
 $$\p^x(\tau_{D_R}>1) \ge \p^x(\tau_{(1,R)}>1)\ge \p^x(\tau_{(1,\infty)}>1)-  \p^x(X_{\tau_{(1,R)}}\ge R)\ge  C_3(V(|x|-1)\wedge1)-\frac {2V(|x|-1)}{V(R-1)}. $$
Observe that, due to Lemma \ref{ch1V},  $C_3(V(|x|-1)\wedge1)\ge C_3(V(1/\psi^{-1}(1))\wedge1) \ge C_3/C_1$ and by the choice of $\chi_1$, $\frac {2V(|x|-1)}{V(R-1)}\le \frac {4V(\chi_1/\psi^{-1}(1))}{V(R)}\le C_3/(2C_1)$. Hence,

$$\p^x(\tau_{D_R}>1) \ge C_3/(2C_1),  \quad 1+1/\psi^{-1}(1)<|x| \le 1+  \chi_1/\psi^{-1}(1).  $$

%
 %

 Next, we assume that $1<|x|<x^*$. By Lemmas \ref{halfspace}, \ref{exit} and the choice of $R$ we have
\begin{eqnarray*}\p^x(\tau_{(1,R)}>1)&\ge& \p^x(\tau_{(1,\infty)}>1)- \p^x(X_{\tau_{(1,R)}}\ge R)\\&\ge&  C_3(V(|x|-1)\wedge1)-\frac {2V(|x|-1)}{V(R-1)} \\
&=&  \frac{V(|x|-1)}{V(x^*-1)} \left[\left(C_3(V(x^*-1)\wedge \frac{V(x^*-1)}{V(x-1)}  \right) - \frac {2V(x^*-1)}{V(R-1)} \right]\\
&\ge&  \frac{V(|x|-1)}{V(x^*-1)} \left[\left(C_3(V(x^*-1)\wedge 1  \right) - \frac {4V(x^*-1)}{V(R)} \right]\\
&\ge&  \frac{V(|x|-1)}{V(x^*-1)} \frac12 C_3(V(x^*-1)\wedge 1)  \\
&\ge&  \frac{V(|x|-1)}{V(x^*-1)} \frac14 C_3\p^{x^*}(\tau_{(1,R )}>1).\end{eqnarray*}
    Moreover, we can  apply Lemma \ref{BHPLowerGeneral} to $F(x)=\E^x\p^{X_{{\tau_{(1,R)}}}}({\tau_{D_R}}>1)$ with $r=x^*-1$. In consequence we can find $c_1$ dependent on the scalings such that
\begin{eqnarray*}\p^x(\tau_{D_R}>1)&\ge& \frac12\left( \p^x(\tau_{(1,R)}>1)+   \E^x\p^{X_{{\tau_{(1,R)}}}}({\tau_{D_R}}>1)\right)\\&\ge& c_1 \frac{V(|x|-1)}{V(x^*-1)} \left( \p^{x^*}(\tau_{(1,R)}>1)+   \E^{x^*}\p^{X_{\tau_{(1,R)}}}(\tau_{D_R}>1)\right)\\&\ge& c_1  \frac{V(|x|-1)}{V(x^*-1)} \p^{x^*}(\tau_{{D_R}}>2)\\
&\ge& \frac {c_1}2  \frac{V(|x|-1)}{V(x^*-1)} \p^{x^*}(T_{B_1}>2),\end{eqnarray*}
where  the last inequality follows from the first part of the proof. Applying Theorem \ref{ball2} we can find $c_2$ dependent only on the scalings such that  
\begin{eqnarray*}\p^x(\tau_{D_R}>1)&\ge& \frac {c_1}2  \frac{V(|x|-1)}{V(x^*-1)} \p^{x^*}(T_{B_1}>2)\ge c_2\p^{x}(T_{B_1}>1),\end{eqnarray*}
which completes the proof for $1<|x|<x^*$.

Finally we consider $1+\chi_1/\psi^{-1}(1)\le |x|\le R/2 $.
Let $R_1= \chi_1/\psi^{-1}(1)\le R/4$. Then by Lemma \ref{ball001}, and the choice of $\chi_1$,
\begin{eqnarray*}\p^x(\tau_{D_R}>1)&\ge&  \p^0(\tau_{(-R_1,R_1) }>1) \ge 1- \frac{\Cb}{\chi_1^\alpha}\ge 1/2 \end{eqnarray*}
and
\begin{eqnarray*}  \p^0(\tau_{(-R/4,R/4) }>1) \ge 1/2 .\end{eqnarray*}
The proof is completed.
\end{proof}

{Below we recall optimal estimates for the transition density  of an unimodal process $X$ if we assume appropriate scaling conditions.} 

\begin{lem}[\cite{MR3165234} ,Corollary 23]\label{density} Assume that $X$ is unimodal. Let  $\psi\in\WLSC{\alpha}{\gamma}\cap \WUSC{\beta}{\rho}$, $1< \alpha \le \beta<2$. Then
$$p_t(x)\approx\psi^{-1}(1/t) \wedge \frac t{|x|V^2(|x|)}\approx  p_t(0)\wedge \frac t{|x|V^2(|x|)}
\approx  \psi^{-1}(1/t) \wedge t\nu(x), \quad \ t>0,\,\,\, x,\,y \in \R.$$
The comparability constants depend on the scalings.

\end{lem}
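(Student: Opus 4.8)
The plan is to establish the single two-sided estimate $p_t(x)\approx\psi^{-1}(1/t)\wedge t\nu(x)$ together with the two comparisons $p_t(0)\approx\psi^{-1}(1/t)$ and $\nu(x)\approx 1/(|x|V^2(|x|))$ for $x\neq 0$; since $a\wedge b\approx a'\wedge b'$ whenever $a\approx a'$ and $b\approx b'$, these give all four forms in the statement at once. For $p_t(0)\approx\psi^{-1}(1/t)$: writing $p_t(0)=\tfrac1\pi\int_0^\infty e^{-t\psi(\xi)}\,d\xi$ and splitting at $\xi=\psi^{-1}(1/t)$, the first piece is at most $\psi^{-1}(1/t)$, and on the tail WLSC together with $\psi\approx\psi^*$ (valid here by unimodality, \cite[Proposition~2]{MR3165234}, or by Lemma~\ref{KWLSC}) gives $t\psi(\xi)\ge c\,(\xi\psi^{-1}(1/t))^{\alpha}$, hence a finite Gaussian-type integral; the matching lower bound follows from $p_t(0)=\sup_y p_t(y)\ge(2r)^{-1}\p^0(|X_t|<r)$ with $r=M/\psi^{-1}(1/t)$, since \eqref{B0} and the weak lower scaling of $V^2$ give $\p^0(|X_t|\ge r)\le C_2t/V^2(r)\le cM^{-\alpha}\le 1/2$ once $M$ is large (depending on the scalings). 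For $\nu(x)\approx 1/(|x|V^2(|x|))$: monotonicity of $\nu$ and doubling of $V$ (from WUSC) give $\tfrac{|x|}2\nu(|x|)\le\nu[|x|/2,\infty)\le C_2/V^2(|x|/2)\le c/V^2(|x|)$, and conversely $\nu[r,\infty)\approx 1/V^2(r)$ together with the reverse doubling $V^2(2^kr)\gtrsim 2^{k\alpha}V^2(r)$ forces $\nu[2^k|x|,\infty)\le\tfrac12\nu[|x|,\infty)$ for a $k$ depending only on the scalings, whence $2^k|x|\,\nu(|x|)\ge\int_{|x|}^{2^k|x|}\nu(r)\,dr\ge\tfrac12\nu[|x|,\infty)\approx 1/V^2(|x|)$.

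The upper bound on $p_t(x)$ needs only unimodality and \eqref{B0} and holds for every $x$: on one hand $p_t(x)\le p_t(0)\approx\psi^{-1}(1/t)$; on the other, since $p_t$ is non-increasing on $(0,\infty)$, $|x|\,p_t(|x|)\le 2\int_{|x|/2}^{|x|}p_t(y)\,dy\le\p^0(|X_t|\ge|x|/2)\le C_2t/V^2(|x|/2)\le c\,t/V^2(|x|)$, so $p_t(x)\le c\,t/(|x|V^2(|x|))\approx t\nu(x)$; the minimum gives the claim.

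The lower bound is the substantive part, and I would split it at the scale $1/\psi^{-1}(1/t)$. First a near-diagonal flatness step: with $r_0=\epsilon/\psi^{-1}(1/t)$, the upper estimate $p_t(0)\le c\psi^{-1}(1/t)$ gives $\p^0(|X_t|<r_0)\le 2c\epsilon$, while the concentration bound above gives $\p^0(|X_t|<Mr_0)\ge 1/2$ once $M\epsilon$ is large; fixing $\epsilon$ small and then $M$ large (both depending only on the scalings) so that $2c\epsilon\le 1/4$, we obtain $\p^0(r_0\le|X_t|\le Mr_0)\ge 1/4$, hence by monotonicity $p_t(r_0)\ge\frac{1/4}{2(M-1)r_0}\gtrsim\psi^{-1}(1/t)$ and therefore $p_t(x)\gtrsim\psi^{-1}(1/t)$ for $|x|\le r_0$, which is the desired bound there because $t\nu(x)\gtrsim t\nu(r_0)\approx\psi^{-1}(1/t)$. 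For $|x|\ge 2r_0$ I would use the one-big-jump estimate: with $\tau=\tau_{(-r_0,r_0)}$, the Markov property at $\tau$ and the Ikeda--Watanabe formula give
\[
p_t(x)\ \ge\ \int_0^{t/2}\!\!\int_{|z|<r_0}\!\!\int_{|y-x|<r_0}p_s^{(-r_0,r_0)}(0,z)\,\nu(y-z)\,p_{t-s}(x-y)\,dy\,dz\,ds .
\]
On the domain of integration $|y-z|\le 3|x|$, so $\nu(y-z)\ge\nu(3|x|)\gtrsim\nu(|x|)$ by unimodality and doubling of $\nu$; the time left is $t-s\in(t/2,t)$ with $|x-y|<r_0$, and since $\psi^{-1}(1/(t-s))\approx\psi^{-1}(1/t)$ there by \eqref{psiInvScal}, the near-diagonal bound applies once $\epsilon$ is small enough and yields $p_{t-s}(x-y)\gtrsim\psi^{-1}(1/t)$; finally $\int_0^{t/2}\p^0(\tau>s)\,ds=\E^0[\tau\wedge(t/2)]\gtrsim_\epsilon t$, because $\E^0\tau_{(-r_0,r_0)}\gtrsim V^2(r_0)\approx\epsilon^\alpha t$ and $\epsilon^\alpha t<t/2$, so the truncation is harmless. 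Multiplying the three factors with $\int_{|y-x|<r_0}dy=2r_0$ gives $p_t(x)\gtrsim\psi^{-1}(1/t)\cdot\nu(|x|)\cdot r_0\cdot t\approx t\nu(x)$ with constants depending only on the scalings, and the transitional annulus $r_0\le|x|\le 2r_0$ is absorbed into the near-diagonal estimate. Combining, $p_t(x)\gtrsim\psi^{-1}(1/t)\wedge t\nu(x)$ for all $x$.

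The step I expect to be hardest is this lower bound, and inside it the control of constants: the parameters $\epsilon,M$ must be pinned down purely in terms of $\alpha,\beta,\gamma,\rho$; the near-diagonal flatness has to be proved first and then reused inside the one-big-jump estimate (so the order matters and there is no circularity); and one must verify that $\E^0[\tau_{(-r_0,r_0)}\wedge(t/2)]$ is a scaling-controlled fraction of $t$ and that the scale $|x|\approx 1/\psi^{-1}(1/t)$ is covered by both arguments. The upper bound and the reduction of the four right-hand sides to one are comparatively routine, relying only on unimodality, \eqref{B0}, \eqref{B}, $\psi\approx\psi^*$, and the scaling properties of $V$ and $\psi^{-1}$.
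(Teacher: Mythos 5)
The paper does not prove this lemma; it is taken as a black box from \cite{MR3165234}, so you are reconstructing the cited result. Your overall scheme (Fourier inversion for $p_t(0)$, the upper bound via unimodality plus \eqref{B0}, the lower bound via near-diagonal flatness plus a one-big-jump decomposition, then scaling control of all parameters) is the standard one and is essentially sound. However, there is one genuine circularity in the step $\nu(x)\gtrsim 1/(|x|V^2(|x|))$: you write that ``$\nu[r,\infty)\approx 1/V^2(r)$ together with the reverse doubling of $V^2$ forces $\nu[2^k|x|,\infty)\le\frac12\nu[|x|,\infty)$''. But at that point you only have the upper half $\nu[r,\infty)\lesssim 1/V^2(r)$ from \eqref{B}; the lower half is precisely what you need to prove. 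Without an a priori lower bound on $\nu[r,\infty)$, the reverse doubling $V^2(2^k r)\gtrsim 2^{k\alpha}V^2(r)$ only tells you that $\nu[2^k|x|,\infty)$ is small relative to $1/V^2(|x|)$, not relative to $\nu[|x|,\infty)$, so you cannot conclude that the tail halves and the argument does not close.

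This is exactly where the assumption $\beta<2$ must enter, and it can be repaired. Set $\bar\nu(r)=\nu[r,\infty)$ and $W(\xi)=\int(1\wedge(\xi s)^2)\,\nu(ds)$. Integration by parts gives $W(\xi)=2\xi^2\int_0^{1/\xi}s\,\bar\nu(s)\,ds$, hence for any integer $k\ge 1$,
$$W(\xi)-2^{2k}W(\xi/2^k)=-2\xi^2\int_{1/\xi}^{2^k/\xi}s\,\bar\nu(s)\,ds\ \ge\ -\,2^{2k}\,\bar\nu(1/\xi),$$
so that $\bar\nu(1/\xi)\ge W(\xi/2^k)-2^{-2k}W(\xi)$. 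Since $W\approx\psi^*$ (pure jump case, which is forced by $\beta<2$) and $\psi^*\in\WUSC{\beta}{\rho}$, choosing $k$ large enough, depending only on $\beta$, $\rho$ and the $W\approx\psi^*$ constants, gives $2^{-2k}W(\xi)\le\tfrac12 W(\xi/2^k)$, hence $\bar\nu(1/\xi)\gtrsim W(\xi/2^k)\gtrsim\psi^*(\xi)$, i.e.\ $\nu[r,\infty)\gtrsim 1/V^2(r)$. Once this is in place your doubling argument delivers $\nu(x)\gtrsim\nu[|x|,\infty)/|x|\gtrsim 1/(|x|V^2(|x|))$, and the rest of your proof goes through. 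The remaining informal step, the claim that $\E^0[\tau_{(-r_0,r_0)}\wedge(t/2)]\gtrsim_\epsilon t$, can be made precise via the maximal inequality $\p^0\bigl(\sup_{s\le T}|X_s|\ge r_0\bigr)\lesssim T/V^2(r_0)$, which yields $\p^0(\tau_{(-r_0,r_0)}>s)\ge\tfrac12$ for $s\le c\,V^2(r_0)$, and $V^2(r_0)\approx_\epsilon t$ with $V^2(r_0)\le t$; so this is merely a detail to spell out, not a gap.
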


The following lemma  is instrumental in estimating the heat kernel $p^{D}$.
For the proof see
 \cite[Lemma~2.2]{MR2677618} or \cite[Lemma~2]{MR2722789}.
\begin{lem}\label{lemppu100} 
  Consider disjoint open sets $U_1, U_3\subset D$.
Let $U_2=D\setminus (U_1\cup U_3)$.  If $x\in
  U_1$, $y \in U_3$ and $t>0$, then
	\begin{align*} p_t^{D}( x, y)&\le \p^x(X_{\tau_{U_1}}\in U_2)\sup_{s<t,\, z\in D_2} p_s( z- y)
    +  (t\wedge \E^x \tau_{U_1}) \sup_{u\in U_1,\, z\in U_3}\nu(z-u),
\\
p_t^{D}( x, y)&\ge   t\,\p^x(\tau_{U_1}>t)
    \,\p^y(\tau_{U_3}>t)\inf_{u\in U_1,\, z\in U_3}\nu(z-u).
  \end{align*}
\end{lem}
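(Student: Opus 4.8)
The plan is to reduce everything to one identity: a first‑passage decomposition of $p_t^{D}$ at the exit time $\tau_{U_1}$, obtained from the strong Markov property together with the time‑dependent Ikeda--Watanabe (Lévy system) formula, and then to read off both bounds by trivial estimates on the three factors that appear. Symmetry of the killed kernel, $p_t^{D}(z,w)=p_t^{D}(w,z)$, will be used to turn integrals over $U_1$ and $U_3$ into probabilities bounded by $1$ (for the upper bound) or by survival probabilities (for the lower bound).

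First I would record the decomposition. Since $U_1\subset D$ we have $\tau_{U_1}\le\tau_D$; since $U_1,U_3$ are disjoint open sets, $\overline{U_1}\cap U_3=\emptyset$, so $p^{U_1}_t(x,\cdot)$ is supported in $U_1$ (the killed path stays in $U_1$), and on $\{\tau_{U_1}\le t,\ X_{\tau_{U_1}}\in D\}$ necessarily $X_{\tau_{U_1}}\in U_2\cup U_3$. Hence, for $x\in U_1$ and $y\in U_3$, the strong Markov property at $\tau_{U_1}$ gives
\begin{align*}
p^{D}_t(x,y)&=\E^x\big[\tau_{U_1}<t,\ X_{\tau_{U_1}}\in U_2;\ p^{D}_{t-\tau_{U_1}}(X_{\tau_{U_1}},y)\big]\\
&\quad+\int_0^t\int_{U_3}\Big(\int_{U_1}p^{U_1}_s(x,u)\,\nu(w-u)\,du\Big)p^{D}_{t-s}(w,y)\,dw\,ds,
\end{align*}
where the second term is the contribution of the paths that leave $U_1$ by jumping directly into $U_3$ (here the Ikeda--Watanabe formula is used, and the exit is a genuine jump since $\overline{U_1}\cap U_3=\emptyset$), while the first term collects the remaining exits through $U_2$ (including a possible continuous exit, which lands on $\partial U_1\cap D\subset U_2$ in the applications).

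For the upper bound, in the first term use $p^{D}_{t-\tau_{U_1}}(z,y)\le p_{t-\tau_{U_1}}(z-y)=p_{t-\tau_{U_1}}(y-z)$ together with $t-\tau_{U_1}\in(0,t)$, so this term is at most $\p^x(X_{\tau_{U_1}}\in U_2)\sup_{s<t,\,z\in U_2}p_s(z-y)$. In the second term bound $\nu(w-u)\le\sup_{u\in U_1,\,z\in U_3}\nu(z-u)$, then $\int_{U_1}p^{U_1}_s(x,u)\,du=\p^x(\tau_{U_1}>s)\le1$ and, by symmetry of $p^{D}$, $\int_{U_3}p^{D}_{t-s}(w,y)\,dw=\p^y(X^{D}_{t-s}\in U_3)\le1$; the remaining $ds$-integral is $\int_0^t\p^x(\tau_{U_1}>s)\,ds\le t\wedge\E^x\tau_{U_1}$, which gives the second summand of the upper estimate. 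For the lower bound, discard the first (nonnegative) term, bound $\nu(w-u)\ge\inf_{u\in U_1,\,z\in U_3}\nu(z-u)$ from below, and use monotonicity: $\p^x(\tau_{U_1}>s)\ge\p^x(\tau_{U_1}>t)$ and $\p^y(X^{D}_{t-s}\in U_3)\ge\p^y(\tau_{U_3}>t-s)\ge\p^y(\tau_{U_3}>t)$ for $0\le s\le t$. Then the $ds$-integral over $(0,t)$ produces the factor $t$, giving exactly $p^{D}_t(x,y)\ge t\,\p^x(\tau_{U_1}>t)\,\p^y(\tau_{U_3}>t)\inf_{u\in U_1,\,z\in U_3}\nu(z-u)$.

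There is no genuine obstacle here; the argument is routine bookkeeping. The only points needing a word of care are: that $p^{U_1}_t(x,\cdot)$ is supported in $U_1$ (immediate from path continuity inside $U_1$ and continuity of the density guaranteed by \eqref{0reg}); that $\tau_{U_1}$ has no atom at $t$, so the event can be taken to be $\{\tau_{U_1}<t\}$ and the time arguments $t-\tau_{U_1}$, $t-s$ stay in $(0,t)$; and the passage from the probabilistic form of the strong Markov decomposition to the integral (Ikeda--Watanabe) form displayed above. All of this is standard, and one may alternatively simply invoke \cite[Lemma~2.2]{MR2677618} or \cite[Lemma~2]{MR2722789}.
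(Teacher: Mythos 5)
Your proof is correct and follows essentially the same route as the cited references; the paper itself does not reprove the lemma but refers to \cite[Lemma~2.2]{MR2677618} and \cite[Lemma~2]{MR2722789}, whose proofs are precisely the first-passage decomposition at $\tau_{U_1}$ plus the time-dependent Ikeda--Watanabe (L\'evy system) formula that you use. Two small remarks: you correctly read the paper's ``$D_2$'' as the typo it is for $U_2$; and your parenthetical ``$\partial U_1\cap D\subset U_2$ in the applications'' is in fact true in general here, since $\overline{U_1}\cap U_3=\emptyset$ forces $\partial U_1\cap D\subset D\setminus(U_1\cup U_3)=U_2$ with no extra hypothesis.
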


{Now we are ready to state and prove the main theorem of this section.}
\begin{thm}  \label{HK}
Let $\psi\in\WLSC{\alpha}{\gamma}\cap \WUSC{\beta}{\rho}$, $1<\alpha\le\beta<2$ and the process is unimodal. Let $D= (-\infty,-r)\cup(r, \infty), \ r>0$.  Then

$$p_t^{D}( x, y)\approx   \p^x(\tau_D>t)\p^y(\tau_D>t) p_t(x-y),\quad \ t>0,\,\,\, x,\,y \in D.$$
The comparability constants depend only on the scalings.
\end{thm}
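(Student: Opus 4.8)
The proof combines two inputs established above: the two-sided survival estimates of Theorem~\ref{ball2} (note that for $D=(-\infty,-r)\cup(r,\infty)$ one has $\tau_D=T_{B_r}$, so $\p^x(\tau_D>t)=\p^x(T_{B_r}>t)$) and the free heat kernel bounds of Lemma~\ref{density}, $p_t(x-y)\approx\psi^{-1}(1/t)\wedge t\nu(x-y)\approx p_t(0)\wedge t\nu(x-y)$. By the parabolic scaling $Y_s=X_{ts}/r$ used exactly as in the proof of Theorem~\ref{ball2}, the asserted comparability is scale invariant, so we may assume $r=1$; the two regimes $t\le 1/\psi^*(1)$ and $t>1/\psi^*(1)$ are handled by the same mechanism, the first one reducing, via Lemmas~\ref{halfspace} and~\ref{exit} applied near each of the two boundary points separately, to a one-sided half-line computation, so I concentrate on $t>1/\psi^*(1)$. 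Write $\rho=\psi^{-1}(1/t)<1$ and $R=\chi/\rho>2$ with $\chi$ from Proposition~\ref{ball02}. I will repeatedly use: $p_{ct}(0)\approx p_t(0)\approx\rho$ and $\p^x(\tau_D>ct)\approx\p^x(\tau_D>t)$ for any fixed $c>0$ (both from scaling); $\p^x(\tau_D>t)\approx\p^x(\tau_{D_R}>t)$ (Proposition~\ref{ball02}), which replaces the unbounded $D$ by the bounded set $D_R$ where Propositions~\ref{exittime} and~\ref{MRestimate} apply; and $K(1/\rho)\approx t\rho$, so $\p^x(\tau_D>t)\approx\frac{V(|x|-1)K(|x|)}{V(|x|)\,t\rho}\wedge1$.

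\emph{Upper bound.} When $|x-y|\le c_0/\rho$ (the constant $c_0$ chosen, depending only on the scalings, so that $p_t$ is comparable to $p_t(0)$ on $[0,c_0/\rho]$) I use a four-step semigroup sandwich: from $p_t^{D}(x,y)=\int\!\int p^D_{t/4}(x,z_1)\,p^D_{t/2}(z_1,z_2)\,p^D_{t/4}(z_2,y)\,dz_1\,dz_2\le\bigl(\sup_{z_1,z_2}p^D_{t/2}(z_1,z_2)\bigr)\,\p^x(\tau_D>t/4)\,\p^y(\tau_D>t/4)$ and $\sup p^D_{t/2}\le p_{t/2}(0)\approx\rho\approx p_t(x-y)$ one gets $p_t^{D}(x,y)\lesssim p_t(x-y)\,\p^x(\tau_D>t)\,\p^y(\tau_D>t)$. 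When $|x-y|>c_0/\rho$ I invoke the first inequality of Lemma~\ref{lemppu100} with $U_1$ a neighbourhood of $x$ of radius $\asymp1/\rho$ inside $D$ — concretely $U_1=D_{R'}$ with $R'\asymp1/\rho$ when $|x|\lesssim1/\rho$, and an honest interval of radius $\asymp1/\rho$ otherwise — and $U_3$ a neighbourhood of $y$ of radius $\asymp|x-y|/4$; disjointness of $U_1,U_3$ holds because $|x-y|\gtrsim1/\rho$. Proposition~\ref{MRestimate} (with Lemma~\ref{exit} when $|x|$ is near $1$) gives $\p^x(X_{\tau_{U_1}}\in U_2)\lesssim\p^x(\tau_D>t)$, and Proposition~\ref{exittime} gives $\E^x\tau_{U_1}\lesssim\frac1\rho\frac{V(|x|-1)K(|x|)}{V(|x|)}\approx t\,\p^x(\tau_D>t)$; unimodality yields $\sup_{u\in U_1,z\in U_3}\nu(z-u)\lesssim\nu(x-y)$ and $\sup_{s\le t,\,z\in U_2}p_s(z-y)\lesssim p_t(x-y)$ (the latter since $|z-y|\gtrsim|x-y|\gtrsim1/\rho$ puts $t$ below the peak of $s\mapsto p_s(z-y)$). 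Hence both terms of Lemma~\ref{lemppu100} are $\lesssim t\nu(x-y)\,\p^x(\tau_D>t)\approx p_t(x-y)\,\p^x(\tau_D>t)$. The remaining factor $\p^y(\tau_D>t)$ is recovered by writing $p_t^{D}(x,y)=\int_Dp^D_{t/2}(x,z)\,p^D_{t/2}(z,y)\,dz$ and running the bound just obtained for $p^D_{t/2}(\cdot,y)$ from the $y$-side, controlling $\int_Dp^D_{t/2}(x,z)\cdot\,dz$ by the free kernel.

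\emph{Lower bound.} If $|x-y|\gtrsim1/\rho$ I apply the second inequality of Lemma~\ref{lemppu100}: choose disjoint intervals $U_1\ni x$ and $U_3\ni y$, each of radius $\asymp1/\rho$, realized (via Proposition~\ref{ball02}, and Lemma~\ref{halfspace} when $x$ or $y$ is near $\partial D$) so that $\p^x(\tau_{U_1}>t)\gtrsim\p^x(\tau_D>t)$ and $\p^y(\tau_{U_3}>t)\gtrsim\p^y(\tau_D>t)$; since $|z-u|\le2|x-y|$ for $u\in U_1$, $z\in U_3$, unimodality together with the doubling of $\nu$ (from the scaling assumptions) gives $\inf_{u\in U_1,z\in U_3}\nu(z-u)\gtrsim\nu(x-y)$, so $p_t^{D}(x,y)\gtrsim t\nu(x-y)\,\p^x(\tau_D>t)\,\p^y(\tau_D>t)\approx p_t(x-y)\,\p^x(\tau_D>t)\,\p^y(\tau_D>t)$. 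If $|x-y|\lesssim1/\rho$ I run a chaining argument: fix an interval $A\subset D$ of length $\asymp1/\rho$ lying at distance $\gtrsim1/\rho$ from $\partial D$ and within distance $\lesssim1/\rho$ of both $x$ and $y$, and bound $p_t^{D}(x,y)\ge\int_A\!\int_A p^D_{t/3}(x,z)\,p^D_{t/3}(z,w)\,p^D_{t/3}(w,y)\,dz\,dw$; for the middle leg use the interior lower bound $p^D_s(z,w)\ge\tfrac12 p_s(z-w)\gtrsim\rho$, valid because $z,w$ lie at distance $\gtrsim1/\psi^{-1}(1/s)$ from $\partial D$ (estimate the correction term in $p^D_s(z,w)=p_s(z-w)-\E^z[\tau_D<s;\,p_{s-\tau_D}(w-X_{\tau_D})]$ via Lemma~\ref{upper_den}), while $\int_A p^D_{t/3}(x,z)\,dz=\p^x(X^D_{t/3}\in A)\gtrsim\p^x(\tau_D>t)$ follows by letting the process first reach the interior region around $A$ while surviving (Proposition~\ref{ball02}, or the half-space estimates if $x$ is near $\partial D$) and then remain in $A$ for the remaining time.

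\emph{Main obstacle.} The delicate points are, on the upper side, the off-diagonal estimate: extracting \emph{both} survival factors from Lemma~\ref{lemppu100} and taming the non-local supremum $\sup_{s<t}p_s$ there, which forces the splitting $p_t^{D}(x,y)=\int p^D_{t/2}(x,\cdot)p^D_{t/2}(\cdot,y)$ and a choice of $U_1$ for which Propositions~\ref{MRestimate} and~\ref{exittime} \emph{simultaneously} give $\p^x(X_{\tau_{U_1}}\in U_2)\lesssim\p^x(\tau_D>t)$ and $\E^x\tau_{U_1}\lesssim t\,\p^x(\tau_D>t)$ (this is where the identity $K(1/\rho)\approx t\rho$ is essential); and, on the lower side, the near-diagonal estimate when both $x$ and $y$ lie close to $\partial D$, where the free heat-kernel lower bound cannot be used directly and one must first transport the surviving mass into the interior. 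The rest — the small-time regime $t\le1/\psi^*(1)$, the matching of ``near'' and ``off-diagonal'' ranges, and the bookkeeping of scaling-dependent constants — is routine.
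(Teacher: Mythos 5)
Your architecture parallels the paper's very closely — scaling reduction, a near/off-diagonal dichotomy at scale $1/\psi^{-1}(1/t)$, Lemma~\ref{lemppu100} for both bounds, semigroup chaining to symmetrize, and the same auxiliary inputs (Theorem~\ref{ball2}, Propositions~\ref{exittime}, \ref{MRestimate}, \ref{ball02}, and Lemma~\ref{density}). Your upper bound is correct and, on the near-diagonal, even slightly cleaner than the paper's: the four-step sandwich yields both survival factors in one stroke, whereas the paper first proves a one-sided bound and recovers the second factor by a final chaining step. In the off-diagonal upper bound you correctly take $U_1=D_{R'}$ (not a single interval) when $|x|\lesssim 1/\rho$.

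The lower bound, however, contains a genuine gap. In the off-diagonal case you posit \emph{intervals} $U_1\ni x$ of radius $\asymp 1/\rho$ and assert that, for $x$ near $\partial D$, Lemma~\ref{halfspace} yields $\p^x(\tau_{U_1}>t)\gtrsim\p^x(\tau_D>t)$. This fails for large $t$: any interval $U_1\subset D$ with $x\in(1,2)\cap U_1$ lies in $(1,\infty)$, so $\p^x(\tau_{U_1}>t)\le\p^x(\tau_{(1,\infty)}>t)\approx V(x-1)/\sqrt{t}$, while Theorem~\ref{ball2} gives $\p^x(\tau_D>t)\approx V(x-1)K(x)/(V(x)\,t\psi^{-1}(1/t))$; the ratio of the first to the second is $\approx V(1)\sqrt{t}\,\psi^{-1}(1/t)/K(1)$, which tends to $0$ as $t\to\infty$ (for $\psi(\xi)=|\xi|^\alpha$ it is of order $t^{1/2-1/\alpha}$ with $1/\alpha>1/2$). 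Physically, the process survives much longer in $D$ than in a half-line because it can hop back and forth across $[-1,1]$, and a one-sided interval cannot register this. The paper's remedy — and the entire point of Proposition~\ref{ball02} — is to take $U_1=D_R=(-R,-1)\cup(1,R)$, a two-piece set straddling both boundary points, for which $\p^x(\tau_{D_R}>1)\gtrsim\p^x(T_{B_1}>1)=\p^x(\tau_D>1)$ does hold for $1<|x|\le R/2$, i.e.\ exactly in the boundary regime. You cite Proposition~\ref{ball02}, but your text assigns it to the interior regime and reserves half-space estimates for the boundary; the correct assignment is the reverse (interior $x$ is handled by a Pruitt-type bound such as Lemma~\ref{ball001}, boundary $x$ by Proposition~\ref{ball02}). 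The same inversion appears in your near-diagonal lower bound (``Proposition~\ref{ball02}, \emph{or} the half-space estimates if $x$ is near $\partial D$''). Once this is corrected, the rest of your lower-bound argument — the interior two-point estimate via the correction-term decomposition, and the integration of $p^D_{t/3}(x,\cdot)$ over the interior box — matches the paper's proof.
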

\begin{proof}
We may assume that $|x|<y$. We find the estimates in the case of fixed  $t=2$ or $t=3$ and $r=1$, keeping all arising constants dependent only on the scalings. Then applying the scaling argument we will be able to extend the estimates for the whole range of times and any $r>0$.
We also assume that $t=1> V^2(1)$. The case $1\le  V^2(1)$ can be deduced from a general bound for the killed semigroup obtained in \cite[Corollary 2.4, Theorem 3.3 and the beginning of Section 5]{MR3249349}.
In what follows all comparabilities  hold  with comparability constants which are either depend only on the scalings or they are absolute. The same remark applies to all constants appearing in the proof.  As mentioned above throughout the proof  we fix $D= (-\infty,-1)\cup(1, \infty)$.

We start with the upper bound. {First, we  prove that there is a constant $c_0$ such that 
\begin{equation}\label{UB11}p_1^{D}(x, y)\le c_0 \p^x(\tau_D>2) p_1(x-y).\end{equation}
To this end we consider two cases.}

{\bf Case 1.} Assume that   $V^2(|x-y|)\le16$.  In this case, by Lemma \ref{density}, { $p_1(x-y)\approx p_{1/2}(0)$.} 
Hence  
%
%
{\begin{eqnarray}p_1^{D}(x, y)&=& \int p_{1/2}^{D}( x, u)p_{1/2}^{D}( u, y)du\nonumber\\&\le&  p_{1/2}(0) \int p_{1/2}^{D}( x,u)du \nonumber\\&\le&  c_1 \p^x(\tau_D>1/2) p_1(x-y)\label{UB12}.\end{eqnarray}}

{\bf Case 2.} Assume that  $16<V(|x-y|)$.
If $V(|x|-1)\geq 1$ then by Theorem  \ref{ball2} (or Lemma \ref{halfspace}), $\p^x(\tau_D>2)\approx 1$ and of course \begin{equation}\label{UB13}p_1^{D}(x, y)\leq p_1(x-y)\leq c_2 \p^x(\tau_D>2) p_1(x-y).\end{equation}
Assume that $V(|x|-1)\le 1$, and  $V(y-1)>2$ (the case $V(|x|-1)<V(y-1)<2$ is included  in Case 1). Let  $R: V^2(R-1)=1$. This implies that $|x|\le R$. Also,   since $1>V^2(1)$, we have $R>2$.

We put $U_1=(-R,-1)\cup(1,R)=D_R$ and $U_3=(y-|x-y|/2,y+|x-y|/2)$. We claim that $(y-x)/4\ge R-x$. Indeed, by subaddativity
$$ V((y-x)/4)\ge (1/4)V(y-x)> 4,$$ while
$$ V(|R-x|)\le V(R)+V(|x|)\le V(R-1)+V(|x|-1)+2V(1)\le 4.$$
Hence
\begin{equation}\label{dist}\inf_{u\in U_1,\, z\in U_3} |z-u|=y-(y-x)/2-R\ge (y-x)/4.\end{equation}
By Proposition  \ref{MRestimate},
$$\p^x(\tau_{U_1}<\tau_D)\le   \Cc\frac{V(|x|-1)K(|x|)}{V(x)K(R)},\quad 1<|x|<R.$$
Since $R\ge 2$ then $1=V(R-1)\le V(R)\le 2V(R-1)\le 2$, and by Lemma \ref{ch1V}, $ R\approx \frac 1{\psi^{-1}(1)}$.
Next, by Lemma \ref{KWLSC},  $K(R)\approx \frac{V^2(R)}{R}\approx \psi^{-1}(1) $  implies that  for all  $x: 1<|x|<R$,

$$\p^x(\tau_{U_1}<\tau_D)\le  c_3\min\left\{\frac{V(|x|-1)} {V(|x|)\psi^{-1}(1)} K(|x| ), 1 \right\} \le c_4 \p^x(\tau_D>1),$$
where the last inequality follows from   Theorem \ref{ball2}.
By Proposition  \ref{exittime} and again by Theorem \ref{ball2},
$$1\wedge \E^x\tau_{U_1}\le \Cd\min\left\{\frac{V(|x|-1)} {V(|x|)\psi^{-1}(1)} K(|x| ), 1 \right\}\le c _5\p^x(\tau_D>1). $$
Let $U_2=D\setminus(U_1\cup U_3)$.  By the estimates of  $p_s( z- y)$ (see Lemma \ref{density}),
$$\sup_{s<1,\, z\in U_2} p_s( z- y)= \sup_{s<1} p(s, (x-y)/2) \leq c_6 p_1(x-y).$$
Moreover by \eqref{dist},
$$\sup_{u\in U_1,\, z\in U_3}\nu(z-u)\le \nu((x-y)/4)\approx \nu(x-y)\leq c_7 p_1(x-y).  $$ Then, by Lemma \ref{lemppu100},
  \begin{eqnarray}p_1^{D}(x, y)&\leq& (1\wedge \E^x\tau_{D_1}) \sup_{u\in D_1,\, z\in D_3}\nu(z-u)+ \sup_{s<1,\, z\in D_2} p_s( z- y)\p^x(\tau_{D_1}<\tau_D)\nonumber\\&\le& (c_5c_7+c_4c_6)\p^x(\tau_D>1) p_1(x-y).\label{UB14}\end{eqnarray}
  
{ Since, by  Theorem  \ref{ball2}, $\p^x(\tau_D>1/2)\approx \p^x(\tau_D>1)\approx  \p^x(\tau_D>2)$ we arrive at \eqref{UB11} by combining \eqref{UB12}, \eqref{UB13} and \eqref{UB14}.}
  
{ Finally, by  the semigroup property and by applying the estimate \eqref{UB11}, and from symmetry of the heat kernel, we have
\begin{equation}\label{HKuB}p_2^{D}( x, y)\leq c_8 \p^x(\tau_D>2)\p^y(\tau_D>2) p_2(x-y).\end{equation}}

To get a  general bound for any $t,r>0$  we consider $Y_s = \frac1r X_{st}, \ s\ge 0$.
 For such a process (fixing $t$ and $r$) its characteristic exponent  is  $\psi^r_t(u)= t\psi(u/r)$ so it has the same scaling characteristics as $\psi(u)$. Let $p^Y, p^{D, Y}$ denote the transition densities for the free and killed  process $Y$, respectively.  We have $p_{2t}(x-y)= p^Y_2((x-y)/r)$ and  $p^{rD}_{2t}(x,y)= p^{D,Y}_2(x/r,y/r)$. Moreover
 $\p^{y/r}(\tau^Y_D>2)= \p^{y}(\tau_{rD}>2t)$. Hence applying
 \eqref{HKuB} to $Y$ we obtain
 \begin{eqnarray*}p_{rD}(2t, x, y)&=& p^{D,Y}_2(x/r,y/r)\\    &\leq& c_8 \p^{y/r}(\tau^Y_D>2)\p^{x/r}(\tau^Y_D>2)p^Y_2((x-y)/r)\\    &=& c_8 \p^{y}(\tau_{rD}>2t)\p^{y}(\tau_{rD}>2t)p_{2t}(x-y).\end{eqnarray*}

Next we deal with the lower bound.

\noindent Let  $R=\frac {\chi}{\psi^{-1}(1)}$, where     $\chi$ is the constant from
 Proposition \ref{ball02}. Recall that  \begin{equation}\label{exitball}\p^0(\tau_{(-R/4, R/4)}>1)\ge 1/2.\end{equation}
Also  note that by Lemma \ref{ch1V}, $1/C_1^2\le V^2(\frac 1{\psi^{-1}(1)})\le  V^2(R)\le (1+\chi^2) V^2(\frac 1{\psi^{-1}(1)})  \le (1+\chi^2)C_1^2 $. 
 %
Next, we define for every $z\in D$,    $U_z= D\cap (-R,R)$ and $B_z= (3R, 4R)$ if $|z|<R/2$ or  $U_z= (z-R/4,z+R/4)$ and $B_z= (z+ 2R, z+3R)$, $z>R/2$ or $B_z= (z-3R, z-2R)$, $z<-R/2$, otherwise. Note that $V(|w|-1)\ge V(R)\ge 1/C_1$ for $w\in B_z$. Hence,  by   \cite[Corollary  3.5]{MR3249349} we have
  \begin{eqnarray*}\label{density2}p_1^{D}( u, v)&\ge& c_9  (V(|u|-1)\wedge 1) (V(|v|-1)\wedge 1)  p_1( u-v)\\
 &\ge& c_9C_1^{-2}  p_1( u- v), \quad u\in B_x,\, v\in B_y.\end{eqnarray*}
  Moreover it is easy to check that $|u-v|\le 2((y-x)\vee R)$, hence $p_1(u - v)\ge c_{10} p_1(x-y)$, which follows from Lemma \ref{density}. In consequence we have
\begin{equation}\label{density2}p_1^{D}( u, v)\ge c_{11}    p_1(x-y), \quad u\in B_x,\,  v\in B_y,\end{equation}
where $c_{11}=c_9c_{10}C_1^{-2}$.
By the semigroup property and \eqref{density2},
 \begin{eqnarray}p_3^{D}(x, y)&=& \int p_1^{D}( x, u)p_1^{D}( u, v)p_1^{D}( v, y)dudv\nonumber\\&\ge& c_{11} p_1(x-y) \int_{B_x} p_1^{D}( x,u)du\int_{B_y} p_1^{D}( x, v)dv.\label{density21}\end{eqnarray}
Next for $u \in B^{\prime\prime}_x$ which is an interval with the same center as $B_x$, and has the length   $|B^{\prime\prime}_x|= |B_x|/2$,    we have by Lemma  \ref{lemppu100},  $$p_1^{D}( x,u)\ge \p^x(\tau_{U_x}>1)\p^u(\tau_{B_x}>1)\inf_{u\in U_x,\, z\in B_x}\nu(z-u)\ge \p^x(\tau_{U_x}>1)\p^u(\tau_{B_x}>1)\nu(5R). $$
Next note that, by \eqref{exitball}, $$\p^u(\tau_{B_x}>1)\ge \p^0(\tau_{(-R/4,-R/4)}>1)\ge 1/2$$ and
$$\p^x(\tau_{U_x}>1)\ge \frac{\Ca}{2} \p^x(\tau_{D}>1),$$
which follows from Proposition \ref{ball02} for $|x|\le R/2$ and  from \eqref{exitball} for $|x|> R/2$.
Hence, using $\nu(5R)\approx \frac 1{RV^2(5R)}\approx \frac1R$, we arrive at
$$\int_{B_x} p_1^{D}( x,u)du\ge \frac{\Ca}{4} \p^x(\tau_{D}>1) \nu(R)|B^{\prime\prime}_x|\ge c_{12}  \frac {|B^{\prime\prime}_x|}{R}\p^x(\tau_{D}>1)\ge c_{13} \p^x(\tau_{D}>1),$$
since $|B^{\prime\prime}_x|\approx R$.
 The above estimates combined with \eqref{density21} yield
$$p_{D}(3, x, y)\ge c_{11}c_{13}^2 p_1(x-y) \p^x(\tau_{D}>1)\p^y(\tau_{D}>1).$$
Finally, by observing that  $ p_3( x- y)\approx   p_1(x-y)$ (Lemma \ref{density}) and $\p^x(\tau_{D}>3)\approx  \p^x(\tau_{D}>1)$ (Theorem \ref{ball2}), we arrive at
$$p_{D}(3, x, y)\ge c _{14}\p^x(\tau_{D}>3)p(3, x, y) \p^y(\tau_{D}>3).$$
 Applying the same  scaling argument as used for the upper bound we conclude for any $t,r>0$,
$$p_t^{rD}( x, y)\ge c_{14} \p^x(\tau_{rD}>t)p_t(x- y) \p^y(\tau_{rD}>t).$$
The proof is completed.
\end{proof}
\begin{rem} If we consider the semigroup killed upon hitting $\{0\}$ then with the assumptions of Theorem \ref{HK} we can  obtain the following estimate of its transition density $$p_t^{\{0\}^c}( x, y)\approx   \p^x(T_0>t)\p^y(T_0>t) p_t(x-y),\quad \ t>0,\,\,\, x,\,y \neq 0.$$
This can be proved either  by taking the limit in the estimates from Theorem \ref{HK} if $r\to 0$ or by  proving directly following the steps of the proof above.
\end{rem}

Now, we suggest the following extension of Theorem \ref{HK}.
\begin{rem} Let $X$ be a pure jump process. The assumption of unimodality of $X$ can be removed by assuming certain estimates of the symmetric L\'evy density of $X$.  Suppose that
$$\nu(x)\approx  \frac {f(1/|x|)}{|x|}
,\ x\in \R,$$
where $f:[0,\infty)\to [0,\infty)$ is non-decreasing  and  $f \in \WLSC{\alpha}{\gamma_1}\cap \WUSC{\beta}{\rho_1}$, $1<\alpha<\beta<2$. Then according to \cite[Proposition 28]{MR3165234} the characteristic exponent of $X$, $\psi \approx f$, so 
$\psi \in \WLSC{\alpha}{\gamma}\cap \WUSC{\beta}{\rho}$. Moreover, by the result of Chen, Kim and Kumagai \cite{MR2806700} we have 
$$p_t(x)\approx f^{-1}(1/t) \wedge t\frac {f(1/|x|)}{|x|}\approx  p_t(0)\wedge t\frac {f(1/|x|)}{|x|}
\approx  \psi^{-1}(1/t) \wedge t\nu(x).$$ 
We also note that the renewal function of the ladder process is $ V_X(x)\approx \frac 1{\sqrt{\psi(1/x)}}\approx \frac 1{\sqrt{f(1/x)}}$. Hence we conclude that Lemma \ref{density} holds in this case.  Moreover the density $p_t(x)$ is almost non-increasing for $x\in (0,\infty)$ that is there is symmetric $q_t(x)$ non-increasing on $(0,\infty)$ and a constant $c\ge 1$ such that 
$$c^{-1} q_t(x)  \le p_t(x) \le c q_t(x), \quad t>0, x\in \R.$$
 Therefore we can  repeat, with necesarry slight modifications,  all the steps from the proof of Theorem \ref{HK} and obtain its conclusion in this case. The details are left to interested readers.
\end{rem}

 \bibliographystyle{abbrv}

\end{document}